\pdfoutput=1 

\documentclass[a4paper,twoside,leqno]{article}

\usepackage{a4wide}

\usepackage{amssymb}
\usepackage{amsmath}
\usepackage{mathtools}          
\usepackage{amsthm}

\usepackage{microtype}

\usepackage{mathrsfs}

\usepackage{authoraftertitle}

\usepackage{graphicx}

\usepackage{esint}                

\usepackage{enumitem}
\setlist[enumerate,1]{label=(\alph*)}
\setlist[enumerate,2]{label=(\roman*)}

\usepackage{url}
\usepackage{hyperref}

\usepackage{xcolor}

\usepackage{mathtools}
\mathtoolsset{showonlyrefs}

\usepackage{dsfont}

\newtheoremstyle{citing}
  {3pt}
  {3pt}
  {\itshape}
  {}
  {\bfseries}
  {.}
  {.5em}
  {\thmnote{#3}}
\theoremstyle{citing}

\theoremstyle{definition}

\theoremstyle{remark}

\swapnumbers
\theoremstyle{plain}

\newtheorem{theorem}{Theorem}[section]
\newtheorem{lemma}[theorem]{Lemma}
\newtheorem{corollary}[theorem]{Corollary}

\theoremstyle{remark}
\newtheorem{remark}[theorem]{Remark}
\newtheorem*{remark*}{Remark}
\newtheorem{example}[theorem]{Example}

\theoremstyle{definition}
\newtheorem{definition}[theorem]{Definition}
\newtheorem{miniremark}[theorem]{}

\theoremstyle{definition}
\swapnumbers

\usepackage{chngcntr}

\counterwithin*{equation}{theorem}
\counterwithin*{enumi}{theorem}

\newcommand{\VSob}[1]{\mathbf{W}_{#1}}
\newcommand{\VSobz}[1]{\mathbf{W}^{\diamond}_{#1}}

\newcommand{\norm}[3]{\boldsymbol{|} #1 \boldsymbol{|}_{#2;#3}}

\newcommand{\Shuffle}[2]{\operatorname{Sh}(#1,#2)}

\newcommand{\Sequence}[2]{\mathscr{S}(#1,#2)}

\newcommand{\dspace}[2]{\mathscr{D}(#1,#2)}

\newcommand{\distr}[2]{\mathscr{D}'(#1,#2)}

\newcommand{\espace}[2]{\mathscr{E}(#1,#2)}

\newcommand{\project}[1]{{#1}_\natural}

\newcommand{\perpproject}[1]{#1_\natural^\perp}

\newcommand{\grass}[2]{\mathbf{G}(#1,#2)}

\newcommand{\measureball}[2]{{#1}\,{#2}}

\newcommand{\qspace}{\mathbf{Q}}

\newcommand{\Var}[1]{\mathbf{V}_{#1}}     
\newcommand{\IVar}[1]{\mathbf{IV}_{#1}}   
\newcommand{\var}[1]{\mathbf{v}_{#1}}     

\DeclareMathOperator{\graph}{graph}     

\newcommand{\adim}{n}
\newcommand{\vdim}{d}

\newcommand{\oball}[2]{\mathbf{U}(#1,#2)}

\newcommand{\cball}[2]{\mathbf{B}(#1,#2)}

\newcommand{\sphere}[1]{\mathbb{S}^{#1}}

\ifdefined\textint
    \renewcommand{\textint}[2]{{\textstyle\int_{#1}^{#2}}}
\else    
    \newcommand{\textint}[2]{{\textstyle\int_{#1}^{#2}}}
\fi

\ifdefined\textsum
    \renewcommand{\textsum}[2]{{\textstyle\sum_{#1}^{#2}}}
\else  
    \newcommand{\textsum}[2]{{\textstyle\sum_{#1}^{#2}}}
\fi

\newcommand{\natp}{\mathscr{P}}
\newcommand{\nat}{\natp \cup \{0\}}

\newcommand{\integers}{\mathbf{Z}}

\newcommand{\R}{\mathbf{R}}

\newcommand{\LM}{\mathscr{L}}

\newcommand{\Lp}[1]{\mathbf{L}_{#1}}

\newcommand{\besicovitch}[1]{\boldsymbol{\beta}(#1)}

\newcommand{\npp}[1]{\boldsymbol{\xi}_{{#1}}}

\newcommand{\HM}{\mathscr{H}}

\newcommand{\density}{\boldsymbol{\Theta}}

\newcommand{\unitmeasure}[1]{\boldsymbol{\alpha}(#1)}

\newcommand{\restrict}{ \mathop{ \rule[1pt]{.5pt}{6pt} \rule[1pt]{4pt}{0.5pt} }\nolimits }

\newcommand{\ud}{\ensuremath{\,\mathrm{d}}}

\newcommand{\uD}{\ensuremath{\mathrm{D}}}

\newcommand{\weakD}{\ensuremath{\mathbf{D}}}

\newcommand{\id}[1]{\mathrm{id}_{#1}}

\newcommand{\lIm}{[}
\newcommand{\rIm}{]}

\newcommand{\scale}[1]{\boldsymbol{\mu}_{#1}}
\newcommand{\trans}[1]{\boldsymbol{\tau}_{#1}}

\newcommand{\tbwedge}{{\textstyle \bigwedge}}

\newcommand{\tbcup}{{{\textstyle \bigcup}}}

\newcommand{\tbcap}{{{\textstyle \bigcap}}}

\newcommand{\Clos}[1]{\mathop{\mathrm{Clos}}#1}

\newcommand{\cylinder}[4]{\mathbf{C} ( #1, #2, #3, #4 )}

\newcommand{\ellipticity}[1]{\operatorname{ellipticity}(#1)}

\DeclareMathOperator{\Aff}{A}       

\DeclareMathOperator{\Unp}{Unp}

\DeclareMathOperator{\Hom}{Hom}

\DeclareMathOperator{\ap}{ap}

\newcommand{\cnt}[1]{\mathscr{C}^{#1}}

\newcommand{\orthproj}[2]{\mathbf{O}^\ast({#1},{#2})}

\newcommand{\hoelder}[2]{\mathbf{h}_{#1}(#2)}

\newcommand{\hnorm}[4]{\boldsymbol{\|} #2 \boldsymbol{\|}^{#4}_{#1;#3}}

\DeclareMathOperator{\reach}{reach}

\DeclareMathOperator{\dmn}{dmn}

\DeclareMathOperator{\grad}{grad}

\DeclareMathOperator{\spt}{spt}

\DeclareMathOperator{\Tan}{Tan}

\DeclareMathOperator{\Nor}{Nor}

\DeclareMathOperator{\Lip}{Lip}

\DeclareMathOperator{\card}{card}

\DeclareMathOperator{\dist}{dist}

\DeclareMathOperator{\diam}{diam}

\DeclareMathOperator{\without}{\sim}

\DeclareMathOperator{\with}{:}

\DeclareMathOperator{\im}{im}

\date{\today}

\title{Rectifiability of every finite order for stationary integral varifolds}

\author{
  S{\l}awomir Kolasi{\'n}ski
}

\hypersetup{
  unicode=true,
  pdfauthor={\MyAuthor},
  pdftitle={\MyTitle},
  pdfsubject={},
  pdfkeywords={},
  pdfproducer={},
  pdfcreator={}
  pdfinfo={
    orcid={0000-0002-7122-7275},
  }
  hidelinks=true,
  colorlinks=true,
  linkcolor=blue,
  citecolor=darkgray,
  linktocpage=false,
}

\begin{document}

\maketitle

\tableofcontents


\begin{abstract}
    We give an alternative proof of a theorem of Brena, De~Lellis and
    Franceschini (arXiv:2503.00649): the support of a stationary integral
    $\vdim$~dimensional varifold in an open subset of~$\R^{\adim}$ is
    $(\HM^{\vdim},\vdim)$~rectifiable of class~$(k,\alpha)$ for every positive
    integer~$k$ and every $0 < \alpha < 1$, and in particular of
    class~$\cnt{\infty}$. Our proof is independent of theirs and proceeds by
    different means. It compares the varifold, at every scale and at
    $\HM^{\vdim}$~almost every point, with the graph of a solution of the
    Euler--Lagrange system of a smoothed area integrand, and thereby derives an
    excess-decay estimate that is faster than any power of the scale; the
    conclusion then follows from Santilli's characterisation of higher order
    rectifiability together with Whitney's extension theorem.
\end{abstract}

\section{Introduction}

Let $V$ be a $\vdim$~dimensional integral varifold in an open subset $U$ of
$\R^{\adim}$ that is stationary, i.e., $\delta V = 0$. By Allard's theorem
(see~\cite{Allard1972}) the support of the weight measure~$\|V\|$ carries
a~$\vdim$~rectifiable structure. That this structure is as smooth as one may
presently hope in the absence of a~regularity theory for $\spt\|V\|$ itself, was
established by Brena, De~Lellis and Franceschini~\cite{Brena2025a}. The~purpose
of the present paper is to give an alternative proof of their theorem, along a
route that is independent of theirs; we~claim no other novelty, and we~compare
the two arguments in detail below.

\begin{theorem}[see~\ref{thm:finite-rectifiability} and~\ref{cor:Cinfty-rectifiability}]
    \label{introthm:main}
    Let $U \subseteq \R^{\adim}$ be open and let $V \in \IVar{\vdim}(U)$ satisfy
    $\delta V = 0$. Then $\spt\|V\|$ is $(\HM^{\vdim},\vdim)$~rectifiable of
    class~$(k,\alpha)$ for every $k \in \natp$ and $0 < \alpha < 1$; in
    particular it is $(\HM^{\vdim},\vdim)$~rectifiable of class~$\cnt{\infty}$.
\end{theorem}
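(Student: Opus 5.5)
The plan follows the route announced in the abstract. There are three ingredients: a pointwise excess-decay estimate that beats every power of the scale at $\HM^{\vdim}$~almost every point, Santilli's characterisation of $(\HM^{\vdim},\vdim)$~rectifiability of class~$(k,\alpha)$, and Whitney's extension theorem.

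\textbf{Reduction and the approximating graph.} By Allard's rectifiability and density theorems, at $\HM^{\vdim}$~almost every $a \in \spt\|V\|$ the density $\density^{\vdim}(\|V\|,a)$ is a positive integer $m$. Moreover, for every $\varepsilon>0$ there is a small scale $r$ at which the tilt excess of $V$ in $\cball{a}{r}$ relative to some $T \in \grass{\adim}{\vdim}$ is below $\varepsilon$ and the mass ratio is close to $m$. I will restrict attention to the set of points where the varifold is, at small scales, close to $m$ copies of a plane in a way that is uniform on a subset of almost full measure; a standard Egorov-type exhaustion gives this. At such points the Lipschitz approximation produces a single-valued Lipschitz graph that agrees with the $m$-sheeted structure outside a small bad set. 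At $\HM^{\vdim}$~almost every point the sheets must in fact coincide to high order, and I will argue this from the density being attained by a \emph{single} approximating graph on a set of density one. Then, at each scale $r$, I solve a Dirichlet problem for the Euler--Lagrange system of a smoothed area integrand $F_r$ (area mollified so that it is uniformly convex and smooth near horizontal planes) on the disc of radius $r/2$, with boundary data the mollified Lipschitz approximation. Elliptic Schauder theory then gives a graph $u_r$ that is $\cnt{k,\alpha}$ with interior estimates controlled by the excess.

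\textbf{Excess decay faster than any power.} I will compare $V$ with $\graph u_r$ by testing stationarity, $\delta V(X)=0$, against vector fields built from $u_r$ and a cutoff. The error consists of three parts: the bad set of the Lipschitz approximation, the difference between $F_r$ and the true area integrand (controlled by the smoothing parameter, which is chosen as a power of the excess), and boundary terms. Let $E_k(a,r)$ denote the height excess of $V$ relative to the best degree-$k$ polynomial graph. The aim is a recursion
\[
E_k(a,\theta r) \le C\theta^{k+\alpha}E_k(a,r) + C\, E_k(a,r)^{1+\gamma},
\]
valid for $\HM^{\vdim}$~almost every $a$. Iterating this gives $E_k(a,r) = O(r^{k+\alpha})$ for every $k$ and $\alpha$. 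The superlinear term is what makes the decay improve order by order: once decay of order $k-1$ is known, the nonlinear error is already of size $o(r^{k+\alpha})$ on a set of density one. The rate of decay is allowed to depend on the point.

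\textbf{Conclusion and main obstacle.} Santilli's characterisation states that a set is $(\HM^{\vdim},\vdim)$~rectifiable of class~$(k,\alpha)$ if and only if, at almost every point, it admits a pointwise approximation by degree-$k$ polynomial graphs with error $O(r^{k+\alpha})$. Applying it, and patching the pointwise Taylor data together with Whitney's extension theorem on compact pieces where the constants are uniform, yields class~$(k,\alpha)$. Class~$\cnt{\infty}$ then follows by intersecting the exceptional null sets over countably many $k$ and rational $\alpha$ and applying Whitney once more. The main obstacle is the comparison step in the presence of multiplicity $m>1$ and the bad set. Stationarity controls the excess only integrally, while the polynomial approximation must hold in height at every scale. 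I would first try to control the bad set's mass by the excess to a power $1+\gamma$, using the monotonicity formula together with a Lebesgue-point argument for the tilt. I expect that closing the recursion uniformly along a sequence of scales will be the delicate part.
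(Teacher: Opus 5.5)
The concluding steps (Santilli's characterisation, a common exceptional set, Whitney) match the paper in outline. The decay step, however, has a genuine gap, located where you expect trouble.

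\textbf{Multiplicity.} The $\qspace_Q$~valued approximation $f$ agrees with $V$ off the bad set~$B$, and the mass of $B$ is indeed quadratic in the tilt. Its average $g = \boldsymbol{\eta}_Q \circ f$, however, agrees with $V$ only over $Y = X \cap \{ x : f(x) = Q \boldsymbol{[} g(x) \boldsymbol{]} \}$. The set $X \without Y$ is \emph{not} part of the bad set: it consists of good points over which distinct sheets coexist. Near~$0$ its measure is $o(s^{\vdim})$ by approximate continuity of the density, but with no rate and no bound by any excess. So ``the sheets coincide to high order at almost every point'' is essentially the conclusion of the theorem, not an available input, and your recursion has no term for this set.

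The paper needs no rate here. The Sobolev--Poincar\'e inequality \cite[6.3]{Menne2012a}, applied with the small exceptional set $\cball{0}{s} \without Y$, bounds the normalised height of $f$ relative to $v_{2s}$ by two terms. The first is $\lambda^{1/\vdim}$ times the normalised tilt relative to $M_{2s}$. The second is $\lambda^{-1/2}$ times the normalised $\Lp{1}$ distance of $g$ and $v_{2s}$ over~$Y$. Here $\lambda$ bounds $s^{-\vdim}\LM^{\vdim}(\cball{0}{s} \without Y)$. The bad set enters through \cite[5.7(8)]{Menne2012a}. There the extra factor $\LM^{\vdim}(\cball{0}{s} \without X)^{1/\vdim} \le \lambda^{1/\vdim} s$ supplies the same small coefficient; mass control alone would leave a linear term with coefficient of order one. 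These linear terms are harmless because the target is only $\zeta(s) \le \kappa\,\zeta(2^{20}s)$ with $\kappa$ arbitrary, and $\lambda = \lambda(\kappa)$ is attained at small scales. Your recursion would need an analogous slot.

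\textbf{Source of the superlinear term.} Your term $C E_k^{1+\gamma}$ is asserted, not derived. Testing $\delta V = 0$ with fields built from $u_r$ and a cutoff is exactly the computation of \S\ref{sec:height-tilt}. It bounds tilt by height (\ref{rem:tilt-height}), not height by anything smaller.

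In the paper the quadratic gain comes from an $\Lp{1}$ duality estimate, \ref{lem:L1-T-1-est} and~\ref{lem:v-u-le-LFu-LFv}. This controls $s^{-1}\norm{g - v_s}{1}{0,s}$ by $\norm{L_G(g)}{-1,1}{0,2^6 s}$ plus squared $\Lp{2}$ norms of gradient differences. In turn, $\norm{L_G(g)}{-1,1}{0,2^6 s}$ is quadratic in the tilt relative to the \emph{curved} surface $M_{2^{10}s}$. To see this, one linearises $G$ at $\uD v_{2^{10}s}$ and uses $\uD G = \uD\Phi$ exactly on the small slopes of the sheets~$f_i$. One then rewrites $\sum_i L_{\Phi;D_i}(f_i)$ as $\delta(V - W)$, which $\delta V = 0$ bounds by the bad-set mass. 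Energy estimates such as~\ref{lem:W12est} give only linear control.

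\textbf{The integrand.} Your scale-dependent mollification of the area is counterproductive. Area is already smooth and uniformly elliptic near horizontal planes. A fixed $G$ equal to $\Phi$ on small slopes makes the integrand error vanish identically.

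\textbf{The polynomial excess.} A recursion at rate $\theta^{k+\alpha}$ for the excess relative to the best degree-$k$ polynomial needs $\uD^{k+1} u_r$ bounded in terms of $E_k(r)$. That requires the polynomial to solve the nonlinear system approximately to matching order, as with the harmonic quadratics of \cite{Savin2018}. The paper avoids this by measuring the excess against the solutions $M_s$ themselves and recovering the jets afterwards through~\ref{lem:jet-cauchy}.
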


Here rectifiability of class~$(k,\alpha)$ (respectively~$\cnt{\infty}$) is
understood in the measure-theoretic sense: the carrier of $\|V\|$ is covered by
a~countable union of $\vdim$~dimensional submanifolds of class~$\cnt{k,\alpha}$
(respectively~$\cnt{\infty}$). Equivalently, by the characterisation of
Anzellotti--Serapioni and Santilli (\cite{Anzellotti1994a} and
\cite[1.2, 5.6]{Santilli2019d}), at
$\HM^{\vdim}$~almost every point $\spt\|V\|$ admits an approximate Taylor
expansion of order~$(k,\alpha)$ (respectively of every finite order). No
regularity of $\spt\|V\|$ as a set is asserted or used; the singular examples of
stationary varifolds are not excluded, and whether the complement of Allard's
regular set is $\HM^{\vdim}$~null remains, after more than fifty years, open.

The result is the natural endpoint of a sequence of successive improvements.
Combining Allard's interior regularity with Almgren's multiple valued functions,
as observed by Brakke~\cite{Brakke1978} (the attribution is made explicitly
in~\cite[\S1]{Brena2025a}), one obtains rectifiability of class~$(1,\alpha)$ for
every $\alpha < 1$; Menne~\cite{Menne2013} established rectifiability of
class~$\cnt2$, which is optimal in the wider class of varifolds of locally
bounded first variation. The endpoint was reached by Brena, De~Lellis and
Franceschini~\cite{Brena2025a}, whose priority we~acknowledge. The present work,
begun years before and carried out in ignorance of theirs, arrives at the same
conclusion by different means.

\subsection*{Historical note and an attribution}
\addcontentsline{toc}{subsection}{Historical note and an attribution}

The main ideas and the whole architecture for the proof presented here where
devised by Ulrich Menne some ten years ago. As a result of our collaboration,
after about three years, a draft of this article was produced, which contained
roughly 70--80\% of the contents of the present manuscript. After that, the work
came to a~standstill, not because of any serious difficulties, but rather for
practical and organisational reasons. Following the publication
of~\cite{Brena2025a} Menne relinquished authorship and allowed me to publish the
article on my own.  Getting back to work on this project was quite difficult, as
I had completely lost my enthusiasm for it. The situation changed when new tools
became available. Once I gained access to \emph{Claude~AI} and \emph{Claude
  Code} the work became easier and I could finish it; see also the
section~\emph{\nameref{sec:AI-tools}}.


\subsection*{Method of proof}
\addcontentsline{toc}{subsection}{Method of proof}

The proof rests on a single excess-decay estimate, iterated at $\HM^{\vdim}$~almost
every point. Fix a point at which $\spt\|V\|$ possesses an approximate tangent
plane~$T$. At scale~$s$ we measure the deviation of~$V$ from a smooth comparison
surface~$M_s$ by a height excess $\zeta(s)$, the scale-invariant quadratic mean
of $\dist(\cdot,M_s)$ against~$\|V\|$ over a cylinder of radius~$s$ about the point.
The comparison surface $M_s = \graph v_s$ is the graph over~$T$ of a
$\cnt{\infty}$~function $v_s$ solving the Euler--Lagrange system $L_G(v_s) = 0$
of a fixed smoothed area integrand~$G$, fitted to~$V$ at the scale~$s$. The main
estimate~\ref{thm:decay} is a geometric decay
\begin{displaymath}
    \zeta(s) \le \kappa\, \zeta(2^{20} s) \,,
\end{displaymath}
in which the factor $0 < \kappa < 1$ may be prescribed arbitrarily small at the
cost of enlarging certain structural constants. Iteration
(see~\ref{mr:iterated-decay}) converts this into $\zeta(s) \le
\kappa^{-1}(s/r)^{\alpha}\zeta(r)$ with rate $\alpha = \tfrac{1}{20}\log_2(1/\kappa)$;
hence, into decay faster than any prescribed power of the scale.

The passage from decay to smoothness is then measure-theoretic. Telescoping the
comparison jets across dyadic scales and applying Chebyshev's inequality
(\ref{lem:decay-approxdiff}) shows that decay of order $\alpha \ge k+2$ forces
$\spt\|V\|$ to be approximately differentiable of order~$(k,\alpha')$ at
$\HM^{\vdim}$~almost every point; Santilli's characterisation of higher order
rectifiability~\cite[5.6]{Santilli2019d} then yields rectifiability of
class~$(k,\alpha)$. As $k$ is arbitrary, one obtains rectifiability of every
finite order (\ref{thm:finite-rectifiability}). Finally, the approximate Taylor
expansions of all orders are available on \emph{one} common set of full measure,
so Whitney's $\cnt{\infty}$~extension theorem upgrades the conclusion
to~$\cnt{\infty}$ (\ref{lem:set-to-function}, \ref{lem:Ck-to-Cinfty}
and~\ref{cor:Cinfty-rectifiability}), by an argument we owe to Menne.

The decay estimate occupies the bulk of the paper. Its proof rests on five
ingredients: lower bounds for the reach of a graph in terms of the second
derivatives of the graph function (\S\ref{sec:reach}), which is what makes the
nearest point projection onto the comparison surface available on the region
where the estimates are run; a reverse-Caccioppoli, or tilt-excess, inequality
relative to a $\cnt2$~comparison manifold --- ``height controls the tilt'' ---
worked out in \S\ref{sec:height-tilt} and~\S\ref{sec:two-heights}; an
$\qspace_Q$~valued Lipschitz approximation of~$V$ over~$T$, with estimates for
the bad set on which $V$ is not well approximated by a graph; interior Schauder
estimates of every order for the system $L_G$ and its linearisation, developed
here in a self-contained way in the spirit of~\cite[5.2.15]{Federer1969}; and
the construction of the comparison functions $v_s$ with the prescribed
behaviour.

\subsection*{Comparison with the work of Brena, De~Lellis and Franceschini}
\addcontentsline{toc}{subsection}{Comparison with the work of Brena, De~Lellis and Franceschini}

Our strategy and that of~\cite{Brena2025a} agree in their broad architecture and
diverge in most of the technical execution. Since the two works were carried out
independently, we describe the parallel in some detail.

The common ground is the following. Both proofs replace the linear building block
of Allard's $\varepsilon$-regularity --- the best approximating \emph{plane} in
the classical Allard--De~Giorgi decay lemma --- by a best approximating
\emph{smooth solution} of a minimal-surface-type system, and both keep the ratio
of consecutive scales fixed while driving the decay factor ($\kappa$ here,
$\gamma$ in~\cite{Brena2025a}) to be arbitrarily small; this is precisely what
produces decay faster than any power, rather than at a single H\"older rate. The
idea of enlarging the class of comparison objects is not new: Savin, in
\cite{Savin2018}, approximates by harmonic quadratic polynomials in place of
linear maps and so passes from $\cnt{1,\alpha}$ to $\cnt{2,\alpha}$~estimates
for his notion of viscosity solution of the minimal surface system; both works
considered here push the device to~$\cnt{\infty}$.
Both derive the requisite energy estimate from a Caccioppoli-type
\emph{tilt-excess inequality} for~$V$ regarded as a generalised graph over the
comparison surface, both control the higher multiplicity $Q > 1$ and the possible
``holes'' by an Almgren-style $\qspace_Q$~valued Lipschitz approximation together
with a density-deficit term --- our bad-set estimates, their parameter~$\eta$,
which is vacuous when $Q = 1$ --- and both conclude by a Whitney-type patching
that exploits the mutual closeness, in every $\cnt{k}$~norm, of the comparison
surfaces attached to nearby scales.

The differences are substantial. \emph{First, the comparison surface and the
parametrisation of~$V$.} In~\cite{Brena2025a} the reference objects are genuine
classical minimal surfaces~$M$, and $V$ is written as a $\qspace_Q$~valued graph
over the \emph{normal bundle} of the curved surface~$M$; the decay lemma updates
$M$ to a new minimal surface at the finer scale, and the analysis is intrinsic to
the moving curved surface, governed by the distance function $d_M^2$ and its
elliptic inequality $\Delta_L \tfrac12 d_M^2 + \delta^2 d_M^2 \ge \tfrac14 |L -
T_{p(z)}M|^2$. In the present paper the comparison surfaces $M_s = \graph v_s$
are, throughout, graphs over the \emph{fixed} approximate tangent plane~$T$;
there is neither a normal-bundle parametrisation nor a moving frame, and the
curvature of the comparison surface enters solely through $v_s$ and its
derivatives.

\emph{Second, the comparison equation.} The reference~\cite{Brena2025a} retains
the exact minimal-surface system, regularising only the average in order to
update the surface, at the cost that the surface obtained in the limit is smooth
but not shown to be minimal (their Theorem~1.3). We instead compare against
solutions of $L_G(v_s) = 0$ for a fixed \emph{smoothed} area integrand~$G$ --- an
elliptic regularisation performed at the level of the integrand rather than of
the surface. The integrand~$G$ is uniformly elliptic and smooth, which is
exactly what our higher order estimates require and what removes the degeneracy
of the area integrand at large slopes.

\emph{Third, the elliptic estimates.} In~\cite{Brena2025a} the regularity of the
comparison surfaces is imported from the classical theory of the minimal-surface
system; here it is proved from scratch. In~\S\ref{sec:elliptic} the operators
$L_F$, $T_A$ and the negative norm~$\|\cdot\|_{-1,1}$ are developed
following~\cite{Menne2013}, and~\ref{lem:schauder-est} establishes interior
Schauder estimates of arbitrary order by a self-contained higher order tower
modelled on~\cite[5.2.15]{Federer1969}.  The tower proper ---
\ref{lem:H}, \ref{lem:ho:tower} and~\ref{thm:ho} --- appeals neither
to~\cite{Gilbarg2001} nor to Simon's scaling method~\cite{Simon1997};
\ref{rem:ho:plan} records how it stands to the latter.  Standard $\Lp{p}$~theory
is used elsewhere, in~\ref{lem:L1-T-1-est} and in the construction of the
comparison functions.  This self-containedness is forced by our need to control
the constants at all orders simultaneously.

\emph{Fourth, the route from decay to smoothness.} The reference~\cite{Brena2025a}
passes directly to $\cnt{\infty}$: their quantitative closeness of two comparison
minimal surfaces feeds a Whitney construction that produces the
$\cnt{\infty}$~graphs at once. Our argument is instead factored through the theory
of approximate differentiability: the decay first yields, via
\ref{lem:decay-approxdiff}, approximate differentiability of a definite finite
order~$(k,\alpha)$, which by Santilli's theorem~\cite{Santilli2019d} is
equivalent to rectifiability of class~$(k,\alpha)$; the $\cnt{\infty}$~statement
is obtained as a separate step from the finite-order statements on a common
set of full measure. This modularity isolates the geometric measure theoretic
content (Santilli's characterisation) from the analytic content (the decay), and
yields, as a by-product, the sharp finite-order statement of
\ref{thm:finite-rectifiability}. At the level of prerequisites, finally, both
papers build on Almgren's multiple valued functions, but they stand differently
to Menne's~\cite{Menne2013} $\cnt2$~rectifiability: \cite{Brena2025a} quotes it
as the previous state of the art, and as one of the treatments that adopt
Almgren's original approximation scheme, but does not otherwise rest on~it,
whereas here it is a~working tool --- the operators $L_F$, $T_A$ and the
norm~$\|\cdot\|_{-1,1}$ of~\S\ref{sec:elliptic} follow
\cite[3.1]{Menne2013}. The analytic core of~\cite{Brena2025a} rests on
tilt-excess and Lipschitz-approximation apparatus of De~Lellis--Spadaro type,
whereas ours continues the line of~\cite{Menne2010,Menne2013} and of Federer's
treatment of elliptic systems.

\section{Notation}

In~principle we shall follow the notation of Federer;
see~\cite[pp.~669--671]{Federer1969}. In~particular, given two sets $A,B$, we
denote with $A \without B$ their \emph{set-theoretic difference} and, for every
$a\in \R^\adim$ and~$s \in \R$ we define the functions $\trans{a}(x) = a+x$
and~$\scale{s}(x) = sx$; see~\cite[2.7.16, 4.2.8]{Federer1969}. Whenever $X$ and
$Y$ are vector-spaces, $\Lambda \in \Hom(X,Y)$, and $x \in X$ we use alternative
notations $\langle x ,\, \Lambda \rangle$ and $\Lambda x$ for the value
of~$\Lambda$ on the vector~$x$. In case $X$ is an inner product space we write
$x \bullet y$ for the \emph{scalar product} of $x,y \in X$. The symbol
$\bigodot^k X$ denotes the space of all \emph{symmetric $k$-forms} on a
vectorspace~$X$, and $\bigodot^k X$ the space of all \emph{symmetric
  $k$-vectors} on~$X$; cf.~\cite[1.9]{Federer1969}. If~$\dim X < \infty$, then
$\bigodot^k X$ and $\bigodot_k X$ are dual to each other; in this case we shall
tacitly identify $\Hom(\bigodot_kX,\R)$ with $\bigodot^k X$.

Concerning varifolds, we~shall use the notation of Allard~\cite{Allard1972}.

Following~\cite{Almgren1968} and~\cite{Almgren2000e}, if $S \in
\grass{\adim}{\vdim}$ is a~$\vdim$~dimensional linear subspace of~$\R^{\adim}$,
then $\project S \in \Hom(\R^{\adim},\R^{\adim})$ shall denote the
\emph{orthogonal projection} onto~$S$. In~particular, if $p \in
\orthproj{\adim}{\vdim}$ is such that $\im p^* = S$, then $\project S = p^*
\circ p$.

As in~\cite[4.1]{Federer1959} if $\Sigma \subseteq \R^{\adim}$, we denote by
$\Unp(\Sigma)$ the domain of the~\emph{nearest point projection} map
\begin{displaymath}
    \npp{\Sigma} = \R^{\adim} \times \Sigma \cap
    \bigl\{ (x,y) : \dist(x,\Sigma) = |x-y| ,\, \cball{x}{|x-y|} \cap \Sigma = \{ y \} \bigr\} \,.
\end{displaymath}
In case $\Sigma$ is a manifold of class~$\cnt{2}$ we write
$\mathbf{b}(\Sigma,x)$ for the second fundamental form of~$\Sigma$ at~$x \in
\Sigma$; cf.~\cite[2.5(1)]{Allard1972}.  Its norm is always that of a~bilinear
map,
\begin{displaymath}
    \| \mathbf{b}(\Sigma,x) \| = \sup \bigl\{ | \mathbf{b}(\Sigma,x)(\sigma,\tau) |
    : \sigma,\tau \in \Tan(\Sigma,x) ,\, |\sigma| = |\tau| = 1 \bigr\} \,;
\end{displaymath}
for a symmetric form with values in $\Nor(\Sigma,x)$ this exceeds the supremum
over the diagonal in general, and it is the former that is needed here.

\section{Reach of a graph}
\label{sec:reach}

\begin{lemma} \label{lemma:structure_submanifold}
        Suppose $M$ is an $m$ dimensional submanifold of class $2$ of $\mathbf
        R^n$, the linear maps
        \begin{equation*}
                Q_a(v) : \Tan (M,a) \to \Tan (M,a), \quad
                L_a(v) : \Tan (M,a) \times \Nor (M,a) \to \mathbf R^n
                \times \mathbf R^n,
        \end{equation*}
        defined for $(a,v) \in \Nor (M)$, satisfy
        \begin{equation*}
                \langle t, Q_a(v) \rangle \bullet u = \langle t \odot u,
                \mathbf b (M,a) \rangle \bullet v, \quad \langle (t,w), L_a
                (v) \rangle = ( t, w - \langle t, Q_a(v) \rangle )
        \end{equation*}
        whenever $t,u \in \Tan (M,a)$ and $w \in \Nor (M,a)$, and $\psi :
        \Nor (M) \to \mathbf R^n$ satisfies
        \begin{equation*}
                \psi (a,v) = a+v \quad \text{for $(a,v) \in \Nor (M)$}.
        \end{equation*}

        Then, the following two statements hold.
        \begin{enumerate}
                \item \label{item:structure_submanifold:Tan} If $(a,v) \in
                \Nor (M)$, then
                \begin{equation*}
                        \Tan ( \Nor (M), (a,v) ) = \im L_a(v).
                \end{equation*}
                \item \label{item:structure_submanifold:covering} If $(a,v)
                \in \Nor(M)$ and $\| \mathbf b (M,a) \| \cdot |v| < 1$, then
                $\uD \psi (a,v)$ is univalent.
        \end{enumerate}
\end{lemma}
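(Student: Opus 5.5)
Part~\ref{item:structure_submanifold:Tan} will be proved by a local computation with an orthonormal frame of the normal bundle, and part~\ref{item:structure_submanifold:covering} will then follow from it by linear algebra.

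For~\ref{item:structure_submanifold:Tan}, fix $(a,v) \in \Nor(M)$. Near~$a$, choose $\cnt{1}$~maps $\nu_1,\dots,\nu_{n-m}$ on a neighbourhood of~$a$ in~$M$ that form an orthonormal basis of $\Nor(M,x)$ at each point~$x$. Such a frame exists because $x \mapsto \perpproject{\Tan(M,x)}$ is of class~$\cnt{1}$ when $M$ is of class~$2$: apply Gram--Schmidt to the projections of fixed vectors. Then
\[
  \Phi(x,c) = \bigl(x, \textsum{i=1}{n-m} c_i \nu_i(x)\bigr)
\]
is a $\cnt{1}$~diffeomorphism from a neighbourhood in $M \times \R^{n-m}$ onto a neighbourhood of $(a,v)$ in $\Nor(M)$. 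Hence $\Tan(\Nor(M),(a,v))$ is the image of $\uD\Phi$, and it has dimension~$n$.

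Differentiating at a point with $v = \sum_i c_i \nu_i(a)$ in the direction $(t,w)$, where $w = \sum_i \dot c_i \nu_i(a)$, gives the pair
\[
  \bigl(t,\; w + \textsum{i}{} c_i \, \uD\nu_i(a)t\bigr).
\]
The key identity is that the tangential part of $\uD\nu_i(a)t$ equals $-\langle t, Q_a(\nu_i(a))\rangle$. To see this, differentiate $\nu_i \bullet \tau = 0$ along~$M$ for tangent fields~$\tau$; this gives $\uD\nu_i(a)t \bullet u = -\langle t \odot u, \mathbf{b}(M,a)\rangle \bullet \nu_i(a)$ for $u \in \Tan(M,a)$, which is the defining relation for~$Q_a$.

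The normal part of $\sum_i c_i \, \uD\nu_i(a) t$ lies in $\Nor(M,a)$. So the derivative equals $(t, w' - \langle t, Q_a(v)\rangle)$, where $w' = w + \perpproject{\Tan(M,a)}\bigl(\sum_i c_i \uD\nu_i(a)t\bigr) \in \Nor(M,a)$. As $(t,w)$ ranges over $\Tan(M,a) \times \Nor(M,a)$, the pair $(t,w')$ ranges over the same space, because the correction is linear in~$t$ alone. Therefore $\im \uD\Phi = \im L_a(v)$.

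Since $L_a(v)$ is visibly injective (read off the first component $t$, then~$w$), both sides of~\ref{item:structure_submanifold:Tan} have dimension~$n$. The main care in the whole proof is in this step: getting the sign and symmetry conventions of $\mathbf{b}$ from \cite[2.5(1)]{Allard1972} right, and checking that the normal component of the frame derivative can be absorbed.

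For~\ref{item:structure_submanifold:covering}, by~\ref{item:structure_submanifold:Tan} and $\psi(a,v) = a + v$, we have
\[
  \uD\psi(a,v)\,L_a(v)(t,w) = t + w - \langle t, Q_a(v)\rangle .
\]
Suppose this vanishes. The tangential component gives $t = \langle t, Q_a(v)\rangle$, and the normal component gives $w = 0$. Now
\[
  |\langle t, Q_a(v)\rangle| = \sup_{|u| \le 1} \langle t \odot u, \mathbf{b}(M,a)\rangle \bullet v \le \|\mathbf{b}(M,a)\|\,|v|\,|t|,
\]
which uses the bilinear norm defined in the notation section. So $|t| \le \|\mathbf{b}(M,a)\|\,|v|\,|t| < |t|$ unless $t = 0$. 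Hence $t = 0$, the kernel of $\uD\psi(a,v)$ on the tangent space is trivial, and $\uD\psi(a,v)$ is univalent.
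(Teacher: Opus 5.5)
Your proposal is correct and follows essentially the paper's argument: a local $\cnt{1}$ orthonormal normal frame, the identity expressing the tangential part of $\uD\nu_i(a)t$ through $Q_a(\nu_i(a))$, a dimension count for (a), and the invertibility of $\mathbf 1_{\Tan(M,a)} - Q_a(v)$ for (b). The only difference is in the choice of frame. The paper takes it from \cite[5.4.11]{Federer1969} so that $\im \uD\nu_i(a) \subseteq \Tan(M,a)$, and no normal correction term appears. You take an arbitrary frame and absorb the normal part by a linear shear of $\Tan(M,a) \times \Nor(M,a)$, which works equally well.
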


\begin{proof} [Proof of \ref{item:structure_submanifold:Tan}]
        As \cite[5.4.11]{Federer1969} remains valid if both occurrences of
        ``class $\infty$'' are replaced by ``class $1$'', we can construct a
        relatively open neighbourhood $U$ of $a$ in $M$ and, for $i = 1,
        \ldots, n-m$, maps $\nu_i : U \to \mathbf R^n$ of class $1$ such that
        \begin{gather*}
                \text{$\nu_1(x), \ldots, \nu_{n-m}(x)$ form an orthonormal
                basis of $\Nor (M,x)$, for $x \in M$}, \\
                \text{$\im \uD \nu_i (a) \subset \Tan (M,a)$ for $i = 1,
                \ldots, n-m$}.
        \end{gather*}
        We let $R_x : \mathbf R^{n-m} \simeq \Nor (M,x)$ be defined by
        \begin{equation*}
                R_x (y) = \sum_{i=1}^{n-m} y_i \nu_i (x) \quad \text{whenever
                $x \in U$ and $y \in \mathbf R^{n-m}$}
        \end{equation*}
        and pick $b \in \mathbf R^{n-m}$ such that $R_a (b) = v$.  Next,
        considering
        \begin{equation*}
                \phi : U \times \mathbf R^{n-m} \to \Nor (M), \quad
                \text{$\phi (x,y) = (x,R_x(y))$ for $x \in U$ and $y \in
                \mathbf R^{n-m}$},
        \end{equation*}
        we note $\phi (a,b) = (a,v)$ and compute, for $t \in \Tan (M,a)$ and
        $z \in \mathbf R^{n-m}$,
        \begin{equation*}
                \langle (t,z), \uD \phi (a,b) \rangle = \left ( t, R_a(z) +
                \sum_{i=1}^{n-m} b_i \langle t, \uD \nu_i (a) \rangle \right)
        \end{equation*}
        Noting that, whenever $t,u \in \Tan (M,a)$, we have
        \begin{equation*}
                \langle t, \uD \nu_i (a) \rangle \bullet u = - \langle t
                \odot u, \mathbf b (M,a) \rangle \bullet \nu_i (a) = - \langle
                t, Q_a ( \nu_i(a) ) \rangle \bullet u,
        \end{equation*}
        we infer
        \begin{equation*}
                \uD \nu_i (a) = - Q_a ( \nu_i (a) ) \quad \text{for $i = 1,
                \ldots, n-m$}.
        \end{equation*}
        The linearity of $Q_a$ then yields
        \begin{equation*}
                \langle (t,z), \uD \phi (a,b) \rangle = ( t, R_a(z) - \langle
                t, Q_a(v) \rangle ) \quad \text{for $t \in \Tan (M,a)$ and $z
                \in \mathbf R^{n-m}$}.
        \end{equation*}
        As $L_a(v)$ is univalent and $T = \Tan (\Nor(M),(a,v))$ is an $n$
        dimensional vector subspace containing $\im \uD \phi (a,b)$ by
        \cite[3.1.21]{Federer1969}, the equation
        \begin{equation*}
                L_a(v) \circ ( \mathbf 1_{\Tan (M,a)} \times R_a ) = \uD \phi
                (a,b)
        \end{equation*}
        implies that $\uD \phi (a,b)$ is univalent with $\im L_a(v) = \im
        \uD \phi (a,b) = T$.
\end{proof}

\begin{proof} [Proof of \ref{item:structure_submanifold:covering}]
        Since $\| Q_a(v) \| < 1$ by our hypothesis, $\mathbf 1_{\Tan (M,a)} -
        Q_a(v)$ is univalent.  Therefore, employing the isomorphism $\alpha :
        \Tan (M,a) \times \Nor (M,a) \simeq \mathbf R^n$ induced by addition,
        we obtain
        \begin{equation*}
                \uD \psi (a,v) \circ L_a (v) = \alpha \circ \big ( ( \mathbf
                1_{\Tan(M,a)} - Q_a(v) ) \times \mathbf 1_{\Nor (M,a)} \big )
        \end{equation*}
        from \ref{item:structure_submanifold:Tan}; whence, we infer the
        conclusion.
\end{proof}

\begin{definition}
        \label{def:pi}
        We let $\boldsymbol \pi = 2 \inf \{ t \with t > 0, \cos (t) = 0 \}$.
\end{definition}

\begin{definition}
        \label{def:derivative-on-subsets}
        Whenever $S \subset \mathbf R$, $Y$ is a normed space, and $\gamma : S
        \to Y$.

        Then, we define $\gamma'$ to be the function whose domain consists of
        those points $a \in \Clos S$ such that the linear span of $\Tan (S,a)$
        equals $\mathbf R$ and such that there exist an open neighbourhood $U$
        of $a$ in $\mathbf R$ and a function $\beta : U \to Y$ differentiable
        at $a$ with $\beta | ( S \cap U ) = \gamma | ( S \cap U )$, and whose
        value at such $a$ equals $\beta'(a)$.
\end{definition}

\begin{remark}
        \label{rem:derivative-on-subsets}
        This extends \cite[2.9.19, 2.9.22]{Federer1969} in the spirit of
        \cite[3.1.22]{Federer1969}.  We will employ this concept for intervals
        $S$.
\end{remark}
                
\begin{definition}
        \label{def:intrinsic-metric}
        Suppose $M$ is a connected submanifold of class $1$ of $\mathbf R^n$.

        Then, by the \emph{intrinsic metric on $M$}, we mean the function
        whose domain is $M \times M$ and whose value at $(x,y) \in M \times M$
        equals the infimum of the set of numbers
        \begin{equation*}
                {\textstyle\int_a^b} | \gamma'(t) | \ud \mathscr L^1 \, t
        \end{equation*}
        corresponding to $- \infty < a < b < \infty$ and $\gamma : \{ t \with
        a \leq t \leq b \} \to M$ with $\gamma(a) = x$, $\gamma(b) = y$, and
        $\Lip \gamma < \infty$.
\end{definition}

\begin{remark}
    \label{rem:intrinsic-metric}
    The intrinsic metric on $M$ is a metric on $M$ yielding the subspace
    topology on $M$ induced by $\mathbf R^n$ by \cite[3.2.46]{Federer1969}.
    Approximation (e.g., by means of \cite[3.1.11, 3.1.20, 4.1.2]{Federer1969})
    shows that $\gamma$ could be required to be of class $1$; hence, our
    definition is in accordance with \cite[Chapter~7,
    Definition~2.4]{Carmo1992}.
\end{remark}

\begin{example} \label{example:geodesic_spheres}
        Suppose $\rho$ is the intrinsic metric on $\mathbf S^{n-1}$.  Then,
        for $u,v \in \mathbf S^{n-1}$, we have
        \begin{equation*}
                \rho (u,v) \leq \boldsymbol \pi \quad \text{and} \quad \cos (
                \rho (u,v)) = u \bullet v;
        \end{equation*}
        this follows from \cite[Chapter 3, Example 2.11 and Chapter 7, Theorem
        2.8]{Carmo1992}.
\end{example}

\begin{lemma} \label{lemma:bending_curves}
        Suppose $I$ is a compact nondegenerate subinterval of $\mathbf R$
        with $0 \in I$, $\gamma : I \to \mathbf R^n$ is of class $2$ relative
        to $I$, $\gamma(0)=0$, $\gamma'(0) = u$, $S = \{ y \with u \bullet y = 0 \}$,
        and
        \begin{equation*}
                \text{$| \gamma'(t) | = 1$ and $| \gamma''(t) | \leq 1$} \quad
                \text{whenever $0 \leq t \leq \boldsymbol \pi/2$ and $t \in
                I$}.
        \end{equation*}

        Then, whenever $0 \leq t \leq \boldsymbol \pi /2$ and $t \in I$, there
        holds
        \begin{gather*}
                \gamma'(t) \bullet u \geq \cos (t), \quad \gamma(t) \bullet u
                \geq \sin (t), \\
                | S_\natural ( \gamma'(t)) | \leq \sin (t), \quad | S_\natural
                ( \gamma(t)) | \leq 1- \cos (t).
        \end{gather*}
\end{lemma}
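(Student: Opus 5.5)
The approach is to compare the unit tangent $\gamma'$, which lives on $\sphere{n-1}$, with the reference curve on the sphere. Since $|\gamma'(t)|=1$ and $|\gamma''(t)|\le 1$, the path $t\mapsto\gamma'(t)$ is a Lipschitz curve in $\sphere{n-1}$ with speed at most~$1$. Let $\rho$ be the intrinsic metric on $\sphere{n-1}$. By definition~\ref{def:intrinsic-metric} (restricting $\gamma'$ to the interval between $0$ and $t$),
\begin{displaymath}
  \rho(u,\gamma'(t)) \le \textint{0}{t} |\gamma''| \ud \LM^1 \le t
\end{displaymath}
whenever $0\le t\le\boldsymbol\pi/2$ and $t\in I$. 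Note that $I$ is an interval containing~$0$, so $[0,t]\subseteq I$.

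The first inequality then follows from example~\ref{example:geodesic_spheres}. There $\cos\rho(u,\gamma'(t)) = u\bullet\gamma'(t)$, and $\rho(u,\gamma'(t))\le t\le\boldsymbol\pi/2$. Since $\cos$ is decreasing on $[0,\boldsymbol\pi]$, this gives
\begin{displaymath}
  \gamma'(t)\bullet u \ge \cos(t).
\end{displaymath}
For the third inequality, use the orthogonal decomposition $|\gamma'(t)|^2 = (\gamma'(t)\bullet u)^2 + |S_\natural(\gamma'(t))|^2 = 1$. This requires $|u|=1$, which holds because $u=\gamma'(0)$ and $|\gamma'(0)|=1$. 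Since $\gamma'(t)\bullet u\ge\cos t\ge 0$ on this range, we get
\begin{displaymath}
  |S_\natural(\gamma'(t))|^2 = 1-(\gamma'(t)\bullet u)^2 \le 1-\cos^2 t = \sin^2 t.
\end{displaymath}

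The remaining two inequalities come from integrating, using $\gamma(0)=0$ and the fact that $\gamma$ is of class~$1$ relative to~$I$, so the fundamental theorem of calculus applies on $[0,t]$. First,
\begin{displaymath}
  \gamma(t)\bullet u = \textint{0}{t}\gamma'(\tau)\bullet u \ud\LM^1\tau \ge \textint{0}{t}\cos\tau \ud\LM^1\tau = \sin t.
\end{displaymath}
Second,
\begin{displaymath}
  |S_\natural(\gamma(t))| \le \textint{0}{t}|S_\natural(\gamma'(\tau))| \ud\LM^1\tau \le \textint{0}{t}\sin\tau \ud\LM^1\tau = 1-\cos t,
\end{displaymath}
where the first step uses the linearity of $S_\natural$.

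The only delicate point is the first step, the speed bound for $\gamma'$ in the intrinsic metric. Definition~\ref{def:intrinsic-metric} requires a Lipschitz curve on a closed interval with values in $M=\sphere{n-1}$. Here $\gamma'|[0,t]$ is of class~$1$ and sphere-valued, with derivative $\gamma''$ in the sense of definition~\ref{def:derivative-on-subsets}, so it qualifies. The case $t=0$ is trivial and is handled separately, since the definition assumes $a<b$.
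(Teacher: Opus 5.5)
Your proof is correct and follows essentially the same route as the paper: bound the intrinsic spherical distance by $\rho(u,\gamma'(t))\le\int_0^t|\gamma''|\le t$, convert this via $\cos\rho(u,y)=u\bullet y$ into $\gamma'(t)\bullet u\ge\cos t$, then use the Pythagorean decomposition and integrate. Your extra remarks spell out steps the paper leaves implicit: that $[0,t]\subseteq I$, that $|u|=1$, that $\cos t\ge 0$ is needed to square the inequality, and that $\gamma'|[0,t]$ is an admissible Lipschitz curve.
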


\begin{proof}
        Taking $\rho$ as in \ref{example:geodesic_spheres}, we have
        \begin{equation*}
                \mathbf S^{n-1} \cap \{ y \with \rho (u,y) \leq s \} = \mathbf
                S^{n-1} \cap \{ y \with y \bullet u \geq \cos (s) \}
        \end{equation*}
        whenever $0 \leq s \leq \boldsymbol \pi$.  Thus, we estimate
        \begin{equation*}
                \rho ( \gamma'(t), \gamma'(0) ) \leq {\textstyle\int_0^t} |
                \gamma''(s)| \ud \mathscr L^1 \, s \leq t, \quad \gamma'(t)
                \bullet u \geq \cos (t) ,;
        \end{equation*}
        hence, $\gamma(t) \bullet u \geq \sin (t)$ by integration.  As $1 = |
        \gamma'(t) \bullet u |^2 + | S_\natural ( \gamma'(t) ) |^2$, it
        follows $| S_\natural ( \gamma'(t)) | \leq \sin (t)$; hence, $|
        S_\natural ( \gamma(t)) | \leq 1 - \cos (t)$ by integration.
\end{proof}

\begin{remark} \label{remark:bending_curves}
        The preceding lemma will be applied in conjunction with the following
        observation: \emph{If $u \in \mathbf S^{n-1}$ and $S = \{ y \with u
        \bullet y = 0 \}$, then
        \begin{equation*}
                A = \{ x \with |x \bullet u| \geq \sin (t), | S_\natural (x) |
                \leq 1 - \cos (t) \} \subset \mathbf R^n \without \mathbf U
                (v,1)
        \end{equation*}
        whenever $v \in S$, $|v| \geq 1$, and $0 \leq t \leq \boldsymbol
        \pi/2$; moreover, if $|v|>1$, then $\mathbf U$ may be replaced by
        $\mathbf B$.} In~fact, for $x \in A$, we have $\cos (t) \leq |
        S_\natural (v) | - | S_\natural (x) | \leq | S_\natural (v-x) |$ and
        \begin{equation*}
                |x-v|^2 = | x \bullet u |^2 + | S_\natural (x-v) |^2 \geq 1
        \end{equation*}
        and the first and last inequalities are strict in case $|v|>1$.
\end{remark}

\begin{theorem} \label{thm:bending_curves}
        Suppose $I$ is a compact subinterval of $\mathbf R$ with $0 \in I$,
        $\gamma : I \to \mathbf R^n$ is a map of class $2$ relative to $I$,
        $\gamma(0) = 0$,
        $J = \dmn \gamma'$, and%
        \begin{footnote}
                {We notice that if $\card I = 1$, then $J = \varnothing$, and
                that otherwise, $J = I$.}
        \end{footnote}
        \begin{equation*}
                \text{$| \gamma'(t) | = 1$ and $| \gamma''(t) | \leq 1$} \quad
                \text{whenever $t \in J$}.
        \end{equation*}

        Then, whenever $t \in I$ and $0 \leq t \leq \boldsymbol \pi$, there
        holds
        \begin{equation*}
                \gamma(t) \notin \mathbf U (v,1) \quad
                \text{whenever $v \in \mathbf S^{n-1}$ and $\gamma'(0) \bullet
                v = 0$}.
        \end{equation*}
\end{theorem}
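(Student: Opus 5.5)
The plan is to treat $0 \leq t \leq \boldsymbol\pi/2$ with \ref{lemma:bending_curves} and \ref{remark:bending_curves}, and then handle $\boldsymbol\pi/2 < t \leq \boldsymbol\pi$ by a comparison argument with the unit circle. The second range is the real difficulty.

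First, the trivial case. If $\card I = 1$, then $I=\{0\}$ and $|\gamma(0)-v| = 1$, so $\gamma(0)\notin\mathbf U(v,1)$. Otherwise $J = I$, and we put $u = \gamma'(0)$, so $|u|=1$, and $S = \{y \with u\bullet y = 0\}$. For $0\le t\le\boldsymbol\pi/2$, \ref{lemma:bending_curves} gives $\gamma(t)\bullet u \geq \sin(t)$ and $|S_\natural(\gamma(t))| \leq 1-\cos(t)$. Any $v\in\mathbf S^{n-1}$ with $u\bullet v = 0$ lies in $S$ with $|v| = 1$. So the observation in \ref{remark:bending_curves}, in its open-ball form, yields $\gamma(t)\notin\mathbf U(v,1)$.

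For the remaining times, the estimate from the proof of \ref{lemma:bending_curves} extends as it stands. Using the intrinsic metric $\rho$ of \ref{example:geodesic_spheres}, we have $\rho(\gamma'(s),u) \leq \int_0^s|\gamma''| \leq s$ for all $0\le s\le\boldsymbol\pi$. Hence $\gamma'(s)\bullet u\ge\cos(s)$ on the whole of $[0,\boldsymbol\pi]$, and also $|\gamma'(s)\bullet v| \le |S_\natural\gamma'(s)| \le \sin(s)$. Integrating, $\gamma(t)\bullet v \le 1-\cos(t)$.

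These bounds alone do not suffice for $t>\boldsymbol\pi/2$, because $\gamma(t)\bullet u$ may become small there. I therefore switch to the function $f(t) = |\gamma(t)-v|^2$ and aim to prove that $f'\ge0$ as long as $t\le\boldsymbol\pi$. We have
\begin{equation*}
f'(t) = 2\gamma'(t)\bullet(\gamma(t)-v), \qquad f''(t) = 2 + 2\gamma''(t)\bullet(\gamma(t)-v) \geq 2 - 2\sqrt{f(t)},
\end{equation*}
with $f(0)=1$ and $f'(0) = -2u\bullet v = 0$. The model case is the unit circle $\gamma(t) = \sin(t)\,u + (1-\cos(t))\,v$, for which $f\equiv 1$. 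The claim thus amounts to a Schur-type comparison: a curve whose curvature is at most $1$ bends away from the osculating circle no faster than that circle itself, for parameter length at most $\boldsymbol\pi$.

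I would run the comparison as follows. Suppose $f(t_1)<1$ for some $t_1\le\boldsymbol\pi$, and let $t_0$ be the last time before $t_1$ with $f(t_0)=1$. By the first step, $t_0 \geq \boldsymbol\pi/2$ is not yet excluded, so we need additional information at $t_0$. On $(t_0,t_1]$ the inequality for $f''$ gives $f''>0$, so $f$ is convex there, which forces $f'(t_0)<0$. To exclude this, I would express $\gamma(t_0)$ and $\gamma'(t_0)$ in terms of the angle function $\theta(s) = \rho(\gamma'(s),u)\le s$, and compare
\begin{equation*}
\gamma'(t_0)\bullet(\gamma(t_0)-v) = \gamma'(t_0)\bullet\gamma(t_0) - \gamma'(t_0)\bullet v
\end{equation*}
with the circle value $0$. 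The term $\gamma'\bullet\gamma = \tfrac12(|\gamma|^2)'$ should be controlled from below by the monotonicity of chord length for curves of curvature at most $1$ on $[0,\boldsymbol\pi]$. I expect this comparison at $t_0\in(\boldsymbol\pi/2,\boldsymbol\pi]$, that is, controlling the sign of $f'$ past the quarter-circle, to be the main obstacle.

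If that route stalls, my fallback is to apply \ref{lemma:bending_curves} a second time, starting from the point $\gamma(\boldsymbol\pi/2)$, where the tangent is nearly orthogonal to $u$. I would then combine the two resulting half-estimates through \ref{remark:bending_curves}, applied with the shifted centre.
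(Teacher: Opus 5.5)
Your treatment of $0 \le t \le \boldsymbol\pi/2$ via \ref{lemma:bending_curves} and \ref{remark:bending_curves} is correct. Your inequality $f'' \ge 2 - 2\sqrt f$ is also correct. However, the range $\boldsymbol\pi/2 < t \le \boldsymbol\pi$, which is the real content of the theorem, is left unproved, and none of the routes you sketch for it works as stated.

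\emph{The target is false.} The claim ``$f' \ge 0$ on $[0,\boldsymbol\pi]$'' fails. Take a planar curve that runs straight along $u$ for a short time $L$ and then follows an arc of curvature $1$ bending towards $v$. Up to an arbitrarily small error from smoothing the junction, it has $f'(t) = 2L\cos(t-L)$. This is negative for $L + \boldsymbol\pi/2 < t \le \boldsymbol\pi$, even though $f > 1$ there.

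\emph{The return-time argument is circular.} It needs $f'(t_0) \ge 0$ whenever $f(t_0) = 1$ and $\boldsymbol\pi/2 \le t_0 < \boldsymbol\pi$. That statement is as strong as the theorem itself. Your ingredients do not give it: chord monotonicity yields at best $\gamma'(t_0)\bullet\gamma(t_0) \ge \sin t_0 < 1$, while $\gamma'(t_0)\bullet v$ is only bounded by $1$ once $t_0 > \boldsymbol\pi/2$.

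\emph{Your extensions past $\boldsymbol\pi/2$ are false.} The bounds $|S_\natural\gamma'(s)| \le \sin s$ and $\gamma(t)\bullet v \le 1-\cos t$ fail beyond $\boldsymbol\pi/2$. Take the unit circle up to time $\boldsymbol\pi/2$ followed by a straight segment in direction $v$, smoothed at the junction. The reason is that $\gamma'(s)\bullet u \ge \cos s$ gives no lower bound on $|\gamma'(s)\bullet u|$ once $\cos s < 0$.

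\emph{The fallback does not meet the hypotheses of \ref{remark:bending_curves}.} Restarting at $\gamma(\boldsymbol\pi/2)$ would require the shifted centre $v-\gamma(\boldsymbol\pi/2)$ to be orthogonal to $\gamma'(\boldsymbol\pi/2)$ and of length at least $1$. Neither is guaranteed.

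\textbf{The missing idea.} The paper chooses the restart point so that exactly these conditions hold. Suppose $\gamma(r) \in \mathbf U(v,1)$, and let $s$ be the last point of $[0,r]$ at which $|\gamma(\cdot)-v|$ attains its maximum over $[0,r]$. Then $|\gamma(s)-v| \ge 1$. Using your first step, $0<s<r$, so $s$ is an interior maximum and $\gamma'(s)\bullet(v-\gamma(s)) = 0$. Now apply \ref{lemma:bending_curves} forwards and backwards from $s$, together with \ref{remark:bending_curves} with centre $v-\gamma(s)$. This gives $r > s+\boldsymbol\pi/2$ and $s \ge \boldsymbol\pi/2$, hence $r > \boldsymbol\pi$.

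\textbf{A one-line fix from your own inequality.} Your differential inequality already closes the gap, with no case distinction. Since $2\sqrt f \le 1+f$, it gives $f'' \ge 1-f$. So $z = f-1$ satisfies $z''+z \ge 0$ with $z(0)=z'(0)=0$. Integrate the identity $\frac{d}{d\sigma}\bigl(z'(\sigma)\sin(t-\sigma) + z(\sigma)\cos(t-\sigma)\bigr) = (z''+z)(\sigma)\sin(t-\sigma)$ over $[0,t]$. This yields $z(t) = \int_0^t (z''+z)(\sigma)\sin(t-\sigma)\,d\sigma \ge 0$ for $0 \le t \le \boldsymbol\pi$, which is the theorem.
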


\begin{proof}
        We suppose $v \in \mathbf S^{n-1}$, $\gamma'(0) \bullet v = 0$, $r
        \geq 0$, and $\gamma(r) \in \mathbf U (v,1)$ and we will show that $r
        > \boldsymbol \pi$.  For this purpose, we denote by $C$ the set of
        $c$ such that $0 \leq c \leq r$ and
        \begin{equation*}
                | \gamma(c) - v | \geq | \gamma(t) - v | \quad \text{for $0
                \leq t \leq r$}.
        \end{equation*}
        Clearly, $C$ is a nonempty compact subset of $\{ t \with 0 \leq t \leq
        r \}$; hence, $s = \sup C \in C$.  We notice that
        \begin{equation*}
                |\gamma(s)-v| \geq 1 \quad \text{and} \quad s < r.
        \end{equation*}Moreover, from \ref{lemma:bending_curves} and
        \ref{remark:bending_curves}, we obtain $| \gamma(t)-v | \geq 1$
        whenever $0 \leq t \leq \boldsymbol \pi/2$ and $t \in I$; hence,
        noting $r > \boldsymbol \pi/2$ and $\{ t \with 0 \leq t \leq
        \boldsymbol \pi/2 \} \subset I$, we infer
        \begin{equation*}
                s > 0 \quad \text{and} \quad \text{if $| \gamma(s)-v | = 1$,
                then $s \geq \boldsymbol \pi/2$}.
        \end{equation*}
        As $\Tan ( \im \gamma, \gamma(s)) \subset \Tan ( \mathbf B
        (v,|\gamma(s)-v|), \gamma(s)) = \mathbf R^n \cap \{ u \with u \bullet
        (v-\gamma(s)) \geq 0 \}$, we then infer that $u = \gamma'(s)$
        satisfies $u \bullet (v-\gamma(s)) = 0$ from \cite[3.1.21]{Federer1969}.
        Therefore, twice applying \ref{lemma:bending_curves}, once with
        $\gamma(t)$ replaced by $\gamma(t-s)-\gamma(s)$ and once with $\gamma$
        replaced by $\gamma (s-t) - \gamma(s)$ yields
        \begin{equation*}
                |(\gamma(t)-\gamma(s)) \bullet u| \geq \sin |t-s| \quad
                \text{and} \quad | S_\natural ( \gamma(t)-\gamma(s) ) | \leq 1
                - \cos |t-s|
        \end{equation*}
        whenever $t \in I$ and $|t-s| \leq \boldsymbol \pi/2$, where $S = \{ y
        \with y \bullet u = 0 \}$.  Applying \ref{remark:bending_curves} with
        $v$ replaced by $v-\gamma(s)$ entails that, for such $t$, we have
        \begin{equation*}
                \gamma(t) \notin \mathbf U (v,1) \quad \text{and} \quad
                \text{if $|\gamma(s)-v|>1$, then $\gamma(t) \notin \mathbf B
                (v,1)$} \,;
        \end{equation*}
        hence, $r > s + \boldsymbol \pi/2$ and, if $| \gamma(s) - v | > 1$, then
        $0 < s - \boldsymbol \pi/2$.  Thus, $r > \boldsymbol \pi$.
\end{proof}

\begin{remark}
        \label{rem:pi-optimal}
        Examples show that $\boldsymbol \pi$ may not be replaced by
        a~larger number.
\end{remark}

\begin{definition}
        \label{def:geodesic-curve}
        Suppose $M$ is a submanifold of class $2$ of $\mathbf R^n$.

        Then, $\gamma$ is termed a \emph{geodesic curve in $M$} if and only if
        $I = \dmn \gamma$ is a nondegenerate subinterval of $\mathbf R$,
        $\gamma : I \to M$ is of class $2$ relative to $I$ and
        \begin{equation*}
                \gamma''(t) = \langle \gamma'(t) \odot \gamma'(t), \mathbf b
                (M,\gamma(t)) \rangle \quad \text{for $t \in I$}.
        \end{equation*}
\end{definition}

\begin{corollary}
        \label{cor:geodesic-leaves-ball}
        Suppose $M$ is a submanifold of class $2$ of $\mathbf R^n$, $(a,v) \in
        \Nor (M)$, $|v| \neq 0$,
        \begin{equation*}
                \| \mathbf b (M,x) \| \leq |v|^{-1} \quad \text{for $x \in M$},
        \end{equation*}
        and $\gamma : I \to M$ is a geodesic curve in $M$ with $0 \in I$,
        $\gamma (0) = a$, and $|\gamma'(t)| = 1$ for $t \in I$.

        Then, whenever $t \in I$ and $0 \leq t \leq \boldsymbol \pi |v|$, there
        holds
        \begin{equation*}
                \gamma(t) \notin \mathbf U (a+v,|v|).
        \end{equation*}
\end{corollary}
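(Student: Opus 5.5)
The plan is to reduce Corollary~\ref{cor:geodesic-leaves-ball} to Theorem~\ref{thm:bending_curves} by translating and rescaling. Let $r = |v|$ and define $\tilde\gamma : r^{-1}I \to \R^{\adim}$ by
\begin{displaymath}
    \tilde\gamma(t) = r^{-1}\bigl(\gamma(rt) - a\bigr) \quad \text{for } t \in r^{-1}I .
\end{displaymath}
Then $\tilde\gamma(0) = 0$ and $\tilde\gamma'(t) = \gamma'(rt)$, so $|\tilde\gamma'| = 1$. Moreover $\tilde\gamma''(t) = r\gamma''(rt)$, and $\tilde v = r^{-1}v \in \sphere{\adim-1}$. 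The target assertion $\gamma(t) \notin \oball{a+v}{r}$ for $0 \le t \le \boldsymbol\pi r$ is then equivalent to $\tilde\gamma(s) \notin \oball{\tilde v}{1}$ for $0 \le s \le \boldsymbol\pi$.

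The hypotheses of Theorem~\ref{thm:bending_curves} are checked in three steps.

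\emph{Orthogonality.} Since $\gamma'(0) \in \Tan(M,a)$ and $v \in \Nor(M,a)$, we have $\tilde\gamma'(0) \bullet \tilde v = 0$.

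\emph{Curvature bound.} By the geodesic equation of Definition~\ref{def:geodesic-curve},
\begin{displaymath}
    |\gamma''(t)| = |\langle \gamma'(t)\odot\gamma'(t), \mathbf b(M,\gamma(t))\rangle| \le \|\mathbf b(M,\gamma(t))\|\,|\gamma'(t)|^2 \le r^{-1} ,
\end{displaymath}
using the bilinear operator norm fixed in the notation section. Hence $|\tilde\gamma''| \le 1$.

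\emph{Compact domain.} Theorem~\ref{thm:bending_curves} requires a compact interval $I$ containing~$0$, whereas here $I$ is an arbitrary nondegenerate interval. Given $t \in I$ with $0 \le t \le \boldsymbol\pi r$, I would apply the theorem to the restriction of $\tilde\gamma$ to the compact interval with endpoints $0$ and $t/r$; if $t = 0$ the claim is trivial, since $|a - (a+v)| = r$. Class~$2$ regularity relative to this subinterval is inherited, and on a nondegenerate interval $J = \dmn\tilde\gamma'$ is the whole interval, so the hypotheses on $J$ hold. The theorem's conclusion at the point $t/r \le \boldsymbol\pi$ gives $\tilde\gamma(t/r) \notin \oball{\tilde v}{1}$, which scales back to the claim.

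The main obstacle is bookkeeping rather than mathematics: making sure the restriction to a compact subinterval keeps $\tilde\gamma$ of class~$2$ relative to it in the sense of Definition~\ref{def:derivative-on-subsets}, and that the one-sided derivatives at the endpoints agree with those of $\gamma$. Both follow directly because the restriction is to a nondegenerate interval.
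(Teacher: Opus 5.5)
Your proposal is correct and follows the paper's proof. Both translate $a$ to the origin, rescale by $|v|^{-1}$, verify $|\tilde\gamma'| = 1$, $|\tilde\gamma''| \le 1$ and $\tilde\gamma'(0) \bullet \tilde v = 0$, and then invoke Theorem~\ref{thm:bending_curves}. The paper applies that theorem directly on the possibly non-compact interval $\lambda I$, so your explicit restriction to the compact interval with endpoints $0$ and $t/|v|$ is a small point of care that the paper leaves implicit.
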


\begin{proof}
        We may assume $a = 0$.  Put $\lambda = |v|^{-1}$ and $\tilde \gamma
        (\tau) = \lambda \gamma ( \lambda^{-1} \tau )$ for $\tau \in \lambda
        I$.  Then $\tilde\gamma$ is a geodesic curve in the submanifold
        $\lambda M$ with $\tilde\gamma(0) = 0$ and $|\tilde\gamma'| = 1$;
        moreover $\mathbf b ( \lambda M, \lambda x ) = \lambda^{-1} \mathbf b
        (M,x)$; whence, $\| \mathbf b (\lambda M, \cdot) \| \leq \lambda^{-1}
        |v|^{-1} = 1$ and $|\tilde\gamma''(\tau)| = | \langle
        \tilde\gamma'(\tau) \odot \tilde\gamma'(\tau) , \mathbf b (\lambda M,
        \tilde\gamma(\tau)) \rangle | \leq 1$ for $\tau \in \lambda I$.  Since
        $\lambda v \in \mathbf S^{n-1}$ and $\tilde\gamma'(0) = \gamma'(0) \in
        \Tan (M,0)$ is orthogonal to~$v$, \ref{thm:bending_curves} yields
        $\tilde\gamma(\tau) \notin \mathbf U ( \lambda v, 1 )$ whenever $\tau
        \in \lambda I$ and $0 \leq \tau \leq \boldsymbol\pi$.  Substituting
        $\tau = \lambda t$, the constraint $0 \leq \tau \leq \boldsymbol\pi$
        becomes $0 \leq t \leq \boldsymbol\pi\lambda^{-1} = \boldsymbol\pi|v|$,
        while $\tilde\gamma(\lambda t) \notin \mathbf U (\lambda v,1)$ reads
        $\gamma(t) \notin \mathbf U (v,|v|)$; as $a = 0$, this is the
        conclusion.

        We~stress that the bound $\boldsymbol\pi|v|$ cannot be replaced
        by~$\boldsymbol\pi$: the parameter~$t$ is an arclength, and the
        normalisation $|v|=1$ used in the reduction rescales it.
\end{proof}

\begin{corollary}
    \label{cor:reach-of-graph}
    Suppose $c \in \R^{\vdim}$, $0 < r \le \infty$, $f : \cball cr \to
    \R^{\adim-\vdim}$ is of class~$\cnt{2}$, $\gamma \in (0,1)$, $\Delta \in
    (0,\infty)$,
    \begin{equation*}
        \| \uD f(x) \| \le \gamma \quad \text{and} \quad \| \uD^2 f(x) \| \le \Delta
        \quad \text{whenever $x \in \cball cr$} \,,
    \end{equation*}
    and $M = \{ (x,f(x)) : x \in \cball cr \}$, where $\cball c\infty =
    \R^{\vdim}$. Then
    \begin{equation*}
        \sup \{ \| \mathbf{b}(M,z) \| : z \in M \} \le \Delta \,,
    \end{equation*}
    and, for every $0 < \rho \le \Delta^{-1}$ with $\boldsymbol\pi\rho \le r$,
    every $y \in \R^{\adim}$ such that $\dist(y,M) < \rho$ and some point of~$M$
    nearest to~$y$ is of the form $(x,f(x))$ with $|x-c| \le r - \boldsymbol\pi\rho$
    has a~unique nearest point in~$M$. In particular, if $r = \infty$ then
    $\reach(M) \ge \Delta^{-1}$.
\end{corollary}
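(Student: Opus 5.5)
The bound on the second fundamental form is a direct computation. For the uniqueness statement I would fit a ball of radius~$\rho$ tangent to~$M$ at the chosen nearest point and show, using \ref{cor:geodesic-leaves-ball}, that no point of~$M$ enters that open ball.

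\emph{Step 1: bound on $\mathbf b$.} Put $F(x) = (x,f(x))$, so that $M = \im F$ and $\Tan(M,F(x)) = \im \uD F(x)$. The second fundamental form is the normal component of the second derivative:
\begin{displaymath}
    \mathbf b(M,F(x))(\uD F(x)\sigma', \uD F(x)\tau') = \perpproject{T}\bigl(0, \uD^2 f(x)(\sigma',\tau')\bigr) \,, \qquad T = \Tan(M,F(x)) \,.
\end{displaymath}
Since $|\uD F(x)\sigma'| \ge |\sigma'|$, a unit tangent vector $\sigma$ is of the form $\uD F(x)\sigma'$ with $|\sigma'| \le 1$, and likewise for~$\tau$. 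Hence $|\mathbf b(M,F(x))(\sigma,\tau)| \le \|\uD^2 f(x)\| \le \Delta$. The bound $\|\uD f\| \le \gamma$ is not needed here.

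\emph{Step 2: the tangent ball.} Let $y$ satisfy $\dist(y,M) = \delta < \rho$, and let $a = (x,f(x))$ be a nearest point with $|x-c| \le r - \boldsymbol\pi\rho$. The case $\delta = 0$ is trivial, so assume $\delta > 0$. Since $\boldsymbol\pi\rho > 0$, $x$ is interior to $\cball cr$. The first order condition (via \cite[3.1.21]{Federer1969}) then gives $v = y - a \in \Nor(M,a)$. Put $w = (\rho/\delta) v$, so $|w| = \rho$. The balls $\cball{y}{\delta}$ and $\cball{a+w}{\rho}$ are internally tangent at~$a$, and $\delta < \rho$. Hence $\cball{y}{\delta} \without \{a\} \subset \oball{a+w}{\rho}$. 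It therefore suffices to show
\begin{displaymath}
    M \cap \oball{a+w}{\rho} = \varnothing \,.
\end{displaymath}
Then any second nearest point $a' \in \cball{y}{\delta} \without \{a\}$ is excluded.

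\emph{Step 3: reaching points of the ball by short geodesics.} Let $z = (x',f(x')) \in M \cap \oball{a+w}{\rho}$. Since the ball has diameter~$2\rho$, $|x'-x| \le |z-a| < 2\rho$. The lifted segment $t \mapsto F((1-t)x + tx')$ has length at most $(1+\gamma^2)^{1/2} \cdot 2\rho < 2\sqrt2\,\rho < \boldsymbol\pi\rho$, and it lies over $\cball{x}{2\rho} \subset \cball cr$. The plan is then:
\begin{enumerate}
    \item Minimise length among Lipschitz curves in the compact piece $M \cap F[\cball{x}{\boldsymbol\pi\rho}]$ joining $a$ to~$z$.
    \item Any competitor of length $< \boldsymbol\pi\rho$ has horizontal projection of length $< \boldsymbol\pi\rho$, so it stays away from the boundary of this piece. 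A minimiser therefore exists, and after parametrising by arclength it is a unit speed geodesic curve in~$M$ (in the sense of \ref{def:geodesic-curve}) of length $< \boldsymbol\pi\rho$.
    \item Step~1 gives $\|\mathbf b(M,\cdot)\| \le \Delta \le \rho^{-1} = |w|^{-1}$, and $(a,w) \in \Nor(M)$. So \ref{cor:geodesic-leaves-ball}, applied on the interval $[0,\boldsymbol\pi\rho]$, shows the geodesic never enters $\oball{a+w}{\rho}$.
\end{enumerate}
This contradicts $z \in \oball{a+w}{\rho}$. The statement for $r = \infty$ follows at once: the side condition is vacuous and nearest points exist, so every $y$ with $\dist(y,M) < \Delta^{-1}$ has a unique nearest point.

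\emph{Main obstacle.} Item~2 of Step~3 is the delicate point: producing a length minimiser that is a geodesic curve of class~$2$ when $M$ is only of class~$2$. The metric coefficients are then only $\cnt{1}$, and the Christoffel symbols only continuous. I would first try to derive the geodesic equation directly from first variations of a minimiser, with variations along tangential vector fields; this gives $\gamma'' \in \Nor$, i.e.\ the equation in \ref{def:geodesic-curve}, weakly, and bootstrapping then gives class~$2$.

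If that becomes messy, I would sidestep minimisers altogether. Consider the function $t \mapsto |F(x + t(x'-x)) - (a+w)|^2 - \rho^2$ along the lifted segment, or more generally the smallest $t$ at which a curve in~$M$ enters the ball. Then repeat the comparison argument of \ref{thm:bending_curves}, which only uses $|\gamma''| \le 1$ after unit speed reparametrisation, for an arbitrary curve whose normal curvature is controlled by~$\mathbf b$.
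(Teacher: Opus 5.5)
Your proposal is correct and is essentially the paper's proof: the same computation bounding $\mathbf b$ by $\|\uD^2 f\|$, the same internally tangent ball of radius~$\rho$ at the chosen nearest point, and the same length-minimising path inside the compact graph over $\cball{x}{\boldsymbol\pi\rho}$, kept off its boundary by the length bound and then fed to the bending estimate (you package the rescaling via \ref{cor:geodesic-leaves-ball}, whereas the paper rescales \ref{thm:bending_curves} directly). The paper simply asserts that this shortest path is a $\cnt{2}$ geodesic, so your first-variation-plus-bootstrap argument supplies a step it leaves implicit; moreover, your fallback already works for the lifted segment itself, since $t \mapsto (x+te,f(x+te))$ with $|e| = 1$ has speed at least~$1$ and acceleration $(0,\uD^2 f(e,e))$, so its arclength reparametrisation has curvature at most $\Delta \le \rho^{-1}$ and \ref{thm:bending_curves} applies to it with no geodesic needed (for an arbitrary curve in~$M$, control of the normal curvature alone would not suffice, as your fallback phrasing suggests).
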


\begin{proof}
    Writing $\iota(x) = (x,f(x))$, the set $M = \im \iota$ is a~$\cnt{2}$~graph,
    and for $z = \iota(x)$ and unit vectors $\sigma = \uD\iota(x)w_1 /
    |\uD\iota(x)w_1|$ and $\tau = \uD\iota(x)w_2/|\uD\iota(x)w_2|$ in
    $\Tan(M,z)$, with $w_1,w_2 \in \R^{\vdim} \without \{0\}$, we have, since
    $\uD^2\iota(x)(w_1,w_2) = (0,\uD^2f(x)(w_1,w_2))$ and
    $|\uD\iota(x)w_i|^2 = |w_i|^2 + |\uD f(x)w_i|^2 \ge |w_i|^2$,
    \begin{equation*}
        |\mathbf{b}(M,z)(\sigma,\tau)|
        = \frac{|\project{\Nor(M,z)}(0,\uD^2f(x)(w_1,w_2))|}{|\uD\iota(x)w_1| \, |\uD\iota(x)w_2|}
        \le \frac{|\uD^2 f(x)(w_1,w_2)|}{|w_1| \, |w_2|} \le \Delta \,;
    \end{equation*}
    whence, $\| \mathbf{b}(M,z) \| \le \Delta$.

    From~\ref{thm:bending_curves}, applied to $t \mapsto (\beta(\rho t) -
    \beta(0))/\rho$, we~recall that a~curve $\beta : [0,L] \to \R^{\adim}$ of
    class~$\cnt{2}$ with $L \le \boldsymbol\pi\rho$, $|\beta'| = 1$, $|\beta''|
    \le \rho^{-1}$, and $\beta'(0) \bullet \nu = 0$ for some $\nu \in
    \sphere{\adim-1}$ satisfies
    \begin{equation}
        \label{eq:cor:reach:bc}
        \beta(t) \notin \oball{\beta(0)+\rho\nu}{\rho}
        \quad \text{for $0 \le t \le L$} \,.
    \end{equation}

    Fix $0 < \rho \le \Delta^{-1}$ with $\boldsymbol\pi\rho \le r$ and $y \in
    \R^{\adim}$ as in the statement, with nearest point $a = (x_a,f(x_a)) \in M$
    satisfying $|x_a - c| \le r - \boldsymbol\pi\rho$; then $y - a \in \Nor(M,a)$
    by~\cite[3.1.21]{Federer1969}. Put $t_0 = |y-a| = \dist(y,M) < \rho$. If $t_0
    = 0$ then $y = a$ and any $b \in M$ with $|y-b| \le t_0$ equals~$a$, so
    suppose $t_0 > 0$ and set $\nu = (y-a)/t_0 \in \sphere{\adim-1}$. Let $b =
    (x_b,f(x_b)) \in M$ satisfy $b \ne a$ and $|y-b| \le |y-a| = t_0$. Then $b
    \in \cball{a+t_0\nu}{t_0} \subseteq \{a\} \cup \oball{a+\rho\nu}{\rho}$, so $b
    \in \oball{a+\rho\nu}{\rho}$; hence, $|b-a| < 2\rho$ and $|x_b-x_a| \le |b-a| <
    2\rho$.

    Since $\boldsymbol\pi > 2$ and $|x_a - c| + \boldsymbol\pi\rho \le r$, both
    $[x_a,x_b] \subseteq \cball{x_a}{2\rho} \subseteq \cball cr$ and
    $\cball{x_a}{\boldsymbol\pi\rho} \subseteq \cball cr$. Thus the graph of the
    segment $t \mapsto x_a + t(x_b-x_a)$ lies in~$M$ and shows, writing $d_M$ for
    the intrinsic metric of~$M$,
    \begin{equation*}
        d_M(a,b) \le |x_b-x_a| \sqrt{1+\gamma^2} < 2\sqrt{1+\gamma^2}\,\rho < 2\sqrt 2\,\rho < \boldsymbol\pi\rho \,.
    \end{equation*}
    The graph $N$ of $f|\cball{x_a}{\boldsymbol\pi\rho}$ is compact and, by the
    segment-graph above, rectifiably connected; hence, the infimum of the lengths
    of paths in~$N$ joining $a$ to~$b$ is attained, by the Arzel\`a--Ascoli
    theorem applied to a~minimising sequence of arclength parametrised, and thus
    $1$~Lipschitz, maps.  Let $\beta : [0,\ell] \to N$ be such a~shortest path,
    parametrised by arclength, so that $|\beta'| = 1$ almost everywhere.
    (Hopf--Rinow, \cite[Chapter~7, Theorem~2.8]{Carmo1992}, is not available
    here: $N$ is a~manifold with boundary, and a~shortest path in such a~set
    might a~priori run along the boundary; it is the excursion estimate below
    that excludes this.)  As the segment-graph above lies in~$N$, $\ell <
    \boldsymbol\pi\rho$. Paths in~$N$
    issuing from~$a$ have base excursion at most their length, so $\beta$ stays
    over $\oball{x_a}{\boldsymbol\pi\rho}$, that is, in the interior of~$N$;
    being a~shortest path there, it is a~geodesic of~$M$, in particular of
    class~$\cnt{2}$, with
    \begin{equation*}
        |\beta''| = |\langle \beta' \odot \beta' ,\, \mathbf{b}(M,\beta) \rangle| \le \| \mathbf{b}(M,\beta) \| \le \Delta \le \rho^{-1} \,;
    \end{equation*}
    moreover $\beta(0) = a$ and $\beta'(0) \in \Tan(M,a)$, so $\beta'(0) \bullet
    \nu = 0$. By~\eqref{eq:cor:reach:bc} with $L = \ell$, $b = \beta(\ell) \notin
    \oball{a+\rho\nu}{\rho}$, a~contradiction. Therefore, $a$ is the unique nearest
    point of~$M$ to~$y$.

    Finally, if $r = \infty$ the condition $|x_a - c| \le r - \boldsymbol\pi\rho$
    holds vacuously, so every $y$ with $\dist(y,M) < \rho$ has a~unique nearest
    point in~$M$; taking $\rho = \Delta^{-1}$ gives $\reach(M) \ge \Delta^{-1}$.
\end{proof}

\section{Height controls the tilt}
\label{sec:height-tilt}

\begin{miniremark}
    \label{mr:setup-for-height}
    In this section we shall study the following situation. The set $M \subseteq
    \R^{\adim}$ shall be a~$\vdim$-dimensional submanifold of~$\R^{\adim}$ of
    class~$\cnt{2}$. The map $\tau : M \to \Hom(\R^{\adim},\R^{\adim})$ shall be defined by
    the formula $\tau(x) = \project{\Tan(M,x)}$ and the map $\nu : M \to
    \Hom(\R^{\adim},\R^{\adim})$ by $\nu(x) = \perpproject{\Tan(M,x)}$. Moreover, we~define
    the number
    \begin{displaymath}
        \kappa_M  = \sup \bigl\{ |\langle v, \uD \tau(y) u \rangle| : y \in M ,\, u \in \Tan(M,y) ,\, v \in \R^{\adim} ,\, |u| = |v| = 1 \bigr\}
    \end{displaymath}
    and assume $\kappa_M < \infty$.
\end{miniremark}

\begin{remark}
    \label{rem:kappaM-and-reach}
    Observe that for $x \in M$, $u,w \in \Tan(M,x)$, and $v \in \R^{\adim}$
    \begin{gather}
        \tau(x) = \uD \npp{M}(x)|\Tan(M,x) \,,
        \quad
        \bigl\langle u ,\, \uD \tau(x) w \bigr\rangle \in \Nor(M,x) \,,
        \\
        \bigl\langle v ,\, \uD \tau(x) u \bigr\rangle
        = \bigl\langle u \odot v ,\, \uD^2\npp{M}(x) \bigr\rangle \,,
        \quad
        \tau(x) \text{ is self-adjoint } \,;
    \end{gather}
    hence,
    \begin{multline}
        \bigl\langle v ,\, \uD \tau(x) u \bigr\rangle \bullet w
        = \uD \bigl[ M \ni y \mapsto \langle v ,\, \tau(y) \rangle \bullet w \bigr](x)u
        = \uD \bigl[ M \ni y \mapsto v \bullet \langle w ,\, \tau(y) \rangle \bigr](x)u
        \\
        = v \bullet \bigl\langle w , \uD \tau(x)u \bigr\rangle 
        = v \bullet \bigl\langle u \odot w , \uD^2\npp{M}(x) \bigr\rangle
        = v \bullet \bigl\langle u , \uD \tau(x)w \bigr\rangle  \,.
    \end{multline}
    Using~\cite[Lemma~3.1]{KolasnskiMenne} and~\cite[4.18]{Federer1959} we see
    that
    \begin{displaymath}
        \kappa_M
        = \sup \bigl\{ \|\mathbf{b}(M,x)\| : x \in M \bigr\}
        \,.
    \end{displaymath}
\end{remark}

\begin{lemma}
    \label{lem:ht:Dalpha}
    Let $M$, $\tau$, $\nu$, $\kappa_M$ be as in~\ref{mr:setup-for-height}.
    Define
    \begin{gather}
        U = \R^{\adim} \cap \bigl\{ x : \dist(x,M) < \min \{ \kappa_M^{-1} ,\, \reach(M) \} \bigr\} \,,
        \quad
        \beta,\gamma :  U \to \Hom(\R^{\adim},\R^{\adim}) \,,
        \\
        \alpha(x) = x - \npp{M}(x) \quad \text{for $x \in U$} \,,
        \quad
        \beta(x) = \tau(\npp{M}(x)) - \gamma(x)
        \quad \text{for $x \in U$} \,,
        \\
        \gamma(x) = \uD \left[ M \ni y \mapsto \tau(y) \alpha(x) \right](\npp{M}(x)) \circ \tau(\npp{M}(x))
         \quad \text{for $x \in U$} \,.
    \end{gather}
    Since $\dist(x,M) < \reach(M)$ for $x \in U$, the nearest point projection
    $\npp{M}$ is defined on~$U$ and of class~$\cnt{1}$ there,
    by~\cite[4.8]{Federer1959}; the constraint $\dist(x,M) < \reach(M)$ is not
    implied by $\kappa_M \dist(x,M) < 1$, since $\reach(M) \le \kappa_M^{-1}$
    may be strict.  Then
    \begin{displaymath}
        \uD \alpha(x) u = - \beta(x)^{-1} \circ \gamma(x) u + \nu(\npp{M}(x))u
        \quad \text{for $x \in U$ and $u \in \R^{\adim}$}\,,
    \end{displaymath}
    where $\beta(x)^{-1}$ denotes the inverse of the automorphism
    $\beta(x)|\Tan(M,\npp{M}(x))$ and $\im \gamma(x) \subseteq
    \Tan(M,\npp{M}(x))$.
\end{lemma}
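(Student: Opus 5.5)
The plan is to differentiate the identity $\tau(\npp{M}(x))\,\alpha(x) = 0$, valid for $x \in U$ because $\alpha(x) = x - \npp{M}(x) \in \Nor(M,\npp{M}(x))$ by~\cite[4.8]{Federer1959}. Write $p = \npp{M}(x)$ and $v = \alpha(x)$. By~\cite[4.8]{Federer1959}, $\npp{M}$ is of class~$\cnt1$ on~$U$ with $\im \uD\npp{M}(x) \subseteq \Tan(M,p)$. Hence $\uD\npp{M}(x) = \tau(p)\circ\uD\npp{M}(x)$, and since $\uD\alpha(x) = \mathbf 1 - \uD\npp{M}(x)$, it suffices to identify $\uD\npp{M}(x)u$.

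First, differentiating $y \mapsto \tau(\npp{M}(y))\alpha(y)$ at $x$ in direction $u$ by the product rule gives
\begin{displaymath}
    0 = \bigl\langle \uD\npp{M}(x)u ,\, \uD\bigl[M \ni y \mapsto \tau(y)v\bigr](p) \bigr\rangle + \tau(p)\bigl(u - \uD\npp{M}(x)u\bigr) \,.
\end{displaymath}
Set $w = \uD\npp{M}(x)u \in \Tan(M,p)$. Then $\tau(p)w = w$, and the first term equals $\gamma(x)w$, since $\gamma(x) = \uD[\tau(\cdot)v](p)\circ\tau(p)$. The identity therefore reads
\begin{displaymath}
    \bigl(\tau(p) - \gamma(x)\bigr)w = \tau(p)u \,, \qquad\text{that is,}\qquad \beta(x)w = \tau(p)u \,.
\end{displaymath}

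Next, check that $\im\gamma(x) \subseteq \Tan(M,p)$ and that $\beta(x)|\Tan(M,p)$ is invertible. By~\ref{rem:kappaM-and-reach}, $\langle t, \uD\tau(p)s\rangle \bullet z = z \bullet \langle s, \uD\tau(p)t\rangle$, and $\langle s, \uD\tau(p) t\rangle \in \Nor(M,p)$ for $s,t \in \Tan(M,p)$. For $z \in \Tan(M,p)$ this gives $\langle v, \uD\tau(p)t\rangle\bullet z = v \bullet \langle z,\uD\tau(p)t\rangle$, and the right side lies in $\Nor(M,p)$ paired against $v$, hence need not vanish. For $z \in \Nor(M,p)$, symmetry together with the fact that $\tau$ maps into projections makes the normal component vanish: differentiate $\tau^2 = \tau$ to get $\uD\tau\,\tau + \tau\,\uD\tau = \uD\tau$, then apply it to $v \in \Nor$. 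So $\gamma(x)$ maps into $\Tan(M,p)$, and $\|\gamma(x)\| \le \kappa_M|v| < 1$ by the definition of $\kappa_M$ and $\dist(x,M) < \kappa_M^{-1}$. A Neumann series then shows that $\beta(x) = \mathbf 1 - \gamma(x)$ is an automorphism of $\Tan(M,p)$. This is essentially the operator $\mathbf 1 - Q_p(v)$ of~\ref{lemma:structure_submanifold}.

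Consequently $w = \beta(x)^{-1}\tau(p)u$, and
\begin{displaymath}
    \uD\alpha(x)u = u - w = \nu(p)u + \tau(p)u - \beta(x)^{-1}\tau(p)u \,.
\end{displaymath}
Writing $\tau(p)u = \beta(x)\tau(p)u + \gamma(x)\tau(p)u$ gives $\tau(p)u - \beta^{-1}\tau(p)u = -\beta^{-1}\gamma(x)\tau(p)u = -\beta^{-1}\gamma(x)u$, because $\gamma(x) = \gamma(x)\tau(p)$. This is exactly the asserted formula.

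The main obstacle is the bookkeeping in the middle step: verifying that $\gamma(x)$ takes values in $\Tan(M,p)$, so that $\beta(x)^{-1}$ makes sense, and obtaining the bound $\|\gamma(x)\| < 1$. For both I would rely on the symmetry relations in~\ref{rem:kappaM-and-reach} and the derivative of $\tau^2 = \tau$.
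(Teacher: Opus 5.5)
Your proposal is correct and follows the paper's route: differentiate the identity $\tau(\npp{M}(x))\,\alpha(x)=0$ and invert $\beta(x)$ on $\Tan(M,\npp{M}(x))$, using $\|\gamma(x)\|\le\kappa_M|\alpha(x)|<1$. The only difference is that you solve for $\uD\npp{M}(x)u$ for arbitrary $u$ in one step and check $\im\gamma(x)\subseteq\Tan(M,\npp{M}(x))$ explicitly via $\tau^2=\tau$, whereas the paper treats tangent $u$ first, dismisses normal $u$ with ``clearly'', and leaves those two verifications implicit.
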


\begin{proof}
    Since, by~\cite[4.8(2)]{Federer1959},
    \begin{displaymath}
        \langle \alpha(x), \tau(\npp{M}(x)) \rangle = 0
        \quad \text{for $x \in U$}
    \end{displaymath}
    we obtain for $x \in U$ and $u \in \Tan(M,\npp{M}(x))$
    \begin{multline}
        0 = \left\langle \uD \alpha(x) u, \tau(\npp{M}(x)) \right\rangle
        + \left\langle \alpha(x), \uD \tau(\npp{M}(x)) \circ \uD \npp{M}(x) u \right\rangle
        \\
        = \left\langle \uD \alpha(x) u, \tau(\npp{M}(x)) \right\rangle
        - \left\langle \alpha(x), \uD \tau(\npp{M}(x)) \circ \uD \alpha(x) u \right\rangle
        + \left\langle \alpha(x), \uD \tau(\npp{M}(x)) u \right\rangle
        \\
        = \left\langle \uD \alpha(x) u, \tau(\npp{M}(x)) - \uD \left[ M \ni y \mapsto \tau(y) \alpha(x) \right](\npp{M}(x)) \right\rangle
        + \left\langle \alpha(x), \uD \tau(\npp{M}(x)) u \right\rangle \,;
    \end{multline}
    hence, observing that $\beta(x)$ is an automorphism of $\Tan(M,\npp{M}(x))$ for
    $x \in U$ (because $|\alpha(x)| < \kappa_M^{-1}$) we get
    \begin{displaymath}
        \uD \alpha(x) u
        = - \left\langle \left\langle \alpha(x), \uD \tau(\npp{M}(x)) u \right\rangle , \beta(x)^{-1} \right\rangle
        = - \beta(x)^{-1} \circ \gamma(x) u
        \,.
    \end{displaymath}
    Clearly
    \begin{displaymath}
        \uD \alpha(x) u = u
        \quad \text{for $x \in U$ and $u \in \Tan(M,\npp{M}(x))^{\perp}$} 
    \end{displaymath}
    and the conclusion follows.
\end{proof}

\begin{remark}
    \label{rem:ht:inv-taylor}
    Let~$M$ and~$\kappa_M$ be as in~\ref{mr:setup-for-height}, $U$, $\alpha$,
    $\beta$, and~$\gamma$ as in~\ref{lem:ht:Dalpha}. Since $\|\gamma(x)\| \le
    \kappa_M |\alpha(x)| < 1$ for $x \in U$ we have
    \begin{displaymath}
        \beta(x)^{-1} \circ \gamma(x) = \textsum{j=1}{\infty} \gamma(x)^j
        \quad \text{for $x \in U$}\,.
    \end{displaymath}
\end{remark}

\begin{lemma}
    \label{lem:ht:comput}
    Let $M$, $\tau$, and~$\kappa_M$ be as in~\ref{mr:setup-for-height}, $U$
    and~$\alpha$ as in~\ref{lem:ht:Dalpha}. Suppose $G \subseteq U$ is open, $0
    < A < \infty$, $\varphi \in \dspace{G}{\R}$ satisfies $|\varphi| \le 1$, and
    \begin{displaymath}
        \sup \bigl\{ \| \uD \varphi(x) \| : x \in G \bigr\} \le A \,.
    \end{displaymath}
    Then
    \begin{multline}
        \varphi(x)^2 \| \project{S} - \tau(\npp{M}(x)) \|^2
        \le \uD g(x) \bullet \project{S}
        + \varphi(x) |\alpha(x)| \cdot \| \project{S} - \tau(\npp{M}(x)) \|
        \left(
            2 A + \adim \kappa_M
        \right)
        \\
        + \varphi(x)^2 | \mathbf{h}(M,\npp{M}(x)) |\cdot |\alpha(x)|
        + \frac{\vdim \kappa_M^2 |\alpha(x)|^2}{1 - \kappa_M |\alpha(x)|}
        \,.
    \end{multline}
\end{lemma}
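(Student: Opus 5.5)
The plan is to read the inequality as the pointwise identity behind a Caccioppoli-type estimate. I~take $g(x) = \tfrac12 \varphi(x)^2 \alpha(x)$, or $g = \varphi^2\alpha$ if the norm on $\Hom(\R^{\adim},\R^{\adim})$ differs by the corresponding factor. The notation $g$, $S$ is not fixed in the statement as given, so this is an assumption; $S \in \grass{\adim}{\vdim}$ is arbitrary. With this choice $\uD g(x) \bullet \project{S}$ is the divergence of the test field along~$S$. The inequality then follows from expanding $\uD g$ and bounding each piece, using~\ref{lem:ht:Dalpha} and~\ref{rem:ht:inv-taylor}.

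\textbf{Expansion.} By the product rule, $\uD g(x) = \varphi(x)\,\alpha(x)\otimes \uD\varphi(x) + \tfrac12\varphi(x)^2 \uD\alpha(x)$ (with the factors adjusted to the normalisation of~$g$). By~\ref{lem:ht:Dalpha}, $\uD\alpha(x) = \nu(\npp{M}(x)) - \beta(x)^{-1}\gamma(x)$. Write $T = \Tan(M,\npp{M}(x))$ and $\tau = \project T$, $\nu = \perpproject T$. Both $S$ and $T$ have dimension~$\vdim$, so
\begin{displaymath}
    \nu \bullet \project S = \vdim - \operatorname{trace}(\project S\tau) = \tfrac12 \|\project S - \tau\|^2 ,
\end{displaymath}
with $\|\cdot\|$ the Hilbert--Schmidt norm. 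This produces the left-hand side $\varphi^2\|\project S - \tau\|^2$ once the factor is fixed. It remains to bound from below $\uD g(x)\bullet \project S - \varphi^2\nu\bullet\project S$ by minus the other three terms on the right.

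\textbf{Error terms.} There are three of them.
\begin{enumerate}
\item The gradient term. Since $\alpha(x) \in \Nor(M,\npp M(x))$, we have $\project S\alpha(x) = (\project S - \tau)\alpha(x)$. Hence $|(\alpha\otimes\uD\varphi)\bullet\project S| \le A|\alpha(x)|\,\|\project S-\tau\|$, which gives the $2A$ part.
\item The first-order part of $\beta^{-1}\gamma$. By~\ref{rem:ht:inv-taylor}, $\beta^{-1}\gamma = \gamma + \sum_{j\ge2}\gamma^j$. Split $\gamma\bullet\project S = \gamma\bullet\tau + \gamma\bullet(\project S - \tau)$. The second summand is at most $\|\gamma\|\,\|\project S-\tau\|$. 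Here $\|\gamma\|_{\mathrm{HS}} \le \adim\kappa_M|\alpha|$, using~\ref{rem:kappaM-and-reach} entrywise in an orthonormal basis, which gives the $\adim\kappa_M$ coefficient. For the first summand, $\gamma\bullet\tau = \operatorname{trace}_T \gamma = \sum_i \langle\alpha, \uD\tau(\npp M(x))e_i\rangle\bullet e_i$. By the symmetry in~\ref{rem:kappaM-and-reach} this equals $\sum_i \alpha\bullet\langle e_i,\uD\tau\, e_i\rangle = \pm\,\alpha\bullet\mathbf h(M,\npp M(x))$. It is therefore bounded by $|\mathbf h|\,|\alpha|$, times $\varphi^2$.
\item The tail. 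On~$T$, $\|\gamma(x)\|_{\mathrm{op}} \le \kappa_M|\alpha(x)| < 1$. So $|\operatorname{trace}(\project S\gamma^j)| \le \vdim\|\gamma\|^j_{\mathrm{op}}$, since $\gamma^j$ has rank at most~$\vdim$ and $\|\project S\| \le 1$. Summing the geometric series over $j\ge2$ gives $\vdim\kappa_M^2|\alpha|^2/(1-\kappa_M|\alpha|)$. The hypothesis $|\varphi|\le1$ lets us drop $\varphi^2$ here and keep $\varphi$ rather than $\varphi^2$ in the mixed terms.
\end{enumerate}

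\textbf{Main obstacle.} The only delicate step is the identification of $\operatorname{trace}_T\gamma(x)$ with $\alpha\bullet\mathbf h$: one has to check the sign convention and that the symmetry of $\uD\tau$ from~\ref{rem:kappaM-and-reach} applies with $v = \alpha(x)$ normal. Even with the wrong sign this only affects a term bounded in absolute value, so the inequality survives either way. Everything else is bookkeeping of the constants $2A$, $\adim\kappa_M$ and $\vdim$, which I~would match against the normalisation of~$g$ last.
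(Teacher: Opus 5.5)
Your route is the paper's: $g=\varphi^2\alpha$, expansion of $\uD g(x)\bullet\project S$ via~\ref{lem:ht:Dalpha} and~\ref{rem:ht:inv-taylor}, and the same three error bounds. The gap is in the step you treat as settled, namely producing the left-hand side.

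In this paper $\|\cdot\|$ on $\Hom(\R^{\adim},\R^{\adim})$ is the \emph{operator} norm and $|\cdot|$ the Euclidean one (Allard's convention; compare~\eqref{eq:proj-norms}). Writing $T=\Tan(M,\npp{M}(x))$, the left-hand side comes from an inequality, not an identity:
\begin{displaymath}
    \|\project S-\project T\|^2=\|\perpproject T\circ\project S\|^2\le|\perpproject T\circ\project S|^2=\project S\bullet\perpproject T \,,
\end{displaymath}
by~\cite[8.9(3)]{Allard1972}, applied with $g=\varphi^2\alpha$.

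Under your Hilbert--Schmidt reading the constants cannot be ``matched last'', because the statement is then false for every $g=c\,\varphi^2\alpha$.
\begin{enumerate}
\item At points of~$M$ all error terms vanish. The claim then reads $2\varphi^2\,\project S\bullet\perpproject T\le c\,\varphi^2\,\project S\bullet\perpproject T$, which forces $c\ge2$.
\item Take $M$ flat, $S$ at one small angle~$\theta$ to~$T$, $\alpha(x)$ normal with $|\project S\alpha(x)|=|\alpha(x)|\sin\theta$, and $\uD\varphi(x)$ of norm nearly~$A$ opposing $\project S\alpha(x)$. The gradient term is then about $2cA\varphi|\alpha|\sin\theta\ge4A\varphi|\alpha|\sin\theta$. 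The allowance $2A\varphi|\alpha|\,|\project S-\project T|$ is only $2\sqrt2A\varphi|\alpha|\sin\theta$, and the remaining terms are $O(\theta^2)$.
\end{enumerate}
Your primary choice $g=\tfrac12\varphi^2\alpha$ is also backwards under your own identity, which would need $c=2$. It fails for either norm already at $\alpha(x)=0$.

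With the operator norm and $g=\varphi^2\alpha$, your bounds give exactly the coefficients $2A$, $1$ and~$\vdim$. One justification must then change. The estimate $|\gamma(x)\bullet(\project S-\project T)|\le|\gamma(x)|\,\|\project S-\project T\|$ is not Cauchy--Schwarz for mixed norms, and it is false in general: take both factors equal to the orthogonal projection onto a $\vdim$~dimensional plane. Argue instead as you did for the tail. Since $\gamma(x)$ is symmetric with image in~$T$, expanding in an eigenbasis gives
\begin{displaymath}
    |\gamma(x)\bullet B|\le\vdim\|\gamma(x)\|\,\|B\|\le\vdim\kappa_M|\alpha(x)|\,\|B\| \,,
\end{displaymath}
which is within the paper's coefficient $\adim\kappa_M$.
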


\begin{proof}
    We~define $g : G \to \R^{\adim}$ by the formula $g(x) = \varphi(x)^2 \alpha(x)$
    and compute, as in~\cite[8.13]{Allard1972} (although, in our case the
    set~$U$ plays a different role than in~\cite{Allard1972}), for $x \in G$ and
    $S \in \grass{\adim}{\vdim}$ using~\ref{lem:ht:Dalpha}
    and~\ref{rem:ht:inv-taylor}
    \begin{displaymath}
        \uD g(x) \bullet \project{S} 
        = 2 \varphi(x) \grad \varphi(x) \bullet \project{S} \alpha(x)
        + \varphi(x)^2 \project{S} \bullet \nu(\npp{M}(x))
        - \varphi(x)^2 \textsum{j=1}{\infty} \project{S} \bullet \gamma(x)^j \,;
    \end{displaymath}
    therefore, by~\cite[8.9(3)]{Allard1972}, 
    \begin{multline}
        \varphi(x)^2 \| \project{S} - \tau(\npp{M}(x)) \|^2
        \le \varphi(x)^2 \project{S} \bullet \nu(\npp{M}(x))
        \\
        \le \uD g(x) \bullet \project{S}
        + 2 A \varphi(x) |\alpha(x)| \cdot \| \project{S} - \tau(\npp{M}(x)) \|
        + \varphi(x)^2 |\project{S} \bullet \gamma(x)| 
        + \vdim \textsum{j=2}{\infty} \| \gamma(x)^j \| \,;
    \end{multline}
    here we~used that $\gamma(x)$ is a~symmetric endomorphism with image
    in~$\Tan(M,\npp{M}(x))$ --- see the computation below --- so that, writing
    $\lambda_1,\ldots,\lambda_{\vdim}$ for its eigenvalues and
    $e_1,\ldots,e_{\vdim}$ for a~corresponding orthonormal basis
    of~$\Tan(M,\npp{M}(x))$,
    \begin{displaymath}
        | \project{S} \bullet \gamma(x)^j |
        = \Bigl| \textsum{i=1}{\vdim} \lambda_i^j \, e_i \bullet \project{S} e_i \Bigr|
        \le \vdim \, \| \gamma(x) \|^j
        \quad \text{for $j \ge 1$} \,,
    \end{displaymath}
    since $0 \le e_i \bullet \project{S} e_i \le 1$.
    For $x \in G$ and $u,v \in \Tan(M,\npp{M}(x))$ we have
    \begin{multline}
        \gamma(x) u \bullet v
        = \langle \alpha(x) , \uD \tau(\npp{M}(x)) u \rangle \bullet v
        = \uD \left[ M \ni y \mapsto \tau(y) \alpha(x) \right](\npp{M}(x)) u \bullet v
        \\
        = \uD \left[ M \ni y \mapsto \tau(y) \alpha(x) \bullet v \right](\npp{M}(x)) u
        = \uD \left[ M \ni y \mapsto \alpha(x) \bullet \tau(y) v \right](\npp{M}(x)) u
        \\
        = \langle v , \uD \tau(\npp{M}(x)) u \rangle \bullet \alpha(x)
        = \mathbf{b}(M,\npp{M}(x))(u,v) \bullet \alpha(x) \,;
    \end{multline}
    hence,
    \begin{multline}
        | \gamma(x) \bullet \project{S} |
        \le | \gamma(x) \bullet \tau(\npp{M}(x)) | + | \gamma(x) \bullet (\project{S} - \tau(\npp{M}(x))) |
        \\
        \le | \mathbf{h}(M,\npp{M}(x)) | \cdot |\alpha(x)| + \adim \kappa_M | \alpha(x) | \cdot \| \project{S} - \tau(\npp{M}(x)) \| 
    \end{multline}
    and we obtain
    \begin{multline}
        \varphi(x)^2 \| \project{S} - \tau(\npp{M}(x)) \|^2
        \le \uD g(x) \bullet \project{S}
        + 2 A \varphi(x) |\alpha(x)| \cdot \| \project{S} - \tau(\npp{M}(x)) \|
        \\
        + \varphi(x)^2 | \mathbf{h}(M,\npp{M}(x)) |\cdot |\alpha(x)|
        + \varphi(x)^2 \adim \kappa_M | \alpha(x) | \cdot \| \project{S} - \tau(\npp{M}(x)) \|
        + \frac{\vdim \kappa_M^2 |\alpha(x)|^2}{1 - \kappa_M |\alpha(x)|} 
        \\
        \le \uD g(x) \bullet \project{S}
        + \varphi(x) |\alpha(x)| \cdot \| \project{S} - \tau(\npp{M}(x)) \|
        \left(
            2 A + \adim \kappa_M
        \right)
        \\
        + \varphi(x)^2 | \mathbf{h}(M,\npp{M}(x)) |\cdot |\alpha(x)|
        + \frac{\vdim \kappa_M^2 |\alpha(x)|^2}{1 - \kappa_M |\alpha(x)|}
    \end{multline}
    as required.
\end{proof}

\begin{corollary}
    \label{cor:ht:tleh}
    Let $M$, $\tau$, and~$\kappa_M$ be as in~\ref{mr:setup-for-height}, $U$ as
    in~\ref{lem:ht:Dalpha}, $G$, $\varphi$, and~$A$ as in~\ref{lem:ht:comput},
    $V \in \Var{\vdim}(G)$. Assume
    \begin{gather}
        \mathbf{h}(M,x) = 0 \quad \text{for all $x \in M$} \,,
        \quad
        \Gamma = 2 (2 A + \adim \kappa_M) \,,
        \\
        \delta V = 0 \,,
        \quad
        \text{and} \quad
        \kappa_M \dist(x,M) \le \tfrac 12 \quad \text{for $x \in G$} \,.
    \end{gather}
    Then
    \begin{displaymath}
        \left( \int \varphi(x)^2 \| \project{S} - \tau(\npp{M}(x)) \|^2 \ud V(x,S) \right)^{1/2}
        \le \Gamma
        \left( \int \dist(x,M)^2 \ud \|V\|(x) \right)^{1/2}  \,.
    \end{displaymath}
\end{corollary}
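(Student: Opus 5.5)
The plan is to integrate the pointwise inequality of~\ref{lem:ht:comput} against~$V$ and then use stationarity to eliminate the divergence term. With $g(x) = \varphi(x)^2\alpha(x)$ as in the proof of~\ref{lem:ht:comput}, $g$ is of class~$\cnt{1}$ with compact support in~$G$. Here $\npp{M}$ is of class~$\cnt{1}$ on~$U$ and $\spt\varphi \subseteq G$ is compact. Hence $\delta V = 0$ gives
\begin{displaymath}
    \int \uD g(x) \bullet \project{S} \ud V(x,S) = 0 \,.
\end{displaymath}
Strictly, $\delta V(g) = 0$ requires a smooth test field. I would first approximate $g$ in~$\cnt{1}$ by mollification inside~$G$, which is routine. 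Because $\mathbf{h}(M,\cdot) = 0$, the mean curvature term in~\ref{lem:ht:comput} also vanishes.

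Next I would handle the last term of~\ref{lem:ht:comput}. Since $\kappa_M \dist(x,M) \le \tfrac12$ on~$G$, we have $1 - \kappa_M|\alpha(x)| \ge \tfrac12$, so that term is at most $2\vdim\kappa_M^2|\alpha(x)|^2$. This term carries no factor~$\varphi$, so it must be integrated only over $\spt\varphi$. To arrange this, I would rerun the final step of~\ref{lem:ht:comput} keeping the factor $\varphi(x)^2$ that multiplies $\project{S}\bullet\gamma(x)^j$ in the expansion of $\uD g \bullet \project{S}$. That term actually enters as $\varphi^2\vdim\sum_{j\ge2}\|\gamma\|^j \le \varphi^2\cdot 2\vdim\kappa_M^2|\alpha|^2$. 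Using $|\varphi| \le 1$ then gives the integrable bound.

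Writing $E = \bigl(\int \varphi^2\|\project S - \tau\circ\npp M\|^2 \ud V\bigr)^{1/2}$ and $H = \bigl(\int \dist(\cdot,M)^2 \ud\|V\|\bigr)^{1/2}$, and applying Cauchy--Schwarz to the cross term, I obtain
\begin{displaymath}
    E^2 \le (2A + \adim\kappa_M)\, H E + 2\vdim\kappa_M^2 H^2 \,.
\end{displaymath}
Put $c = 2A + \adim\kappa_M$. The quadratic inequality yields $E \le cH + \sqrt{2\vdim}\,\kappa_M H$. The absorption and constant bookkeeping is where I expect the real work to lie. It remains to check that $\sqrt{2\vdim}\,\kappa_M \le c$, which holds since $\adim \ge \vdim \ge 1$ gives $\adim \ge \sqrt{2\vdim}$ as soon as $\adim \ge 2$. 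The case $\adim = \vdim$ is trivial because $\alpha = 0$. This gives $E \le 2cH = \Gamma H$.
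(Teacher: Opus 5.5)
Your proposal is correct and is essentially the paper's proof. You integrate the pointwise inequality of~\ref{lem:ht:comput}, use $\delta V(\varphi^2\alpha)=0$ and $\mathbf{h}(M,\cdot)=0$, bound the remaining term by $2\vdim\kappa_M^2|\alpha|^2$, and absorb after Cauchy--Schwarz. The differences are cosmetic. The paper absorbs by a two-case split, which needs only $2\vdim^{1/2}\kappa_M \le 2\adim\kappa_M$ and so no separate treatment of $\adim=\vdim$. Keeping $\varphi^2$ in the last term is unnecessary, because the right-hand side already integrates $\dist(\cdot,M)^2$ over all of~$G$. Your remark about approximating the merely $\cnt{1}$ field $\varphi^2\alpha$ by smooth ones is a point the paper passes over.
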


\begin{proof}
    Let $\alpha$ be as in~\ref{lem:ht:comput} and put $g = \varphi^2\alpha$.
    Since $\varphi \in \dspace{G}{\R}$ and $\alpha$ is of class~$\cnt{1}$
    on~$U \supseteq G$, we~have $g \in \dspace{G}{\R^{\adim}}$; whence,
    \begin{displaymath}
        \textint{}{} \uD g(x) \bullet \project{S} \ud V(x,S) = \delta V(g) = 0 \,;
    \end{displaymath}
    this is the only use of the hypothesis $\delta V = 0$.  Moreover
    $\mathbf{h}(M,\npp{M}(x)) = 0$ for $x \in G$ by hypothesis, and $\kappa_M
    |\alpha(x)| \le \tfrac 12$ gives $(1 - \kappa_M|\alpha(x)|)^{-1} \le 2$.
    We~may therefore employ~\ref{lem:ht:comput} and H{\"o}lder's inequality to
    get
    \begin{multline}
        \int \varphi(x)^2 \| \project{S} - \tau(\npp{M}(x)) \|^2 \ud V(x,S)
        \\
        \le (2 A + \adim \kappa_M)
        \left( \int \varphi(x)^2 \| \project{S} - \tau(\npp{M}(x)) \|^2 \ud V(x,S) \right)^{1/2}
        \left( \int |\alpha(x)|^2 \ud \|V\|(x) \right)^{1/2} 
        \\
        + 2 \vdim \kappa_M^2 \int |\alpha(x)|^2 \ud \|V\|(x) \,.
    \end{multline}
    If
    \begin{displaymath}
        2 \vdim \kappa_M^2 \int |\alpha(x)|^2 \ud \|V\|(x) \le \frac 12 \int \varphi(x)^2 \| \project{S} - \tau(\npp{M}(x)) \|^2 \ud V(x,S) \,,
    \end{displaymath}
    then
    \begin{displaymath}
        \left( \int \varphi(x)^2 \| \project{S} - \tau(\npp{M}(x)) \|^2 \ud V(x,S) \right)^{1/2}
        \le 2 (2 A + \adim \kappa_M)
        \left( \int |\alpha(x)|^2 \ud \|V\|(x) \right)^{1/2}  \,.
    \end{displaymath}
    Otherwise,
    \begin{displaymath}
        \int \varphi(x)^2 \| \project{S} - \tau(\npp{M}(x)) \|^2 \ud V(x,S)  \le 4 \vdim \kappa_M^2 \int |\alpha(x)|^2 \ud \|V\|(x) \,,
    \end{displaymath}
    and $2 \vdim^{1/2} \kappa_M \le 2 \adim \kappa_M \le \Gamma$.
    In any case, recalling $\dist(x,M) = |\alpha(x)|$ for $x \in G$, we obtain the desired estimate.
\end{proof}

\begin{remark}
    \label{rem:tilt-height}
    Recall~\ref{rem:kappaM-and-reach} and assume
    \begin{gather}
        a \in \R^{\adim} \,,
        \quad
        T \in \grass{\adim}{\vdim} \,,
        \quad
        0 < \varepsilon < 1 \,,
        \quad
        G = \cylinder Ta{2r}{2r} \,,
        \quad
        \varphi \in \dspace{G}{\R} \,,
        \\
        \varphi(x) = 1
        \quad \text{for $x \in \cylinder Tarr$} \,,
        \quad
        r^{-1} \le \Lip \varphi \le r^{-1} + \varepsilon \,,
        \\
        G \subseteq \R^{\adim} \cap \bigl\{ x : \dist(x,M) \le \tfrac 12 \reach(M) \bigr\} \,.
    \end{gather}
    Then $r \le \tfrac 14 \reach(M) \le (4\kappa_M)^{-1}$.  Indeed, $\oball
    a{2r} \subseteq G$; picking $y \in M$ with $|a-y| = \dist(a,M)$ and a~unit
    vector $\nu \in \Nor(M,y)$ with $a = y + \dist(a,M)\nu$ when $a \notin M$,
    and any unit $\nu \in \Nor(M,a)$ when $a \in M$, we~have $\dist(y+t\nu,M) =
    t$ for $0 \le t < \reach(M)$ by~\cite[4.8]{Federer1959}; as the points
    $y + t\nu$ with $\dist(a,M) \le t < \dist(a,M) + 2r$ lie in $\oball a{2r}
    \subseteq G$, the hypothesis forces $\min\{\dist(a,M)+2r,\reach(M)\} \le
    \tfrac 12 \reach(M)$; hence, $2r \le \tfrac 12 \reach(M)$.  Since $\reach(M)
    \le \kappa_M^{-1}$ by~\ref{rem:kappaM-and-reach}
    and~\cite[4.18]{Federer1959}, this gives $2\kappa_M \le r^{-1}$ and therefore
    $\Gamma_{\ref{cor:ht:tleh}} = 2(2A_{\ref{cor:ht:tleh}} + \adim\kappa_M) \le
    (4+\adim) A_{\ref{cor:ht:tleh}} \le (4+\adim)(r^{-1} + \varepsilon)$.  In~consequence,
    since $\varepsilon$ can be arbitrarily small \ref{cor:ht:tleh}~yields
    \begin{multline}
        \Bigl( \textint{\cylinder Tarr \times \grass{\adim}{\vdim}}{}  \| \project{S} - \tau(\npp{M}(x)) \|^2 \ud V(x,S) \Bigr)^{1/2}
        \\
        \le \frac{4+\adim}{r}
        \Bigl( \textint{\cylinder Ta{2r}{2r}}{} \dist(x,M)^2 \ud \|V\|(x) \Bigr)^{1/2}  \,.
    \end{multline}
\end{remark}

\section{Comparison of two different heights}
\label{sec:two-heights}

\begin{remark}
    \label{rem:reach-of-a-graph}
    Assume $P,Q \in \grass{\adim}{\vdim}$, $p,q \in \orthproj{\adim}{\vdim}$,
    $\im p^* = P$, $\im q^* = Q$, $f : \R^{\vdim} \to \R^{\adim}$ is of
    class~$\cnt{1}$, $f(0) = 0$, $p \circ f = \id{\R^{\vdim}}$, $Q = \im \uD
    f(0)$, $\Sigma = \im f$, $\bar{F} = q \circ f : \R^{\vdim} \to
    \R^{\vdim}$.  The construction of $F$, $V$, and~$g$, the formulae for $\uD
    g$ and $\uD F^{-1}$, and every estimate below in which only first
    derivatives occur require no more than $f \in \cnt{1}$; the displays
    involving $\uD^2f$, $\uD^2g$, $\uD^2F^{-1}$, $\Lip \uD g$, $\mathbf b(\Sigma,
    \cdot)$, and $\reach(\Sigma)$ presuppose that $f$ be of class~$\cnt{2}$, and
    are used only under that hypothesis. Then $\im \uD \bar{F}(0) = \im ( q \circ \uD f(0) ) =
    \R^{\vdim}$; hence, the inverse function theorem
    (see~\cite[3.1.1]{Federer1969}) ensures the existence of an~open
    neighbourhood $U \subseteq \R^{\vdim}$ of~$0$ such that $\bar{F}|U$ is
    invertible with inverse of class~$\cnt{1}$. Set $F = \bar F|U$, $V = \im F$
    and $g = f \circ F^{-1} : V \to \R^{\adim}$. Observe that $F(0) = 0$ so that
    $V$ is an open neighbourhood of~$0$ and $f \lIm U \rIm = g \lIm V \rIm
    \subseteq \Sigma$; thus, the graphs of $(f - p^*)|U$ and $(g - q^*)|V$
    coincide. Moreover, $\uD (g - q^*)(0) = 0$ so
    \begin{displaymath}
        \mathbf{b}(\Sigma,0)(q^*u,q^*v) = \uD^2 g(0)(u,v)
        \quad \text{for $u,v \in \R^{\vdim}$} \,,
    \end{displaymath}
    where $\mathbf{b}(\Sigma,\cdot)$ denotes the second fundamental form
    of~$\Sigma$; cf.~\cite[2.5(1)]{Allard1972}. Let $\Phi :
    \mathbf{GL}(\vdim,\R) \to \mathbf{GL}(\vdim,\R)$ be given by $\Phi(A) =
    A^{-1}$ for $A \in \mathbf{GL}(\vdim,\R)$. Then
    \begin{displaymath}
        \uD \Phi(T)S = - T^{-1} \circ S \circ T^{-1}
        \quad \text{for $S \in \Hom(\R^{\vdim},\R^{\vdim})$ and $T \in \mathbf{GL}(\vdim,\R)$} \,;
    \end{displaymath}
    cf.~\cite[3.1.11, p.~220]{Federer1969}. For $u,v \in \R^{\vdim}$, $x \in U$,
    and $y = F(x)$ straightforward computations show
    \begin{gather}
        \uD g(y) = \uD f(F^{-1}(y)) \circ \uD F^{-1}(y) \,,
        \\
        \uD^2g(y)(u,v) = \uD^2f(x) \bigl( \uD F^{-1}(y)u, \uD F^{-1}(y)v \bigr)
        + \uD f(x) \bigl( \uD^2 F^{-1}(y)(u,v) \bigr) \,,
        \\
        \uD F^{-1}(y) = \Phi\bigl( \uD F \circ F^{-1} (y) \bigr) = \uD F(x)^{-1} \,,
        \\
        \uD^2 F^{-1}(y)(u,v)
        = - \uD F(x)^{-1} \circ  \uD^2F(x) \bigl( \uD F(x)^{-1} v, \uD F(x)^{-1} u \bigr)
        \,;
    \end{gather}
    hence, setting $a = \uD F(x)^{-1} u$ and $b = \uD F(x)^{-1} v$
    \begin{gather}
        \uD g(y) u = \uD f(x) a \,,
        \\
        \uD^2g(y)(u,v) = \uD^2f(x)(a,b) - \uD f(x) \circ (q \circ \uD f(x))^{-1} \circ q \circ \uD^2 f(x)(a,b) \,.
    \end{gather}

    Set $h = f - p^* : \R^{\vdim} \to P^{\perp}$ and assume $\Lip h \le \gamma$
    and $\Lip \uD h \le s^{-1}$ for some $\gamma \in (0,1)$ and $s \in
    (0,\infty)$. Using~\cite[4.2, 4.3]{KolRMI} and~\cite[8.9(3)]{Allard1972}
    we~see that $\| \project{Q} \circ \perpproject{P} \|^2 \le \frac{\gamma^2}{1
      + \gamma^2}$; hence,
    \begin{gather}
        | q \circ p^*(u) |^2 = |u|^2 - | \perpproject{Q} \circ \project{P} p^*(u) |^2 \ge |u|^2 \frac{1}{1 + \gamma^2}
        \quad \text{for $u \in \R^{\vdim}$}\,,
        \\
        |q \circ \uD f(x) u|
        \ge |q \circ p^* u| - | q \circ \uD h(x) u|
        \ge |u| \frac{1 - \gamma^2}{\sqrt{1+\gamma^2}}
        \quad \text{for $x \in U$, $u \in \R^{\vdim}$}\,,
        \\
        \Lip (g - q^*) \le \gamma \frac{1+ \sqrt{1+\gamma^2}}{1 - \gamma^2} \le \gamma \frac{4 + \gamma^2}{2 - 2 \gamma^2} 
    \end{gather}
    and for $x \in U$, $a,b \in \R^{\vdim}$, $y = F(x)$, $u = \uD F(x) a$, $v =
    \uD F(x) b$, writing $\uD f(x) = \perpproject{Q} \circ \uD f(x) +
    \project{Q} \circ \uD f(x)$, we obtain
    \begin{gather}
        \uD^2 g(y)(u,v) = ( \perpproject{Q} - L ) \circ \uD^2f(x)(a,b) \,,
        \quad \text{where} \quad
        L = \perpproject{Q} \circ \uD f(x) \circ ( q \circ \uD f(x) )^{-1} \circ q \,,
        \\
        \begin{multlined}
            \|L\| = \|L^*\| = \sup \bigl\{ |z| : z \in Q ,\, w \in Q^{\perp} ,\, |w| = 1 ,\, \uD f(x)^* z = \uD f(x)^* w \bigr\} 
            \\
            = \sup \biggl\{ \frac{| \perpproject{Q} \circ \uD f(x) u |}{| \project{Q} \circ \uD f(x) u |} : u \in \R^{\vdim} ,\, u \ne 0 \biggr\}
            \le \gamma \frac{1 + (1+ \gamma^2)^{-1/2}} { 1 - \gamma^2 ( 1+ \gamma^2)^{-1/2} }
            \le \gamma \bigl( 2 + \tfrac 72 \gamma^2 + \tfrac 38 \gamma^4 \bigr)
            \le 6 \gamma\,,
        \end{multlined}
        \\
        \Lip \uD g \le \frac 1s (1 + 6 \gamma) \frac{1 + \gamma^2}{(1 - \gamma^2)^2} \,.
    \end{gather}
    In~particular, since $L = 0$ at $x = 0$,
    \begin{displaymath}
        \| \mathbf{b}(\Sigma,0) \| = \| \uD^2 g(0) \|
        \le \frac{1+\gamma^2}{s (1 - \gamma^2)^2} \,.
    \end{displaymath}
    The reach of~$\Sigma$ is, however, governed by the second fundamental form
    at \emph{every} point of~$\Sigma$, not only at~$0$. Since $\Sigma$ is the
    graph of $h : \R^{\vdim} \to P^{\perp}$ with $\Lip h \le \gamma < 1$ and
    $\sup_{x} \| \uD^2 h(x) \| = \Lip \uD h \le s^{-1}$, the second fundamental
    form of the graph obeys $\sup_{z \in \Sigma} \| \mathbf{b}(\Sigma,z) \| \le
    s^{-1}$; hence, identifying $P$ with $\R^{\vdim} \times \{0\}$ by an
    orthogonal transformation and applying~\ref{cor:reach-of-graph} with $h$,
    $\gamma$, $s^{-1}$, $\infty$ in place of $f$, $\gamma$, $\Delta$, $r$
    (so that $\Sigma$ is an~entire graph), we~obtain
    \begin{displaymath}
        \reach(\Sigma) \ge s \ge s \frac{(1 - \gamma^2)^2}{1+\gamma^2} \,.
    \end{displaymath}
\end{remark}

\begin{lemma}
    \label{lem:convex-reach}
    Suppose $T \in \grass{\adim}{\vdim}$, $p \in \orthproj{\adim}{\vdim}$, $q \in
    \orthproj{\adim}{\adim-\vdim}$, $\im p^* = T = \ker q$, $C \subseteq
    \R^{\vdim}$ is convex and closed, $v : C \to \R^{\adim-\vdim}$ is of
    class~$\cnt{2}$, $\Delta \in (0,\infty)$, $\| \uD^2 v(x) \| \le \Delta$ for $x
    \in C$, $w = p^* + q^* \circ v$, and $M = w \lIm C \rIm$. Then the following
    hold.
    \begin{enumerate}
    \item
        \label{i:cr:strict}
        If $y \in \R^{\adim}$ and $\Delta\,|q(y) - v(x)| < 1$ for $x \in C$, then
        the function $x \mapsto |y - w(x)|^2$ is strictly convex on~$C$;
        consequently $y$ has at most one nearest point in~$M$.
    \item
        \label{i:cr:reach}
        If, moreover, $C$ is bounded and $\Lip v \le \gamma < \infty$, then $M$
        is compact and $\reach(M) \ge \Delta^{-1} - \gamma \diam C$.
    \end{enumerate}
\end{lemma}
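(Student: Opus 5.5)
The plan is to reduce both assertions to the convexity of the squared distance function restricted to the parametrisation~$w$. No curvature or geodesic argument in the manner of~\ref{cor:reach-of-graph} should be needed here, because $C$ is convex and $M$ is a graph over~$T$.

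For~\ref{i:cr:strict}, write $\phi(x) = |y - w(x)|^2$ for $x \in C$. Since $p \circ p^* = \id{\R^{\vdim}}$, $q \circ q^* = \id{\R^{\adim-\vdim}}$ and $T \perp T^\perp$, we have
\begin{displaymath}
    \phi(x) = |p(y) - x|^2 + |q(y) - v(x)|^2 \,.
\end{displaymath}
Fix distinct $x_0, x_1 \in C$ and put $h = x_1 - x_0$ and $c(t) = x_0 + t h$ for $0 \le t \le 1$. Then $c(t) \in C$ by convexity, and $t \mapsto \phi(c(t))$ is of class~$\cnt{2}$ relative to $[0,1]$, in the sense of~\ref{def:derivative-on-subsets}, since $v$ is of class~$\cnt{2}$ on~$C$. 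A direct computation gives
\begin{displaymath}
    (\phi \circ c)''(t) = 2|h|^2 + 2|\uD v(c(t))h|^2 - 2\,(q(y) - v(c(t))) \bullet \uD^2 v(c(t))(h,h) \,.
\end{displaymath}
Using $\|\uD^2 v\| \le \Delta$, this is at least $2|h|^2\bigl(1 - \Delta|q(y) - v(c(t))|\bigr)$, which is strictly positive by hypothesis. Hence $\phi$ is strictly convex along every segment in~$C$, and so strictly convex on~$C$.

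Since $p \circ w = \id{C}$, the map $w$ is injective, so nearest points of~$y$ in~$M$ correspond bijectively to minimisers of~$\phi$ on~$C$. A strictly convex function has at most one minimiser, which gives the uniqueness of the nearest point.

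For~\ref{i:cr:reach}, the set $C$ is closed and bounded, hence compact, so $M = w\lIm C\rIm$ is compact by continuity of~$w$. Put $\rho = \Delta^{-1} - \gamma \diam C$; if $\rho \le 0$ there is nothing to prove. Let $y \in \R^{\adim}$ with $\dist(y,M) < \rho$. By compactness some nearest point $w(x_0)$ exists. Then
\begin{displaymath}
    |q(y) - v(x_0)| = |q(y - w(x_0))| \le |y - w(x_0)| < \rho \,,
\end{displaymath}
and for every $x \in C$,
\begin{displaymath}
    |q(y) - v(x)| \le |q(y) - v(x_0)| + \gamma|x - x_0| < \rho + \gamma \diam C = \Delta^{-1} \,.
\end{displaymath}
Thus the hypothesis of~\ref{i:cr:strict} holds, and $y$ has exactly one nearest point in~$M$. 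Since $y$ was an arbitrary point with $\dist(y,M) < \rho$, this yields $\reach(M) \ge \rho$.

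The only delicate point is justifying the second-derivative computation when $C$ has empty interior or at boundary points of~$C$. I would handle it by working only along segments $t \mapsto x_0 + t h$, which lie in~$C$ by convexity. One-sided derivatives at the endpoints are covered by~\ref{def:derivative-on-subsets}, and a function on $[0,1]$ with strictly positive second derivative is strictly convex. The rest of the argument is routine.
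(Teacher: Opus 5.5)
Your proposal is correct and follows the paper's proof essentially step for step. It uses the same orthogonal splitting $|y-w(x)|^2 = |p(y)-x|^2 + |q(y)-v(x)|^2$, the same lower bound $2|u|^2\bigl(1-\Delta|q(y)-v(x)|\bigr)$ on the second derivative, and the same estimate $|q(y)-v(x)| \le \dist(y,M) + \gamma \diam C$ to reduce (b) to (a); your restriction to segments $t \mapsto x_0 + th$ is a harmless refinement of the paper's direct computation of $\uD^2 F(x)$ at points of~$C$.
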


\begin{proof}
    Since $p^*$ and $q^*$ map $\R^{\vdim}$ and $\R^{\adim-\vdim}$ isometrically
    onto the orthogonal subspaces $T$ and $T^{\perp}$, while $p \circ w =
    \id{\R^{\vdim}}$ and $q \circ w = v$, we~have for $x \in C$
    \begin{displaymath}
        F(x) = |y - w(x)|^2 = |p(y) - x|^2 + |q(y) - v(x)|^2 \,;
    \end{displaymath}
    whence, for $u \in \R^{\vdim}$,
    \begin{align*}
        \uD^2 F(x)(u,u)
        &= 2|u|^2 + 2|\uD v(x)u|^2 - 2 \bigl\langle q(y) - v(x) ,\, \uD^2 v(x)(u,u) \bigr\rangle
        \\
        &\ge 2|u|^2 \bigl( 1 - \Delta\,|q(y) - v(x)| \bigr) \,.
    \end{align*}
    Under the hypothesis of~\ref{i:cr:strict} this is positive for $u \ne 0$, so
    $F$ is strictly convex on the convex set~$C$ and has at most one minimiser;
    as the points of~$M$ nearest to~$y$ are the images $w(x)$ of the minimisers
    of~$F$, item~\ref{i:cr:strict} follows.

    For~\ref{i:cr:reach} note that $C$ is compact; hence, so is $M = w \lIm C
    \rIm$. Let $y \in \R^{\adim}$ with $\dist(y,M) < \Delta^{-1} - \gamma \diam
    C$; a~nearest point $w(x_0)$ of~$M$ to~$y$ exists, and for $x \in C$
    \begin{displaymath}
        |q(y) - v(x)| \le |q(y-w(x_0))| + |v(x_0) - v(x)| \le \dist(y,M) + \gamma \diam C < \Delta^{-1} \,,
    \end{displaymath}
    so that~\ref{i:cr:strict} yields a~unique nearest point. Thus $\{ y :
    \dist(y,M) < \Delta^{-1} - \gamma \diam C \} \subseteq \Unp(M)$, that is,
    $\reach(M) \ge \Delta^{-1} - \gamma \diam C$.
\end{proof}

\begin{lemma}
    \label{lem:heights}
    Suppose
    \begin{gather}
        p \in \orthproj{\adim}{\vdim} \,,
        \quad
        \gamma \in (0,2/3] \,,
        \quad
        f : \R^{\vdim} \to \R^{\adim} \text{ is Lipschitzian} \,,
        \quad
        p \circ f = \id{\R^{\vdim}} \,,
        \quad
        \Sigma = \im f \,,
        \\
        \Lip (f - p^*) \le \gamma \,,
        \quad
        a \in \R^{\adim} \,,
        \quad
        b,c \in \Sigma \,,
        \quad
        |a - c| = \dist(a,\Sigma) \,,
        \quad
        p(a - b) = 0 \,,
        \\
        f|\oball{p(c)}{3|a-c|} \text{ is of class~$\cnt{1}$} \,,
        \\
        \omega = \sup\bigl\{ \| \uD f(x) - \uD f(p(c)) \| :  x \in \R^{\vdim} ,\, |x- p(c)| \le 2|a-c| \bigr\}
        \,.
    \end{gather}
    The differentiability of~$f$ is thus required only near~$p(c)$; the proof
    uses no more, the construction of~$g$ resting on the inverse function
    theorem at~$p(c)$ and on the lower bound for $q \circ f$, which needs
    only~$\Lip(f-p^*) \le \gamma$.
    Then
    \begin{displaymath}
        |a-c| \le |a-b| \le \Gamma |a-c|
        \quad \text{and} \quad
        |b-c| \le \gamma \sqrt{1 + \gamma^2} |a-c|
        \,,
    \end{displaymath}
    where $\Gamma = (1 + \gamma^2)^{1/2} + \gamma \omega (1+\gamma^2)^{3/2} (1 - \gamma^2)^{-1}$.
\end{lemma}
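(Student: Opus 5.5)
The plan is to work directly with the first order geometry at the nearest point~$c$, taking $P = \im p^*$ as the base plane. The first inequality $|a-c| \le |a-b|$ is immediate, since $c$ is a nearest point of~$\Sigma$ to~$a$ and $b \in \Sigma$. For the rest, put $x_c = p(c)$ and $x_b = p(b) = p(a)$, and let $Q = \im \uD f(x_c)$, the tangent plane of~$\Sigma$ at~$c$. This plane exists because $f$ is of class~$\cnt1$ on $\oball{x_c}{3|a-c|}$.

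\emph{Step 1: normality.} Since $c$ minimises $|a-\cdot|^2$ on~$\Sigma$ and $\Sigma$ is a~$\cnt1$ graph near~$c$, the first variation (equivalently \cite[3.1.21]{Federer1969}) gives $a - c \in Q^\perp$. I will write $\uD f(x_c) = p^* + \uD h(x_c)$ with $h = f - p^*$ and $\|\uD h(x_c)\| \le \gamma$. A vector $n \in Q^\perp$ satisfies $p(n) = -\uD h(x_c)^*(\perpproject{P} n)$, so $|p(n)| \le \gamma\,|\perpproject{P} n|$. Since $|n|^2 = |p(n)|^2 + |\perpproject{P} n|^2$, this yields $|p(n)| \le \gamma(1+\gamma^2)^{-1/2}|n|$. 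Applied to $n = a-c$, it gives $|x_b - x_c| = |p(a-c)| \le \gamma(1+\gamma^2)^{-1/2}|a-c|$.

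\emph{Step 2: the bound for $|b-c|$.} Because $\Lip f \le \Lip p^* + \Lip h \le (1+\gamma^2)^{1/2}$ (orthogonal splitting into $P$ and $P^\perp$ components), Step~1 gives
\begin{displaymath}
|b-c| \le (1+\gamma^2)^{1/2}|x_b-x_c| \le \gamma|a-c| \le \gamma\sqrt{1+\gamma^2}\,|a-c| \,.
\end{displaymath}
Step~1 also yields $|x_b - x_c| \le 2|a-c|$ with room to spare, so the segment $[x_c,x_b]$ lies in the region where $f$ is $\cnt1$ and where $\omega$ controls $\uD f - \uD f(x_c)$.

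\emph{Step 3: the bound for $|a-b|$.} Let $a' = c + \uD f(x_c)(x_b-x_c)$, the point of the affine tangent plane $c+Q$ lying over~$x_b$. By the mean value theorem along $[x_c,x_b]$ we have $|b - a'| \le \omega|x_b-x_c| \le \omega\gamma(1+\gamma^2)^{-1/2}|a-c|$. It therefore remains to show $|a - a'| \le (1+\gamma^2)^{1/2}|a-c|$, up to factors that can be absorbed into the stated~$\Gamma$. Here $a - a'$ is the vertical offset ($p(a-a')=0$) from $a$ to the plane $c+Q$, while $|a-c|$ is the perpendicular distance to it. The ratio is $1/\cos$ of the angle between $P^\perp$ and~$Q^\perp$, which is at most $(1+\gamma^2)^{1/2}$ because $Q$ is the graph of a linear map of norm~$\le\gamma$. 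The cleanest way to make this precise is to decompose $a-a'$ into its components in $Q^\perp$ and~$Q$. The $Q^\perp$ component equals $a-c$. The $Q$ component is determined by the vanishing of~$p$ and is controlled through the bound $|q\circ\uD f(x)u| \ge |u|(1-\gamma^2)(1+\gamma^2)^{-1/2}$ from~\ref{rem:reach-of-a-graph}. That same bound is also where the factor $(1-\gamma^2)^{-1}$ in~$\Gamma$ comes from, when one converts the error $|b-a'|$ into the statement. Alternatively, one can follow the hint in the statement and reparametrise $\Sigma$ near~$c$ as a graph $g$ over~$Q$ via~\ref{rem:reach-of-a-graph}, using the inverse function theorem at~$x_c$, and estimate the $Q^\perp$ and $Q$ parts there. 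The triangle inequality then gives $|a-b| \le |a-a'| + |a'-b| \le \Gamma|a-c|$.

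I expect Step~3 to be the main obstacle: obtaining exactly the constant $\Gamma = (1+\gamma^2)^{1/2} + \gamma\omega(1+\gamma^2)^{3/2}(1-\gamma^2)^{-1}$. This requires bookkeeping of the powers of $(1+\gamma^2)$ and $(1-\gamma^2)$ in the oblique-projection estimate and in the error term. I would first try the linear algebra over~$Q$ just described, using the bounds on $\|L\|$ and on $q\circ\uD f$ from~\ref{rem:reach-of-a-graph}. The restriction $\gamma \le 2/3$ serves only to keep the region $|x - x_c| \le 2|a-c|$ inside the $\cnt1$ ball and to keep the constants finite.
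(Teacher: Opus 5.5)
Your proof is correct, and for the one nontrivial estimate it takes a shorter route than the paper's.

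\textbf{What the two arguments share.} Both use the point $a'$, which is the paper's $y$, the point of $c+Q$ lying over $p(a)$. Both use the inequality $|a-a'| \le (1+\gamma^2)^{1/2}|a-c|$ and the split $|a-b| \le |a-a'| + |a'-b|$. They differ only in how $|a'-b|$ is bounded.

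\textbf{The paper's route.} It reparametrises $\Sigma$ as the graph of some $g$ over $Q = \Tan(\Sigma,c)$, for which it must show that $q \circ f$ is a bijection. It introduces the $Q$-projection $x$ of~$b$ and applies the oblique projection estimate a second time to compare $|b-y|$ with $|b-x|$. It then applies the mean value theorem to $g-q^*$, using $\|\uD g(z)-q^*\| \le (1+\gamma^2)^{1/2}(1-\gamma^2)^{-1}\omega$. To keep the relevant preimages inside $\cball{p(c)}{2|a-c|}$ it needs $\gamma \le 2/3$. The extra powers of $1+\gamma^2$ and $1-\gamma^2$ in the second summand of~$\Gamma$ are the price of this detour.

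\textbf{Your route.} You stay in the $P$-parametrisation. Your description of $Q^\perp$ gives $|p(a)-p(c)| \le \gamma(1+\gamma^2)^{-1/2}|a-c|$ directly. A first-order Taylor estimate of~$f$ on $[p(c),p(a)]$ then gives $|b-a'| \le \gamma\omega(1+\gamma^2)^{-1/2}|a-c|$. This avoids the inverse function theorem, the global graph over~$Q$, and the hypothesis $\gamma \le 2/3$. It also yields the sharper bounds $|b-c| \le \gamma|a-c|$ and $|a-b| \le \bigl((1+\gamma^2)^{1/2} + \gamma\omega(1+\gamma^2)^{-1/2}\bigr)|a-c|$, which imply the stated ones.

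\textbf{Corrections to your commentary.} Your worry about reproducing~$\Gamma$ exactly is therefore unfounded. The lower bound for $q \circ \uD f$ from~\ref{rem:reach-of-a-graph} that you invoke plays no role.

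The step you left heuristic is settled by Pythagoras. Since $a-c \in Q^\perp$ and $c-a' \in Q$, we have $|a-a'|^2 = |a-c|^2 + |c-a'|^2$. Moreover $|c-a'| = |\uD f(p(c))(p(a)-p(c))| \le (1+\gamma^2)^{1/2}|p(a)-p(c)| \le \gamma|a-c|$ by your Steps~1 and~2. This leading term admits no absorption into~$\Gamma$, since $\omega$ may vanish, but it holds exactly as required.
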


\begin{proof}
    Clearly $|a-c| \le |a-b|$ because $c$ is a~nearest point of~$\Sigma$
    to~$a$; the estimate for $|b-c|$ will be obtained
    in~\eqref{eq:x-c-est} and~\eqref{eq:y-c-est}, so it remains to
    bound~$|a-b|$.
    
    Without loss of generality we may and shall assume that $f(0) = c = 0$. Set
    \begin{displaymath}
        P = \im p^* \,,
        \quad
        Q = \Tan(\Sigma,c) \,,
        \quad \text{and} \quad
        h = f - p^* : \R^{\vdim} \to P^{\perp} \,.
    \end{displaymath}
    Employ~\ref{rem:reach-of-a-graph} to find $U,V \subseteq \R^{\vdim}$, $q \in
    \orthproj{\adim}{\vdim}$, and $g : V \to \R^{\adim}$ of class~$\cnt{1}$ such
    that $\im q^* = Q$, $q \circ g = \id{\R^{\vdim}}$, and $f \lIm U \rIm =
    g\lIm V \rIm$; thus $\Sigma$ is a~graph of~$(f-p^*)$ over~$P$ and, locally,
    a~graph of~$(g - q^*)$ over~$Q$.  Application of~\cite[4.2, 4.3]{KolRMI} yields
    \begin{displaymath}
        \| \project{P} - \project{Q} \|^2
        = \frac{\|\uD h(0)\|^2}{1 + \|\uD h(0)\|^2} 
        \le \frac{\gamma^2}{1 + \gamma^2}  \,.
    \end{displaymath}
    Since $\Lip (f - p^*) < 1$, the map $F = q \circ f$ satisfies the lower
    bound $|F(u)-F(u')| \ge (1-\gamma^2)(1+\gamma^2)^{-1/2}|u-u'|$ established
    below and has everywhere invertible differential; being thus univalent and
    proper, it maps $\R^{\vdim}$ onto $\R^{\vdim}$.  Consequently $\Sigma$ is
    a~graph over~$Q$ in the sense that $(z+Q^{\perp}) \cap \Sigma = \{z\}$ for
    $z \in \Sigma$, and we can define~$g$ globally on~$\R^{\vdim}$. In the~sequel we
    shall assume $U = V = \R^{\vdim}$. Note also that $g(0) = 0$ and $\uD (g -
    q^*)(0) = 0$.

    Employing~\cite[4.8(2)]{Federer1959} we see that $a-c \in Q^{\perp}$ and we
    know that $a-b \in P^{\perp}$. Define $x,y \in Q$ so that
    \begin{displaymath}
        x-c = \project{Q}(b-c)
        \quad \text{and} \quad
        Q \cap (a + P^{\perp}) = \{y\} \,.
    \end{displaymath}
    In~other words $y$ is the unique point in~$Q$ such that
    $\perpproject{Q}(y-a) = c-a$. Observe that $\{b\} = \Sigma \cap (a +
    P^{\perp})$ and $b-a \in P^{\perp}$ so $(a + P^{\perp}) = (b + P^{\perp})$;
    hence, $\{y\} = Q \cap (b + P^{\perp})$. This shows that~$y$ is also the
    unique point in~$Q$ satisfying $\perpproject{Q}(y-b) = x-b$.
    We estimate
    \begin{multline}
        \label{eq:a-c-est}
        |a-c|^2 = |\perpproject{Q}(y-a)|^2
        = |y-a|^2 - | \project{Q} \circ \perpproject{P}(y-a)|^2
        \\
        \ge |y-a|^2 \bigl( 1 - \| \project{Q} - \project{P} \|^2 \bigr)
        \ge \frac{|y-a|^2}{1 + \gamma^2}\,;
    \end{multline}
    here we~used $\perpproject{P}(y-a) = y-a$ and $\| \project{Q} \circ
    \perpproject{P} \| = \| \project{Q} - \project{P} \|$,
    by~\cite[8.9(3)]{Allard1972};
    hence,
    \begin{gather}
        \label{eq:y-a-est}
        |y-a|^2
        \le |a-c|^2 (1 + \gamma^2)
        \\
        \label{eq:y-c-est}
        \text{and}\quad
        |y-c|^2 = |y-a|^2 - |a-c|^2
        \le \gamma^2 |a-c|^2 \,.
    \end{gather}
    The computation of~\eqref{eq:a-c-est}, carried out with $b$, $x$ in place
    of $a$, $c$ --- legitimate because $\perpproject{Q}(y-b) = x-b$ and $y-b \in
    P^{\perp}$ --- similarly gives
    \begin{equation}
        \label{eq:b-y-est}
        |b-y|^2
        \le |b-x|^2 \bigl( 1 - \| \project{Q} - \project{P} \|^2 \bigr)^{-1}
        \le |b-x|^2 (1 + \gamma^2) \,.
    \end{equation}
    Since $b-y \in P^{\perp}$ we also have
    \begin{equation}
        \label{eq:x-c-est}
        |x-c| \le |b-c| = | f \circ p(y) - f \circ p(c)|
        \le \Lip f |y-c| \le (1+\gamma^2)^{1/2} |y-c| \,.
    \end{equation}
    Recall~\ref{rem:reach-of-a-graph} to see that if $w,z \in \R^{\vdim}$, $|w|
    \le 2|a-c|$, and $q \circ f(w) = z$, then
    \begin{multline}
        \| \uD g(z) - q^* \|
        = \bigl\| \perpproject{Q} \circ \uD f(w) \circ (q \circ \uD f(w))^{-1} \bigr\|
        \\
        \le \frac{ (1+\gamma^2)^{1/2} }{1 - \gamma^2} \| \perpproject{Q} \circ ( \uD f(w) - \uD f(0) ) \|
        \le \frac{ (1+\gamma^2)^{1/2} }{1 - \gamma^2} \omega \,.
    \end{multline}
    The preceding estimate is applicable to every~$z$ occurring
    in~\eqref{eq:b-x-est} below. In~fact, suppose $z,w \in \R^{\vdim}$, $|z| \le
    |x-c|$, and $q \circ f(w) = z$. Setting $F = q \circ f = q \circ p^* + q
    \circ h$ we~have $F(0) = 0$; moreover, $\im \uD h(u) \subseteq P^{\perp}$ and
    $\| \uD h(u) \| \le \gamma$ for $u \in \R^{\vdim}$; whence,
    recalling~\ref{rem:reach-of-a-graph} and $\| \project{Q} \circ
    \perpproject{P} \| \le \gamma (1+\gamma^2)^{-1/2}$,
    \begin{displaymath}
        \Lip (q \circ h) \le \frac{\gamma^2}{(1+\gamma^2)^{1/2}}
        \quad \text{and} \quad
        | q \circ p^*(u) | \ge \frac{|u|}{(1+\gamma^2)^{1/2}}
        \quad \text{for $u \in \R^{\vdim}$} \,;
    \end{displaymath}
    consequently, $F$ is univalent with
    \begin{displaymath}
        |F(u) - F(u')| \ge \frac{1-\gamma^2}{(1+\gamma^2)^{1/2}} |u-u'|
        \quad \text{for $u,u' \in \R^{\vdim}$} \,.
    \end{displaymath}
    Taking $u = w$ and $u' = 0$, and combining with~\eqref{eq:x-c-est}
    and~\eqref{eq:y-c-est}, we~conclude
    \begin{displaymath}
        |w| \le \frac{(1+\gamma^2)^{1/2}}{1-\gamma^2} |z|
        \le \frac{(1+\gamma^2)^{1/2}}{1-\gamma^2} |x-c|
        \le \frac{\gamma(1+\gamma^2)}{1-\gamma^2} |a-c|
        \le 2|a-c| \,,
    \end{displaymath}
    the last inequality because $\gamma \le 2/3$.

    Thus, using the Taylor formula~\cite[3.1.11,p.~220]{Federer1969} and
    recalling that $\uD (g - q^*)(0) = 0$ and $q \circ g = \id{\R^{\vdim}}$
    we~obtain
    \begin{multline}
        \label{eq:b-x-est}
        |b-x| = |(g - q^*)(q(x))|
        = |(g-q^*)(q(x)) - (g-q^*)(q(c))|
        \\
        \le |x-c| \sup\bigl\{ \| \uD g(z) - \uD g(0) \| : |z| \le |x-c| \bigr\}
        \\
        = |x-c| \sup \bigl\{ \| \uD (g - q^*)(z) \| : |z| \le |x-c| \bigr\}
        \le |x-c| \frac{ (1+\gamma^2)^{1/2} }{1 - \gamma^2} \omega \,.
    \end{multline}
    Finally, combining~\eqref{eq:y-a-est}, \eqref{eq:b-y-est},
    \eqref{eq:b-x-est}, \eqref{eq:x-c-est}, and~\eqref{eq:y-c-est}
    we may write
    \begin{displaymath}
        |a-b| \le |a-y| + |y-b|
        \le |a-c|
        \left( (1 + \gamma^2)^{1/2} +  \frac{ (1+\gamma^2)^{3/2} \omega \gamma}{1 - \gamma^2} \right) \,.
        \qedhere
    \end{displaymath}
\end{proof}

\begin{corollary}
    \label{cor:tilt-tilt}
    Adopt the hypotheses and notation of~\ref{lem:heights} and assume in
    addition that $f|\oball{p(c)}{3|a-c|}$ is of class~$\cnt{1,1}$, $\Lip \uD f$
    denoting the Lipschitz constant of $\uD f$ there --- which contains $p(b)$,
    since $|p(b)-p(c)| \le |b-c| \le |a-c|$.  If $T \in \grass{\adim}{\vdim}$
    and $a \in \Unp(\Sigma)$ --- so that $c = \npp{\Sigma}(a)$ and, $\Sigma$
    being a~graph over~$\im p^*$, $b = f(p(a))$ --- then,
    employing~\cite[8.9(5)]{Allard1972},
    \begin{multline}
        \bigl| \| \project{T} - \project{\Tan(\Sigma,\npp{\Sigma}(a))} \|
        - \| \project{T} - \project{\Tan(\Sigma,f(p(a)))} \| \bigr|
        \\
        \le \| \project{\Tan(\Sigma,c)} - \project{\Tan(\Sigma,b)} \|
        \le \| \uD f(p(c)) - \uD f(p(b)) \|
        \le \Lip \uD f \cdot |b-c|
        \\
        \le \Lip \uD f \cdot \gamma \sqrt{1 + \gamma^2} \dist(a,\Sigma) \,.
    \end{multline}
\end{corollary}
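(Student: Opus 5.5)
The chain of inequalities in the statement is proved link by link, from left to right.

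\emph{Identifications.} Since $a \in \Unp(\Sigma)$ and $|a-c| = \dist(a,\Sigma)$, uniqueness of the nearest point gives $c = \npp{\Sigma}(a)$. As $p \circ f = \id{\R^{\vdim}}$, the set $\Sigma$ meets each fibre $z + \ker p$ exactly in $f(p(z))$. Together with $p(a-b) = 0$ and $b \in \Sigma$ this yields $b = f(p(b)) = f(p(a))$. The bound $|p(b)-p(c)| \le |b-c|$ holds because $\|p\| \le 1$. Lemma~\ref{lem:heights} gives $|b-c| \le \gamma\sqrt{1+\gamma^2}\,|a-c| \le |a-c|$, using $\gamma \le 2/3$. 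Hence $p(b)$ lies in the closed ball of radius $|a-c|$ about $p(c)$, inside $\oball{p(c)}{3|a-c|}$ whenever $a \notin \Sigma$. If $a \in \Sigma$, then $a = b = c$ and everything is trivial.

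\emph{First inequality.} This is the reverse triangle inequality for the operator norm:
\[
\bigl|\,\|\project{T} - \project{X}\| - \|\project{T} - \project{Y}\|\,\bigr| \le \|\project{X} - \project{Y}\|,
\]
applied with $X = \Tan(\Sigma,c)$ and $Y = \Tan(\Sigma,b)$.

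\emph{Second inequality (main obstacle).} One must compare projections onto $\im \uD f(p(c))$ and $\im \uD f(p(b))$ with the differentials themselves. Here I would invoke \cite[8.9(5)]{Allard1972}, as the statement indicates. That result bounds $\|\project{\im A} - \project{\im B}\|$ by a quantity controlled by $\|A-B\|$ when $A$ and $B$ are injective linear maps with $\|A^{-1}\|$-type lower bounds. Here the relevant lower bound is $|\uD f(x)u| \ge |p \circ \uD f(x) u| = |u|$. The point to check is that the constant in that estimate is $\le 1$, so no factor is lost. If the constant turned out to be larger, I would instead argue directly. Write $P_X - P_Y = P_X P_Y^\perp - P_X^\perp P_Y$ and use $\|P_X^\perp P_Y\| = \|P_X - P_Y\|$. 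Then estimate $P_X^\perp P_Y$ on unit vectors $\uD f(p(b))u / |\uD f(p(b))u|$ by $|P_X^\perp(\uD f(p(b)) - \uD f(p(c)))u| / |u|$, since $P_X^\perp \uD f(p(c)) = 0$ and $|\uD f(p(b))u| \ge |u|$.

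\emph{Remaining steps.} The third inequality is the definition of $\Lip \uD f$ on $\oball{p(c)}{3|a-c|}$, which contains both $p(b)$ and $p(c)$, together with $|p(b)-p(c)| \le |b-c|$. The last inequality is the bound $|b-c| \le \gamma\sqrt{1+\gamma^2}\,|a-c|$ from Lemma~\ref{lem:heights}, combined with $|a-c| = \dist(a,\Sigma)$.
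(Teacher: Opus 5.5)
Your proposal is correct and matches the paper's proof. The first, third and fourth inequalities are justified as you do. For the second, the paper uses exactly your fallback: the identity $\|P_X - P_Y\| = \|P_Y^\perp P_X\|$ from~\cite[8.9(3)]{Allard1972} together with $|\uD f(u)v| \ge |v|$, with the roles of $b$ and $c$ swapped relative to yours, so the detour through~\cite[8.9(5)]{Allard1972} and the concern about its constant are unnecessary.
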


\begin{proof}
    The displayed chain consists of four inequalities.  The first is the
    triangle inequality, the third combines the definition of $\Lip \uD f$
    with $\Lip p \le 1$, and the fourth is the second conclusion
    of~\ref{lem:heights}.  For the second, recall $\Tan(\Sigma,f(u)) = \im \uD
    f(u)$ for $u \in \R^{\vdim}$.  Since $p \circ f = \id{\R^{\vdim}}$ we~have $p \circ
    \uD f(u) = \id{\R^{\vdim}}$, whence $|\uD f(u)v| \ge |p \circ \uD f(u)v| =
    |v|$; hence, whenever $u,u',v \in
    \R^{\vdim}$ and $|\uD f(u)v| = 1$ --- so that $|v| \le 1$ ---
    \begin{displaymath}
        \dist \bigl( \uD f(u)v ,\, \im \uD f(u') \bigr)
        \le \bigl| \uD f(u)v - \uD f(u')v \bigr|
        \le \| \uD f(u) - \uD f(u') \| \,,
    \end{displaymath}
    and $\| \project{\im \uD f(u)} - \project{\im \uD f(u')} \| = \|
    \perpproject{\im \uD f(u')} \circ \project{\im \uD f(u)} \|$
    by~\cite[8.9(3)]{Allard1972}, the two planes being members
    of~$\grass{\adim}{\vdim}$.
\end{proof}

\begin{corollary}
    \label{cor:np-in-2cyl}
    Adopt the hypotheses and notation of~\ref{lem:heights} and assume in
    addition that $\gamma \le \frac 14$, that $T = \im p^*$, that $\Sigma \cap
    p^{-1}\lIm \cball 0r \rIm \subseteq \cylinder T0r{\frac 12 r}$ for some $0
    < r < \infty$, and that $a \in \cylinder T0rr \cap \Unp(\Sigma)$.  Then
    \begin{displaymath}
        c = \npp{\Sigma}(a) \in \Sigma \cap p^{-1}\lIm \cball 0{2r} \rIm
        \subseteq \cylinder T0{2r}{r} \,.
    \end{displaymath}
\end{corollary}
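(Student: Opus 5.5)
Set $P = T = \im p^*$ and, as in the proof of~\ref{lem:heights}, write $h = f - p^*$. Then $\Sigma$ is the graph of~$h$ over~$T$ with $\Lip h \le \gamma \le \frac14$. The plan is to bound $|p(c)|$ by combining the elementary estimates of~\ref{lem:heights}: the nearest point~$c$ is close to~$a$, and $a$ lies over $\cball 0r$. The inclusion $\Sigma \cap p^{-1}\lIm \cball 0{2r}\rIm \subseteq \cylinder T0{2r}{r}$ will be handled separately by a Lipschitz estimate for~$h$.

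\textbf{Step 1 (distance from $a$ to $\Sigma$).} Let $b = f(p(a))$. Since $p(a) \in \cball 0r$, the hypothesis gives $b \in \cylinder T0r{\frac12 r}$, so $|\perpproject T(b)| \le \frac12 r$. Together with $|\perpproject T(a)| \le r$ this yields
\begin{displaymath}
    \dist(a,\Sigma) \le |a-b| = |\perpproject T(a-b)| \le \tfrac32 r \,.
\end{displaymath}

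\textbf{Step 2 (horizontal displacement of the nearest point).} Since $a \in \Unp(\Sigma)$, the point $c = \npp{\Sigma}(a)$ satisfies $|a-c| = \dist(a,\Sigma)$. We have $p(a-b) = 0$ and $b,c \in \Sigma$, so the hypotheses of~\ref{lem:heights} hold; its differentiability requirement is part of the adopted hypotheses. Its second conclusion gives
\begin{displaymath}
    |p(b) - p(c)| \le |b-c| \le \gamma\sqrt{1+\gamma^2}\,|a-c| \le \tfrac14\sqrt{\tfrac{17}{16}}\cdot\tfrac32 r < r \,.
\end{displaymath}
Hence $|p(c)| \le |p(a)| + |p(b)-p(c)| < 2r$, and therefore $c \in \Sigma \cap p^{-1}\lIm \cball 0{2r}\rIm$.

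\textbf{Step 3 (the inclusion in the cylinder).} Take $z \in \Sigma$ with $|p(z)| \le 2r$. If $|p(z)| \le r$, the hypothesis gives $|\perpproject T(z)| \le \frac12 r$ directly. Otherwise put $x' = r\,p(z)/|p(z)|$, so that $|x'| = r$ and $|p(z) - x'| \le r$. Then $|h(x')| \le \frac12 r$ by hypothesis, and
\begin{displaymath}
    |\perpproject T(z)| = |h(p(z))| \le |h(x')| + \gamma\,|p(z)-x'| \le \tfrac12 r + \tfrac14 r < r \,,
\end{displaymath}
so $z \in \cylinder T0{2r}{r}$.

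The step most likely to need care is Step 2. One must check that~\ref{lem:heights} genuinely applies, with $c$ the nearest point and $b$ the vertical point. The inclusion $\gamma \le \frac14 \le \frac23$ causes no trouble, but the constant must come out strictly below~$1$ to absorb $|p(a)| \le r$. The numerical factor $\frac38\sqrt{17/16} \approx 0.39$ leaves ample room.
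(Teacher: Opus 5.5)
Your proof is correct and follows the paper's argument: you bound $|a-b| \le \tfrac32 r$ for the vertical point $b = f(p(a))$, then use the second conclusion of~\ref{lem:heights} to get $|p(c)| \le |p(a)| + |b-c| < 2r$. The only deviation is cosmetic: for the cylinder inclusion the paper runs the Lipschitz estimate for $f - p^*$ from the base point $0$, which gives height at most $\tfrac12 r + \tfrac14 \cdot 2r = r$, whereas your radial base point $x'$ gives the slightly sharper bound $\tfrac34 r$.
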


\begin{proof}
    Since $\perpproject{T} \circ p^* = 0$ and $\Lip \bigl( \perpproject{T}
    \circ (f - p^*) \bigr) \le \gamma$, the hypothesis applied with $u = 0$
    yields $|\perpproject{T}(f(u))| \le |\perpproject{T}(f(0))| + \gamma |u| \le
    \tfrac 12 r + \tfrac 14 \cdot 2r = r$ whenever $u \in \cball 0{2r}$; this is
    the asserted inclusion $\Sigma \cap p^{-1}\lIm \cball 0{2r} \rIm \subseteq
    \cylinder T0{2r}{r}$.  Next, $b = f(p(a))$ satisfies $p(b) = p(a) \in
    \cball 0r$ and $a - b \in T^{\perp}$; whence, $|\perpproject{T}(b)| \le
    \tfrac 12 r$ by the hypothesis and $|\perpproject{T}(a)| \le r$ because $a
    \in \cylinder T0rr$, so that $|a-b| = |\perpproject{T}(a) -
    \perpproject{T}(b)| \le \tfrac 32 r$.  From~\ref{lem:heights} and $\gamma
    \le \tfrac 14$ we~therefore obtain
    \begin{displaymath}
        |p(c)| \le |p(b)| + |b-c|
        \le r + \tfrac{\sqrt{17}}{16} |a-c|
        \le r + \tfrac 38 |a-b|
        \le r + \tfrac 38 \cdot \tfrac 32 r = \tfrac{25}{16}r < 2r \,.
        \qedhere
    \end{displaymath}
\end{proof}

\section{Some estimates for second order elliptic systems}
\label{sec:elliptic}

\begin{definition}[\protect{cf.~\cite[3.1]{Menne2013}}]
    \label{def:ell-system}
    We employ two ways of defining differential operators.
    \begin{enumerate}
    \item
        \label{i:pde:LF}
        Assume $F : \Hom(\R^{\vdim},\R^{\adim-\vdim}) \to \R \text{ is of class~$\cnt{1}$}$ and
        \begin{displaymath}
            \sup \bigl\{ \| \uD F(\sigma) \| \cdot ( 1 + \| \sigma \| )^{-1} : \sigma \in \Hom(\R^{\vdim},\R^{\adim-\vdim}) \bigr\} < \infty \,.
        \end{displaymath}
        For any open set $U \subseteq \R^{\vdim}$, $\LM^{\vdim}$~measurable
        set~$D \subseteq \R^{\vdim}$, and $u \in \VSob{1,1}(U,\R^{\adim-\vdim})$
        we define the distribution $L_{F;D}(u) \in \distr{U}{\R^{\adim-\vdim}}$
        by the formula
        \begin{displaymath}
            L_{F;D}(u)\theta = - \int_{D \cap U} \bigl\langle \uD \theta(x) ,\, \uD F(\weakD u(x)) \bigr\rangle \ud \LM^{\vdim}(x) 
            \quad \text{for $\theta \in \dspace{U}{\R^{\adim-\vdim}}$} \,.
        \end{displaymath}
        In~case $D = U$, we write $L_F(u) = L_{F;U}(u)$.

    \item
        \label{i:pde:TA}
        Assume $U \subseteq \R^{\vdim}$ is open and
        \begin{displaymath}
            A : U \to {\textstyle \bigodot^2} \Hom(\R^{\vdim},\R^{\adim-\vdim})
            \text{ is $\LM^{\vdim} \restrict U$~measurable and locally bounded} \,.
        \end{displaymath}
        For any $\LM^{\vdim}$~measurable set $D \subseteq \R^{\vdim}$ and $u \in
        \VSob{1,1}(U,\R^{\adim-\vdim})$ we define the distribution $T_{A;D}(u)
        \in \distr{U}{\R^{\adim-\vdim}}$ by
        \begin{displaymath}
            T_{A;D}(u)\theta = - \int_{D \cap U} \bigl\langle \uD \theta(x) \odot \weakD u(x) ,\, A(x) \bigr\rangle \ud \LM^{\vdim}(x)
            \quad \text{for $\theta \in \dspace{U}{\R^{\adim-\vdim}}$} \,.
        \end{displaymath}
        In~case $D = U$, we write $T_A = T_{A;U}$.
    \end{enumerate}
\end{definition}

\begin{remark}
    \label{rem:LF-domain-problem}
    Given $A$ as in~\ref{def:ell-system}\ref{i:pde:TA} we shall assume that $U$
    is the interior of the domain of~$A$; hence, $T_A$ can be regarded as a~map
    of the~type $\VSob{1,1}(U,\R^{\adim-\vdim}) \to
    \distr{U}{\R^{\adim-\vdim}}$. However, given $F$ as
    in~\ref{def:ell-system}\ref{i:pde:LF} the set $U$ is not determined
    by~$F$. If $D \subseteq \R^{\vdim}$ is $\LM^{\vdim}$~measurable, then the
    map $L_{F;D}$ can be assigned the type
    \begin{displaymath}
        \tbcup\{ \VSob{1,1}(U,\R^{\adim-\vdim}) : U \subseteq \R^{\vdim} \text{ is open} \}
        \to \tbcup\{ \distr{U}{\R^{\adim-\vdim}} : U \subseteq \R^{\vdim} \text{ is open} \} \,.
    \end{displaymath}
    Nonetheless, no ambiguity will arise: whenever we write $L_F(u)$ or
    $L_{F;D}(u)$ in the sequel, the open set~$U$ is the one on which the
    argument~$u$ is defined, and where two functions with different domains are
    compared we~restrict them to a~common domain first, as
    in~\ref{thm:decay}.
\end{remark}

\begin{remark}
    \label{rem:operator-PDEs}
    Choose orthonormal bases $e_1, \ldots, e_{\vdim}$ and $v_1, \ldots,
    v_{\adim-\vdim}$ of $\R^{\vdim}$ and $\R^{\adim-\vdim}$ and let $\omega_1,
    \ldots, \omega_{\vdim}$ and $\eta_1, \ldots, \eta_{\adim-\vdim}$ be the dual
    bases. For $i \in \{ 1,2,\ldots,\vdim \}$ let $\uD_i$ denote the directional
    derivative in direction~$e_i$. In~case $F$ and $A$ are as
    in~\ref{def:ell-system}, $A$ is of class~$\cnt{1}$, $F$ is of
    class~$\cnt{2}$, $u,w : U \to \R^{\adim-\vdim}$ are of class~$\cnt{2}$, and
    there exist $\LM^{\vdim}\restrict U$~measurable functions $f,g : U \to
    \R^{\adim-\vdim}$ such that
    \begin{multline}
        T_A(u)\theta = \textint U{} \theta(x) \bullet f(x) \ud \LM^{\vdim}(x)
        \quad \text{and} \quad
        L_F(w)\theta = \textint U{} \theta(x) \bullet g(x) \ud \LM^{\vdim}(x)
        \\ \text{for $\theta \in \dspace{U}{\R^{\adim-\vdim}}$} \,,
    \end{multline}
    then $u$ and $w$ satisfy the following systems of partial differential
    equations
    \begin{align}
        - &\textsum{i,k=1}{\vdim} \textsum{l=1}{\adim-\vdim}
        \uD_i \bigl( (\uD_k u \bullet v_l)
        \langle \omega_iv_j \odot \omega_kv_l ,\, A \rangle  \bigr)(x)
        = f(x) \bullet v_j
        \\
        \text{and} \quad
        - &\textsum{i=1}{\vdim} \uD_i \langle \omega_i v_j ,\, \uD F \circ \uD w \rangle (x) = g(x) \bullet v_j
        \qquad \text{for $x \in U$ and $j \in \{1,\ldots,\adim-\vdim\}$} \,.
    \end{align}
\end{remark}

\begin{remark}
    \label{rem:Laplacian}
    Define
    \begin{gather}
        \Upsilon \in {\textstyle \bigodot^2} \Hom(\R^{\vdim},\R^{\adim-\vdim}) \,,
        \quad
        \Upsilon(\sigma,\tau) = \sigma \bullet \tau \,,
        \\
        F(\sigma) = \tfrac 12 |\sigma|^2 \quad \text{for $\sigma \in \Hom(\R^{\vdim},\R^{\adim-\vdim})$} \,,
        \quad
        A(x) = \Upsilon \quad \text{for $x \in U$} \,.
    \end{gather}
    Then for each $u \in \VSob{1,1}(U,\R^{\adim-\vdim})$ the two distributions
    $L_F(u)$ and $T_A(u)$ are equal and
    \begin{displaymath}
        L_F(u) \theta = T_A(u)\theta = - \int_U \uD \theta(x) \bullet \weakD u(x) \ud \LM^{\vdim}(x)
        \quad
        \text{for $\theta \in \dspace{U}{\R^{\adim-\vdim}}$} \,;
    \end{displaymath}
    hence, both $L_F(u)$ and $T_A(u)$ correspond to the Laplacian of~$u$.
\end{remark}

\begin{definition}[\protect{cf.~\cite[pp.~717-718]{Menne2013}}]
    \label{def:negative-norms}
    Whenever $T \in \distr{U}{\R^{\adim-\vdim}}$, $i$ is a \emph{negative}
    integer, $1 \le p \le \infty$, $1 \le q \le \infty$, $1/p + 1/q = 1$, we set
    \begin{displaymath}
        \norm{T}{i,p}{U} = \sup \bigl\{ T(\theta) : \theta \in \dspace{U}{\R^{\adim-\vdim}} ,\, \norm{\uD^{-i} \theta}{q}{U} \le 1 \bigr\} \,.
    \end{displaymath}
\end{definition}

\begin{definition}[\protect{cf.~\cite[5.2.3]{Federer1969}}]
    \label{def:strong-ellipticity}
    Let $\Xi \in \bigodot^2 \Hom(\R^{\vdim},\R^{\adim-\vdim})$. We say that
    $\Xi$ is \emph{strongly elliptic} if there exists a positive number $c \in
    \R$ such that
    \begin{displaymath}
        \int \bigl\langle \uD \theta(x) \odot \uD \theta(x) ,\, \Xi \bigr\rangle \ud \LM^{\vdim}(x)
        \ge c \int |\uD \theta|^2 \ud \LM^{\vdim}
        \quad \text{for $\theta \in \dspace{\R^{\vdim}}{\R^{\adim-\vdim}}$} \,.
    \end{displaymath}
    Any number $c$ as above is called an~\emph{ellipticity bound} for~$\Xi$.
\end{definition}

\begin{definition}
    \label{def:elliptic-bounds}
    Let $A$, $F$, $T_A$, $L_F$ be as in~\ref{def:ell-system}, and $0 < c \le M
    < \infty$.
    \begin{enumerate}
    \item We~say that $T_A$ is~\emph{strongly elliptic with bounds $(c,M)$} if
        $c$ is an~ellipticity bound for~$A(x)$ and $\|A(x)\| \le M$ for
        $\LM^{\vdim}$ almost all $x \in U$.

    \item We say that $L_F$ is \emph{strongly elliptic with bounds $(c,M)$} if
        $c$ is an~ellipticity bound for~$\uD^2F(\sigma)$ and $\|\uD^2 F(\sigma)\|
        \le M$ for all $\sigma \in \Hom(\R^{\vdim},\R^{\adim-\vdim})$.
    \end{enumerate}
    The bounds are part of the notion; we~never use the phrase without them.
\end{definition}

\begin{remark}
    \label{rem:elliptic-euler-lagrange}
    Let $A$, $F$, $T_A$, $L_F$ be as in~\ref{def:ell-system}, $\Upsilon$ as
    in~\ref{rem:Laplacian}, and $0 < \varepsilon < 1$.
    \begin{enumerate}
    \item Assume $A$ is bounded. Then $T_A$ is the Euler-Lagrange operator
        associated with the functional mapping $u \in
        \VSob{1,2}(U,\R^{\adim-\vdim})$ into
        \begin{displaymath}
            \frac 12 \int_U \bigl\langle \weakD u(x) \odot \weakD u(x) ,\, A(x) \bigr\rangle \ud \LM^{\vdim}(x) \,.
        \end{displaymath}

    \item Strong ellipticity of $T_A$ is ensured by the condition
        \begin{displaymath}
            \| A(x) - \Upsilon \| \le \varepsilon
            \quad \text{for $\LM^{\vdim}$ almost all $x \in U$} \,.
        \end{displaymath}

    \item Assume $\Lip F < \infty$ and $U$ is bounded. Then $L_F$ is the
        Euler-Lagrange operator associated with the functional mapping $u \in
        \VSob{1,1}(U,\R^{\adim-\vdim})$ into
        \begin{displaymath}
            \int_U F(\weakD u(x)) \ud \LM^{\vdim}(x) \,.
        \end{displaymath}
        
    \item Strong ellipticity of $L_F$ follows from the condition
        \begin{displaymath}
            \| \uD^2F(\sigma) - \Upsilon \| \le \varepsilon
            \quad \text{for $\sigma \in \Hom(\R^{\vdim},\R^{\adim-\vdim})$} \,.
        \end{displaymath}
    \end{enumerate}
\end{remark}

\begin{lemma}[\protect{cf.~\cite[7.8]{Menne2012a}}]
    \label{lem:L1-T-1-est}
    Suppose $\vdim$, $\adim$, $U$, $A$, $T_A$ are as in~\ref{def:ell-system},
    $\Upsilon$ as in~\ref{rem:Laplacian},
    \begin{gather}
        a \in \R^{\vdim} \,,
        \quad
        0 < r < \infty \,,
        \quad
        U = \oball ar \,,
        \quad
        u \in \VSobz{1,1}(U,\R^{\adim-\vdim}) \,,
        \quad
        \Lip A < \infty \,.
    \end{gather}
    Then there exists a~number $0 < \varepsilon \le 1/2$ depending only
    on~$\adim$ and~$\vdim$, and for every $0 \le \Lambda < \infty$ a~number
    $\Gamma > 0$ depending only on $\adim$, $\vdim$, and~$\Lambda$, such that
    \begin{displaymath}
        \norm{u}{1}{U} \le \Gamma r \norm{T_A(u)}{-1,1}{U} \,,
    \end{displaymath}
    whenever $\| A(x) - \Upsilon \| \le \varepsilon$ for $\LM^{\vdim}$ almost
    all $x \in U$ and $r \Lip A \le \Lambda$.
\end{lemma}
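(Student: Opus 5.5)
The plan is to argue by duality, solving an adjoint Dirichlet problem with a Lipschitz bound on the gradient. By the scaling $x \mapsto a + rx$ (under which $\norm{u}{1}{U}$ and $r\norm{T_A(u)}{-1,1}{U}$ acquire the same factor $r^{\vdim}$, and $r\Lip A$ is invariant) we may assume $a = 0$ and $r = 1$. We write
\begin{displaymath}
    \norm{u}{1}{U} = \sup \Bigl\{ \textint U{} u \bullet g \ud \LM^{\vdim} : g \in \cnt{\infty}(\R^{\vdim},\R^{\adim-\vdim}) ,\, |g| \le 1 \Bigr\}
\end{displaymath}
(density of smooth $g$ with $|g|\le1$ in the unit ball of $\Lp{\infty}$ for the weak$^*$ pairing with $\Lp{1}$ suffices). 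For each such~$g$ we shall solve the Dirichlet problem $T_A(\theta) = g$ in~$U$, $\theta = 0$ on $\partial U$. Since $A(x) \in \bigodot^2\Hom(\R^{\vdim},\R^{\adim-\vdim})$ is symmetric, $T_A$ is formally self-adjoint, so no separate adjoint operator is needed.

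The first step is the existence and regularity of~$\theta$. Choose $\varepsilon$ small enough, depending only on $\adim,\vdim$, so that $\|A(x)-\Upsilon\| \le \varepsilon$ makes $A(x)$ satisfy the Legendre condition with constant $\tfrac12$ (cf.~\ref{rem:elliptic-euler-lagrange}). Lax--Milgram then gives a unique $\theta \in \VSobz{1,2}(U,\R^{\adim-\vdim})$. Because the coefficients are Lipschitz with $\Lip A \le \Lambda$ and are close to the Laplacian, standard global $\Lp{p}$ theory for systems on the ball (Calder\'on--Zygmund for $\Upsilon$, freezing coefficients and absorbing the $\varepsilon$-perturbation and the lower-order $\Lip A$ terms) gives, for a fixed $p > \vdim$,
\begin{displaymath}
    \norm{\theta}{\VSob{2,p}}{U} \le \Gamma(\adim,\vdim,\Lambda)\,\norm{g}{p}{U} \le \Gamma .
\end{displaymath}
By Morrey's embedding, $\theta \in \cnt{1}(\Clos U)$, $\theta|\partial U = 0$, and $\sup|\uD\theta| \le \Gamma$. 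This is the step I expect to be the main obstacle. If one wants to avoid boundary $\Lp{p}$ theory, the alternative is to extend $A$ to all of $\R^{\vdim}$ and solve on a larger ball, but then the boundary values of~$\theta$ are lost. So I would accept the global $\VSob{2,p}$ estimate on the ball as the standard input; it is the appeal to ``standard $\Lp{p}$~theory'' mentioned in the introduction.

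The second step is the duality identity $\textint U{} u\bullet g \ud\LM^{\vdim} = T_A(u)\theta$, where the right side is understood as the integral in~\ref{def:ell-system}\ref{i:pde:TA}. For $u \in \dspace{U}{\R^{\adim-\vdim}}$ it follows by integrating by parts twice, using the symmetry of~$A$ and the equation for~$\theta$. Both sides are continuous in $u$ for the $\VSob{1,1}$ norm, since $\uD\theta$ and $A$ are bounded, so the identity passes to $u \in \VSobz{1,1}$.

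The third step bounds $T_A(u)\theta$ by $\norm{T_A(u)}{-1,1}{U}\,\sup|\uD\theta|$, even though $\theta$ is not compactly supported. Let $\Phi_k(y) = \max\{0,|y|-1/k\}\,y/|y|$. This map is $1$-Lipschitz and vanishes near~$0$, so $\Phi_k \circ \theta$ has compact support in~$U$ (because $\theta$ is continuous and vanishes on $\partial U$), and $|\uD(\Phi_k\circ\theta)| \le |\uD\theta|$ almost everywhere. Moreover $\uD(\Phi_k\circ\theta) \to \uD\theta$ almost everywhere on $\{\theta \ne 0\}$, while on $\{\theta = 0\}$ both $\uD(\Phi_k\circ\theta)$ and $\uD\theta$ vanish almost everywhere. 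Mollifying slightly produces $\theta_k \in \dspace{U}{\R^{\adim-\vdim}}$ with $\sup|\uD\theta_k| \le \sup|\uD\theta|$ and $\uD\theta_k \to \uD\theta$ almost everywhere. Dominated convergence, with $|\weakD u| \in \Lp{1}$ and $A$ bounded, then yields
\begin{displaymath}
    T_A(u)\theta = \lim_k T_A(u)\theta_k \le \norm{T_A(u)}{-1,1}{U}\,\sup|\uD\theta| \le \Gamma\,\norm{T_A(u)}{-1,1}{U} .
\end{displaymath}
Taking the supremum over~$g$ proves the lemma after undoing the scaling.
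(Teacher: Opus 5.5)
Your proposal is correct and follows the same duality scheme as the paper: solve the adjoint Dirichlet problem for $T_A$ (self-adjoint by the symmetry of~$A$), obtain a global $\VSob{2,p}$ bound for some $p > \vdim$ (the paper takes $p = 2\vdim$), convert it into a bound for $\sup|\uD\theta|$ by the Sobolev--Morrey embedding, and test $T_A(u)$ with the adjoint solution. The one inessential difference is that the paper uses data $\theta$ with $\norm{\theta}{p}{U} \le 1$ and bounds $\norm{u}{q}{U}$ before applying H\"older's inequality, whereas you take $|g| \le 1$ and obtain $\norm{u}{1}{U}$ directly. Elsewhere your execution is the more careful one. You quote global $\Lp{p}$~theory instead of deriving the $\VSob{2,p}$ bound by differentiating the adjoint equation; the paper's identification of the $\VSobz{1,p}$ solution $\eta_v$ with $\uD\eta \, v$ cannot hold up to $\partial U$, where the normal derivative of a Dirichlet solution need not vanish. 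Your truncation $\Phi_k \circ \theta$ followed by mollification also supplies the approximation by members of $\dspace{U}{\R^{\adim-\vdim}}$ with uniformly bounded gradients that is needed to bound the pairing by $\norm{T_A(u)}{-1,1}{U} \sup|\uD\theta|$, a point the paper passes over with the phrase ``approximating $\eta$ in $\VSob{1,p}$''.
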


\begin{proof}
    Let $p = 2\vdim$ and $q = p/(p-1)$ and assume $r=1$, the general case
    following by replacing $u$, $A$ with $u \circ \trans a \circ \scale r$, $A
    \circ \trans a \circ \scale r$, the latter having Lipschitz constant $r \Lip
    A \le \Lambda$. Set
    \begin{displaymath}
        \varepsilon = \min \bigl\{ \tfrac 12 ,\, \varepsilon_{\text{\cite[3.2]{Menne2013}}}(\adim,p) \bigr\}
    \end{displaymath}
    and assume $\| A(x) - \Upsilon \| \le \varepsilon$ for $\LM^{\vdim}$ almost
    all $x \in U$. Let $\theta \in
    \dspace{U}{\R^{\adim-\vdim}}$. Applying~\cite[Theorem~4.6]{Giusti2003} one
    obtains $\eta \in \VSobz{1,p}(U,\R^{\adim-\vdim})$ satisfying
    \begin{displaymath}
        - \int_{U} \bigl\langle \uD \zeta(x) \odot \weakD \eta(x) ,\, A(x) \bigr\rangle \ud \LM^{\vdim}(x)
        = \int_{U} \zeta(x) \bullet \theta(x) \ud \LM^{\vdim}(x)
        \quad \text{for $\zeta \in \dspace{U}{\R^{\adim-\vdim}}$} \,.
    \end{displaymath}
    For each $v \in \R^{\vdim}$ with $|v| = 1$ define $G_v \in \Lp{p}(U,
    \Hom(\R^{\vdim},\R^{\adim-\vdim}))$ by requiring
    \begin{multline}
        G_v(x) \bullet \sigma = \sigma(v) \bullet \theta(x) - \bigl\langle \sigma \odot \weakD \eta(x) ,\, \uD A(x) v \bigr\rangle
        \\
        \text{for $\sigma \in \Hom(\R^{\vdim},\R^{\adim-\vdim})$ and $\LM^{\vdim}$ almost all $x \in U$} 
    \end{multline}
    and apply again~\cite[Theorem~4.6]{Giusti2003} to find $\eta_v \in
    \VSobz{1,p}(U,\R^{\adim-\vdim})$ such that
    \begin{displaymath}
        - \int_{U} \bigl\langle \uD \zeta(x) \odot \weakD \eta_v(x) ,\, A(x) \bigr\rangle \ud \LM^{\vdim}(x)
        = - \int_{U} \bigl\langle \uD \zeta(x) ,\, G_v(x) \bigr\rangle \ud \LM^{\vdim}(x)
        \quad \text{for $\zeta \in \dspace{U}{\R^{\adim-\vdim}}$} \,;
    \end{displaymath}
    the right hand side is the pairing appropriate to~$G_v$, which takes values
    in~$\Hom(\R^{\vdim},\R^{\adim-\vdim})$, and it is precisely what results from
    differentiating the equation for~$\eta$ in the direction~$v$.
    Clearly $\eta_v(x) = \weakD \eta(x)v$ for any unit vector $v \in \R^{\vdim}$
    and $\LM^{\vdim}$~almost all $x \in U$. In~particular, we see that $\eta \in
    \VSobz{2,p}(U,\R^{\adim-\vdim})$. An application of~\cite[10.17]{Giusti2003}
    yields a number $\Delta_1$, depending only on~$\adim$, $\vdim$, and the
    ellipticity data of~$A$ --- hence, by $\|A - \Upsilon\| \le \varepsilon \le
    \tfrac12$, only on~$\adim$ and~$\vdim$ --- with
    \begin{displaymath}
        \norm{\weakD \eta_v}{1,p}{U} \le \Delta_1 \bigl(
        \norm{G_v}{p}{U} + \norm{\eta_v}{2}{U}
        \bigr) \,.
    \end{displaymath}
    Employing~\cite[3.2]{Menne2013} together with the Poincar{\'e}
    inequality~\cite[7.10]{Gilbarg2001} we~get $\Delta_2 \in \R$ depending only
    on~$\adim$ and~$\vdim$ such that
    \begin{displaymath}
        \norm{\weakD \eta}{p}{U} \le \Delta_2 \norm{\theta}{p}{U} \,.
    \end{displaymath}
    Since $\| G_v(x) \| \le |\theta(x)| + \Lip A \, \| \weakD \eta(x) \|$ for
    $\LM^{\vdim}$ almost all $x \in U$, it follows that $\norm{G_v}{p}{U} \le (1
    + \Lambda \Delta_2) \norm{\theta}{p}{U}$; therefore, we obtain a number
    $\Delta_3$ depending only on~$\adim$, $\vdim$, and~$\Lambda$ such that
    \begin{displaymath}
        \norm{\weakD^2 \eta}{p}{U} \le \Delta_3 \norm{\theta}{p}{U} \,;
    \end{displaymath}
    here the term $\norm{\eta_v}{2}{U}$ was absorbed by means of $\eta_v =
    \weakD\eta \, v$, $\norm{\weakD\eta}{2}{U} \le
    \unitmeasure{\vdim}^{1/2-1/p}\norm{\weakD\eta}{p}{U}$, and the preceding
    estimate.
    Proceeding further as in the proof of~\cite[7.8]{Menne2012a}, we~note that $p
    > \vdim$; whence, the Sobolev embedding
    theorem~\cite[Theorem~7.26(ii)]{Gilbarg2001} yields $\Delta_4$ depending only
    on~$\adim$ and~$\vdim$ with
    \begin{displaymath}
        \norm{\weakD \eta}{\infty}{U}
        \le \Delta_4 \bigl( \norm{\weakD \eta}{p}{U} + \norm{\weakD^2 \eta}{p}{U} \bigr)
        \le \Delta_4 ( \Delta_2 + \Delta_3 ) \norm{\theta}{p}{U} \,.
    \end{displaymath}
    Approximating $u$ in $\VSobz{1,1}(U,\R^{\adim-\vdim})$ and $\eta$ in
    $\VSob{1,p}(U,\R^{\adim-\vdim})$ by members of
    $\dspace{U}{\R^{\adim-\vdim}}$, and using $\eta$ as a~test function
    for~$T_A(u)$, we~infer
    \begin{displaymath}
        \textint U{} u(x) \bullet \theta(x) \ud \LM^{\vdim}(x)
        = - \textint U{} \bigl\langle \uD \eta(x) \odot \weakD u(x) ,\, A(x) \bigr\rangle \ud \LM^{\vdim}(x)
        \le \norm{T_A(u)}{-1,1}{U} \norm{\weakD \eta}{\infty}{U} \,.
    \end{displaymath}
    Taking the supremum over $\theta$ with $\norm{\theta}{p}{U} \le 1$, duality
    in~$\Lp{q}$ together with H{\"o}lder's inequality
    (cf.~\cite[2.4.16]{Federer1969}) gives
    \begin{displaymath}
        \norm{u}{1}{U}
        \le \unitmeasure{\vdim}^{1/p} \norm{u}{q}{U}
        \le \unitmeasure{\vdim}^{1/p} \Delta_4 ( \Delta_2 + \Delta_3 ) \norm{T_A(u)}{-1,1}{U} \,,
    \end{displaymath}
    which is the assertion with $\Gamma = \unitmeasure{\vdim}^{1/p} \Delta_4 (
    \Delta_2 + \Delta_3 )$.
\end{proof}

\begin{remark}
    \label{rem:L1-T-1-dependence}
    The dependence of~$\Gamma$ on the scale invariant quantity $r \Lip A$ is
    caused by the passage through second order estimates for the adjoint problem
    in the proof; in the case of constant coefficients, $A = \Upsilon$, one may
    take $\Lambda = 0$ and recovers~\cite[7.8]{Menne2012a}
    and~\cite[3.14]{Menne2013}, where $\Gamma$ depends only on~$\adim$ and the
    ellipticity bounds. We~do not know whether $\Gamma$ may be chosen
    independently of $r \Lip A$ for merely bounded measurable~$A$ satisfying
    $\| A - \Upsilon \| \le \varepsilon$.
\end{remark}

\begin{lemma}[\protect{cf.~\cite[3.16]{Menne2013}}]
    \label{lem:v-u-le-LFu-LFv}
    Suppose $\vdim$, $\adim$, $U$, $F$, $L_F$ are as in~\ref{def:ell-system},
    $\Upsilon$ as in~\ref{rem:Laplacian},
    \begin{gather}
        \| \Upsilon - \uD^2 F(\sigma) \| \le \varepsilon_{\ref{lem:L1-T-1-est}}(\adim,\vdim) \quad \text{for $\sigma \in \Hom(\R^{\vdim},\R^{\adim-\vdim})$} \,,
        \quad
        \Lambda = \Lip \uD^2 F < \infty \,,
        \\
        a \in \R^{\vdim} \,,
        \quad
        0 < r < \infty \,,
        \quad
        U = \oball ar \,,
        \\
        u, v \in \VSob{1,2}(U,\R^{\adim-\vdim}) \,,
        \quad
        u-v \in \VSobz{1,2}(U,\R^{\adim-\vdim}) \,.
    \end{gather}
    Then, for every $0 \le \Theta < \infty$, there exists $\Gamma =
    \Gamma(\adim,\vdim,\Lambda\Theta)$, which may and shall be assumed
    nondecreasing in its last argument, such that for any differentiable $w :
    U \to \R^{\adim-\vdim}$ with $\Lip \uD w < \infty$ and $r \Lip \uD w \le
    \Theta$ there holds
    \begin{displaymath}
        r^{-1-\vdim} \norm{v-u}{1}{U}
        \le \Gamma r^{-\vdim} \bigl(
        \norm{L_F(v) - L_F(u)}{-1,1}{U} 
        + \Lambda ( \norm{\weakD(u-w)}{2}{U}^2 + \norm{\weakD(v-w)}{2}{U}^2 )
        \bigr) \,.
    \end{displaymath}
\end{lemma}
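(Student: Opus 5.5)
The plan is to linearise $L_F(v)-L_F(u)$ along the segment from $u$ to $v$, express it as $T_A(v-u)$ with a coefficient field $A$ built from $w$, and then apply~\ref{lem:L1-T-1-est} to $v-u \in \VSobz{1,1}(U,\R^{\adim-\vdim})$ (note $\VSobz{1,2} \subseteq \VSobz{1,1}$ on the bounded ball~$U$). The linearisation must have Lipschitz coefficients, so I would freeze the coefficients at $\uD w$ rather than at $\weakD u$ or $\weakD v$. Set
\begin{displaymath}
    A(x) = \uD^2 F(\uD w(x)) \quad \text{for $x \in U$} \,.
\end{displaymath}
Then $\|A(x) - \Upsilon\| \le \varepsilon_{\ref{lem:L1-T-1-est}}(\adim,\vdim)$ by hypothesis. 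Moreover $\Lip A \le \Lambda \Lip \uD w$, so $r \Lip A \le \Lambda\Theta$, and $A$ is bounded and Lipschitz. Hence~\ref{lem:L1-T-1-est}, applied with $\Lambda\Theta$ in place of~$\Lambda$, gives
\begin{displaymath}
    \norm{v-u}{1}{U} \le \Gamma_{\ref{lem:L1-T-1-est}}(\adim,\vdim,\Lambda\Theta)\, r\, \norm{T_A(v-u)}{-1,1}{U} \,.
\end{displaymath}
To get a $\Gamma$ that is nondecreasing in its last argument, I would replace $\Gamma_{\ref{lem:L1-T-1-est}}(\adim,\vdim,\cdot)$ by $t \mapsto \sup\{\Gamma_{\ref{lem:L1-T-1-est}}(\adim,\vdim,s) : s \le t\}$. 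This supremum is finite: either one inspects the proof of~\ref{lem:L1-T-1-est}, where the constant depends on $\Lambda$ through $1+\Lambda\Delta_2$ and is monotone, or one simply notes that a constant valid for $\Lambda$ is valid for every smaller $\Lambda$, since the hypothesis is $r\Lip A \le \Lambda$.

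Next I would compare $T_A(v-u)$ with $L_F(v)-L_F(u)$. For $\theta \in \dspace{U}{\R^{\adim-\vdim}}$ write
\begin{displaymath}
    \uD F(\weakD v) - \uD F(\weakD u) = \int_0^1 \uD^2 F\bigl(\weakD u + t\,\weakD(v-u)\bigr)\ud t \;\bigl(\weakD(v-u)\bigr) \,.
\end{displaymath}
Subtracting $\langle \weakD(v-u), A\rangle$ leaves the error
\begin{displaymath}
    E(x) = \int_0^1 \bigl[\uD^2F(\weakD u + t\,\weakD(v-u)) - \uD^2F(\uD w)\bigr]\ud t \;\bigl(\weakD(v-u)\bigr) \,.
\end{displaymath}
Using $\Lip \uD^2 F = \Lambda$, this is bounded pointwise by
\begin{displaymath}
    |E| \le \Lambda \bigl(|\weakD(u-w)| + |\weakD(v-w)|\bigr)\,|\weakD(v-u)| \le \Lambda\bigl(|\weakD(u-w)|+|\weakD(v-w)|\bigr)^2 \,,
\end{displaymath}
because $\weakD u + t\,\weakD(v-u) - \uD w$ is a convex combination of $\weakD(u-w)$ and $\weakD(v-w)$, and $|\weakD(v-u)| \le |\weakD(u-w)| + |\weakD(v-w)|$. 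Then
\begin{displaymath}
    |T_A(v-u)\theta - (L_F(v)-L_F(u))\theta| \le \sup|\uD\theta| \int_U |E| \ud\LM^{\vdim} \le 2\Lambda\bigl(\norm{\weakD(u-w)}{2}{U}^2 + \norm{\weakD(v-w)}{2}{U}^2\bigr) \sup|\uD\theta| \,.
\end{displaymath}
Taking the supremum over $\theta$ with $\norm{\uD\theta}{\infty}{U} \le 1$ gives
\begin{displaymath}
    \norm{T_A(v-u)}{-1,1}{U} \le \norm{L_F(v)-L_F(u)}{-1,1}{U} + 2\Lambda\bigl(\norm{\weakD(u-w)}{2}{U}^2 + \norm{\weakD(v-w)}{2}{U}^2\bigr) \,.
\end{displaymath}
Combining this with the first display and dividing by $r^{1+\vdim}$ yields the claim with $\Gamma = 2\,\Gamma_{\ref{lem:L1-T-1-est}}$, after the monotone replacement above.

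The main obstacle is technical: justifying the Taylor formula for $\uD F$ along Sobolev gradients and checking that all integrals are finite. This works because $u,v \in \VSob{1,2}$ and $\uD w$ is bounded on $U$ (it is Lipschitz on the bounded ball), so $E \in \Lp1$. One also has to make sure that~\ref{lem:L1-T-1-est} tolerates $A$ defined only on~$U$ and merely Lipschitz there. Both points are routine.
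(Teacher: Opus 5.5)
Your proposal is correct and is essentially the paper's proof. There too the coefficients are frozen at $B=\uD^2F\circ\uD w$, and one writes $L_F(v)-L_F(u)=T_{A+B}(v-u)$ with $A=\int_0^1\uD^2F(t\weakD v+(1-t)\weakD u)-\uD^2F(\uD w)\ud t$; then \ref{lem:L1-T-1-est} is applied to $T_B(v-u)$ with $\Lambda\Theta$ in place of $\Lambda$, and $T_A(v-u)$ is bounded pointwise through $\Lip\uD^2F$. The only difference is that the paper integrates $(1-t)|\weakD(u-w)|+t|\weakD(v-w)|$ in $t$ to get the factor $\tfrac12\Lambda$, hence $\Gamma=\Gamma_{\ref{lem:L1-T-1-est}}(\adim,\vdim,\Lambda\Theta)$ instead of your $2\Gamma_{\ref{lem:L1-T-1-est}}$, which is immaterial.
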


\begin{proof}
    Throughout, we abbreviate $\varepsilon_0 =
    \varepsilon_{\ref{lem:L1-T-1-est}}(\adim,\vdim)$ and recall $0 < \varepsilon_0
    \le 1/2$.  We also recall that, for $\Xi \in \bigodot^2
    \Hom(\R^{\vdim},\R^{\adim-\vdim})$,
    \begin{equation}
        \label{eq:vulf:comass}
        \| \Xi \| = \sup \bigl\{ \langle \sigma \odot \tau ,\, \Xi \rangle :
        \sigma, \tau \in \Hom(\R^{\vdim},\R^{\adim-\vdim}) ,\,
        |\sigma| \le 1 ,\, |\tau| \le 1 \bigr\} \,.
    \end{equation}

    \textbf{Step 0 (preliminaries).}  From $\| \Upsilon - \uD^2 F(\sigma) \|
    \le \varepsilon_0$, valid for every $\sigma \in
    \Hom(\R^{\vdim},\R^{\adim-\vdim})$, we infer
    \begin{equation}
        \label{eq:vulf:D2F-bdd}
        \| \uD^2 F(\sigma) \| \le \| \Upsilon \| + \varepsilon_0 < \infty
        \quad \text{for $\sigma \in \Hom(\R^{\vdim},\R^{\adim-\vdim})$} \,;
    \end{equation}
    in particular $\uD F$ satisfies the growth condition
    of~\ref{def:ell-system}\ref{i:pde:LF}, so that $L_F(u)$ and~$L_F(v)$ are
    well defined elements of $\distr{U}{\R^{\adim-\vdim}}$.  Moreover, $\uD w$ is
    defined on the whole of~$U$ and Lipschitz, so that $B = \uD^2F \circ \uD w$
    below is $\LM^{\vdim}\restrict U$~measurable and bounded; and
    $\LM^{\vdim}(U) < \infty$, consequently
    $\VSobz{1,2}(U,\R^{\adim-\vdim}) \subseteq \VSobz{1,1}(U,\R^{\adim-\vdim})$;
    whence, $v - u \in \VSobz{1,1}(U,\R^{\adim-\vdim})$.

    Define the $\LM^{\vdim} \restrict U$~measurable functions $A, B : U \to
    \bigodot^2 \Hom(\R^{\vdim},\R^{\adim-\vdim})$ and the distributions $R, S
    \in \distr{U}{\R^{\adim-\vdim}}$ by
    \begin{gather}
        A(x) = \textint 01 \uD^2F(t \weakD v(x) + (1-t) \weakD u(x)) - \uD^2 F(\uD w(x)) \ud \LM^{1}(t)
        \quad \text{for $\LM^{\vdim}$ almost all $x \in U$} \,,
        \\
        B = \uD^2F \circ \uD w \,,
        \quad
        R = L_F(v) - L_F(u) \,,
        \quad
        \text{$T_A$ and $T_B$ as in~\ref{def:ell-system}} \,,
        \quad
        S = R - T_A(v-u) \,.
    \end{gather}
    By~\eqref{eq:vulf:D2F-bdd} both $A$ and $B$ are bounded, so that $T_A$
    and~$T_B$ are defined in accordance
    with~\ref{def:ell-system}\ref{i:pde:TA}, and
    \begin{equation}
        \label{eq:vulf:B-close}
        \| B(x) - \Upsilon \| \le \varepsilon_0
        \quad \text{for $\LM^{\vdim}$ almost all $x \in U$} \,.
    \end{equation}

    \textbf{Step 1 ($S = T_B(v-u)$).}  The fundamental theorem of calculus
    (cf.~\cite[3.1.11]{Federer1969}) applied to $t \mapsto \uD F(t \weakD v(x)
    + (1-t)\weakD u(x))$ gives, for $\LM^{\vdim}$ almost all $x \in U$,
    \begin{displaymath}
        \uD F(\weakD v(x)) - \uD F(\weakD u(x))
        = \bigl\langle \weakD (v-u)(x) ,\, (A+B)(x) \bigr\rangle \,;
    \end{displaymath}
    hence, $R = T_{A+B}(v-u)$ and, $C \mapsto T_C$ being linear, $S = R -
    T_A(v-u) = T_B(v-u)$.

    \textbf{Step 2 (application of~\ref{lem:L1-T-1-est}).}  Recalling $B(x) =
    \uD^2F(\uD w(x))$ and $\Lambda = \Lip \uD^2 F$, we~have
    \begin{displaymath}
        \Lip B \le \Lambda \Lip \uD w < \infty \,;
        \quad \text{hence,} \quad
        r \Lip B \le \Lambda \Theta \,.
    \end{displaymath}
    Together with~\eqref{eq:vulf:B-close}, this permits us
    to~apply~\ref{lem:L1-T-1-est} with $B$, $v-u$, $\Lambda\Theta$ in place
    of~$A$, $u$, $\Lambda$, which yields
    \begin{equation}
        \label{eq:vulf:L1-est}
        r^{-1-\vdim} \norm{v-u}{1}{U}
        \le \Gamma_{\ref{lem:L1-T-1-est}}(\adim,\vdim,\Lambda\Theta) r^{-\vdim} \norm{S}{-1,1}{U} \,,
    \end{equation}
    since $S = T_B(v-u)$ by Step~1.

    \textbf{Step 3 (pointwise bound for $A$).}  Writing $\uD w(x)$, appearing
    in the definition of~$A(x)$, as $t \uD w(x) + (1-t) \uD w(x)$ we obtain
    \begin{multline}
        \label{eq:vulf:A-pointwise}
        \|A(x)\| \le \Lambda \textint 01 t |\weakD (v-w)(x)| + (1-t) |\weakD (u-w)(x)| \ud \LM^{1}(t)
        \\
        = \tfrac 12 \Lambda \bigl( |\weakD (v-w)(x)| + |\weakD (u-w)(x)| \bigr)
        \quad \text{for $\LM^{\vdim}$~almost all $x \in U$} \,.
    \end{multline}

    \textbf{Step 4 (bound for $\norm{S}{-1,1}{U}$).}  Let $\theta \in
    \dspace{U}{\R^{\adim-\vdim}}$ satisfy $\norm{\uD \theta}{\infty}{U} \le 1$.
    Using~\eqref{eq:vulf:comass}, then $|\weakD (v-u)(x)| \le |\weakD
    (v-w)(x)| + |\weakD (u-w)(x)|$, and finally~\eqref{eq:vulf:A-pointwise},
    we~estimate
    \begin{multline}
        \bigl| T_A(v-u) \theta \bigr|
        \le \textint U{} \| A(x) \| \, |\weakD (v-u)(x)| \ud \LM^{\vdim}(x)
        \\
        \le \tfrac 12 \Lambda \textint U{} \bigl( |\weakD (v-w)(x)| + |\weakD (u-w)(x)| \bigr)^2 \ud \LM^{\vdim}(x) \,;
    \end{multline}
    hence, by the definition of~$S$ and the triangle inequality,
    \begin{equation}
        \label{eq:vulf:S-est}
        \norm{S}{-1,1}{U}
        \le \norm{R}{-1,1}{U}
        + \tfrac 12 \Lambda \textint{U}{} \bigl( |\weakD (v-w)(x)| + |\weakD (u-w)(x)| \bigr)^2 \ud \LM^{\vdim}(x) \,.
    \end{equation}

    \textbf{Step 5 (conclusion).}  Since $(\alpha + \beta)^2 \le 2 \alpha^2 + 2
    \beta^2$ for $\alpha, \beta \in \R$, we have
    \begin{displaymath}
        \tfrac 12 \textint{U}{} \bigl( |\weakD (v-w)(x)| + |\weakD (u-w)(x)| \bigr)^2 \ud \LM^{\vdim}(x)
        \le \norm{\weakD(u-w)}{2}{U}^2 + \norm{\weakD(v-w)}{2}{U}^2 \,.
    \end{displaymath}
    Combining this with~\eqref{eq:vulf:S-est}, recalling $R = L_F(v) - L_F(u)$,
    and inserting the result into~\eqref{eq:vulf:L1-est}, we conclude
    \begin{displaymath}
        r^{-1-\vdim} \norm{v-u}{1}{U}
        \le \Gamma_{\ref{lem:L1-T-1-est}}(\adim,\vdim,\Lambda\Theta) r^{-\vdim} \bigl(
        \norm{L_F(v) - L_F(u)}{-1,1}{U}
        + \Lambda \bigl( \norm{\weakD(u-w)}{2}{U}^2 + \norm{\weakD(v-w)}{2}{U}^2 \bigr)
        \bigr) \,,
    \end{displaymath}
    which is the assertion with $\Gamma = \Gamma_{\ref{lem:L1-T-1-est}}(\adim,\vdim,\Lambda\Theta)$.
    \qedhere
\end{proof}

\begin{remark}
    \label{rem:vulf:Lip-Dw}
    In~\cite[3.16]{Menne2013} the comparison function~$w$ is affine, so that $B =
    \uD^2F(\uD w)$ is constant and the estimate of~\cite[3.14]{Menne2013} may be
    invoked with a~constant depending only on the ellipticity data.  Admitting
    $w$ with $\Lip \uD w < \infty$, as we~do here, renders $B$ variable and
    thereby introduces the dependence of~$\Gamma$ on $r \Lip \uD w$;
    cf.~\ref{rem:L1-T-1-dependence}.  In the applications
    of~\ref{lem:v-u-le-LFu-LFv} in the proof of the main theorem the
    function~$w$ is one of the comparison functions~$v_{\varrho}$, for which
    $r \Lip \uD v_{\varrho} \le 1$, so that $\Theta = 1$ is admissible.
\end{remark}





\begin{lemma}[\protect{cf.~\cite[3.15]{Menne2013}}]
    \label{lem:W12est}
    Suppose $F$, $L_F$ are as in~\ref{def:ell-system},
    \begin{gather}
        0 < c < M < \infty \,,
        \quad
        a \in \R^{\vdim} \,,
        \quad
        0 < r < \infty \,,
        \\
        \|\uD^2 F(\sigma)\| \le M 
        \quad \text{and} \quad
        \uD^2F(\sigma)(\tau,\tau) \ge c |\tau|^2
        \quad \text{for $\sigma, \tau \in \Hom(\R^{\vdim}, \R^{\adim - \vdim})$} \,,
        \\
        u,v \in \VSob{1,2}(\oball ar,\R^{\adim - \vdim})
        \quad \text{satisfy} \quad
        u - v \in \VSobz{1,2}(\oball ar,\R^{\adim - \vdim}) \,.
    \end{gather}
    Then for every function $w \in \VSob{1,2}(\oball ar,\R^{\adim - \vdim})$
    such that $L_F(w) = 0$ there holds
    \begin{displaymath}
        \norm{\weakD (v-u)}{2}{a,r} \le c^{-1}
        \bigl( M \norm{\weakD (u-w)}{2}{a,r}
        + \norm{L_F(v)}{-1,2}{a,r} \bigr) \,.
    \end{displaymath}
\end{lemma}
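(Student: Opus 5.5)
The plan is the classical monotonicity argument. We test all three equations with the single function $\theta = v - u \in \VSobz{1,2}(\oball ar,\R^{\adim-\vdim})$ and combine the pointwise Legendre condition with the mean value theorem for~$\uD F$.

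\textbf{Step 1 (pointwise monotonicity and Lipschitz bound).} For $\sigma,\sigma' \in \Hom(\R^{\vdim},\R^{\adim-\vdim})$ the fundamental theorem of calculus gives
\begin{displaymath}
    \uD F(\sigma) - \uD F(\sigma') = \textint 01 \bigl\langle \sigma - \sigma' ,\, \uD^2F(t\sigma + (1-t)\sigma') \bigr\rangle \ud \LM^1(t) \,.
\end{displaymath}
Pairing this with $\sigma - \sigma'$ and using $\uD^2F(\cdot)(\tau,\tau) \ge c|\tau|^2$ yields
\begin{displaymath}
    \bigl\langle \sigma-\sigma' ,\, \uD F(\sigma) - \uD F(\sigma') \bigr\rangle \ge c|\sigma-\sigma'|^2 \,.
\end{displaymath}
The bound $\|\uD^2F\| \le M$ gives $\|\uD F(\sigma) - \uD F(\sigma')\| \le M|\sigma - \sigma'|$. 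In particular $\uD F$ has linear growth, so for any $z \in \VSob{1,2}$ the function $\uD F(\weakD z)$ lies in $\Lp{2}$ on the ball.

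\textbf{Step 2 (extension of the distributions to $\VSobz{1,2}$ test functions).} By Step~1 each functional $\theta \mapsto \int \langle \uD\theta, \uD F(\weakD z)\rangle$, for $z \in \{u,v,w\}$, is continuous with respect to $\norm{\weakD\theta}{2}{a,r}$. Approximating $\theta = v-u$ in $\VSobz{1,2}$ by members of $\dspace{\oball ar}{\R^{\adim-\vdim}}$, we obtain
\begin{gather}
    \int \langle \weakD\theta, \uD F(\weakD w)\rangle \ud\LM^{\vdim} = 0 \,,
    \\
    -\int \langle \weakD\theta, \uD F(\weakD v)\rangle \ud\LM^{\vdim} \ge -\norm{L_F(v)}{-1,2}{a,r}\,\norm{\weakD\theta}{2}{a,r} \,.
\end{gather}
The second line uses the definition~\ref{def:negative-norms} with $p=q=2$, applied also to $-\theta$. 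I expect this density step to be the only point needing care, and it is routine given the linear growth.

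\textbf{Step 3 (the estimate).} Integrate Step~1 with $\sigma = \weakD v$, $\sigma' = \weakD u$, and split $\uD F(\weakD u) = \uD F(\weakD w) + \bigl(\uD F(\weakD u) - \uD F(\weakD w)\bigr)$:
\begin{multline}
    c\,\norm{\weakD\theta}{2}{a,r}^2 \le \int \langle \weakD\theta, \uD F(\weakD v)\rangle - \int \langle \weakD\theta, \uD F(\weakD w)\rangle - \int \langle \weakD\theta, \uD F(\weakD u) - \uD F(\weakD w)\rangle
    \\
    \le \norm{L_F(v)}{-1,2}{a,r}\,\norm{\weakD\theta}{2}{a,r} + M\,\norm{\weakD(u-w)}{2}{a,r}\,\norm{\weakD\theta}{2}{a,r} \,.
\end{multline}
Here the middle term vanishes by Step~2, and the last term is controlled by the Lipschitz bound of Step~1 together with the Cauchy--Schwarz inequality.

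Dividing by $\norm{\weakD\theta}{2}{a,r}$ (the case where it is zero being trivial) gives
\begin{displaymath}
    \norm{\weakD(v-u)}{2}{a,r} \le c^{-1}\bigl( M\,\norm{\weakD(u-w)}{2}{a,r} + \norm{L_F(v)}{-1,2}{a,r} \bigr) \,,
\end{displaymath}
which is the assertion.
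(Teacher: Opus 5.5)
Your proof is correct and is essentially the paper's argument: both test with $v-u$ (justified by density in $\VSobz{1,2}$, which you spell out and the paper only asserts). Both then use the pointwise lower bound $c$ for coercivity, bound the $\weakD(u-w)$ term by $M$ and Cauchy--Schwarz, and divide by $\norm{\weakD(v-u)}{2}{a,r}$. The only cosmetic difference is in the linearisation: the paper uses a single averaged coefficient $A$ along the segment from $\weakD w$ to $\weakD v$ and splits $\weakD(v-w) = \weakD(v-u) + \weakD(u-w)$, whereas you use monotonicity along $[\weakD u, \weakD v]$ and the Lipschitz bound along $[\weakD w, \weakD u]$; the constants come out identical.
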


\begin{proof}
    Since $L_F(w) = 0$ for $\theta \in \dspace{\oball ar}{\R^{\adim - \vdim}}$
    we have
    \begin{multline}
        L_F(v)(\theta) = - \int_{\oball ar} \langle \uD \theta(x), \uD F(\weakD v(x)) - \uD F(\weakD w(x)) \rangle \ud \LM^{\vdim}(x)
        \\
        = - \int_{\oball ar} \langle \uD \theta(x) \odot \weakD (v-w)(x), A(x) \rangle \ud \LM^{\vdim}(x) \,,
        \\
        \text{where} \quad
        A(x) = \int_0^1 \uD^2 F(t \weakD v(x) + (1-t) \weakD w(x)) \ud \LM^1(t) \,.
    \end{multline}
    This implies for $\theta \in \dspace{\oball ar}{\R^{\adim - \vdim}}$
    \begin{multline}
        \int_{\oball ar} \langle \uD \theta(x) \odot \weakD (v-u)(x), A(x) \rangle \ud \LM^{\vdim}(x)
        \\
        = - \int_{\oball ar} \langle \uD \theta(x) \odot \weakD (u-w)(x), A(x) \rangle \ud \LM^{\vdim}(x)
        - L_F(v)(\theta)
        \,.
    \end{multline}
    Since $u - v \in \VSobz{1,2}(\oball ar,\R^{\adim - \vdim})$ we may
    approximate $v-u$ with $\theta$ and obtain
    \begin{displaymath}
        c ( \norm{\weakD(v-u)}{2}{a,r} )^2
        \le \bigl( M \norm{\weakD(u-w)}{2}{a,r} + \norm{L_F(v)}{-1,2}{a,r} \bigr)
        \norm{\weakD(v-u)}{2}{a,r} \,.
        \qedhere
    \end{displaymath}
\end{proof}

\subsection*{Estimates for the higher order derivatives}
\addcontentsline{toc}{subsection}{Estimates for the higher order derivatives}


\begin{remark}
    \label{rem:ho:plan}
    The remainder of this section is organised as follows.
    \ref{def:ho:seq}--\ref{lem:ho:pde} fix Federer's notation for iterated
    partial derivatives, introduce the class~$W(c,M)$ of admissible
    coefficients, and record the systems satisfied by the derivatives $\uD_{s}f$
    of a solution together with a closed formula for their
    inhomogeneities~$\Omega_s$; \ref{mr:H:cited} collects the results
    of~\cite{Federer1969} which will be used.  \ref{lem:H} is the interior
    Schauder estimate on which everything rests, and \ref{rem:H:iterate} the
    form in which it is iterated.  \ref{lem:ho:tower} is the quantitative
    version of the inductive step in the proof of~\cite[5.2.15]{Federer1969},
    \ref{thm:ho} the quantitative version of~\cite[5.2.15]{Federer1969} itself,
    and \ref{lem:schauder-est} its specialisation to the situation of the
    present paper.

    \emph{Comparison with~\cite{Simon1997}.}  Take there $\kappa =
    (1,\ldots,1)$ and $I = J = \{ \gamma : |\gamma| = 1 \}$, so that the
    operator is $u \mapsto \textsum{i,j}{} \uD_j (A_{ij} \uD_i u)$ and the
    seminorm $[\cdot]_{-J,\alpha}$ is the infimum of $\textsum{i}{}
    [\Omega_i]_{\alpha}$ over the representations $\textsum{i}{} \uD_i
    \Omega_i$ of the right hand side.  Then \cite[Theorem~3]{Simon1997},
    combined with the interpolation inequality \cite[1.5]{Simon1997}, is the
    estimate~\eqref{eq:H:concl} of~\ref{lem:H}, and \cite[\S9]{Simon1997}
    observes that the passage to systems requires none but notational changes.
    Simon's hypotheses are in fact weaker than ours: no smallness of the
    $\delta$~H{\"o}lder seminorm of~$A$ is required, only a bound, and
    hypoellipticity of the operator obtained by freezing the coefficients
    replaces the Legendre--Hadamard condition of~\ref{def:ho:W}.  His theorem
    is also far more general, covering $\kappa$~homogeneous hypoelliptic
    operators of arbitrary order and general boundary value problems, and its
    proof is much shorter: a blow-up and compactness argument reduces the
    estimate to a Liouville theorem for the frozen operator.  That proof
    is, however, by contradiction; the constant is produced by a compactness
    argument, no bound for it is obtained, and its uniformity is asserted only
    over compact families of operators.  The higher order estimates
    of~\cite[\S8]{Simon1997} are then derived, as they are here, by applying
    the first order estimates to the derivatives of the solution.  We~follow
    Federer instead, whose \cite[5.2.14]{Federer1969} is proved by explicit
    potential theoretic estimates, because this keeps the present paper
    self-contained and its constants explicit.
\end{remark}

\begin{definition}
    \label{def:ho:seq}
    Let $k$ be a positive integer and let $p$ be a non-negative integer.  We
    denote by $\Sequence{k}{p}$ the set of
    all functions of the type $\{1,\ldots,p\} \to \{1,\ldots,k\}$, and we let
    $\Sequence{k}{0}$ be the one-element set whose member is the unique function
    $\varnothing \to \{1,\ldots,k\}$; cf.~\cite[1.10.6]{Federer1969}.  Fixing
    the standard basis $e_1,\ldots,e_{\vdim}$ of~$\R^{\vdim}$, for a
    function~$f$ of class~$\cnt{p}$ defined on an open subset of~$\R^{\vdim}$
    with values in a normed space and for $s \in \Sequence{\vdim}{p}$ we
    abbreviate
    \begin{displaymath}
        \uD_i f = \langle e_i ,\, \uD f \rangle \,,
        \quad
        \uD_s f = \uD_{s(1)} \uD_{s(2)} \cdots \uD_{s(p)} f
        = \langle e_{s(1)} \odot \cdots \odot e_{s(p)} ,\, \uD^p f \rangle \,,
    \end{displaymath}
    with the convention $\uD_s f = f$ in case $p = 0$.
\end{definition}

\begin{definition}
    \label{def:ho:W}
    For $\Upsilon \in \bigodot^2 \Hom(\R^{\vdim},\R^{\adim-\vdim})$ we define
    \begin{displaymath}
        \ellipticity{\Upsilon} = \inf \bigl\{
        \langle (\xi\,y) \odot (\xi\,y) \,, \Upsilon \rangle :
        \xi \in \Hom(\R^{\vdim},\R) \,,\ y \in \R^{\adim-\vdim} \,,\
        |\xi| = 1 = |y| \bigr\} \,,
    \end{displaymath}
    and, given $0 < c < M < \infty$, we~let
    \begin{displaymath}
        W(c,M) = {\textstyle \bigodot^2} \Hom(\R^{\vdim},\R^{\adim-\vdim}) \cap
        \bigl\{ \Upsilon : \ellipticity{\Upsilon} > c \,,\ \| \Upsilon \| < M \bigr\} \,.
    \end{displaymath}
\end{definition}

\begin{miniremark}
    \label{mr:ho:Omega}
    Suppose $U \subseteq \R^{\vdim}$ is open, $q$ is a non-negative integer,
    $f : U \to
    \R^{\adim-\vdim}$ is of class~$\cnt{q+1}$, $A : U \to {\textstyle
      \bigodot^2}\Hom(\R^{\vdim},\R^{\adim-\vdim})$, $\Omega : U \to
    \Hom(\R^{\vdim},\R^{\adim-\vdim})$ and $h : U \to
    \R^{\adim-\vdim}$ are of class~$\cnt{q}$.  Whenever $j \in \{1,\ldots,q\}$,
    $s \in \Sequence{\vdim}{j}$, $x \in U$, and $\sigma \in
    \Hom(\R^{\vdim},\R^{\adim-\vdim})$, writing $s' = s|\{1,\ldots,j\} \without
    \{j\}$, we~define $h_{s'} = \uD_{s'} h$ and the map $\Omega_s : U \to
    \Hom(\R^{\vdim},\R^{\adim-\vdim})$ recursively by setting $\Omega_{s'} =
    \Omega$ in case $s' \in \Sequence{\vdim}{0}$ and
    \begin{equation}
        \label{eq:ho:Omega-recursion}
        \Omega_s(x) \bullet \sigma
        = \uD_{s(j)} \Omega_{s'}(x) \bullet \sigma
        - \bigl\langle \uD \uD_{s'} f(x) \odot \sigma \,, \uD_{s(j)} A(x) \bigr\rangle \,.
    \end{equation}
\end{miniremark}

\begin{lemma}
    \label{lem:ho:pde}
    Adopt the hypotheses and notation of~\ref{mr:ho:Omega} and assume
    \begin{equation}
        \label{eq:ho:base-pde}
        \textint{U}{} \bigl\langle \uD \theta \odot \uD f \,, A \bigr\rangle \ud \LM^{\vdim}
        = \textint{U}{} \theta \bullet h + \uD \theta \bullet \Omega \ud \LM^{\vdim}
        \quad \text{for $\theta \in \dspace{U}{\R^{\adim-\vdim}}$} \,.
    \end{equation}
    Then, for every $s \in \Sequence{\vdim}{q}$, writing $h_s = \uD_s h$:
    \begin{enumerate}
    \item
        \label{i:ho:pde}
        $\textint{U}{} \bigl\langle \uD \theta \odot \uD \uD_s f \,, A \bigr\rangle \ud \LM^{\vdim}
        = \textint{U}{} \theta \bullet h_s + \uD \theta \bullet \Omega_s \ud \LM^{\vdim}$
        for $\theta \in \dspace{U}{\R^{\adim-\vdim}}$;
    \item
        \label{i:ho:Omegas-explicit}
        for $\sigma \in \Hom(\R^{\vdim},\R^{\adim-\vdim})$ and $x \in U$,
        \begin{displaymath}
            \sigma \bullet \Omega_s(x) = \sigma \bullet \uD_s \Omega(x)
            - \textsum{j=0}{q-1} \textsum{\eta \in \Shuffle{j}{q-j}}{}
            \bigl\langle \uD \uD_{s \circ \eta|\{1,\ldots,j\}} f(x) \odot \sigma \,,
            \uD_{s \circ \eta|\{j+1,\ldots,q\}} A(x) \bigr\rangle \,.
        \end{displaymath}
    \end{enumerate}
\end{lemma}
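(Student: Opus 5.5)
Both parts go by induction on the length~$q$ of~$s$; part~\ref{i:ho:Omegas-explicit} is then a purely algebraic unrolling of the recursion~\eqref{eq:ho:Omega-recursion}. For $q = 0$, part~\ref{i:ho:pde} is the hypothesis~\eqref{eq:ho:base-pde}, and part~\ref{i:ho:Omegas-explicit} reads $\Omega_s = \Omega$ with an empty sum.

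\emph{Inductive step for~\ref{i:ho:pde}.} Suppose $1 \le j \le q$, $s \in \Sequence{\vdim}{j}$, and $s'$ is as in~\ref{mr:ho:Omega}, with the identity for $s'$ already known. Write $i = s(j)$ and $g = \uD_{s'} f$, which is of class~$\cnt{2}$ because $j - 1 \le q-1$. Given $\theta \in \dspace{U}{\R^{\adim-\vdim}}$, I would apply the identity for~$s'$ to the test function $-\uD_i\theta$. On the left, integrating by parts moves $\uD_i$ onto the product $\langle \uD\theta \odot \uD g, A\rangle$. Using $\uD\uD_i\theta = \uD_i\uD\theta$, this produces
\begin{displaymath}
    \textint{U}{} \langle \uD\theta \odot \uD\uD_i g ,\, A \rangle + \langle \uD\theta \odot \uD g ,\, \uD_i A \rangle \ud \LM^{\vdim} \,.
\end{displaymath}
On the right, integrating by parts gives $\textint{U}{} \theta \bullet \uD_i h_{s'} + \uD\theta \bullet \uD_i\Omega_{s'} \ud\LM^{\vdim}$; here $\Omega_{s'}$ and $h_{s'}$ are of class~$\cnt{1}$, since each recursion step costs one derivative of $A$, $\Omega$, $h$ and $j-1 \le q-1$. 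Moving the $\uD_i A$ term to the right yields exactly $\uD\theta \bullet \Omega_s$ by~\eqref{eq:ho:Omega-recursion}, with $\sigma = \uD\theta(x)$. Finally $\uD_i h_{s'} = h_s$ and $\uD_i g = \uD_s f$, where the order of the indices is immaterial by symmetry of $\uD^p f$.

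\emph{Part~\ref{i:ho:Omegas-explicit}.} Unrolling the recursion, $\Omega_s$ equals $\uD_s\Omega$ minus a sum of terms. Each term arises at some step $k$ and has the form $\langle \uD\uD_{s|\{1,\dots,k-1\}}f \odot \sigma, \uD_{s(k)}A\rangle$, after which the remaining derivatives $\uD_{s(k+1)},\dots,\uD_{s(q)}$ act on it. Expanding those remaining derivatives by the Leibniz rule distributes them between the $f$~factor and the $A$~factor. I would organise the bookkeeping by induction on~$q$ to reproduce the shuffle formula. Each term in the final expansion is labelled by the subset of indices that land on~$f$ (these form the first block of~$\eta$, of size~$j \le q-1$) and the complementary subset that lands on~$A$ (the second block, which is nonempty because it contains the step at which the term was created). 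Conversely, every pair consisting of a subset~$B$ of size~$j$ and its nonempty complement arises exactly once: it comes from the step $k = \min$ of the complement, since all indices before~$k$ must have gone to~$f$. This correspondence is precisely the set of $(j,q-j)$ shuffles~$\eta$, because a shuffle is determined by the image of its first block. Symmetry of higher derivatives lets the composite index sequences be written as $s\circ\eta|\cdots$.

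\emph{Main obstacle.} The obstacle is not analytic. It is this combinatorial matching of the Leibniz expansion with the sum over $\Shuffle{j}{q-j}$, and in particular checking that there is no double counting. I would handle it with the bijection just described, via the minimum of the complement.
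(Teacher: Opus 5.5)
Your proposal is correct and follows the paper's proof. For part~(a) you argue by induction on the length of~$s$, test the identity for~$s'$ with $\uD_i\theta$, integrate by parts, and invoke the recursion~\eqref{eq:ho:Omega-recursion} with $\sigma = \uD\theta(x)$; for part~(b) you expand that recursion by the product rule and group the terms by shuffles, which is what the paper does. The only difference is that you make the grouping explicit through the bijection given by the minimum of the complement of the $f$-block, which the paper leaves as a one-line remark, and your regularity bookkeeping ($\uD_{s'}f \in \cnt{2}$, $\Omega_{s'}, h_{s'} \in \cnt{1}$) is also correct.
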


\begin{proof}
    We argue by induction on~$q$; the case $q = 0$ is~\eqref{eq:ho:base-pde}
    with $\Omega_{s} = \Omega$, $h_s = h$.  Let $q \ge 1$, fix $s \in
    \Sequence{\vdim}{q}$, put $i = s(q)$ and $s' = s|\{1,\ldots,q\} \without
    \{q\} \in \Sequence{\vdim}{q-1}$, and assume~\ref{i:ho:pde}
    and~\ref{i:ho:Omegas-explicit} hold with $q-1$ and~$s'$ in place of $q$
    and~$s$.  Since $\uD_{s'} f$, $\Omega_{s'}$, $A$ are of class~$\cnt{1}$ and
    $\uD_i \uD_{s'} f = \uD_s f$, for $\theta \in \dspace{U}{\R^{\adim-\vdim}}$
    the map $x \mapsto \bigl\langle \uD \theta(x) \odot \uD \uD_{s'} f(x) \,,
    A(x) \bigr\rangle$ has compact support; whence, $\textint{U}{} \uD_i \bigl[
    \bigl\langle \uD \theta \odot \uD \uD_{s'} f \,, A \bigr\rangle \bigr] \ud
    \LM^{\vdim} = 0$ and therefore
    \begin{displaymath}
        \textint{U}{} \bigl\langle \uD \uD_i \theta \odot \uD \uD_{s'} f \,, A \bigr\rangle
        + \bigl\langle \uD \theta \odot \uD \uD_{s} f \,, A \bigr\rangle
        + \bigl\langle \uD \theta \odot \uD \uD_{s'} f \,, \uD_i A \bigr\rangle
        \ud \LM^{\vdim} = 0 \,.
    \end{displaymath}
    Using~\ref{i:ho:pde} with $s'$ in place of~$s$ and $\uD_i \theta$ in place
    of~$\theta$, and integrating by parts, we~obtain
    \begin{multline}
        \textint{U}{} \bigl\langle \uD \theta \odot \uD \uD_{s} f \,, A \bigr\rangle \ud \LM^{\vdim}
        = - \textint{U}{} \uD_i \theta \bullet h_{s'}
        + \uD \uD_i \theta \bullet \Omega_{s'}
        + \bigl\langle \uD \theta \odot \uD \uD_{s'} f \,, \uD_i A \bigr\rangle
        \ud \LM^{\vdim}
        \\
        = \textint{U}{} \theta \bullet \uD_i h_{s'}
        + \uD \theta \bullet \uD_i \Omega_{s'}
        - \bigl\langle \uD \theta \odot \uD \uD_{s'} f \,, \uD_i A \bigr\rangle
        \ud \LM^{\vdim}
        = \textint{U}{} \theta \bullet h_{s} + \uD \theta \bullet \Omega_{s} \ud \LM^{\vdim} \,,
    \end{multline}
    where we used $\uD_i h_{s'} = \uD_s h = h_s$ and, by~\eqref{eq:ho:Omega-recursion},
    $\uD_i \Omega_{s'} \bullet \sigma - \bigl\langle \uD \uD_{s'} f \odot \sigma \,,
    \uD_i A \bigr\rangle = \Omega_s \bullet \sigma$.  This
    settles~\ref{i:ho:pde}.  Finally, substituting the inductive
    form~\ref{lem:ho:pde}\ref{i:ho:Omegas-explicit} of~$\Omega_{s'}$
    into~\eqref{eq:ho:Omega-recursion} and distributing the
    derivative~$\uD_{i}$ over the factor $\uD_{s'} \Omega$ and over each product
    $\uD \uD_{\cdots} f \odot \uD_{\cdots} A$ by the product rule, one obtains,
    upon grouping the resulting terms according to the shuffle that assigns
    each of the derivatives $\uD_{s(1)},\ldots,\uD_{s(q)}$ either to the
    $f$-factor or to the $A$-factor, the
    formula~\ref{lem:ho:pde}\ref{i:ho:Omegas-explicit} at level~$q$; the derivative landing
    on $\uD_{s'}\Omega$ produces the term $\sigma \bullet \uD_s \Omega$, while
    the shuffles with $j$ derivatives on the $f$-factor and $q-j$ on the
    $A$-factor produce the double sum.
\end{proof}


\begin{miniremark}
    \label{mr:H:cited}
    Throughout this subsection $\vdim,\adim \in \integers$, $0 < \vdim < \adim$,
    $0 < \delta < 1$, and $0 < c < M < \infty$; the set $W(c,M)$ is as
    in~\ref{def:ho:W}.  Following \cite[5.2.1]{Federer1969} we~write
    $\norm{f}{2}{b,r}$ for the $\Lp{2}$~norm of~$f$ over $\oball br$ --- this is
    the quantity denoted $|f|_{b,r}$ there --- and $\hoelder{\delta}{f}$ for the
    least~$t$ such that $|f(x)-f(y)| \le t|x-y|^{\delta}$ for $x,y \in \dmn f$.
    In~addition, for $f$ defined on $\cball br$ and $0 < t < \infty$, we~abbreviate
    \begin{displaymath}
        \hnorm{\delta}{f}{b,r}{t} = \norm{f}{\infty}{b,r} + t^{\delta} \hoelder{\delta}{f|\cball br} \,,
    \end{displaymath}
    and record the two properties of this quantity which will be used without
    further comment: it is non-decreasing in~$t$ and in~$r$, and it is
    submultiplicative, in the sense that
    \begin{equation}
        \label{eq:hnorm:mult}
        \hnorm{\delta}{f \bullet g}{b,r}{t} \le \hnorm{\delta}{f}{b,r}{t} \, \hnorm{\delta}{g}{b,r}{t}
    \end{equation}
    whenever $\bullet$ is a bilinear pairing of norm at most~$1$.

    We~employ four results of Federer:
    \begin{enumerate}
    \item
        \label{i:cited:closure}
        the \emph{closure lemma} \cite[5.2.2]{Federer1969}, applied
        componentwise as in \cite[p.~558]{Federer1969};
    \item
        \label{i:cited:garding}
        \emph{G{\aa}rding's inequality} \cite[5.2.3, p.~535]{Federer1969}, in the
        sharpened form recorded there --- where the value $A(b)$ occurring in
        \cite{Federer1969} is replaced by an arbitrary strongly elliptic
        $\Upsilon$, which the one line proof given there permits: if $U
        \subseteq \R^{\vdim}$ is open,
        $\Upsilon$ is strongly elliptic with ellipticity bound~$c$, and
        $\|A(x)-\Upsilon\| \le d$ for $x \in U$, then $\textint{U}{}
        \bigl\langle \uD\theta \odot \uD\theta ,\, A \bigr\rangle \ud
        \LM^{\vdim} \ge (c-d) \bigl( \norm{\uD\theta}{2}{U} \bigr)^{2}$ for
        $\theta \in \dspace{U}{\R^{\adim-\vdim}}$;
    \item
        \label{i:cited:basic}
        the \emph{basic estimate} \cite[5.2.3, p.~536]{Federer1969};
    \item
        \label{i:cited:hoelder}
        the \emph{H{\"o}lder estimate for linear systems}
        \cite[5.2.14]{Federer1969}, whose constants we~denote by $\varepsilon_0
        = \varepsilon_{\text{\cite[5.2.14]{Federer1969}}}(\vdim,\adim,\delta,c,M)$
        and $\Delta_0 =
        \Gamma_{\text{\cite[5.2.14]{Federer1969}}}(\vdim,\adim,\delta,c,M)$.
    \end{enumerate}
    Since \ref{i:cited:garding}--\ref{i:cited:hoelder} are phrased in terms
    of strong ellipticity, whereas $W(c,M)$ is defined in~\ref{def:ho:W} by the
    Legendre--Hadamard condition, we~record that the two agree with the same
    bound; this is the converse implication alluded to in \cite[5.2.3,
    p.~534]{Federer1969}.  Indeed, if $\ellipticity{\Upsilon} \ge c$ and $\theta
    \in \dspace{\R^{\vdim}}{\R^{\adim-\vdim}}$, then the Fourier transform
    of~$\uD\theta$ at~$\xi$ equals $2\pi \sqrt{-1} \, \xi \, \hat\theta(\xi)$;
    whence, writing $\hat\theta = a + \sqrt{-1} \, b$ with $a$ and~$b$ real
    valued and using Plancherel's theorem together with the homogeneity of the
    expression defining $\ellipticity{\Upsilon}$,
    \begin{multline}
        \textint{}{} \bigl\langle \uD\theta \odot \uD\theta ,\, \Upsilon \bigr\rangle \ud \LM^{\vdim}
        = 4\pi^2 \textint{}{} \bigl\langle (\xi \, a(\xi)) \odot (\xi \, a(\xi))
        + (\xi \, b(\xi)) \odot (\xi \, b(\xi)) ,\, \Upsilon \bigr\rangle \ud \LM^{\vdim}(\xi)
        \\
        \ge 4\pi^2 c \textint{}{} |\xi|^2 \bigl( |a(\xi)|^2 + |b(\xi)|^2 \bigr) \ud \LM^{\vdim}(\xi)
        = c \bigl( \norm{\uD\theta}{2}{} \bigr)^2 \,.
    \end{multline}
\end{miniremark}

\begin{lemma}[interior estimate; cf.~\protect{\cite[5.2.14]{Federer1969}}]
    \label{lem:H}
    Let $\vdim,\adim,\delta,c,M$ be as in~\ref{mr:H:cited} and set
    \begin{displaymath}
        \varepsilon = \varepsilon(\vdim,\adim,\delta,c,M) = \min \{ \varepsilon_0 \,, c/4 \} \,.
    \end{displaymath}
    There exists $\Gamma = \Gamma(\vdim,\adim,\delta,c,M) < \infty$ such that
    the following holds.  Suppose
    \begin{enumerate}
    \item
        $b \in \R^{\vdim}$ and $0 < r < r' < \infty$; put $\tau = r'-r$;
    \item
        \label{i:H:assume-epsilon}
        $A : \cball{b}{r'} \to W(c,M)$ is of class~$\cnt{0,\delta}$ and
        $(r')^{\delta} \hoelder{\delta}{A|\cball{b}{r'}} \le \varepsilon$;
    \item
        \label{i:H:g}
        $g : \cball{b}{r'} \to \R^{\adim-\vdim}$ is of class~$\cnt{1}$ with
        $\hoelder{\delta}{\uD g|\cball{b}{r'}} < \infty$, and $\Omega :
        \cball{b}{r'} \to \Hom(\R^{\vdim},\R^{\adim-\vdim})$ is of
        class~$\cnt{0,\delta}$;
    \item
        \label{i:H:pde}
        $\textint{}{} \bigl\langle \uD\theta \odot \uD g ,\, A \bigr\rangle \ud
        \LM^{\vdim} = \textint{}{} \uD\theta \bullet \Omega \ud \LM^{\vdim}$ for
        $\theta \in \dspace{\oball{b}{r'}}{\R^{\adim-\vdim}}$.
    \end{enumerate}
    Then
    \begin{gather}
        \label{eq:H:L2}
        \hnorm{\delta}{\uD g}{b,r}{\tau} \le \Gamma \Bigl(
        \tau^{-\vdim/2} \norm{\uD g}{2}{b,r'}
        + \tau^{\delta} \hoelder{\delta}{\Omega|\cball{b}{r'}} \Bigr) \,,
        \\
        \label{eq:H:concl}
        \hnorm{\delta}{\uD g}{b,r}{\tau} \le \Gamma \Bigl(
        \tau^{-1} \norm{g}{\infty}{b,r'} + \hnorm{\delta}{\Omega}{b,r'}{\tau} \Bigr) \,.
    \end{gather}
\end{lemma}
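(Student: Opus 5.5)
We reduce to Federer's Hölder estimate \cite[5.2.14]{Federer1969} and track the scaling. By translation and the dilation $x \mapsto b + r'x$ we may assume $b = 0$ and $r' = 1$. Under this rescaling $g$ becomes $g_1(x) = g(r'x)$, $\uD g_1 = r'\uD g(r'\cdot)$, and $\Omega$ becomes $r'\Omega(r'\cdot)$. The quantity $(r')^\delta \hoelder{\delta}{A}$ is invariant, so hypothesis~\ref{i:H:assume-epsilon} becomes $\hoelder{\delta}{A} \le \varepsilon$. Both \eqref{eq:H:L2} and \eqref{eq:H:concl} are homogeneous of the same degree under this scaling; the combinations $\tau^\delta\hoelder{\delta}{\cdot}$, $\tau^{-\vdim/2}\norm{\cdot}{2}{}$ and $\tau^{-1}\norm{\cdot}{\infty}{}$ are designed for exactly this. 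So it suffices to treat $r'=1$, $0<\tau<1$.

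\textbf{Step 1: an estimate on small balls.} For a centre $y \in \cball 0r$ and radius $\rho = \tau/2$, the ball $\cball y\rho$ lies in $\cball 01$. We apply \cite[5.2.14]{Federer1969} (item~\ref{i:cited:hoelder} of~\ref{mr:H:cited}) on $\cball y\rho$ with frozen coefficient $\Upsilon = A(y) \in W(c,M)$. The Legendre--Hadamard/strong ellipticity equivalence recorded in~\ref{mr:H:cited} gives strong ellipticity with bound~$c$. The smallness $\|A(x)-A(y)\| \le \varepsilon\rho^\delta \le \varepsilon_0$ is what Federer requires. Rescaled to unit size, his estimate yields
\[
\rho^{\vdim/2}\norm{\uD g}{\infty}{y,\rho/2} + \rho^{\vdim/2+\delta}\hoelder{\delta}{\uD g|\cball y{\rho/2}} \le \Delta_0\bigl(\norm{\uD g}{2}{y,\rho} + \rho^{\vdim/2+\delta}\hoelder{\delta}{\Omega}\bigr).
\]
The divergence-form right-hand side $\Omega$ enters only through its Hölder seminorm, because constants can be subtracted from~$\Omega$. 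If Federer's statement is phrased for $\Omega=0$, or with a different right-hand side, we first subtract $\Omega(y)$ and then, if needed, absorb the inhomogeneity by the basic estimate~\ref{i:cited:basic} and Gårding~\ref{i:cited:garding}; this uses $\varepsilon \le c/4$, so that the ellipticity bound becomes at least $3c/4$.

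\textbf{Step 2: globalising to \eqref{eq:H:L2}.} The $\Lp\infty$ bound at each $y$ is immediate, since $\norm{\uD g}{2}{y,\rho} \le \norm{\uD g}{2}{0,r'}$. For the Hölder seminorm on $\cball 0r$ we split into two cases. If $|x-y| < \rho/2$, Step~1 applies directly. Otherwise $|\uD g(x)-\uD g(y)| \le 2\norm{\uD g}{\infty}{0,r} \le 2\norm{\uD g}{\infty}{0,r}(2|x-y|/\rho)^\delta$. Multiplying by $\tau^\delta$ then gives \eqref{eq:H:L2}, with $\Gamma$ depending on $\Delta_0$, $\vdim$ and $\delta$.

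\textbf{Step 3: deriving \eqref{eq:H:concl} from \eqref{eq:H:L2}.} We need a Caccioppoli inequality. Test \ref{i:H:pde} with $\theta = \varphi^2(g - \lambda)$, where $\varphi$ is a cutoff equal to~$1$ on an intermediate ball and $\Lip\varphi \lesssim \tau^{-1}$, and $\lambda$ is a constant vector chosen as $0$. Gårding's inequality and Young's inequality give
\[
\norm{\uD g}{2}{0,r+\tau/2} \lesssim \tau^{-1}\norm{g}{2}{0,r'} + \norm{\Omega-\Omega(0)}{2}{0,r'}.
\]
Here $\Omega$ may be replaced by $\Omega - \Omega(0)$, since constants integrate to zero against $\uD\theta$. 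The right-hand side is at most $\tau^{\vdim/2}\bigl(\tau^{-1}\norm{g}{\infty}{} + \hnorm{\delta}{\Omega}{0,r'}{\tau}\bigr)$ up to constants, after using ball volumes $\lesssim 1$ together with $\tau \le 1$. Working on a ball of radius of order $\tau$ around each point and covering gives the $\tau^{\vdim/2}$ factor honestly. Inserting this into \eqref{eq:H:L2}, applied between radii $r$ and $r+\tau/2$, proves \eqref{eq:H:concl}.

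The main obstacle is Step~1: matching Federer's exact formulation of \cite[5.2.14]{Federer1969} (its normalisations, whether it allows $\Omega$, and its ellipticity hypothesis) to the present hypotheses with constants depending only on $(\vdim,\adim,\delta,c,M)$. I would first check that the frozen-coefficient smallness and the $c/4$ margin suffice there.
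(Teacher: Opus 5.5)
Your scaling reduction and Steps~1--2 are sound and follow the paper's proof: freeze $A$ at each centre, apply \cite[5.2.14]{Federer1969} on a ball of radius comparable to~$\tau$, pass to the supremum norm, and patch by distinguishing small and large $|x-y|$. Subtracting $\Omega(y)$ is harmless.

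The gap is in Step~3. As written, you prove a Caccioppoli inequality on the whole ball. You then claim its right hand side is $\lesssim \tau^{\vdim/2}\bigl(\tau^{-1}\norm{g}{\infty}{0,r'} + \hnorm{\delta}{\Omega}{0,r'}{\tau}\bigr)$ ``using ball volumes $\lesssim 1$ together with $\tau \le 1$'', and insert this into \eqref{eq:H:L2} between radii $r$ and $r+\tau/2$. The inequality $\tau \le 1$ points the wrong way for this, and the claimed bound is false when $\vdim \ge 3$. Take $b = 0$, $r' = 1$, $A$ constant, $\Omega = 0$, and $g$ linear with $\uD g = L \neq 0$. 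Then $\norm{\uD g}{2}{0,r+\tau/2} \ge (2^{-\vdim}\unitmeasure{\vdim})^{1/2}|L|$, because $r+\tau/2 \ge 1/2$. On the other hand $\tau^{\vdim/2}\bigl(\tau^{-1}\norm{g}{\infty}{0,1} + \hnorm{\delta}{\Omega}{0,1}{\tau}\bigr) \le \tau^{\vdim/2-1}|L| \to 0$ as $\tau \downarrow 0$.

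Since \eqref{eq:H:L2} puts the factor $\tau^{-\vdim/2}$ in front of an $\Lp{2}$~norm over a ball of radius comparable to~$r'$, any route through the already globalised \eqref{eq:H:L2} gives at best $\tau^{-1-\vdim/2}\norm{g}{\infty}{b,r'}$. That is not \eqref{eq:H:concl} with a constant independent of $\tau/r'$. Your remark about covering by balls of radius of order~$\tau$ does not help if the covering is used to bound the global norm $\norm{\uD g}{2}{0,r+\tau/2}$: summing over roughly $\tau^{-\vdim}$ balls cancels the gain, as the example shows it must.

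The localisation has to happen before patching, at the level of your Step~1 estimate. With $r' = 1$, $\rho = \tau/2$ and $y \in \cball 0r$, test the equation on $\oball y{2\rho} \subseteq \oball 01$, where $\|A - A(y)\| \le \varepsilon \tau^{\delta} \le c/4$. Apply G{\aa}rding's inequality to $\varphi g$, not a pointwise lower bound for $\langle \uD g \odot \uD g , A \rangle$, which the Legendre--Hadamard condition defining $W(c,M)$ does not give. This yields $\norm{\uD g}{2}{y,\rho} \lesssim \rho^{\vdim/2}\bigl(\rho^{-1}\norm{g}{\infty}{0,1} + \rho^{\delta}\hoelder{\delta}{\Omega}\bigr)$. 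Insert this into the display of Step~1, where the factors $\rho^{\vdim/2}$ cancel, and only then run the patching of Step~2.

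This is how the paper obtains \eqref{eq:H:concl}. It inserts the local Caccioppoli inequality (Federer's basic estimate) into the local H{\"o}lder estimate on balls of radius $\varsigma = \tau/3$. It bounds the $\Lp{2}$~norms over such balls by $(\unitmeasure{\vdim}\varsigma^{\vdim})^{1/2}$ times supremum norms before patching.
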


\begin{proof}
    Put $\varsigma = \tau/3$ and let $z \in \cball br$; then $\cball
    z{2\varsigma} \subseteq \cball b{r'}$ and $2\varsigma \le r'$.

    \emph{Step 1 (localisation).}  For $x \in \cball{z}{\varsigma}$ we~have $\|
    A(x) - A(z) \| \le \hoelder{\delta}{A|\cball b{r'}} \varsigma^{\delta} \le
    \varepsilon (\varsigma/r')^{\delta} \le \varepsilon \le c/4$; since $A(z)$ is
    strongly elliptic with ellipticity bound~$c$, as recorded at the end
    of~\ref{mr:H:cited}, the inequality
    of~\ref{mr:H:cited}\ref{i:cited:garding} yields
    \begin{displaymath}
        \textint{}{} \bigl\langle \uD\theta \odot \uD\theta ,\, A \bigr\rangle \ud \LM^{\vdim}
        \ge \tfrac{3c}{4} \bigl( \norm{\uD\theta}{2}{z,\varsigma} \bigr)^{2}
        \quad \text{for $\theta \in \dspace{\oball{z}{\varsigma}}{\R^{\adim-\vdim}}$} \,.
    \end{displaymath}
    Consequently \ref{mr:H:cited}\ref{i:cited:basic}, applied
    to~\ref{i:H:pde} on $\oball z\varsigma$ with $\lambda = 0$, $\chi = 0$, $\mu
    = 3c/4$, and with inner radius $\varsigma/2$, gives
    \begin{equation}
        \label{eq:H:caccioppoli}
        \tfrac{3c}{4} \norm{\uD g}{2}{z,\varsigma/2}
        \le \norm{\Omega}{2}{z,\varsigma}
        + (3cM/4)^{1/2} \, 2 \varsigma^{-1} \norm{g}{2}{z,\varsigma} \,.
    \end{equation}
    Moreover, $(\varsigma/2)^{\delta} \hoelder{\delta}{A|\cball{z}{\varsigma/2}}
    \le \varepsilon \le \varepsilon_0$; whence,
    \ref{mr:H:cited}\ref{i:cited:hoelder}, applied to~\ref{i:H:pde} on
    $\oball{z}{\varsigma/2}$ with inner radius $\varsigma/4$, gives
    \begin{equation}
        \label{eq:H:schauder}
        (\varsigma/4)^{\vdim/2+\delta} \hoelder{\delta}{\uD g|\cball{z}{\varsigma/4}}
        \le \Delta_0 \Bigl( \norm{\uD g}{2}{z,\varsigma/2}
        + (\varsigma/2)^{\vdim/2+\delta} \hoelder{\delta}{\Omega|\cball{z}{\varsigma/2}} \Bigr) \,.
    \end{equation}

    \emph{Step 2 (passage to the supremum norm).}  Since, for $y \in
    \cball{z}{\varsigma/4}$,
    \begin{displaymath}
        |\uD g(y)| \le \bigl( \unitmeasure{\vdim} (\varsigma/4)^{\vdim} \bigr)^{-1/2}
        \norm{\uD g}{2}{z,\varsigma/4}
        + (\varsigma/2)^{\delta} \hoelder{\delta}{\uD g|\cball{z}{\varsigma/4}} \,,
    \end{displaymath}
    dividing~\eqref{eq:H:schauder} by $\varsigma^{\vdim/2}$ produces $\Delta_1 =
    \Delta_1(\vdim,\adim,\delta,c,M)$ with
    \begin{equation}
        \label{eq:H:local}
        \hnorm{\delta}{\uD g}{z,\varsigma/4}{\varsigma}
        \le \Delta_1 \Bigl( \varsigma^{-\vdim/2} \norm{\uD g}{2}{z,\varsigma/2}
        + \varsigma^{\delta} \hoelder{\delta}{\Omega|\cball{z}{\varsigma/2}} \Bigr) \,.
    \end{equation}

    \emph{Step 3 (patching).}  As $\norm{\uD g}{\infty}{b,r} \le \sup \{ \norm{\uD
      g}{\infty}{z,\varsigma/4} : z \in \cball br \}$ and, for $x,y \in \cball
    br$, either $|x-y| \le \varsigma/4$ --- whence $x,y \in \cball
    x{\varsigma/4}$ --- or $|x-y| > \varsigma/4$, we~get
    \begin{displaymath}
        \hoelder{\delta}{\uD g|\cball{b}{r}}
        \le \sup_{z \in \cball br} \hoelder{\delta}{\uD g|\cball{z}{\varsigma/4}}
        + 2 (\varsigma/4)^{-\delta} \norm{\uD g}{\infty}{b,r} \,,
    \end{displaymath}
    and therefore, recalling $\tau = 3\varsigma$,
    \begin{equation}
        \label{eq:H:patch2}
        \hnorm{\delta}{\uD g}{b,r}{\tau}
        \le \bigl( 1 + 3^{\delta} ( 1 + 2 \cdot 4^{\delta} ) \bigr)
        \sup_{z \in \cball br} \hnorm{\delta}{\uD g}{z,\varsigma/4}{\varsigma}
        \le 28 \sup_{z \in \cball br} \hnorm{\delta}{\uD g}{z,\varsigma/4}{\varsigma} \,.
    \end{equation}

    \emph{Step 4 (conclusion).}  Since $\cball{z}{\varsigma} \subseteq
    \cball{b}{r'}$, the right hand sides of~\eqref{eq:H:local}
    and~\eqref{eq:H:caccioppoli} only increase when the balls
    $\cball{z}{\varsigma/2}$ and $\cball{z}{\varsigma}$ occurring in them are
    replaced by $\cball{b}{r'}$; both estimates therefore persist after that
    replacement.
    Combining~\eqref{eq:H:patch2} with~\eqref{eq:H:local} and recalling that
    $\varsigma^{-\vdim/2} = 3^{\vdim/2}\tau^{-\vdim/2}$ and
    $\varsigma^{\delta} \le \tau^{\delta}$, we~obtain~\eqref{eq:H:L2}.  If
    instead we~first insert~\eqref{eq:H:caccioppoli} into~\eqref{eq:H:local}
    and then estimate
    \begin{displaymath}
        \norm{h}{2}{z,\varsigma} \le \bigl( \unitmeasure{\vdim} \varsigma^{\vdim} \bigr)^{1/2} \norm{h}{\infty}{z,\varsigma}
        \quad \text{for $h \in \{ g ,\, \Omega \}$} \,,
    \end{displaymath}
    we~arrive at~\eqref{eq:H:concl}.
\end{proof}

\begin{remark}
    \label{rem:H:iterate}
    Suppose $U = \oball{b}{\varrho}$, the maps $u$, $A$, and~$\Omega$ are of
    class~$\cnt{\infty}$ on $\cball{b}{\varrho}$ --- the smoothness of $A$
    and~$\Omega$ being what allows $\Omega_s$ to be formed at \emph{every}
    order in~\ref{mr:ho:Omega} --- and $u$, $A$, $\Omega$, $h = 0$ satisfy the
    base equation~\eqref{eq:ho:base-pde}, where $A$ takes its values
    in~$W(c,M)$ and satisfies~\ref{lem:H}\ref{i:H:assume-epsilon}
    on~$\cball{b}{\varrho}$.
    Applying~\ref{lem:H} with $g = \uD_{s} u$ and with $\Omega_{s}$ in place
    of~$\Omega$ --- which is legitimate by~\ref{lem:ho:pde}\ref{i:ho:pde} ---
    and using $\norm{\uD_{s} u}{\infty}{b,r'} \le \norm{\uD \uD_{s'}
      u}{\infty}{b,r'}$, one~obtains: whenever $q \in \natp$, $s \in
    \Sequence{\vdim}{q}$, $s' = s|\{1,\ldots,q\} \without \{q\}$, $0 < r < r'
    \le \varrho$, and $\tau = r'-r$, there holds
    \begin{equation}
        \label{eq:H}
        \tau \, \hnorm{\delta}{\uD \uD_{s} u}{b,r}{\tau}
        \le \Gamma \Bigl( \norm{\uD \uD_{s'} u}{\infty}{b,r'} + \tau \, \omega_{s}(r') \Bigr) \,,
        \quad \text{where} \quad
        \omega_{s}(r',\tau) = \hnorm{\delta}{\Omega_{s}}{b,r'}{\tau} \,,
    \end{equation}
    the second argument of~$\omega_s$ being carried explicitly because the
    weight is not determined by the radius: below it is taken once as
    $\varrho-r'$ and once as $r'-r$.
    with $\Gamma = \Gamma_{\ref{lem:H}}(\vdim,\adim,\delta,c,M)$
    \emph{independent of~$q$}.  The conversion
    \begin{displaymath}
        \norm{\uD^{q+1} u}{\infty}{} \le \vdim^{(q+1)/2} \max_{s \in \Sequence{\vdim}{q}} \norm{\uD \uD_s u}{\infty}{}
    \end{displaymath}
    is performed \emph{once}, at the very end; it is precisely to keep~$\Gamma$
    independent of~$q$ that~\eqref{eq:H} is stated componentwise in~$s$.
\end{remark}


\begin{lemma}[higher order estimates for linear systems]
    \label{lem:ho:tower}
    Let $\vdim,\adim,\delta,c,M$ and $\varepsilon$ be as in~\ref{lem:H} and let
    $q \in \natp$.  There exists $\Gamma =
    \Gamma(\vdim,\adim,\delta,c,M,q) < \infty$ such that the following
    holds.  Suppose
    \begin{enumerate}
    \item
        $b \in \R^{\vdim}$ and $0 < \varrho < \infty$;
    \item
        \label{i:tower:A}
        $A : \cball{b}{\varrho} \to W(c,M)$ is of class~$\cnt{q-1,\delta}$ and
        $\varrho^{\delta} \hoelder{\delta}{A|\cball{b}{\varrho}} \le \varepsilon$;
    \item
        \label{i:tower:Omega}
        $\Omega : \cball{b}{\varrho} \to \Hom(\R^{\vdim},\R^{\adim-\vdim})$ is
        of class~$\cnt{q-1,\delta}$;
    \item
        \label{i:tower:f}
        $f : \cball{b}{\varrho} \to \R^{\adim-\vdim}$ is of class~$\cnt{1}$,
        $\hoelder{\delta}{\uD f|\cball{b}{\varrho}} < \infty$, and $f$, $A$,
        $\Omega$, $h = 0$ satisfy~\eqref{eq:ho:base-pde} with $U =
        \oball{b}{\varrho}$.
    \end{enumerate}
    Then, $\Omega_s$ being defined as in~\ref{mr:ho:Omega},
    \begin{enumerate}[resume]
    \item
        \label{i:tower:reg}
        $f$ is of class~$\cnt{q}$ on $\oball{b}{\varrho}$ and
        $\hoelder{\delta}{\uD^{q} f|K} < \infty$ for every compact $K \subseteq
        \oball{b}{\varrho}$;
    \item
        \label{i:tower:Omega-s}
        for $p \in \{0,1,\ldots,q-1\}$ and $s \in \Sequence{\vdim}{p}$ the
        map~$\Omega_s$ is of class~$\cnt{q-1-p}$ on $\oball{b}{\varrho}$ with
        $\hoelder{\delta}{\uD^{q-1-p}\Omega_s|K} < \infty$ for every compact $K
        \subseteq \oball{b}{\varrho}$, and
        \begin{displaymath}
            \textint{}{} \bigl\langle \uD\theta \odot \uD \uD_{s} f \,, A \bigr\rangle \ud \LM^{\vdim}
            = \textint{}{} \uD\theta \bullet \Omega_{s} \ud \LM^{\vdim}
            \quad \text{for $\theta \in \dspace{\oball{b}{\varrho}}{\R^{\adim-\vdim}}$} \,;
        \end{displaymath}
    \item
        \label{i:tower:est}
        abbreviating
        \begin{displaymath}
            \mathcal{A} = \textsum{i=1}{q-1} \varrho^{i} \hnorm{\delta}{\uD^{i}A}{b,\varrho}{\varrho} \,,
            \quad
            \mathcal{O} = \textsum{i=0}{q-1} \varrho^{i} \hnorm{\delta}{\uD^{i}\Omega}{b,\varrho}{\varrho} \,,
        \end{displaymath}
        there holds, whenever $0 < r < \varrho$,
        \begin{displaymath}
            \textsum{p=1}{q} \varrho^{p-1} \hnorm{\delta}{\uD^{p} f}{b,r}{\varrho}
            \le \Gamma \, \Bigl( \frac{\varrho}{\varrho-r} \Bigr)^{q} (1+\mathcal{A})^{q} \Bigl(
            \hnorm{\delta}{\uD f}{b,\varrho}{\varrho} + \mathcal{O} \Bigr) \,.
        \end{displaymath}
    \end{enumerate}
\end{lemma}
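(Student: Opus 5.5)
I plan to argue by induction on~$q$, following the inductive step in the proof of~\cite[5.2.15]{Federer1969} but keeping track of the constants through~\ref{rem:H:iterate}.

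\emph{The case $q = 1$.} Items~\ref{i:tower:reg} and~\ref{i:tower:Omega-s} are hypotheses, with $\Omega_s = \Omega$ for the empty~$s$. Item~\ref{i:tower:est} follows from~\eqref{eq:H:concl} of~\ref{lem:H} with $g = f$, with $\varrho$ in place of~$r'$, and with $\tau = \varrho - r$. Since $\tau \le \varrho$, the norm weighted by $\tau$ is dominated by the one weighted by $\varrho$ up to the factor $(\varrho/\tau)^{\delta} \le \varrho/\tau$. The term $\tau^{-1}\norm{f}{\infty}{}$ is awkward because the right side of~\ref{i:tower:est} contains no $\norm{f}{\infty}{}$. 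I would therefore avoid~\eqref{eq:H:concl} at the base level and simply observe that $\hnorm{\delta}{\uD f}{b,r}{\varrho} \le \hnorm{\delta}{\uD f}{b,\varrho}{\varrho}$, which settles this case trivially.

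\emph{Regularity step ($q-1 \to q$).} Assume the lemma for $q-1$, with hypotheses of class $\cnt{q-1,\delta}$, and hence for $\cnt{q-2,\delta}$ data. Then $f$ is of class~$\cnt{q-1}$ and the equations in~\ref{i:tower:Omega-s} hold for $p \le q-2$. By the explicit formula~\ref{lem:ho:pde}\ref{i:ho:Omegas-explicit}, $\Omega_s$ with $|s| = q-1$ involves $\uD_s\Omega$, which is $\cnt{0,\delta}$, together with products $\uD\uD_{\cdot}f \odot \uD_{\cdot}A$ involving at most $q-1$ derivatives of~$f$ and of~$A$. Each such term is locally $\cnt{0,\delta}$. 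I would then show that $\uD_{s'}f$ with $|s'| = q-1$ is $\cnt{1,\delta}$. Following Federer, I would apply the closure lemma~\ref{mr:H:cited}\ref{i:cited:closure} to difference quotients of $\uD_{s''}f$, $|s''| = q-2$, obtaining $\uD_{s'} f \in \VSob{1,2}_{\mathrm{loc}}$ satisfying the weak equation~\ref{lem:ho:pde}\ref{i:ho:pde}. Then \ref{lem:H}, via~\cite[5.2.14]{Federer1969} together with regularisation, upgrades this to $\uD\uD_{s'}f \in \cnt{0,\delta}$ locally. This proves~\ref{i:tower:reg} and~\ref{i:tower:Omega-s}.

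\emph{Quantitative step.} Split $[r,\varrho]$ into $q$ equal radii $r = r_q < r_{q-1} < \dots < r_0 = \varrho$ with gaps $\tau = (\varrho-r)/q$. Apply~\eqref{eq:H} of~\ref{rem:H:iterate} for each $p$ between the radii $r_{p-1}$ and $r_p$. This gives
\[
\tau \hnorm{\delta}{\uD^{p+1}f}{b,r_p}{\tau} \lesssim \norm{\uD^p f}{\infty}{b,r_{p-1}} + \tau\,\max_{|s|=p}\hnorm{\delta}{\Omega_s}{b,r_{p-1}}{\tau}.
\]
Using the submultiplicativity~\eqref{eq:hnorm:mult} and the shuffle formula, I would estimate
\[
\varrho^{p}\hnorm{\delta}{\Omega_s}{}{} \le \mathcal{O} + C_q\,\mathcal{A}\sum_{j \le p}\varrho^{j}\hnorm{\delta}{\uD^{j+1}f}{}{}.
\]
Inserting the bounds for lower derivatives already obtained on the larger balls then yields a recursion in which each level contributes one factor $(\varrho/\tau)(1+\mathcal{A})$. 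Converting between the weights $\tau$ and $\varrho$ costs further factors $(\varrho/\tau)^{\delta}$, which can be absorbed into $(\varrho/\tau)$. Unwinding the recursion gives the stated bound with constant $\Gamma$ depending on $q$ through $q^q$, $C_q$, and the conversion factor $\vdim^{q/2}$.

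\emph{Main obstacle.} I expect the main obstacle to be the regularity step, namely justifying that $\uD_{s'}f$ is a genuine $\cnt{1,\delta}$ weak solution before the a priori estimate of~\ref{lem:H} applies. That lemma presupposes $\hoelder{\delta}{\uD g} < \infty$. I would handle this through difference quotients and the closure lemma, exactly as in~\cite[5.2.15]{Federer1969}. The bookkeeping of the powers of $(1+\mathcal{A})$ is secondary; if one additional power appears, it can be absorbed by enlarging the exponent or the constant, since $\mathcal{A}$ enters only polynomially.
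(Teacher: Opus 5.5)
Your architecture matches the paper's: induction on~$q$; difference quotients and the closure lemma for \ref{i:tower:reg} and \ref{i:tower:Omega-s}; and a chain of nested balls with \eqref{eq:H:concl} and the shuffle formula \ref{lem:ho:pde}\ref{i:ho:Omegas-explicit} for \ref{i:tower:est}. Your factor $(\varrho/\tau)(1+\mathcal{A})$ per level is equivalent to the paper's use of $\tau$-weighted quantities $\tau^{p-1}N_p$, with the conversion to $\varrho$-weights done at the end.

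\emph{The gap.} The regularity step does not go through as you describe it, and it fails exactly at the point you flag as the main obstacle. The closure lemma \ref{mr:H:cited}\ref{i:cited:closure} does not produce local $\VSob{1,2}$ membership. Its input is uniform convergence of $g_\nu$ together with a $\nu$-uniform bound for $\norm{\uD g_\nu}{\infty}{b,r} + \hoelder{\delta}{\uD g_\nu|\cball{b}{r}}$. Its output is that the limit is of class~$\cnt{1}$ with $\delta$~H\"older derivative. Your subsequent upgrade of a weak $\VSob{1,2}$ solution to $\cnt{1,\delta}$ ``by \ref{lem:H} via Federer's 5.2.14 and regularisation'' is unsupported. \ref{lem:H} is an a~priori estimate whose hypothesis \ref{lem:H}\ref{i:H:g} already requires $\hoelder{\delta}{\uD g} < \infty$. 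Bridging that by regularisation would need an approximation-and-uniqueness argument, or Campanato theory, which the paper neither has nor needs.

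\emph{The missing idea.} Apply \ref{lem:H} to the difference quotients themselves, not to their limit.
\begin{enumerate}
\item For $s'' \in \Sequence{\vdim}{q-2}$, the inductive hypothesis makes $\uD_{s''}f$ of class~$\cnt{1}$ with $\delta$~H\"older derivative on compact subsets of $\oball{b}{\varrho}$. Hence $g_\nu(x) = \nu\bigl(\uD_{s''}f(x+\nu^{-1}e_i) - \uD_{s''}f(x)\bigr)$ already satisfies \ref{lem:H}\ref{i:H:g} on $\cball{b}{r'}$ whenever $r' + \nu^{-1} < \varrho$.
\item Testing the equation for $\uD_{s''}f$ with $\nu\bigl(\theta(\cdot-\nu^{-1}e_i)-\theta\bigr)$ shows that $g_\nu$ solves the system with coefficients~$A$ and right hand side $\zeta_\nu-\eta_\nu$. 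Here $\zeta_\nu$ is the difference quotient of $\Omega_{s''}$, and $\eta_\nu$ pairs $\uD\uD_{s''}f(\cdot+\nu^{-1}e_i)$ with the difference quotient of~$A$.
\item The $\cnt{0,\delta}$ norms of $\zeta_\nu$ and $\eta_\nu$ are bounded independently of~$\nu$. This holds because $\Omega_{s''}$ is locally of class $\cnt{1,\delta}$ by \ref{lem:ho:pde}\ref{i:ho:Omegas-explicit}, and $\uD\uD_{s''}f$ and $\uD_iA$ are $\delta$~H\"older on compact sets.
\item \eqref{eq:H:concl} then bounds $\hnorm{\delta}{\uD g_\nu}{b,r}{r'-r}$ uniformly in~$\nu$, which is exactly the input of the closure lemma. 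It gives $\uD_i\uD_{s''}f \in \cnt{1}$ with H\"older derivative and $\uD g_\nu \to \uD\uD_i\uD_{s''}f$ locally uniformly.
\item Passing to the limit in the difference-quotient equation yields the equation of \ref{i:tower:Omega-s} at length $q-1$.
\end{enumerate}

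\emph{Smaller points in your quantitative step.}
\begin{enumerate}
\item The shuffle bound runs over $j \le p-1$, not $j \le p$. With $j = p$, the quantity $\uD^{p+1}f$ being estimated appears on the right, and the recursion is circular.
\item The outermost radius must be strictly less than~$\varrho$. \ref{lem:H} needs $\hoelder{\delta}{\uD\uD_i f|\cball{b}{r'}} < \infty$ on the outer closed ball, but \ref{i:tower:reg} gives this only on compact subsets of $\oball{b}{\varrho}$. The paper takes $r_0 = \varrho - \tau$.
\item \eqref{eq:H} in \ref{rem:H:iterate} is stated for $\cnt{\infty}$ data. You should invoke \eqref{eq:H:concl} directly, which \ref{i:tower:Omega-s} legitimises.
\item Surplus powers of $1+\mathcal{A}$ could not be absorbed into~$\Gamma$, which may depend only on $\vdim,\adim,\delta,c,M,q$. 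Your count gives $(1+\mathcal{A})^{q-1}$, so this does not arise.
\end{enumerate}
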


\begin{proof}
    \emph{Part~1: \ref{i:tower:reg} and the equation of~\ref{i:tower:Omega-s}, by induction
      on~$q$.}  For $q = 1$ both are contained in~\ref{i:tower:f}.  Let $q \ge
    2$ and assume the assertions proved with $q-1$ in place of~$q$; since
    $\cnt{q-1,\delta} \subseteq \cnt{q-2,\delta}$, they apply to the present
    data and yield that $f$ is of class~$\cnt{q-1}$ on $\oball{b}{\varrho}$,
    with $\hoelder{\delta}{\uD^{q-1}f|K} < \infty$ for compact $K \subseteq
    \oball b\varrho$, and that the equation of~\ref{i:tower:Omega-s} holds for $s \in
    \Sequence{\vdim}{p}$ with $p \le q-2$.

    We~first note that $\Omega_{s}$ is of class~$\cnt{1}$ with
    $\hoelder{\delta}{\uD\Omega_s|K} < \infty$ for compact $K \subseteq \oball
    b\varrho$ whenever $s \in \Sequence{\vdim}{q-2}$.  Indeed,
    by~\ref{lem:ho:pde}\ref{i:ho:Omegas-explicit} the map $\Omega_s$ is the sum
    of $\uD_s \Omega$, which is of class~$\cnt{1,\delta}$ by~\ref{i:tower:Omega},
    and of products $\uD\uD_{\sigma} f \odot \uD_{\varsigma} A$ with $\sigma \in
    \Sequence{\vdim}{j}$ and $\varsigma \in \Sequence{\vdim}{q-2-j}$ for some
    $j \in \{0,\ldots,q-3\}$; the first factor is of class~$\cnt{q-2-j}$ with
    $\delta$~H{\"o}lder top derivative --- as $f$ is of class~$\cnt{q-1}$ and
    $j+1 \le q-2$ --- and the second is of class~$\cnt{j+1,\delta}$
    by~\ref{i:tower:A}; both exponents are at least~$1$.

    Now fix $s \in \Sequence{\vdim}{q-2}$, $i \in \{1,\ldots,\vdim\}$, and
    numbers $0 < r < r' < \varrho' < \varrho$, put $g = \uD_{s}f$, and define,
    for integers $\nu > (\varrho'-r')^{-1}$, maps on $\cball{b}{r'}$ by
    \begin{gather}
        g_{\nu}(x) = \nu \bigl( g(x + \nu^{-1}e_i) - g(x) \bigr) \,,
        \quad
        \zeta_{\nu}(x) = \nu \bigl( \Omega_{s}(x+\nu^{-1}e_i) - \Omega_{s}(x) \bigr) \,,
        \\
        \sigma \bullet \eta_{\nu}(x) = \nu \bigl\langle \uD \uD_{s} f(x+\nu^{-1}e_i) \odot \sigma \,,
        A(x+\nu^{-1}e_i) - A(x) \bigr\rangle \,.
    \end{gather}
    Testing the equation of~\ref{i:tower:Omega-s} with $\theta_{\nu}(x) = \nu \bigl(
    \theta(x-\nu^{-1}e_i) - \theta(x) \bigr)$, where $\theta \in
    \dspace{\oball{b}{r'}}{\R^{\adim-\vdim}}$, and translating the
    variable of integration --- the computation is that
    of~\cite[p.~558]{Federer1969} --- we~obtain
    \begin{equation}
        \label{eq:tower:nu-pde}
        \textint{}{} \bigl\langle \uD\theta \odot \uD g_{\nu} \,, A \bigr\rangle \ud \LM^{\vdim}
        = \textint{}{} \uD\theta \bullet ( \zeta_{\nu} - \eta_{\nu} ) \ud \LM^{\vdim}
        \quad \text{for $\theta \in \dspace{\oball{b}{r'}}{\R^{\adim-\vdim}}$} \,.
    \end{equation}
    Writing $\nu \bigl( F(x+\nu^{-1}e_i) - F(x) \bigr) = \textint{0}{1} \uD_i
    F(x+t\nu^{-1}e_i) \ud \LM^1(t)$ for $F = g$, for $F = \Omega_{s}$, and for
    $F = A$, and using~\eqref{eq:hnorm:mult}, we~infer that, for $0 < t <
    \infty$,
    \begin{gather}
        \label{eq:tower:nu-bounds}
        \norm{g_{\nu}}{\infty}{b,r'} \le \norm{\uD \uD_{s} f}{\infty}{b,\varrho'} \,,
        \quad
        \hnorm{\delta}{\zeta_{\nu}}{b,r'}{t} \le \hnorm{\delta}{\uD_i\Omega_{s}}{b,\varrho'}{t} \,,
        \\
        \hnorm{\delta}{\eta_{\nu}}{b,r'}{t} \le
        \hnorm{\delta}{\uD\uD_{s} f}{b,\varrho'}{t} \, \hnorm{\delta}{\uD_i A}{b,\varrho'}{t} \,,
    \end{gather}
    all right hand sides being finite by the preceding paragraph, because
    $\cball{b}{\varrho'}$ is a compact subset of $\oball{b}{\varrho}$.  Since $A$ satisfies~\ref{lem:H}\ref{i:H:assume-epsilon} on
    $\cball{b}{r'}$, applying~\ref{lem:H} to~\eqref{eq:tower:nu-pde} with $g_\nu$
    in place of~$g$, with $\zeta_\nu-\eta_\nu$ in place of~$\Omega$, and with
    inner and outer radii $r$ and~$r'$, we~conclude that $\hnorm{\delta}{\uD
      g_{\nu}}{b,r}{r'-r}$ is bounded independently of~$\nu$
    by~\eqref{eq:tower:nu-bounds}.  As $g_{\nu}
    \to \uD_i g$ uniformly on $\cball{b}{r}$, \ref{mr:H:cited}\ref{i:cited:closure}
    shows that $\uD_i \uD_{s} f = \uD_i g$ is of class~$\cnt{1}$ on
    $\oball{b}{r}$ with $\hoelder{\delta}{\uD\uD_i g|\cball{b}{r}} < \infty$,
    and that $\uD g_{\nu} \to \uD\uD_i g$ uniformly on compact subsets of
    $\oball br$.  Since $r < \varrho$, $s$ and~$i$ were arbitrary, $f$ is of
    class~$\cnt{q}$ on $\oball{b}{\varrho}$ with $\hoelder{\delta}{\uD^{q}f|K} <
    \infty$ for compact $K \subseteq \oball b\varrho$.  Finally, letting $t \in
    \Sequence{\vdim}{q-1}$ be defined by $t|\{1,\ldots,q-2\} = s$ and $t(q-1) =
    i$, the maps $\zeta_{\nu} - \eta_{\nu}$ converge uniformly on
    $\cball{b}{r}$ to~$\Omega_{t}$, by~\eqref{eq:ho:Omega-recursion} and the
    continuity of $\uD_i\Omega_s$, $\uD\uD_sf$ and~$\uD_iA$; passing to the
    limit in~\eqref{eq:tower:nu-pde} therefore gives the equation
    of~\ref{i:tower:Omega-s} for~$t$.  This completes the induction.

    \emph{Part~2: \ref{i:tower:Omega-s}.}  Let $p \in \{0,\ldots,q-1\}$ and $s
    \in \Sequence{\vdim}{p}$.  By~\ref{lem:ho:pde}\ref{i:ho:Omegas-explicit},
    $\Omega_s$ is the sum of $\uD_s\Omega$, of class~$\cnt{q-1-p,\delta}$
    by~\ref{i:tower:Omega}, and of products $\uD\uD_{\sigma}f \odot
    \uD_{\varsigma}A$ with $\HM^0(\dmn\sigma) = j$ and $\HM^0(\dmn\varsigma) =
    p-j$, $0 \le j \le p-1$; by Part~1 the first factor is of
    class~$\cnt{q-1-j}$ with $\delta$~H{\"o}lder top derivative on compact
    subsets, and by~\ref{i:tower:A} the second is of class~$\cnt{q-1-p+j,\delta}$.
    Since $\min \{ q-1-j ,\, q-1-p+j \} \ge q-1-p$ for $0 \le j \le p-1$, the
    assertion follows.

    \emph{Part~3: the estimate of~\ref{i:tower:est}.}  Let $0 < r < \varrho$, put
    $\tau = (\varrho-r)/(q+1)$ and $r_p = \varrho - (p+1)\tau$ for $p \in
    \{0,\ldots,q\}$, so that $r_0 < \varrho$ and $r_q = r$, and set
    \begin{displaymath}
        N_p = \max \bigl\{ \hnorm{\delta}{\uD\uD_{s}f}{b,r_{p-1}}{\tau} : s \in \Sequence{\vdim}{p-1} \bigr\}
        \quad \text{for $p \in \{1,\ldots,q\}$} \,,
    \end{displaymath}
    which is finite by Part~1 because $\cball{b}{r_{p-1}}$ is a compact subset
    of $\oball{b}{\varrho}$, and which for $p = 1$ satisfies $N_1 =
    \hnorm{\delta}{\uD f}{b,r_0}{\tau} \le \hnorm{\delta}{\uD
      f}{b,\varrho}{\varrho}$.  Let $p \in \{2,\ldots,q\}$ and
    $s \in \Sequence{\vdim}{p-1}$.  Applying~\ref{lem:H}, by way
    of~\eqref{eq:H:concl} and of the equation of~\ref{i:tower:Omega-s}, with $g = \uD_{s}f$,
    with~$\Omega_{s}$ in place of~$\Omega$, and with inner and outer radii
    $r_{p-1}$ and~$r_{p-2}$, and estimating $\norm{\uD_{s}f}{\infty}{b,r_{p-2}}
    \le \norm{\uD\uD_{s'}f}{\infty}{b,r_{p-2}} \le N_{p-1}$, where $s' =
    s|\{1,\ldots,p-1\} \without \{p-1\}$, we~obtain
    \begin{equation}
        \label{eq:tower:rec}
        N_{p} \le \Gamma_1 \bigl( \tau^{-1} N_{p-1} + W_{p-1} \bigr) \,,
        \quad \text{where} \quad
        W_{p-1} = \max \bigl\{ \hnorm{\delta}{\Omega_{s}}{b,r_{p-2}}{\tau} : s \in \Sequence{\vdim}{p-1} \bigr\}
    \end{equation}
    and $\Gamma_1 = \Gamma_{\ref{lem:H}}(\vdim,\adim,\delta,c,M)$.  Next, by
    \ref{lem:ho:pde}\ref{i:ho:Omegas-explicit}, by~\eqref{eq:hnorm:mult}, by
    $\HM^0(\Shuffle{j}{p-1-j}) = \binom{p-1}{j}$, and by
    $|\uD_{\sigma}F| \le |\uD^{i}F|$ for $\sigma \in \Sequence{\vdim}{i}$,
    \begin{displaymath}
        W_{p-1} \le \mathcal{O}_{p-1} + \textsum{j=0}{p-2} {\textstyle \binom{p-1}{j}} N_{j+1} \mathcal{A}_{p-1-j} \,,
    \end{displaymath}
    where $\mathcal{O}_{i} = \hnorm{\delta}{\uD^{i}\Omega}{b,\varrho}{\tau}$ and
    $\mathcal{A}_{i} = \hnorm{\delta}{\uD^{i}A}{b,\varrho}{\tau}$; here we~used
    that $r_{p-2} \le r_{j}$, hence $\cball b{r_{p-2}} \subseteq \cball b{r_j}$,
    for $j \le p-2$, so that
    $\hnorm{\delta}{\uD\uD_{\sigma}f}{b,r_{p-2}}{\tau} \le N_{j+1}$ whenever
    $\sigma \in \Sequence{\vdim}{j}$ and $j \le p-2$.  Inserting this
    into~\eqref{eq:tower:rec}, multiplying by~$\tau^{p-1}$, and abbreviating $n_p = \tau^{p-1}N_p$, $a_i =
    \tau^{i}\mathcal{A}_i$, $o_i = \tau^{i}\mathcal{O}_i$, and $a = \max \{ a_i
    : 1 \le i \le q-1 \}$, this becomes
    \begin{displaymath}
        n_{p} \le \Gamma_1 \Bigl( n_{p-1} + o_{p-1} + 2^{q} a \textsum{j=1}{p-1} n_{j} \Bigr)
        \le \Gamma_1 (1 + 2^{q} a) \Bigl( \textsum{j=1}{p-1} n_{j} + o_{p-1} \Bigr) \,;
    \end{displaymath}
    whence, by induction on~$p$,
    \begin{displaymath}
        \textsum{j=1}{q} n_{j} \le \bigl( 1 + \Gamma_1 (1+2^{q}a) \bigr)^{q}
        \Bigl( n_1 + \textsum{i=0}{q-1} o_{i} \Bigr) \,.
    \end{displaymath}
    Since $\tau \le \varrho$ we~have $a \le \mathcal{A}$ and $\sum_{i} o_i \le
    \mathcal{O}$, and since $\varrho / \tau = (q+1) \varrho/(\varrho-r)$ and
    $\norm{\uD^{p}f}{\infty}{} \le \vdim^{p/2} \max \{
    \norm{\uD\uD_{s}f}{\infty}{} : s \in \Sequence{\vdim}{p-1} \}$, together
    with the analogous inequality for the $\delta$~H{\"o}lder seminorms and with
    $r_q = r$, the left hand side of the estimate of~\ref{i:tower:est} is at most
    $\bigl( (q+1) \varrho/(\varrho-r) \bigr)^{q-1+\delta} \vdim^{q/2}
    \sum_{j=1}^{q} n_j$.
    Recalling $n_1 \le \hnorm{\delta}{\uD f}{b,\varrho}{\varrho}$
    we~obtain the estimate of~\ref{i:tower:est}.
\end{proof}


\begin{theorem}[\protect{cf.~\cite[5.2.15]{Federer1969}}]
    \label{thm:ho}
    Let $\vdim,\adim,\delta,c,M$ and $\varepsilon$ be as in~\ref{lem:H} and
    suppose
    \begin{enumerate}
    \item
        \label{i:ho:q}
        $q$ is an integer, $q \ge 2$;
    \item
        \label{i:ho:G}
        $V$ is an open subset of $\R^{\vdim} \times \R^{\adim-\vdim} \times
        \Hom(\R^{\vdim},\R^{\adim-\vdim})$ and $G : V \to \R$ is of
        class~$\cnt{q+1}$;
    \item
        \label{i:ho:f}
        $U$ is an open subset of~$\R^{\vdim}$, $f : U \to \R^{\adim-\vdim}$ is
        of class~$\cnt{1}$, $\hoelder{\delta}{\uD f} < \infty$, and $\psi(x) =
        \bigl( x ,\, f(x) ,\, \uD f(x) \bigr) \in V$ whenever $x \in U$;
    \item
        \label{i:ho:el}
        $\textint{U}{} \bigl\langle (0,\theta(x),\uD\theta(x)) \,, \uD G[\psi(x)]
        \bigr\rangle \ud \LM^{\vdim}x = 0$ whenever $\theta \in
        \dspace{U}{\R^{\adim-\vdim}}$;
    \item
        \label{i:ho:A}
        $A : U \to {\textstyle \bigodot^2} \Hom(\R^{\vdim},\R^{\adim-\vdim})$
        satisfies $\bigl\langle \sigma \odot \tau ,\, A(x) \bigr\rangle =
        \bigl\langle (0,0,\sigma) \odot (0,0,\tau) \,, \uD^2 G[\psi(x)]
        \bigr\rangle$ whenever $x \in U$ and $\sigma,\tau \in
        \Hom(\R^{\vdim},\R^{\adim-\vdim})$;
    \item
        \label{i:ho:ball}
        $b \in \R^{\vdim}$, $0 < \varrho < \infty$, $\cball{b}{\varrho}
        \subseteq U$, and $Z$ is a convex subset of~$V$ such that $\psi \lIm
        \cball{b}{\varrho} \rIm \subseteq Z$ and
        \begin{displaymath}
            \alpha_p = \sup \bigl\{ \| \uD^{p} G(w) \| : w \in Z \bigr\} < \infty
            \quad \text{for $p \in \{0,1,\ldots,q+1\}$} \,;
        \end{displaymath}
    \item
        \label{i:ho:ell}
        $\bigl\langle \sigma \odot \tau ,\, \Upsilon(w) \bigr\rangle = \bigl\langle
        (0,0,\sigma) \odot (0,0,\tau) \,, \uD^2 G(w) \bigr\rangle$ defines a
        map $\Upsilon : Z \to W(c,M)$;
    \item
        \label{i:ho:small}
        $\varrho^{\delta} \alpha_3 \hoelder{\delta}{\psi|\cball{b}{\varrho}} \le \varepsilon$.
    \end{enumerate}
    Then, abbreviating
    \begin{displaymath}
        E = \hnorm{\delta}{\uD f}{b,\varrho}{\varrho} \,,
        \quad
        \Lambda = \max \{ 1 ,\, \varrho \} \cdot \max \{ 1 ,\, \alpha_0 ,\, \ldots ,\, \alpha_{q+1} \} \,,
        \quad
        \varrho_p = \varrho ( \tfrac12 + 2^{-p} ) \,,
    \end{displaymath}
    the following statements hold.
    \begin{enumerate}[resume]
    \item
        \label{i:ho:class}
        $f$ is of class~$\cnt{q}$ on $\oball{b}{\varrho}$ and
        $\hoelder{\delta}{\uD^{q}f|K} < \infty$ for every compact $K \subseteq
        \oball{b}{\varrho}$.
    \item
        \label{i:ho:Omega-s}
        To each $s \in \Sequence{\vdim}{p}$ with $p \in \{1,\ldots,q-1\}$
        corresponds $\Omega_s : \oball{b}{\varrho} \to
        \Hom(\R^{\vdim},\R^{\adim-\vdim})$ of class~$\cnt{q-p}$ such that
        $\hoelder{\delta}{\uD^{q-p}\Omega_s|K} < \infty$ for every compact $K
        \subseteq \oball{b}{\varrho}$ and
        \begin{displaymath}
            \textint{}{} \bigl\langle \uD\theta \odot \uD\uD_{s} f \,, A \bigr\rangle \ud \LM^{\vdim}
            = \textint{}{} \uD\theta \bullet \Omega_{s} \ud \LM^{\vdim}
            \quad \text{for $\theta \in \dspace{\oball{b}{\varrho}}{\R^{\adim-\vdim}}$} \,.
        \end{displaymath}
    \item
        \label{i:ho:Omega-1}
        $\Omega_{(i)}(x) \bullet \sigma = \bigl\langle (0,\sigma(e_i),0) \,, \uD
        G[\psi(x)] \bigr\rangle - \bigl\langle (0,0,\sigma) \odot (e_i, \uD_i
        f(x), 0) \,, \uD^2 G[\psi(x)] \bigr\rangle$ whenever $i \in
        \{1,\ldots,\vdim\}$, $x \in \oball{b}{\varrho}$ and $\sigma \in
        \Hom(\R^{\vdim},\R^{\adim-\vdim})$.
    \item
        \label{i:ho:Omega-rec}
        $\Omega_{s}$ is obtained from $\Omega_{(i)}$ by the
        recursion~\eqref{eq:ho:Omega-recursion} of~\ref{mr:ho:Omega}, applied
        with $\uD_i f$ in place of~$f$.
    \item
        \label{i:ho:est}
        Defining $\Gamma_1 = E$ and, recursively,
        \begin{displaymath}
            \Gamma_p = \Delta \bigl( \Lambda ( 1 + \Gamma_{p-1} ) \bigr)^{(p+1)^2}
            \quad \text{for $p \in \{2,\ldots,q\}$} \,,
        \end{displaymath}
        where $\Delta = \Delta(\vdim,\adim,\delta,c,M,q) < \infty$ is a suitable
        constant, there holds
        \begin{displaymath}
            \textsum{p=1}{q} \varrho^{p-1} \hnorm{\delta}{\uD^{p}f}{b,\varrho_q}{\varrho} \le \Gamma_q \,;
        \end{displaymath}
        in particular $\cball{b}{\varrho/2} \subseteq \cball{b}{\varrho_q}$
        yields the same bound on $\cball{b}{\varrho/2}$.
    \item
        \label{i:ho:homog}
        In~case $G$ is independent of its first two variables one has
        $\Omega_{(i)} = 0$; and if, in~this case, \ref{i:ho:small} is replaced
        by the requirement that $\alpha_3 E \le \varepsilon$ and $E \le 1$, then
        \ref{i:ho:class}--\ref{i:ho:Omega-rec} remain valid and the estimate above
        holds with $\Gamma_q = \Delta_0 \, E$, where $\Delta_0 =
        \Delta_0(\vdim,\adim,\delta,c,M,q,\Lambda_0) < \infty$ and $\Lambda_0 =
        \max \{ 1, \alpha_2, \ldots, \alpha_{q+1} \}$.
    \end{enumerate}
\end{theorem}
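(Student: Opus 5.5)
The plan is to reduce the nonlinear Euler--Lagrange equation to a linear system of the type treated in \ref{lem:ho:tower}, with coefficients $A$ and inhomogeneities built from $G$ and $\psi$, and then to run an induction on the order, following \cite[5.2.15]{Federer1969}.

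\emph{First differentiation.} We would start from \ref{i:ho:el}. Testing with difference quotients $\theta_\nu(x)=\nu(\theta(x-\nu^{-1}e_i)-\theta(x))$ and translating the variable gives an equation for $g_\nu=\nu(f(\cdot+\nu^{-1}e_i)-f)$. Writing the difference of $\uD G[\psi]$ at the two points as an integral of $\uD^2G$ along the segment joining them, which lies in the convex set~$Z$, we obtain
\[
\int \langle \uD\theta\odot \uD g_\nu , A_\nu\rangle = \int \uD\theta\bullet\Omega_{\nu} + \theta \bullet h_\nu ,
\]
where $A_\nu$ averages $\Upsilon$ along the segment. By \ref{i:ho:ell}, $A_\nu$ takes values in $W(c,M)$, and by \ref{i:ho:small} it satisfies the smallness hypothesis \ref{lem:H}\ref{i:H:assume-epsilon}, since $\hoelder{\delta}{A_\nu}\le\alpha_3\hoelder{\delta}{\psi}$.

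The zero order term $\theta\bullet h_\nu$ comes from the $\R^{\adim-\vdim}$ slot of $\uD G$. It does not fit \ref{lem:H} directly. We would rewrite it in divergence form, $\theta\bullet h=-\int \uD\theta\bullet H$ with $H(x)=\int_0^1 h(b+t(x-b))\,t^{\vdim-1}(x-b)\,\ud t$ or a primitive in one coordinate direction. Alternatively, and more simply, we would move it into the $\Omega$ slot directly, exactly as \ref{i:ho:Omega-1} suggests: there the term $\langle(0,\sigma(e_i),0),\uD G\rangle$ is the contribution of $\theta(x)$ after one integration by parts of $\uD_i$.

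Passing $\nu\to\infty$ with \ref{lem:H}, we get uniform bounds on $\hnorm{\delta}{\uD g_\nu}{}{}$. The closure lemma \ref{mr:H:cited}\ref{i:cited:closure} then shows that $\uD_i f\in\cnt{1}$ with Hölder derivative. In the limit, $\uD_i f$ solves the system of \ref{i:ho:Omega-s} with coefficient $A$ from \ref{i:ho:A} and right hand side $\Omega_{(i)}$ as in \ref{i:ho:Omega-1}. This proves the cases $p=1$ of \ref{i:ho:class}--\ref{i:ho:Omega-1}.

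\emph{Induction and estimates.} Now $A=\Upsilon\circ\psi$ is of class $\cnt{q-1,\delta}$ once $f\in\cnt{q,\delta}$, and $\Omega_{(i)}$ is likewise built from $\uD G\circ\psi$, $\uD^2 G\circ\psi$ and $\uD f$. We would therefore apply \ref{lem:ho:tower} to $\uD_i f$ in the role of~$f$, with $\Omega_{(i)}$ in the role of~$\Omega$, on the shrinking balls $\cball b{\varrho_p}$. This bootstraps the regularity one order at a time, using $G\in\cnt{q+1}$, and yields \ref{i:ho:class}, \ref{i:ho:Omega-s} and \ref{i:ho:Omega-rec}; the last holds by definition of the recursion in~\ref{mr:ho:Omega}.

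For the estimate \ref{i:ho:est}, we use chain rule (Faà di Bruno) bounds at each step $p$:
\[
\varrho^i\hnorm{\delta}{\uD^i(\uD^kG\circ\psi)}{}{}\le C\Lambda(1+\Gamma_{p-1})^{i+1}.
\]
Feeding these into the quantities $\mathcal A$ and $\mathcal O$ of \ref{lem:ho:tower}\ref{i:tower:est}, with $\varrho/(\varrho_{p-1}-\varrho_p)=2^{p}$, gives a bound of the shape $\Delta(\Lambda(1+\Gamma_{p-1}))^{N}$. The exponent is at most $(p+1)^2$ after crude bookkeeping: $(1+\mathcal A)^{p}$ contributes about $p^2$ factors and $\mathcal O$ about $p+1$.

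\emph{Homogeneous case.} If $G$ depends only on its last variable, both terms in \ref{i:ho:Omega-1} vanish, so $\Omega_{(i)}=0$. The Hölder seminorm of $A$ is then bounded by $\alpha_3\hoelder{\delta}{\uD f}\le \alpha_3E\varrho^{-\delta}$, so \ref{lem:H}\ref{i:H:assume-epsilon} follows from $\alpha_3E\le\varepsilon$. For the linear bound, every term of $\Omega_s$ contains at least one factor of a derivative of $\uD f$, by the shuffle formula \ref{lem:ho:pde}\ref{i:ho:Omegas-explicit}. The other factors involve derivatives of $A$, which are bounded by $\Lambda_0$ times powers of quantities that are at most~$1$ since $E\le1$. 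Induction then gives $\Gamma_p\le\Delta_0E$.

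I expect the main obstacle to be the first step: handling the lower order term so that \ref{lem:H}, which has only a divergence-form right hand side, applies. I would first try absorbing it through the explicit $\Omega_{(i)}$ formula after differentiation. Failing that, I would use a Newtonian potential primitive, with the Hölder bounds taken from \cite[5.2]{Federer1969}.
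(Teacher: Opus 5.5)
Your proposal follows the paper's proof: difference quotients with \ref{lem:H} and the closure lemma for $q=2$, then induction by applying \ref{lem:ho:tower} to $\uD_i f$ with $A$ and $\Omega_{(i)}$ on the balls $\cball b{\varrho_p}$, chain-rule bounds for $\mathcal A$ and $\mathcal O$, and linearity in $E$ once $\Omega_{(i)}=0$. The obstacle you anticipate does not arise, and your preferred fix is the paper's: leave the difference quotient on $\theta_\nu$ in the $\R^{\adim-\vdim}$ slot, so that term becomes $\int \uD\theta\bullet P_\nu$, where $\sigma\bullet P_\nu(x)$ is the average over $0\le t\le 1$ of $\langle(0,\sigma(e_i),0),\uD G[\psi(x+t\nu^{-1}e_i)]\rangle$ and is uniformly $\delta$~H\"older; by contrast, differencing $\uD G\circ\psi$ in that slot (and then taking a radial primitive) would give a zero-order term containing $\uD f_\nu$, which has no $\nu$-uniform bound.
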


\begin{proof}
    We~abbreviate $\ell = \adim-\vdim$ and $H =
    \varrho^{\delta}\hoelder{\delta}{\psi|\cball{b}{\varrho}}$, and note for
    later use that, since $\hoelder{\delta}{f|\cball b\varrho} \le
    (2\varrho)^{1-\delta} \norm{\uD f}{\infty}{b,\varrho}$ and
    $\hoelder{\delta}{\mathrm{id}|\cball b\varrho} \le (2\varrho)^{1-\delta}$,
    \begin{equation}
        \label{eq:ho:H-vs-E}
        H \le 2\varrho ( 1 + \norm{\uD f}{\infty}{b,\varrho} )
        + \varrho^{\delta}\hoelder{\delta}{\uD f|\cball b\varrho}
        \le 2 \Lambda (1+E) \,.
    \end{equation}
    Observe also that $W(c,M)$ is convex, the function
    $\ellipticity{\cdot}$ being an infimum of linear functions and the norm
    being convex.

    \emph{Step 1: the case $q = 2$.}  Fix $i \in \{1,\ldots,\vdim\}$, put $R =
    \tfrac78\varrho$ and $\tau = \tfrac18\varrho$, and define, for integers $\nu
    > 8/\varrho$, maps on $\cball{b}{R}$ by
    \begin{gather}
        f_{\nu}(x) = \nu \bigl( f(x+\nu^{-1}e_i) - f(x) \bigr) \,,
        \quad
        \psi_{\nu}(x) = \nu \bigl( \psi(x+\nu^{-1}e_i) - \psi(x) \bigr) = \bigl( e_i, f_{\nu}(x), \uD f_{\nu}(x) \bigr) \,,
        \\
        \phi_{\nu}(t,x) = (1-t) \psi(x) + t \psi(x+\nu^{-1}e_i) \,,
        \\
        \bigl\langle \sigma \odot \tau ,\, A_{\nu}(x) \bigr\rangle
        = \textint{0}{1} \bigl\langle (0,0,\sigma) \odot (0,0,\tau) \,,
        \uD^2 G[\phi_{\nu}(t,x)] \bigr\rangle \ud \LM^1 t \,,
        \\
        \sigma \bullet P_{\nu}(x)
        = \textint{0}{1} \bigl\langle (0,\sigma(e_i),0) \,,
        \uD G[\psi(x+t\nu^{-1}e_i)] \bigr\rangle \ud \LM^1 t \,,
        \\
        \sigma \bullet Q_{\nu}(x)
        = \textint{0}{1} \bigl\langle (0,0,\sigma) \odot (e_i, f_{\nu}(x), 0) \,,
        \uD^2 G[\phi_{\nu}(t,x)] \bigr\rangle \ud \LM^1 t \,;
    \end{gather}
    these are well defined because $Z$ is convex, so that $\phi_{\nu}(t,x) \in
    Z$.  Testing~\ref{i:ho:el} with $\theta_{\nu}(x) = \nu \bigl(
    \theta(x-\nu^{-1}e_i) - \theta(x) \bigr)$, where $\theta \in
    \dspace{\oball{b}{R}}{\R^{\ell}}$, and translating the variable of
    integration --- the computation is that of~\cite[p.~556]{Federer1969} ---
    one obtains
    \begin{equation}
        \label{eq:ho:base-nu}
        \textint{}{} \bigl\langle \uD\theta \odot \uD f_{\nu} \,, A_{\nu} \bigr\rangle \ud \LM^{\vdim}
        = \textint{}{} \uD\theta \bullet ( P_{\nu} - Q_{\nu} ) \ud \LM^{\vdim}
        \quad \text{for $\theta \in \dspace{\oball{b}{R}}{\R^{\ell}}$} \,.
    \end{equation}
    By~\ref{i:ho:ell} and the convexity of $W(c,M)$ the map~$A_{\nu}$ takes its
    values in $W(c,M)$, and
    \begin{displaymath}
        R^{\delta} \hoelder{\delta}{A_{\nu}|\cball{b}{R}}
        \le \varrho^{\delta} \alpha_3 \hoelder{\delta}{\psi|\cball{b}{\varrho}} \le \varepsilon
    \end{displaymath}
    by~\ref{i:ho:small}, since $\hoelder{\delta}{\phi_{\nu}(t,\cdot)|\cball bR}
    \le \hoelder{\delta}{\psi|\cball b\varrho}$ for $0 \le t \le 1$.  Moreover
    $\norm{f_{\nu}}{\infty}{b,R} \le \norm{\uD f}{\infty}{b,\varrho}$,
    $\hoelder{\delta}{f_{\nu}|\cball bR} \le \hoelder{\delta}{\uD
      f|\cball{b}{\varrho}}$, and therefore, using $\tau \le \varrho$ and
    $|(e_i,y,0)| \le 1 + |y|$,
    \begin{equation}
        \label{eq:ho:PQ}
        \hnorm{\delta}{P_{\nu}-Q_{\nu}}{b,R}{\tau}
        \le \alpha_1 + \alpha_2 H + (1+E)(\alpha_2 + \alpha_3 H) + \alpha_2 E
        \le 8 \Lambda^2 (1+E)^2
    \end{equation}
    by~\eqref{eq:ho:H-vs-E}.  Since $f_{\nu}$ is of class~$\cnt1$ with
    $\hoelder{\delta}{\uD f_{\nu}|\cball bR} \le 2\nu\hoelder{\delta}{\uD
      f|\cball b\varrho} < \infty$, we~may apply~\ref{lem:H}
    to~\eqref{eq:ho:base-nu}, with $A_{\nu}$, $f_{\nu}$ and $P_{\nu}-Q_{\nu}$ in
    place of $A$, $g$ and~$\Omega$, and with inner and outer radii $\varrho_2 =
    \tfrac34\varrho$ and~$R$; by~\eqref{eq:H:concl} and~\eqref{eq:ho:PQ},
    \begin{equation}
        \label{eq:ho:base-est}
        \hnorm{\delta}{\uD f_{\nu}}{b,\varrho_2}{\tau}
        \le \Gamma_{\ref{lem:H}} \bigl( 8 \varrho^{-1} E + 8 \Lambda^2 (1+E)^2 \bigr) \,,
    \end{equation}
    a bound independent of~$\nu$.  Since $f_{\nu} \to \uD_i f$ uniformly on
    $\cball{b}{\varrho_2}$, \ref{mr:H:cited}\ref{i:cited:closure} shows that
    $\uD_i f$ is of class~$\cnt1$ on $\oball{b}{\varrho_2}$, that
    $\hoelder{\delta}{\uD\uD_i f|\cball{b}{\varrho_2}} < \infty$ with the bound
    \eqref{eq:ho:base-est}, and that $\uD f_{\nu} \to \uD\uD_i f$ uniformly on
    compact subsets of $\oball{b}{\varrho_2}$.  Since $A_{\nu} \to A$ and, by
    \ref{i:ho:Omega-1}, $P_{\nu} - Q_{\nu} \to \Omega_{(i)}$ uniformly on
    $\cball{b}{\varrho_2}$, passing to the limit in~\eqref{eq:ho:base-nu} yields
    \begin{equation}
        \label{eq:ho:first-pde}
        \textint{}{} \bigl\langle \uD\theta \odot \uD\uD_i f \,, A \bigr\rangle \ud \LM^{\vdim}
        = \textint{}{} \uD\theta \bullet \Omega_{(i)} \ud \LM^{\vdim}
        \quad \text{for $\theta \in \dspace{\oball{b}{\varrho_2}}{\R^{\ell}}$} \,,
    \end{equation}
    which is~\ref{i:ho:Omega-s} for $p = 1$, and identifies $\Omega_{(i)}$
    as in~\ref{i:ho:Omega-1}.  Finally, as $\norm{\uD^2f}{\infty}{} \le \vdim
    \max_i \norm{\uD\uD_i f}{\infty}{}$ and $\varrho^{\delta} =
    8^{\delta}\tau^{\delta}$, \eqref{eq:ho:base-est} gives $\Delta_1 =
    \Delta_1(\vdim)$ with
    \begin{equation}
        \label{eq:ho:F2}
        \varrho \hnorm{\delta}{\uD^2 f}{b,\varrho_2}{\varrho}
        \le \Delta_1 \Gamma_{\ref{lem:H}} \bigl( E + \varrho \Lambda^2 (1+E)^2 \bigr)
        \le \Delta_2 \bigl( \Lambda (1+E) \bigr)^{3} \,,
    \end{equation}
    with $\Delta_2 = \Delta_2(\vdim,\adim,\delta,c,M)$, since $\varrho \le
    \Lambda$ and $\Lambda \ge 1$.

    The hypotheses \ref{i:ho:ball}--\ref{i:ho:small} continue to hold with $b$
    and~$\varrho$ replaced by~$b'$ and~$\varrho'$ whenever $\cball{b'}{\varrho'}
    \subseteq \cball{b}{\varrho}$; applying the preceding argument at every $b'
    \in \oball{b}{\varrho}$ with $\varrho' = \varrho - |b'-b|$, we~conclude that
    $f$ is of class~$\cnt2$ on $\oball{b}{\varrho}$, that
    $\hoelder{\delta}{\uD^2f|K} < \infty$ for every compact $K \subseteq
    \oball{b}{\varrho}$, and that~\eqref{eq:ho:first-pde} holds for $\theta \in
    \dspace{\oball{b}{\varrho}}{\R^{\ell}}$.  Together
    with~\ref{i:ho:Omega-1} --- which is the definition of~$\Omega_{(i)}$ ---
    and with~\eqref{eq:ho:F2} this proves the theorem for $q = 2$; note
    $\Gamma_2 = \Delta(\Lambda(1+E))^{9} \ge E + \Delta_2 (\Lambda(1+E))^{3}$
    for a suitable~$\Delta$.

    \emph{Step 2: induction on~$q$.}  Let $q \ge 3$ and assume the theorem
    proved with $q-1$ in place of~$q$; as $G$ is of class~$\cnt{q}$ and
    $\alpha_p < \infty$ for $p \le q$, it applies to the present data and yields
    that $f$ is of class~$\cnt{q-1}$ on $\oball{b}{\varrho}$ with
    $\hoelder{\delta}{\uD^{q-1}f|K} < \infty$ for compact $K \subseteq
    \oball{b}{\varrho}$, that~\eqref{eq:ho:first-pde} holds on
    $\oball{b}{\varrho}$, and that
    \begin{equation}
        \label{eq:ho:Fq-1}
        F_{q-1} := \textsum{p=1}{q-1} \varrho^{p-1} \hnorm{\delta}{\uD^{p}f}{b,\varrho_{q-1}}{\varrho}
        \le \Gamma_{q-1} \,.
    \end{equation}
    Consequently $\psi$ is of class~$\cnt{q-2,\delta}$ on every closed ball
    $\cball{b}{\varrho'}$ with $\varrho' < \varrho$; hence, so are $A = \Upsilon
    \circ \psi$ and, by~\ref{i:ho:Omega-1}, $\Omega_{(i)}$.
    Fix $i \in \{1,\ldots,\vdim\}$ and apply~\ref{lem:ho:tower}, at order $q-1$
    and on $\cball{b}{\varrho_{q-1}}$, to $g = \uD_i f$ and to $A$,
    $\Omega_{(i)}$ --- admissible by~\eqref{eq:ho:first-pde},
    by~\ref{i:ho:ell}, and because $\varrho_{q-1}^{\delta}
    \hoelder{\delta}{A|\cball{b}{\varrho_{q-1}}} \le \varrho^{\delta} \alpha_3
    \hoelder{\delta}{\psi|\cball b\varrho} \le \varepsilon$.  Its
    conclusion~\ref{lem:ho:tower}\ref{i:tower:reg} gives that $\uD_i f$ is of
    class~$\cnt{q-1}$ on $\oball{b}{\varrho_{q-1}}$ with $\delta$~H{\"o}lder top
    derivative on compact subsets, and
    \ref{lem:ho:tower}\ref{i:tower:Omega-s} gives~\ref{i:ho:Omega-s}
    and~\ref{i:ho:Omega-rec}; since $i$~was arbitrary and since the same
    argument applies with $b,\varrho$ replaced by $b',\varrho'$ as at the end of
    Step~1, we~obtain~\ref{i:ho:class} on all of $\oball{b}{\varrho}$.  The
    sharper class assertion in~\ref{i:ho:Omega-s} follows, as in Part~2 of the
    proof of~\ref{lem:ho:tower}, from
    \ref{lem:ho:pde}\ref{i:ho:Omegas-explicit}: for $s \in \Sequence{\vdim}{p}$
    the map $\Omega_s$ is the sum of derivatives of order $p-1$ of
    $\Omega_{(i)}$, which is of class~$\cnt{q-1,\delta}$ on compact subsets of
    $\oball b\varrho$ by~\ref{i:ho:Omega-1} and~\ref{i:ho:class}, and of
    products $\uD\uD_{\sigma}\uD_i f \odot \uD_{\varsigma}A$ --- the recursion
    being applied with $\uD_i f$ in place of~$f$ --- with $\sigma \in
    \Sequence{\vdim}{j}$, $j \le p-2$, and $\varsigma \in
    \Sequence{\vdim}{p-1-j}$; the first factor is of class~$\cnt{q-j-2}$ and the
    second of class~$\cnt{q-p+j,\delta}$, so that all summands are of
    class~$\cnt{q-p}$ with $\delta$~H{\"o}lder top derivative on compact
    subsets.

    It remains to prove the estimate of~\ref{i:ho:est}.  Abbreviate $\Pi = \Lambda(1 +
    F_{q-1})$.  Since $\uD\psi = (\mathrm{id}_{\R^{\vdim}}, \uD f, \uD^2 f)$ and
    $\uD^{j}\psi = (0, \uD^{j}f, \uD^{j+1}f)$ for $j \ge 2$, \eqref{eq:ho:Fq-1}
    yields
    \begin{equation}
        \label{eq:ho:psi-bound}
        \varrho^{j} \hnorm{\delta}{\uD^{j}\psi}{b,\varrho_{q-1}}{\varrho} \le 2 \Pi
        \quad \text{for $1 \le j \le q-2$} \,.
    \end{equation}
    Hence, by the chain rule \cite[3.1.11]{Federer1969} applied to $\Upsilon$
    and~$\psi$, by~\ref{i:ho:ball}, by~\eqref{eq:ho:psi-bound}, and
    by~\eqref{eq:hnorm:mult}, there is
    $\Delta_3 = \Delta_3(\vdim,\adim,q)$ such that the quantities $\mathcal{A}$
    and~$\mathcal{O}$ of~\ref{lem:ho:tower}\ref{i:tower:est}, formed at order
    $q-1$ on $\cball{b}{\varrho_{q-1}}$ from $A$ and~$\Omega_{(i)}$, satisfy
    \begin{equation}
        \label{eq:ho:AO}
        \mathcal{A} + \mathcal{O} \le \Delta_3 \, \Pi^{q+1} \,;
    \end{equation}
    here we~used that $\uD^{j}\Omega_{(i)}$ is, by~\ref{i:ho:Omega-1}, a
    universal polynomial expression in $\uD^{p}G \circ \psi$ with $p \le j+2$,
    in $\uD^{l}\psi$ with $l \le j$, and in $\uD^{l}\uD_i f$ with $l \le j$, and
    that $\varrho_{q-1} \ge \varrho/2$.  Applying \ref{lem:ho:tower}\ref{i:tower:est} with $g =
    \uD_i f$, with $\varrho_{q-1}$ in place of~$\varrho$ and with $r =
    \varrho_q$ --- so that $\varrho_{q-1}/(\varrho_{q-1}-\varrho_q) \le 2^{q-1}
    + 2$ --- multiplying by~$\varrho$, taking the maximum over~$i$, and
    using $\varrho \hnorm{\delta}{\uD\uD_i f}{b,\varrho_{q-1}}{\varrho} \le
    \varrho \hnorm{\delta}{\uD^2 f}{b,\varrho_{q-1}}{\varrho} \le F_{q-1}$,
    we~find $\Delta_4 = \Delta_4(\vdim,\adim,\delta,c,M,q)$ with
    \begin{displaymath}
        \textsum{p=2}{q} \varrho^{p-1} \hnorm{\delta}{\uD^{p}f}{b,\varrho_q}{\varrho}
        \le \Delta_4 \bigl( 1 + \Delta_3 \Pi^{q+1} \bigr)^{q-1} \bigl( F_{q-1} + \Lambda \mathcal{O} \bigr) \,,
    \end{displaymath}
    where the factor~$\Lambda$ absorbs the factor~$\varrho$ multiplying
    $\mathcal{O}$.  Adding the term $p = 1$, which is at most~$F_{q-1}$, and
    using~\eqref{eq:ho:AO} together with $\Pi \ge 1$ and $F_{q-1} \le \Pi$,
    we~obtain $\Delta_5 = \Delta_5(\vdim,\adim,\delta,c,M,q)$ with
    \begin{displaymath}
        \textsum{p=1}{q} \varrho^{p-1} \hnorm{\delta}{\uD^{p}f}{b,\varrho_q}{\varrho}
        \le \Delta_5 \, \Pi^{(q+1)(q-1)} \, \Pi^{q+2}
        = \Delta_5 \, \Pi^{q^2+q+1}
        \le \Delta_5 \, \Pi^{(q+1)^2} \,.
    \end{displaymath}
    Since $\Pi = \Lambda(1+F_{q-1}) \le \Lambda(1+\Gamma_{q-1})$
    by~\eqref{eq:ho:Fq-1}, this is the estimate of~\ref{i:ho:est}, with the
    recursion stated there, provided $\Delta \ge \Delta_5$.

    \emph{Step 3: the case of \ref{i:ho:homog}.}  Suppose $G$ is independent of
    its first two variables.  Then $\uD G[\psi(x)]$ has vanishing components in
    the first two variables, so the first summand in~\ref{i:ho:Omega-1}
    vanishes; the second vanishes as well, because $\uD^2 G[\psi(x)]$ pairs only
    the third components and the second argument $(e_i,\uD_if(x),0)$ has
    vanishing third component.  Thus $\Omega_{(i)} = 0$.  Moreover, $A =
    \Upsilon' \circ \uD f$, where $\Upsilon'$ is the map induced by~$\uD^2 G$ on
    the third variable, so that
    \begin{displaymath}
        \varrho^{\delta} \hoelder{\delta}{A|\cball{b}{\varrho}}
        \le \alpha_3 \varrho^{\delta} \hoelder{\delta}{\uD f|\cball b\varrho}
        \le \alpha_3 E \le \varepsilon \,,
    \end{displaymath}
    which is the hypothesis of~\ref{lem:H} and of~\ref{lem:ho:tower} needed in
    Steps~1 and~2; the sole use of~\ref{i:ho:small} there was to secure it.
    Step~1 goes through unchanged and furnishes~\eqref{eq:ho:first-pde},
    together with the fact that $f$ is of class~$\cnt{2}$ on
    $\oball{b}{\varrho}$; since now $\Omega_{(i)} = 0$, applying~\ref{lem:H}
    to~$\uD_i f$ and~\eqref{eq:ho:first-pde}, with inner and outer radii
    $\varrho_2$ and~$R$, gives
    \begin{displaymath}
        \hnorm{\delta}{\uD\uD_i f}{b,\varrho_2}{\tau}
        \le \Gamma_{\ref{lem:H}} \, \tau^{-1} \norm{\uD_i f}{\infty}{b,R}
        \le 8 \Gamma_{\ref{lem:H}} \, \varrho^{-1} E \,;
    \end{displaymath}
    whence, $\varrho \hnorm{\delta}{\uD^2f}{b,\varrho_2}{\varrho} \le \Delta_6
    E$ in place of~\eqref{eq:ho:F2}.  In Step~2 one has $\mathcal{O} = 0$, and
    since $\uD^{j}\psi$ may there be replaced by $\uD^{j}\uD f$ --- so that
    $\varrho^{j}\hnorm{\delta}{\uD^{j}\uD f}{b,\varrho_{q-1}}{\varrho} \le
    F_{q-1}$ for $1 \le j \le q-2$, without the factor $\max\{1,\varrho\}$ ---
    the bound~\eqref{eq:ho:AO} improves to $\mathcal{A} \le \Delta_3 \Lambda_0
    (1+F_{q-1})^{q}$.  Consequently, if $E \le 1$ and if $F_{q-1} \le \Delta_0'
    E \le \Delta_0'$ by the inductive hypothesis, then
    \begin{displaymath}
        \textsum{p=1}{q} \varrho^{p-1} \hnorm{\delta}{\uD^{p}f}{b,\varrho_q}{\varrho}
        \le F_{q-1} + \Delta_4 \bigl( 1 + \Delta_3 \Lambda_0 (1+\Delta_0')^{q} \bigr)^{q-1} F_{q-1}
        \le \Delta_0 E
    \end{displaymath}
    with $\Delta_0 = \Delta_0(\vdim,\adim,\delta,c,M,q,\Lambda_0)$, the estimate
    being \emph{linear} in~$E$ because the inhomogeneity $\mathcal{O}$ vanishes.
\end{proof}

\begin{lemma}
    \label{lem:schauder-est}
    Assume
    \begin{gather}
        \vdim, \adim, k \in \natp \,,
        \quad
        \vdim \le \adim \,,
        \quad
        k \ge 2 \,,
        \quad
        r \in (0,\infty) \,,
        \quad
        \eta, \lambda \in (0,1) \,,
        \\
        G : \Hom(\R^{\vdim},\R^{\adim - \vdim}) \to \R
        \quad \text{is of class~$\cnt{\infty}$ and satisfies}
        \\
        \lambda |\tau|^2 \le \langle \tau \odot \tau ,\, \uD^2 G(\sigma) \rangle \le \lambda^{-1} |\tau|^2
        \quad \text{for $\sigma,\tau \in \Hom(\R^{\vdim},\R^{\adim - \vdim})$} \,,
        \\
        u : \oball 0r \to \R^{\adim - \vdim}
        \quad \text{is of class~$\cnt{\infty}$ and satisfies} \quad
        L_{G}(u) = 0 \,,
        \\
        r^{1/2} \hoelder{1/2}{\uD u|{\cball 0r}} + \norm{\uD u}{\infty}{0,r} \le \eta \,.
    \end{gather}
    There exist $\eta_0 = \eta_0(\adim,\vdim,\lambda,G) \in (0,1)$ and $\Gamma =
    \Gamma(\adim,\vdim,\lambda,k,G)$, depending only on the indicated
    quantities, such that if $\eta \le \eta_0$ then
    \begin{equation}
        \label{eq:schauder-iter}
        \sum_{i=1}^{k} \Bigl( r^{i-1} \norm{\uD^i u}{\infty}{0, r/4}
        + r^{i-1/2} \hoelder{1/2}{\uD^i u|{\cball{0}{r/4}}} \Bigr)
        \le \Gamma \eta \,.
    \end{equation}
\end{lemma}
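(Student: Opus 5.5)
The plan is to deduce the statement from \ref{thm:ho}\ref{i:ho:homog}, applied with $q = k$, $\delta = 1/2$, $b = 0$, and a slightly smaller radius. Regard $G$ as a function on $V = \R^{\vdim} \times \R^{\adim-\vdim} \times \Hom(\R^{\vdim},\R^{\adim-\vdim})$ independent of its first two variables, and take $Z = V$ (convex). Then hypothesis \ref{thm:ho}\ref{i:ho:el} is exactly $L_G(u) = 0$; note $\uD G$ has linear growth because $\|\uD^2 G\| \le \lambda^{-1}$, so $L_G$ is defined as in \ref{def:ell-system}.

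For the ellipticity data, the hypothesis $\lambda|\tau|^2 \le \langle \tau\odot\tau, \uD^2G(\sigma)\rangle \le \lambda^{-1}|\tau|^2$ gives, by polarisation, $\|\uD^2G(\sigma)\| \le \lambda^{-1}$ (the form is symmetric positive). It also gives $\ellipticity{\uD^2G(\sigma)} \ge \lambda$, since $|\xi y| = 1$ for unit $\xi,y$. Choosing $c = \lambda/2$ and $M = 2\lambda^{-1}$ places $\Upsilon$ in $W(c,M)$. The constants $\alpha_p = \sup\|\uD^pG\|$ for $2 \le p \le k+1$ must be finite, and this is the main obstacle: the hypothesis controls only $\uD^2G$ globally. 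The dependence of $\eta_0$ and $\Gamma$ on $G$ in the statement is what lets us handle this. Since $\norm{\uD u}{\infty}{0,r} \le \eta < 1$, only values $\sigma$ with $|\sigma| \le 1$ matter. So I would replace $Z$ by the convex set $\R^{\vdim}\times\R^{\adim-\vdim}\times\cball 0 1$, on which every $\alpha_p$ with $p \ge 2$ is finite by smoothness and compactness. Then $\Lambda_0 = \max\{1,\alpha_2,\ldots,\alpha_{k+1}\}$ depends only on $G$ and $k$. (The quantities $\alpha_0,\alpha_1$ enter only $\Lambda$, which is not used in \ref{i:ho:homog}; if needed, subtract an affine function from $G$, which does not change $L_G$.)

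Next, apply \ref{thm:ho}\ref{i:ho:homog} with $\varrho = r'$ for some $r' < r$ close to $r$, or directly on $\cball 0r$ if $u$ extends continuously, which the Hölder hypothesis ensures for $\uD u$. The quantity $E = \hnorm{1/2}{\uD u}{0,\varrho}{\varrho}$ is bounded by $\eta$ by hypothesis. Set $\eta_0 = \min\{1, \varepsilon/\alpha_3\}$, with $\varepsilon = \varepsilon_{\ref{lem:H}}(\vdim,\adim,1/2,c,M)$. This yields $\alpha_3 E \le \varepsilon$ and $E \le 1$, so the theorem gives
\[
\textsum{p=1}{k}\varrho^{p-1}\hnorm{1/2}{\uD^pu}{0,\varrho_k}{\varrho} \le \Delta_0 E \le \Delta_0\eta .
\]
Since $\varrho_k \ge \varrho/2 \ge r/4$, and $\hnorm{1/2}{\cdot}{}{\varrho}$ with weight $\varrho \approx r$ dominates $\norm{\cdot}{\infty}{} + r^{1/2}\hoelder{1/2}{\cdot}$ up to a factor, multiplying out gives \eqref{eq:schauder-iter} with $\Gamma$ a fixed multiple of $\Delta_0$. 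Hypothesis \ref{thm:ho}\ref{i:ho:f} (finite Hölder seminorm of $\uD u$) holds on $\oball 0r$.

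The only delicate points, besides the finiteness of the $\alpha_p$, are as follows. First, one must check that \ref{thm:ho} is applicable with $U = \oball 0r$ while the ball $\cball 0\varrho$ is required to lie in $U$; this is why $\varrho < r$ is taken, letting $\varrho \uparrow r$ at the end or simply taking $\varrho = r/2$ at the cost of a constant. Second, one must confirm that the constant $\Delta_0$ depends only on $(\vdim,\adim,\lambda,k,G)$, through $c$, $M$, $\delta = 1/2$, and $\Lambda_0$.
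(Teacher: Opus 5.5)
Your proposal is correct and is essentially the paper's own proof: it applies \ref{thm:ho}\ref{i:ho:homog} with $q=k$, $\delta=\tfrac12$, $c=\lambda/2$, $M=2\lambda^{-1}$, $b=0$ and $Z=\R^{\vdim}\times\R^{\adim-\vdim}\times\{\tau:|\tau|\le1\}$, chooses $\eta_0$ so that $\alpha_3\eta_0\le\varepsilon_{\ref{lem:H}}$, and takes $\varrho=r/2$ (any $\varrho\in[r/2,r)$ also works), which gives $\Gamma=2^k\Delta_0$. Two cosmetic fixes remain: take $\eta_0=\min\{\tfrac12,\varepsilon/\alpha_3\}$ so that $\eta_0\in(0,1)$ strictly, and treat the trivial case $\vdim=\adim$ separately, since \ref{mr:H:cited} assumes $\vdim<\adim$.
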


\begin{proof}
    We~may assume $\vdim < \adim$, the case $\vdim = \adim$ being trivial.  This
    is the case of~\ref{thm:ho}\ref{i:ho:homog}, with $q = k$, $\delta = 1/2$,
    $c = \lambda/2$, $M = 2\lambda^{-1}$, $b = 0$, and $\varrho = r/2$.  Indeed,
    put $\ell = \adim-\vdim$, let
    \begin{gather}
        V = \R^{\vdim} \times \R^{\ell} \times \Hom(\R^{\vdim},\R^{\ell}) \,,
        \quad
        \tilde G(x,y,\tau) = G(\tau) \quad \text{for $(x,y,\tau) \in V$} \,,
        \\
        Z = \R^{\vdim} \times \R^{\ell} \times \bigl\{ \tau : |\tau| \le 1 \bigr\} \,,
        \quad
        \Lambda_G^{(p)} = \sup \bigl\{ \| \uD^{p} G(\tau) \| : |\tau| \le 1 \bigr\} \,,
    \end{gather}
    and note that $Z$ is convex, that $\tilde G$ is of class~$\cnt{k+1}$, that
    $\alpha_p = \Lambda_G^{(p)} < \infty$ for $p \in \{0,\ldots,k+1\}$ --- so
    that $\Lambda_0 = \max\{1,\Lambda_G^{(2)},\ldots,\Lambda_G^{(k+1)}\}$
    depends only on $k$ and~$G$ --- and that $\tilde G$ is independent of its
    first two variables.  Since $L_G(u) = 0$ means, by~\ref{def:ell-system},
    that $\textint{}{} \langle \uD\theta ,\, \uD G(\uD u) \rangle \ud
    \LM^{\vdim} = 0$ for $\theta \in \dspace{\oball 0r}{\R^{\ell}}$, and since
    $\uD \tilde G[\psi(x)]$ has vanishing components in the first two variables,
    hypothesis~\ref{thm:ho}\ref{i:ho:el} holds; hypotheses
    \ref{thm:ho}\ref{i:ho:q}--\ref{i:ho:A} are immediate, and $A = \uD^2 G \circ
    \uD u$.  The ellipticity hypothesis on~$G$ yields
    $\ellipticity{\uD^2G(\sigma)} \ge \lambda > c$ and $\|\uD^2G(\sigma)\| \le
    \lambda^{-1} < M$, and $c < M$ because $\lambda < 1$; hence,
    \ref{thm:ho}\ref{i:ho:ell} holds with the map $\Upsilon$ induced by $\uD^2
    G$.  Finally $\norm{\uD u}{\infty}{0,r} \le \eta \le 1$ gives $\psi \lIm
    \cball{0}{\varrho} \rIm \subseteq Z$, and
    \begin{displaymath}
        E = \hnorm{1/2}{\uD u}{0,\varrho}{\varrho}
        \le \norm{\uD u}{\infty}{0,r} + r^{1/2} \hoelder{1/2}{\uD u|\cball 0r} \le \eta \,.
    \end{displaymath}
    Choosing $\eta_0 = \eta_0(\adim,\vdim,\lambda,G) \in (0,1)$ so small that
    $\Lambda_G^{(3)} \eta_0 \le \varepsilon_{\ref{lem:H}}(\vdim,\adim,\tfrac12,c,M)$
    --- a requirement which does \emph{not} involve~$k$ --- the hypotheses
    of~\ref{thm:ho}\ref{i:ho:homog} are met, and~\ref{thm:ho}\ref{i:ho:est} yields
    \begin{displaymath}
        \textsum{p=1}{k} \varrho^{p-1} \hnorm{1/2}{\uD^{p}u}{0,\varrho/2}{\varrho}
        \le \Delta_0 \, E \le \Delta_0 \, \eta
        \quad \text{with} \quad
        \Delta_0 = \Delta_0(\vdim,\adim,\tfrac12,c,M,k,\Lambda_0) \,.
    \end{displaymath}
    Since $r/4 = \varrho/2$ --- a radius \emph{independent of~$k$}, because
    $\varrho_k \ge \varrho/2$ in~\ref{thm:ho}\ref{i:ho:est} --- since $r^{p-1} =
    2^{p-1}\varrho^{p-1}$, and since
    $r^{1/2} = 2^{1/2}\varrho^{1/2}$, the left hand side
    of~\eqref{eq:schauder-iter} is at most $2^{k}$ times the left hand side of
    the last display.  Thus~\eqref{eq:schauder-iter} holds with $\Gamma =
    2^{k}\Delta_0$, which depends only on $\adim$, $\vdim$, $\lambda$, $k$
    and~$G$.
\end{proof}

\section{Auxiliary results for the construction of approximating functions}

\begin{definition}
    \label{def:area-integrand}
    We define $\Phi : \Hom(\R^{\vdim},\R^{\adim - \vdim}) \to \R$ by the formula
    \begin{displaymath}
        {\textstyle \Phi(\tau) = \bigl( \sum_{i=0}^{\vdim} |\tbwedge_i \tau|^2 \bigr)^{1/2} }
        \quad \text{for $\tau \in \Hom(\R^{\vdim}, \R^{\adim - \vdim})$} \,.
    \end{displaymath}
\end{definition}

\begin{remark}
    \label{rem:area-integrand}
    Note that the map $\R^{\adim} \times \Hom(\R^{\vdim}, \R^{\adim - \vdim})
    \ni (x,\tau) \mapsto \Phi(\tau)$ is the nonparametric integrand associated
    to the area integrand; see~\cite[5.1.1, 5.1.9]{Federer1969}. Observe also
    that $\Phi(0) = 1$, $\uD \Phi(0) = 0$, and $\langle \sigma \odot \rho ,\,
    \uD^2 \Phi(0) \rangle = \sigma \bullet \rho$ whenever $\sigma, \rho \in
    \bigodot^1(\R^{\vdim}, \R^{\adim - \vdim})$.
\end{remark}

\begin{lemma}
    \label{lem:fake-integrand}
    Let $0 < \varepsilon \le 1$ and $\Phi$ be as in~\ref{def:area-integrand}.
    There exist $0 < \eta \le 1$ and $G : \Hom(\R^{\vdim},\R^{\adim - \vdim})
    \to \R$ of class~$\cnt{\infty}$ such that for $\sigma,\tau \in
    \Hom(\R^{\vdim},\R^{\adim - \vdim})$ and $i \in \{0,1,2\}$ there holds
    \begin{gather}
        \uD^i \Phi(\sigma) = \uD^i G(\sigma)
        \quad \text{given $|\sigma| \le \eta$} \,,
        \quad
        \| \uD^2G(\sigma) - \uD^2 \Phi(0) \|
        \le \varepsilon \,,
        \\
        G(\sigma) = 1 + \tfrac 12 |\sigma|^2
        \quad \text{given $|\sigma| > 2 \eta$} \,,
        \quad
        \langle \tau \odot \tau  ,\, \uD^2 G(\sigma) \rangle \ge (1 - \varepsilon) |\tau|^2 \,,
        \quad
        \| \uD^2 G(\sigma) \| \le 1 + \varepsilon \,.
    \end{gather}
\end{lemma}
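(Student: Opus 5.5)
We obtain $G$ by gluing the area integrand~$\Phi$ to its quadratic model at the origin by means of a radial cut-off, and we choose $\eta$ so small that the gluing error is of order~$\eta$. Put
\begin{displaymath}
    Q(\sigma) = 1 + \tfrac12 |\sigma|^2 \,, \quad R = \Phi - Q \,, \quad G = Q + \chi R \,,
\end{displaymath}
where $\chi(\sigma) = \psi(|\sigma|/\eta)$. Here $\psi : \R \to [0,1]$ is a fixed function of class~$\cnt{\infty}$ with $\psi(t) = 1$ for $t \le 1$ and $\psi(t) = 0$ for $t \ge 2$.

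First, $\Phi$ is of class~$\cnt{\infty}$ on all of $\Hom(\R^{\vdim},\R^{\adim-\vdim})$. Indeed, $\Phi^2$ is a polynomial in the entries of~$\sigma$, and $\Phi^2 \ge |\tbwedge_0 \sigma|^2 = 1$; hence $\Phi$ is the composition of a polynomial with the square root on $[1,\infty)$. The function $\chi$ is smooth because $\psi$ is constant near~$0$. Consequently $G$ is of class~$\cnt{\infty}$.

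The qualitative requirements are immediate. For $|\sigma| \le \eta$ we have $\chi = 1$ on a neighbourhood, so $G = \Phi$ there and $\uD^i G(\sigma) = \uD^i\Phi(\sigma)$ for $i \in \{0,1,2\}$. For $|\sigma| > 2\eta$ we have $\chi(\sigma) = 0$, so $G(\sigma) = Q(\sigma)$.

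For the quantitative bound, let $\Upsilon$ be as in~\ref{rem:Laplacian}. Then $\uD^2 Q \equiv \Upsilon$, and $\uD^2\Phi(0) = \Upsilon$ by~\ref{rem:area-integrand}. Moreover $Q(0) = \Phi(0) = 1$ and $\uD\Phi(0) = 0 = \uD Q(0)$. Hence $R(0) = 0$, $\uD R(0) = 0$ and $\uD^2 R(0) = 0$. Taylor's formula, with $C = \sup\{ \|\uD^3 R(\sigma)\| : |\sigma| \le 2 \}$, yields for $|\sigma| \le 2$
\begin{displaymath}
    |R(\sigma)| \le C|\sigma|^3 \,, \quad \|\uD R(\sigma)\| \le C|\sigma|^2 \,, \quad \|\uD^2 R(\sigma)\| \le C|\sigma| \,.
\end{displaymath}
By the chain rule, $\|\uD^j\chi\| \le C_\psi \eta^{-j}$ for $j \in \{0,1,2\}$, where $C_\psi$ depends only on~$\psi$ and $\eta \le 1$. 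We have
\begin{displaymath}
    \uD^2 G - \Upsilon = \uD^2(\chi R) = R\,\uD^2\chi + 2\,\uD\chi \odot \uD R + \chi\,\uD^2 R \,,
\end{displaymath}
and this vanishes for $|\sigma| \ge 2\eta$. On $|\sigma| \le 2\eta$ each of the three terms is bounded by a constant times $\eta^{-j}\eta^{3-j}\cdots$, precisely by $C C_\psi \cdot 8\,\eta$, $2 C C_\psi \cdot 4\,\eta$ and $C \cdot 2\eta$ respectively. Altogether $\|\uD^2 G(\sigma) - \Upsilon\| \le C'\eta$ with $C'$ depending only on $\vdim$, $\adim$ and~$\psi$. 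Choosing $\eta = \min\{1, \varepsilon/C'\}$ gives $\|\uD^2G(\sigma) - \uD^2\Phi(0)\| \le \varepsilon$ for every~$\sigma$.

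The remaining two inequalities follow from this bound, because $\langle \tau \odot \tau, \Upsilon \rangle = |\tau|^2$ and $\|\Upsilon\| = 1$ (Cauchy--Schwarz in the norm of bilinear forms). Thus
\begin{displaymath}
    \langle \tau \odot \tau, \uD^2 G(\sigma) \rangle \ge |\tau|^2 - \varepsilon|\tau|^2 \quad \text{and} \quad \|\uD^2 G(\sigma)\| \le 1 + \varepsilon \,.
\end{displaymath}

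The only point needing care is the cubic vanishing of~$R$ at the origin. It is what makes the cut-off derivatives, of size $\eta^{-1}$ and $\eta^{-2}$, harmless, and it rests entirely on~\ref{rem:area-integrand}.
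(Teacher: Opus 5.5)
Your proof is correct and is essentially the paper's argument. The paper obtains $G$ from \cite[3.21]{Menne2013}, whose construction is this same cut-off gluing of $\Phi$ to its second order Taylor polynomial $1+\tfrac12|\sigma|^2$. The cited lemma bounds the error by the oscillation of $\uD^2\Phi$ on $\cball 0{2\eta}$, whereas your self-contained version uses a bound on $\uD^3 R$ and gets an error $O(\eta)$.

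One small imprecision: at points with $|\sigma| = \eta$ the function $\chi$ is not identically $1$ on a neighbourhood. There the equality $\uD^i G(\sigma) = \uD^i\Phi(\sigma)$ should be obtained by continuity from the open ball, or from the vanishing of all derivatives of $\psi - 1$ at $t = 1$.
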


\begin{proof}
    Let $\Gamma = \Gamma_{\text{\cite[3.21]{Menne2013}}}$ be the number depending
    on~$\adim$ and~$\vdim$ appearing in~\cite[3.21]{Menne2013}. Since $\Phi$ is
    of class~$\cnt{\infty}$ we see that there exists $0 < \eta \le 1$ such that
    \begin{displaymath}
        s = \sup \left\{ \| \uD^2 \Phi(\sigma) - \uD^2 \Phi(0) \| : \sigma \in \Hom(\R^{\vdim},\R^{\adim - \vdim}) ,\, |\sigma| \le 2 \eta \right\}
        \le  \varepsilon\Gamma^{-1} \,,
    \end{displaymath}
    We employ~\cite[3.21]{Menne2013} with $\Hom(\R^{\vdim},\R^{\adim - \vdim})$,
    $2$, $\infty$, $\Phi$, $0$, $2 \eta$ in place of $H$, $k$, $l$, $\Phi$, $a$,
    $\delta$ to obtain the function $G : \Hom(\R^{\vdim},\R^{\adim - \vdim}) \to
    \R$; its conclusions read $\uD^i G = \uD^i \Phi$ on $\cball 0\eta$ for $i \in
    \{0,1,2\}$, $\| \uD^2 G(\sigma) - \uD^2 \Phi(0) \| \le \Gamma s \le
    \varepsilon$ for all~$\sigma$, and $G$ agrees outside $\cball 0{2\eta}$ with
    a~polynomial function of degree at most~$2$, which by the construction
    carried out in the proof of~\cite[3.21]{Menne2013} is the second order
    Taylor polynomial of~$\Phi$ at~$0$.  Recalling~\ref{rem:area-integrand},
    that polynomial is $\sigma \mapsto 1 + \tfrac 12 |\sigma|^2$, and $\uD^2
    \Phi(0)$ is the form $(\sigma,\tau) \mapsto \sigma \bullet \tau$, whose norm
    is~$1$; whence $G$ has all the desired properties.
\end{proof}

\begin{lemma}
    \label{lem:approx-function}
    Let $\adim, \vdim \in \natp$ satisfy $0 < \vdim \le \adim$ and $\Phi$ be as
    in~\ref{def:area-integrand}. There exist $0 < \kappa \le 1$, $0 <
    \varepsilon < \min \{ 1/2 ,\, \varepsilon_{\ref{lem:L1-T-1-est}}(\adim,\vdim) \}$,
    and $G : \Hom(\R^{\vdim},\R^{\adim - \vdim}) \to \R$ of class~$\cnt{\infty}$
    satisfying
    \begin{gather}
        \uD^i \Phi(\sigma) = \uD^i G(\sigma)
        \quad \text{for $i \in \{0, 1, 2 \}$ and $\sigma \in \Hom(\R^\vdim, \R^{\adim - \vdim})$ given $|\sigma| \le \kappa$} \,,
        \\
        (1 - \varepsilon)|\tau|^2 \le \langle \tau \odot \tau ,\, \uD^2G(\sigma) \rangle \le (1 + \varepsilon)|\tau|^2
        \quad \text{for $\sigma,\tau \in \Hom(\R^\vdim, \R^{\adim - \vdim})$} \,,
    \end{gather}
    such that whenever $0 < r < \infty$ and $g : \R^{\vdim} \to \R^{\adim -
      \vdim}$ satisfies $\Lip g \le \kappa$, then there exists a~map $u \in
    \espace{\oball 0r}{\R^{\adim - \vdim}}$ such that
    \begin{gather}
        \label{eq:LGu-and-u-g}
        L_G(u) = 0 \,,
        \quad
        u - g \in \VSobz{1,2}(\oball 0r,\R^{\adim - \vdim}) \,,
        \\
        \label{eq:LPhiu}
        L_{\Phi}(u)(\theta) = 0
        \quad
        \text{for $\theta \in \dspace{\oball 0r}{\R^{\adim-\vdim}}$ with $\spt \theta \subseteq \oball 0{r/4}$} \,,
        \\
        \label{eq:Du-g}
            \norm{\weakD (u-g)}{2}{0,r} \le 3 \norm{\weakD (g-w)}{2}{0,r}
            \quad
            \text{for $w \in \VSob{1,2}(\oball 0r,\R^{\adim - \vdim})$ with $L_G(w) = 0$} \,.
    \end{gather}
    Moreover, there is a positive finite number $\Gamma_0 =
    \Gamma_0(\adim,\vdim)$ with
    \begin{equation}
        \label{eq:u-lo-est}
        \norm{\uD u}{\infty}{0, r/4}
        + r^{1/2} \hoelder{1/2}{\uD u|{\cball{0}{r/4}}}
        \le \Gamma_0 r^{-\vdim/2} \norm{\uD g}{2}{0,r} \,,
    \end{equation}
    and, for any $k \in \natp$, a positive finite number $\Gamma =
    \Gamma(\adim,\vdim,k)$ such that
    \begin{equation}
        \label{eq:u-hi-est}
        \textsum{i=1}{k} \Bigl( r^{i-1} \norm{\uD^i u}{\infty}{0, 2^{-4}r}
        + r^{i-1/2} \hoelder{1/2}{\uD^i u|{\cball{0}{2^{-4}r}}} \Bigr)
        \le \Gamma r^{-\vdim/2} \norm{\uD g}{2}{0,r} \,.
    \end{equation}
    The radius $2^{-4}r$ in~\eqref{eq:u-hi-est} does not depend on~$k$.
\end{lemma}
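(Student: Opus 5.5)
The plan is to choose $G$ by \ref{lem:fake-integrand} with a small $\varepsilon$, obtain $u$ by direct minimisation, and then get regularity from Lipschitz $\Lp p$ theory plus the Schauder tower. Concretely, fix $\varepsilon$ smaller than $\min\{1/2,\varepsilon_{\ref{lem:L1-T-1-est}}(\adim,\vdim)\}$, and small enough relative to the constants below. Apply \ref{lem:fake-integrand} with this $\varepsilon$ to get $\eta$ and $G$. The first display of the statement then holds for any $\kappa\le\eta$, and the ellipticity display follows from $\|\uD^2G(\sigma)-\uD^2\Phi(0)\|\le\varepsilon$ together with $\uD^2\Phi(0)=\Upsilon$.

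\textbf{Existence of $u$ and \eqref{eq:Du-g}.} Given $g$ with $\Lip g\le\kappa$, minimise $\int_{\oball0r}G(\weakD v)$ over $v\in g+\VSobz{1,2}$. The integrand is smooth, uniformly convex and of quadratic growth, so the direct method gives a minimiser $u$ with $L_G(u)=0$. Interior smoothness $u\in\espace{\oball0r}{\R^{\adim-\vdim}}$ follows from difference quotients as in Step~1 of \ref{thm:ho}, combined with the $\Lp p$/De~Giorgi-free route: $\uD^2G$ is $\varepsilon$-close to the constant $\Upsilon$, so the Cordes-type perturbation (\cite[3.2]{Menne2013}) gives $\weakD u\in\Lp p$ locally for large $p$, hence $\cnt{0,\delta}$ for $\weakD u$. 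After that, \ref{thm:ho} bootstraps to every order.

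For \eqref{eq:Du-g}, apply \ref{lem:W12est} with $v=u$ (so $L_G(u)=0$), with $u$ there replaced by $g$, and with $c=1-\varepsilon$, $M=1+\varepsilon$. This yields $\norm{\weakD(u-g)}{2}{0,r}\le \frac{1+\varepsilon}{1-\varepsilon}\norm{\weakD(g-w)}{2}{0,r}\le 3\norm{\weakD(g-w)}{2}{0,r}$.

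\textbf{Estimates \eqref{eq:u-lo-est}, \eqref{eq:u-hi-est}, \eqref{eq:LPhiu}.} Rescale to $r=1$. Taking $w=0$ in \eqref{eq:Du-g} (admissible since $L_G(0)=0$) gives $\norm{\weakD u}{2}{0,1}\le4\norm{\uD g}{2}{0,1}$. The key step is the low-order estimate: a $\cnt{1,1/2}$ bound for $u$ on $\cball0{1/4}$ linear in $\norm{\weakD u}{2}{}$.

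\begin{itemize}
\item I would write the equation for $\uD_iu$ as a linear system with coefficients $A=\int_0^1\uD^2G(\cdot)$, which satisfy $\|A-\Upsilon\|\le\varepsilon$.
\item Small-perturbation $\Lp p$ estimates (\cite[3.2]{Menne2013}, $p=2\vdim$) plus Caccioppoli give $\norm{\weakD^2u}{p}{}\le C\norm{\weakD u}{2}{}$ on $\oball0{1/2}$.
\item Morrey's embedding then bounds $\norm{\uD u}{\infty}{0,1/4}+\hoelder{1/2}{\uD u|\cball0{1/4}}$ by $\Gamma_0\norm{\uD g}{2}{}$; this is \eqref{eq:u-lo-est}.
\end{itemize}

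\textbf{Main obstacle.} Obtaining \eqref{eq:u-lo-est} with a constant depending only on $(\adim,\vdim)$ and \emph{linear} in the data, without Schauder estimates whose coefficients depend on $u$. I expect the Cordes smallness $\varepsilon$ to be precisely what makes this work; that is why $\varepsilon$ must be fixed before $\kappa$.

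\textbf{Choice of $\kappa$ and the remaining conclusions.} Since $\Lip g\le\kappa$ gives $\norm{\uD g}{2}{0,1}\le\unitmeasure{\vdim}^{1/2}\kappa$, choose $\kappa$ so that $\Gamma_0\unitmeasure{\vdim}^{1/2}\kappa\le\min\{\eta,\eta_0\}$, with $\eta_0=\eta_{0,\ref{lem:schauder-est}}(\adim,\vdim,1-\varepsilon,G)$. Then:

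\begin{itemize}
\item $|\uD u|\le\eta$ on $\cball0{1/4}$, so $\uD G(\uD u)=\uD\Phi(\uD u)$ there, which gives \eqref{eq:LPhiu}.
\item \ref{lem:schauder-est}, applied on $\oball0{1/4}$ with $k$ arbitrary, gives \eqref{eq:u-hi-est} on $\cball0{1/16}=\cball0{2^{-4}}$ with right side $\Gamma\Gamma_0\norm{\uD g}{2}{}$.
\end{itemize}

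The radius $2^{-4}$ is independent of $k$ because the radius in \ref{lem:schauder-est} is. Rescaling back to general $r$ restores the stated powers of $r$.
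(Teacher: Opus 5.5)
Your proposal is correct and is essentially the paper's proof. The paper likewise takes $G$ from \ref{lem:fake-integrand}, gets $u$ by the direct method, derives \eqref{eq:Du-g} from \ref{lem:W12est}, proves \eqref{eq:u-lo-est} by a Cordes-type interior $\Lp{2\vdim}$ bound for $\weakD^2 u$ plus Morrey's embedding, and then chooses $\kappa$ so that $|\uD u| \le \eta$ on $\cball 0{r/4}$ (giving \eqref{eq:LPhiu}) and \ref{lem:schauder-est} applies on $\oball 0{r/4}$.

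The only difference is the source of the $W^{2,2\vdim}$ estimate. You rebuild it from \cite[3.2]{Menne2013} via the differentiated linear system, whereas the paper quotes the nonlinear version \cite[3.6]{Menne2013} directly and handles the bookkeeping with a cut-off and Poincar\'e's inequality. One slip in your smoothness paragraph: it is $\weakD^2 u \in \Lp{p}$ with $p > \vdim$, not $\weakD u \in \Lp{p}$, that makes $\uD u$ H\"older continuous.
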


\begin{proof}
    Employ~\cite[3.6]{Menne2013} and~\ref{lem:L1-T-1-est} to obtain
    \begin{displaymath}
        \varepsilon = \tfrac 12 \min \bigl\{ \varepsilon_{\text{\cite[3.6]{Menne2013}}}(\adim,2,2\vdim) ,\, \varepsilon_{\ref{lem:L1-T-1-est}}(\adim,\vdim) ,\, \tfrac 12 \bigr\} \,.
    \end{displaymath}
    Then use~\ref{lem:fake-integrand} to get
    \begin{displaymath}
        \eta = \eta_{\ref{lem:fake-integrand}}(\varepsilon)
        \quad \text{and} \quad
        G = G_{\ref{lem:fake-integrand}}(\varepsilon) \,.
    \end{displaymath}
    Let $\Delta_1 \in \R$ be the~positive finite number depending only
    on~$\vdim$ and~$\adim$ appearing in the Sobolev-Morrey inequality
    (e.g.~\cite[(7.42)]{Gilbarg2001}). Define
    \begin{gather*}
        \Delta_2 = 1 + 2^{\vdim+4} + 2^{2\vdim+5} \,,
        \quad
        \Delta_3 = \Delta_1 \Delta_2 \Gamma_{\text{\cite[3.6]{Menne2013}}}(\adim,2,2\vdim)
        2^{\vdim+3} \unitmeasure{\vdim}^{1/2} \,,
        \quad
        \Delta_4 = \Delta_3 + 2^{\vdim+2} \unitmeasure{\vdim}^{-1/2} \,,
        \\
        \eta_0 = \eta_{0,\ref{lem:schauder-est}}(\adim,\vdim,1-\varepsilon,G) \,,
        \quad \text{and} \quad
        \kappa = \frac{\min \{ \eta ,\, \eta_0 \}}{2 \Delta_4 \unitmeasure{\vdim}^{1/2}} \,;
    \end{gather*}
    note $2 \Delta_4 \unitmeasure{\vdim}^{1/2} \ge 2^{\vdim+3} \ge 1$, whence
    $\kappa \le \eta$.
    Assume $g : \R^{\vdim} \to \R^{\adim - \vdim}$ satisfies $\Lip g \le
    \kappa$. Apply the direct method of the calculus of variations
    (cf.~\cite[Theorems~4.5 and~4.6 and Remark~4.1]{Giusti2003}) to get the
    function $u \in \VSob{1,2}(\oball 0r,\R^{\adim - \vdim})$ such that
    \begin{displaymath}
        L_G(u) = 0 
        \quad \text{and} \quad
        u - g \in \VSobz{1,2}(\oball 0r,\R^{\adim - \vdim}) \,.
    \end{displaymath}
    Recall~\ref{lem:fake-integrand} to see that $\| \uD^2 G(\sigma)\| \le
    (1+\varepsilon)$ and $\langle \tau \odot \tau ,\, \uD^2G(\sigma) \rangle \ge
    (1 - \varepsilon)|\tau|^2$ whenever $\sigma,\tau \in \Hom(\R^\vdim, \R^{\adim -
      \vdim})$. We use~\ref{lem:W12est} with $g$, $u$, $G$ in place of $u$, $v$,
    $F$ respectively to get~\eqref{eq:Du-g}.
    From the choice of $\varepsilon$ and~\cite[3.6]{Menne2013} it follows that
    \begin{multline}
        \label{eq:D2u2d}
        \norm{\weakD^2 u}{2\vdim}{y,s/2} \le \Gamma_{\text{\cite[3.6]{Menne2013}}} s^{-2-\vdim + 1/2}\norm{u-P}{1}{y,s}
        \\ \text{for $y \in \oball 0r$, $0 < s \le r -|y|$ and any affine function $P : \R^{\vdim} \to \R^{\adim - \vdim}$} \,.
    \end{multline}
    In~particular, one can take $y = 0$ and $s = r$. From~\eqref{eq:D2u2d} and
    the Sobolev-Morrey embedding theorem~\cite[7.17]{Gilbarg2001} it follows
    that~$u|{\oball 0s}$ is of class~$\cnt{1,1/2}$ for any $0 < s < r$; hence,
    employing~\cite[5.2.15]{Federer1969} we see that $u \in \espace{\oball
      0r}{\R^{\adim - \vdim}}$. Choose $\varphi \in \dspace{\oball 0r}{\R}$ such
    that
    \begin{equation}
        \spt \varphi \subseteq \oball 0{r/2} \,,
        \quad
        \oball 0{r/4} \subseteq \varphi^{-1}\{1\} \,,
        \quad
        \norm{\uD \varphi}{\infty}{0,r} \le \frac{16}r \,,
        \quad
        \norm{\uD^2 \varphi}{\infty}{0,r} \le \frac{128}{r^2} \,.
    \end{equation}
    Define
    \begin{gather}
        a = {\textstyle \fint_{\oball 0{r/2}}} u \ud \LM^{\vdim} \,,
        \quad
        b = {\textstyle \fint_{\oball 0{r/2}}} \uD u \ud \LM^{\vdim} \,,
        \quad
        P(x) = a + b(x) \,,
        \quad
        v = \varphi \cdot (u - P) \,,
        \\
        \tilde a = {\textstyle \fint_{\oball 0{r}}} u \ud \LM^{\vdim} \,,
        \quad
        \tilde b = {\textstyle \fint_{\oball 0{r}}} \uD u \ud \LM^{\vdim} \,,
        \quad
        \tilde P(x) = \tilde a + \tilde b(x)
        \quad \text{for $x \in \R^{\vdim}$} \,.
    \end{gather}
    We~record that the Poincar{\'e} inequality~\cite[(7.45)]{Gilbarg2001},
    in~the form in which it is used below --- the mean being taken over the
    convex set~$\Omega$ itself --- carries the constant $\bigl(
    \unitmeasure{\vdim} / \LM^{\vdim}(\Omega) \bigr)^{1-1/\vdim} (\diam
    \Omega)^{\vdim}$, hence $2^{\vdim}\rho$ in case $\Omega = \oball 0\rho$.
    Since $v(x) = 0$ whenever $x \in \oball 0r$ and $|x| \ge r/2$ the
    Sobolev-Morrey inequality (e.g.~\cite[(7.42)]{Gilbarg2001}) yields
    \begin{equation}
        \label{eq:Dvx-Dvy}
        \| \uD v(x) - \uD v(y) \|
        \le \Delta_1 |x-y|^{1/2} \norm{\uD^2 v}{2\vdim}{0,r/2}
        \quad \text{for $x,y \in \oball 0{r/2}$}
        \,.
    \end{equation}
    Since $\fint_{\oball 0{r/2}} (u-P) \ud \LM^{\vdim} = 0$ and $\fint_{\oball
      0{r/2}} (\uD u - b) \ud \LM^{\vdim} = 0$, the Poincar{\'e} inequality
    (e.g.~\cite[(7.45)]{Gilbarg2001}), applied on $\oball 0{r/2}$, yields
    \begin{gather}
        \norm{u-P}{2\vdim}{0,r/2}
        \le 2^{\vdim-1} r \norm{\uD (u-P)}{2\vdim}{0,r/2} \,,
        \\
        \norm{\uD (u-P)}{2\vdim}{0,r/2}
        = \norm{\uD u - b}{2\vdim}{0,r/2}
        \le 2^{\vdim-1} r  \norm{\uD^2 u}{2\vdim}{0,r/2} \,.
    \end{gather}
    Thus,
    \begin{equation}
        \label{eq:D2vL2d}
        \norm{\uD^2 v}{2\vdim}{0,r/2}
        \le \tfrac{128}{r^2} \norm{u-P}{2\vdim}{0,r/2}
        + \tfrac{32}{r} \norm{\uD (u-P)}{2\vdim}{0,r/2}
        + \norm{\uD^2 u}{2\vdim}{0,r/2}
        \le \Delta_2 \norm{\uD^2 u}{2\vdim}{0,r/2} \,.
    \end{equation}
    Since $\fint_{\oball 0r}(u - \tilde P) \ud \LM^{\vdim} = 0$, the same
    inequality applied on $\oball 0r$, the bound $|\tilde b| \LM^{\vdim}(\oball
    0r) \le \norm{\uD u}{1}{0,r}$, H{\"o}lder's inequality,
    and~\eqref{eq:Du-g} applied with~$w = 0$ --- admissible because $\uD G(0) =
    \uD \Phi(0) = 0$, whence $L_G(0) = 0$ --- yield
    \begin{multline}
        \label{eq:u-PL1}
        r^{-2-\vdim + 1/2} \norm{u-\tilde P}{1}{0,r}
        \le 2^{\vdim} r^{-1-\vdim + 1/2} \norm{\uD u - \tilde b}{1}{0,r}
        \le 2^{\vdim+1} r^{-1/2 - \vdim} \norm{\uD u}{1}{0,r}
        \\
        \le 2^{\vdim+1} \unitmeasure{\vdim}^{1/2} r^{-1/2 - \vdim/2} \norm{\uD u}{2}{0,r}
        \le 2^{\vdim+1} \unitmeasure{\vdim}^{1/2} r^{-(\vdim+1)/2}
        \bigl( \norm{\uD (u-g)}{2}{0,r} + \norm{\uD g}{2}{0,r} \bigr)
        \\
        \le 2^{\vdim+3} \unitmeasure{\vdim}^{1/2} r^{-(\vdim+1)/2} \norm{\uD g}{2}{0,r}
        \,.
    \end{multline}
    Applying~\eqref{eq:D2u2d} with $y = 0$, $s = r$, and $\tilde P$ in place
    of~$P$ --- legitimate, that estimate holding for \emph{every} affine
    function --- and combining the outcome with~\eqref{eq:Dvx-Dvy}
    and~\eqref{eq:D2vL2d} we~obtain
    \begin{displaymath}
        \| \uD v(x) - \uD v(y) \|
        \le \Delta_3 \biggl( \frac{|x-y|}{r} \biggr)^{1/2} r^{-\vdim/2} \norm{\uD g}{2}{0,r}
        \quad \text{for $x,y \in \oball 0{r/2}$} \,.
    \end{displaymath}
    Note that $\uD v(x) = \uD u(x) - b$ whenever $x \in \cball 0{r/4}$, that
    $\uD v(z) = 0$ whenever $|z| = r/2$, and that, by H{\"o}lder's inequality
    and~\eqref{eq:Du-g} applied with $w = 0$,
    \begin{displaymath}
        |b| \le \LM^{\vdim}(\oball 0{r/2})^{-1/2} \norm{\uD u}{2}{0,r/2}
        \le 2^{\vdim/2+2} \unitmeasure{\vdim}^{-1/2} r^{-\vdim/2} \norm{\uD g}{2}{0,r} \,.
    \end{displaymath}
    Choosing $z$ with $|z| = r/2$ we~therefore get
    \begin{gather}
        \| \uD u(x) - \uD u(y) \|
        \le \Delta_3 \biggl( \frac{|x-y|}{r} \biggr)^{1/2} r^{-\vdim/2} \norm{\uD g}{2}{0,r}
        \quad \text{for $x,y \in \cball 0{r/4}$} \,,
        \\
        \|\uD u(x)\| \le \| \uD v(x) - \uD v(z) \| + |b|
        \le \Delta_4 r^{-\vdim/2} \norm{\uD g}{2}{0,r}
        \quad \text{for $x \in \cball 0{r/4}$} \,;
    \end{gather}
    hence,
    \begin{displaymath}
        r^{1/2} \hoelder{1/2}{\uD u|{\cball 0{r/4}}} + \norm{\uD u}{\infty}{0,r/4}
        \le 2 \Delta_4 r^{-\vdim/2} \norm{\uD g}{2}{0,r}
        \le 2 \Delta_4 \unitmeasure{\vdim}^{1/2} \Lip g \,.
    \end{displaymath} 
    This is~\eqref{eq:u-lo-est}, with $\Gamma_0 = 2\Delta_4$.  In~particular,
    since $2 \Delta_4 \unitmeasure{\vdim}^{1/2} \Lip g \le \min
    \{ \eta ,\, \eta_0 \} \le \eta$, we see that~\eqref{eq:LPhiu} holds;
    moreover
    \begin{displaymath}
        2 \Delta_4 \unitmeasure{\vdim}^{1/2} \Lip g \le \eta_0 \,,
    \end{displaymath}
    so the smallness hypothesis of~\ref{lem:schauder-est} is satisfied
    with $r/4$ in place of~$r$.  Given $k \in \natp$ we~obtain, applying
    \ref{lem:schauder-est} with $\max\{k,2\}$ in place of~$k$ --- whose
    conclusion is stated on $\cball 0{(r/4)/4} = \cball 0{2^{-4}r}$ ---
    and absorbing the factors $4^{i-1}$ and $4^{i-1/2}$ arising from the
    passage from the weights $(r/4)^{i-1}$, $(r/4)^{i-1/2}$ to $r^{i-1}$,
    $r^{i-1/2}$, the estimate~\eqref{eq:u-hi-est} with $\Gamma = 4^{k} \cdot 2
    \Delta_4 \Gamma_{\ref{lem:schauder-est}}(\adim,\vdim,1 - \varepsilon,\max\{k,2\},G)$.
\end{proof}

\section{Estimates for the bad set}

\begin{lemma}
    \label{lem:lip-approx}
    Assume
    \begin{gather}
        \adim,\vdim,Q \in \natp \,,
        \quad
        L \in (0,\infty) \,,
        \quad
        M \in [1,\infty) \,,
        \quad
        \delta \in (0,1/2) \,.
    \end{gather}
    Then there exists a~positive finite number $\varepsilon$ such that the
    following holds. Suppose
    \begin{enumerate}
    \item $\gamma \in (0,\varepsilon]$, $2 \le \vdim < \adim$, $r \in
        (0,\infty)$, $h \in (0,\infty]$, $h > 2 \delta r$,

    \item $T \in \grass{\adim}{\vdim}$,
        $p \in \orthproj{\adim}{\vdim}$,
        $\im p^* = T$,
        $q \in \orthproj{\adim}{\adim - \vdim}$,
        $\im q^* = T^{\perp}$,

    \item $U = \cylinder T0rh + \oball{0}{2r}$,
        $V \in \IVar{\vdim}(U)$,
        $\|\delta V\|$ is a~Radon measure,

    \item $(Q-1+\delta)\unitmeasure{\vdim} r^{\vdim} \le \|V\|(\cylinder T0rh)
        \le (Q+1-\delta)\unitmeasure{\vdim} r^{\vdim}$,
        
    \item $\|V\|( \cylinder T0r{h+\delta r} \without \cylinder T0r{h-2\delta r})
        \le (1 - \delta) \unitmeasure{\vdim} r^{\vdim}$,

    \item $\|V\|(U) \le M \unitmeasure{\vdim} r^{\vdim}$,
        
    \item $B$ is the set of all $z \in \cylinder T0rh$ satisfying
        $\density^{*\vdim}(\|V\|,z) > 0$ and
        \begin{gather}
            \text{either} \quad
            \measureball{\| \delta V \|}{\cball z{\rho}} > \gamma \measureball{\|V\|}{\cball z{\rho}}^{1 - 1/\vdim}
            \quad \text{for some $\rho \in (0,2r)$} 
            \\
            \text{or} \quad
            \textint{\cball z{\rho} \times \grass{\adim}{\vdim}}{} |\project S - \project T| \ud V(\xi,S)
            > \gamma \measureball{\|V\|}{\cball z{\rho}}
            \quad \text{for some $\rho \in (0,2r)$} \,,
        \end{gather}

    \item $A = \cylinder T0rh \without B$,
        $A(x) = A \cap \{ z : p(z) = x \}$ for $x \in \R^{\vdim}$,

    \item
        \label{i:la:X}
        $X$ is the set of all $x \in p \lIm A \rIm$ such that
        \begin{displaymath}
            \textsum{z \in A(x)}{} \density^{\vdim}(\|V\|,z) = Q
            \quad \text{and} \quad
            \density^{\vdim}(\|V\|,z) \in \nat \quad \text{for $z \in A(x)$} \,,
        \end{displaymath}

    \item
        \label{i:la:f}
        $f : X \to \qspace_Q(\R^{\adim - \vdim})$ is characterised by the requirement
        \begin{displaymath}
            \density^{\vdim}(\|V\|,z) = \density^0(\| f(x) \|, q(z))
            \quad \text{whenever $x \in X$ and $z \in A(x)$} \,,
        \end{displaymath}
        
    \item $g : \R^{\vdim} \to \R^{\adim-\vdim}$ is a Lipschitz extension
        of~$\boldsymbol{\eta}_Q \circ f : X \to \R^{\adim-\vdim}$ acquired by
        applying the Kirszbraun theorem~\cite[2.10.43]{Federer1969},
        
    \item $\measureball{\| \delta V \|}{\cball 0\rho}
        < 4^{-\vdim-1} \gamma \measureball{\|V\|}{\cball 0\rho}^{1-1/\vdim}$
        for $\rho \in (0,4r)$,
        
    \item $\int_{\cball 0\rho \times \grass{\adim}{\vdim}} |\project{S} -
        \project{T}| \ud V(\xi,S) < 4^{-\vdim-1} \gamma \measureball{\|V\|}{\cball
          0\rho}$ for $\rho \in (0,4r)$,

    \item $1 - \tfrac 12
        \le \frac{  \measureball{\|V\|}{\cball 0\rho} }{\unitmeasure{\vdim} \rho^\vdim \density^{\vdim}(\|V\|,0)}
        \le 1+ \tfrac 12$ 
        for $\rho \in (0,4r)$,

    \item $\Xi(s) = \measureball{\| \delta V \|}{\cylinder T0{3s}{3s}}^{\vdim/(\vdim-1)}
        + \textint{\cylinder T0{3s}{3s} \times \grass{\adim}{\vdim}}{} \| \project S - \project T \|^2 \ud V(\xi,S)$
        for $s \in (0,2r)$,

    \item
        \label{i:la:height}
        $\dist(z,T) \le \tfrac 12 |z|$ for $z \in \spt\|V\|$.
    \end{enumerate}

    Then $\Lip f \le L$ and the following hold.
    \begin{enumerate}[resume]
    \item
        \label{i:bs:good-scale}
        For $z \in \cball{0}{2r}$ and $\rho \in (2|z|,2r)$ we have
        \begin{gather}
            \measureball{\| \delta V \|}{\cball z\rho}
            < \gamma \measureball{\|V\|}{\cball z\rho}^{1-1/\vdim} 
            \\
            \text{and} \quad
            \textint{\cball z{\rho} \times \grass{\adim}{\vdim}}{} |\project S - \project T| \ud V(\xi,S)
            < \gamma \measureball{\|V\|}{\cball z{\rho}} \,.
        \end{gather}
        In particular, if $s \in (0,2r)$, then $B \cap \cball 0s$ is the set
        of~$z \in \cylinder T0rh \cap \cball 0s$ satisfying
        $\density^{*\vdim}(\|V\|,z) > 0$ and
        \begin{gather}
            \text{either} \quad
            \measureball{\| \delta V \|}{\cball z{\rho}} > \gamma \measureball{\|V\|}{\cball z{\rho}}^{1 - 1/\vdim}
            \quad \text{for some $\rho \in (0,2s]$} \,,
            \\
            \text{or} \quad
            \textint{\cball z{\rho} \times \grass{\adim}{\vdim}}{} |\project S - \project T| \ud V(\xi,S)
            > \gamma \measureball{\|V\|}{\cball z{\rho}}
            \quad \text{for some $\rho \in (0,2s]$} \,.
        \end{gather}

    \item
        \label{i:bs:BF-covering}
        For $s \in (0,2r)$ we have
        \begin{displaymath}
            \|V\|(B \cap \cball 0s) \le 4 \vdim \besicovitch{\adim} \gamma^{-2} \Xi(s) \,.
        \end{displaymath}
        
    \item  
        \label{i:bs:g-L2-norm}
        There exists $1 \le \Gamma < \infty$ determined by $Q$, $\vdim$,
        $\adim$, $L$, and $\gamma$ such that for $s \in (0,2r)$ there holds
        \begin{displaymath}
            \norm{\uD g}{2}{0,s}^2 \le \Gamma^2 \Xi(s) \,.
        \end{displaymath}
        
    \item
        \label{i:bs:geometric-tilt}
        Assume $s \in (0,r/2)$, $u : \R^{\vdim} \to \R^{\adim - \vdim}$, $\Lip u
        \le \gamma (8\vdim)^{-1/2}$, $w = p^* + q^* \circ u$, and $\Sigma =
        \im w$ is isometric to the~graph of~$u$. Then the following hold.
        \begin{enumerate}
        \item
            \label{i:bs:gt:angle}
            $\| \project{T} - \project{\Tan(\Sigma,w(p(x)))} \| < (8\vdim)^{-1/2} \gamma$
            and $| \project{T} - \project{\Tan(\Sigma,w(p(x)))} | < \tfrac 12 \gamma$
            for $x \in \R^{\vdim}$.
        \item
            \label{i:bs:gt:bad-set-local}
            $B \cap \cball 0{2s}$ is contained in the set of those $z \in
            \cylinder T0rh \cap \cball 0{2s}$ for which $\density^{*\vdim}(\|V\|,z)
            > 0$ and for some $\rho \in (0,4s]$ there holds
            \begin{gather}
                \text{either} \quad
                \measureball{\| \delta V \|}{\cball z{\rho}} > \gamma \measureball{\|V\|}{\cball z{\rho}}^{1 - 1/\vdim}
                \\
                \text{or} \quad
                \textint{\cball z{\rho} \times \grass{\adim}{\vdim}}{} |\project S - \project{\Tan(\Sigma,w(p(\xi)))}| \ud V(\xi,S)
                > \tfrac 12 \gamma \measureball{\|V\|}{\cball z{\rho}}
                \,.
            \end{gather}
        \item
            \label{i:bs:gt:bad-set-global}
            There exists $1 \le \Gamma < \infty$ determined by $Q$, $\vdim$,
            $\adim$, $L$, and $\gamma$ such that
            \begin{multline}
                \|V\|(B \cap \cylinder T0ss) \le
                \Gamma \bigl(
                \measureball{\| \delta V \|}{\cylinder T0{2^3s}{2^3s}}^{\vdim/(\vdim-1)}
                \\
                + \textint{\cylinder T0{2^3s}{2^3s} \times \grass{\adim}{\vdim}}{}
                \| \project S - \project{\Tan(\Sigma,w(p(\xi)))} \|^2
                \ud V(\xi,S)
                \bigr) \,.
            \end{multline}
        \end{enumerate}

    \end{enumerate}
\end{lemma}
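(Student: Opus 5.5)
The plan is to obtain the assertion $\Lip f \le L$ from the Lipschitz approximation theorem for integral varifolds with small tilt and first variation, in the form of Almgren and of~\cite[\S4]{Menne2010} (see also~\cite{Menne2013}). Hypotheses (d), (e), (f) are exactly the density and mass conditions of that theorem, and the smallness of~$\gamma$ relative to $Q,\vdim,\adim,L,M,\delta$ fixes~$\varepsilon$. Concretely, I would check three things and then quote the result: on~$A$ every point lies at a good scale for all $\rho\in(0,2r)$; the level sets $A(x)$ carry total density~$Q$ for $x\in X$; and two points of $A$ over nearby base points have heights differing by at most $L$ times the base distance. The last point is the usual argument by contradiction. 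It uses the monotonicity identity together with the smallness of $\|\delta V\|$ and of the tilt, which forces $\spt\|V\|\cap\cball z\rho$ into a thin slab around $z+T$. I expect this step, together with~\ref{i:bs:g-L2-norm}, to be the main obstacle: both rest on the full machinery of the approximation theorem and not on elementary estimates. The plan is to import them rather than reprove them.

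For~\ref{i:bs:good-scale}, let $z\in\cball 0{2r}$ and $2|z|<\rho<2r$. Then
\begin{displaymath}
    \cball 0{\rho/2}\subseteq\cball z\rho\subseteq\cball 0{3\rho/2},
    \qquad 3\rho/2<4r \,.
\end{displaymath}
Hypotheses (l) and (m) at radius $3\rho/2$ bound $\|\delta V\|\,\cball z\rho$ and the tilt integral over $\cball z\rho$ by $4^{-\vdim-1}\gamma$ times the corresponding quantities for $\|V\|\,\cball 0{3\rho/2}$. The density ratio hypothesis~(n) gives
\begin{displaymath}
    \|V\|\,\cball 0{3\rho/2}\le 3\cdot 3^{\vdim}\,\|V\|\,\cball 0{\rho/2}\le 3^{\vdim+1}\,\|V\|\,\cball z\rho \,.
\end{displaymath}
Since $3^{\vdim+1}\le 4^{\vdim+1}$, this yields both strict inequalities. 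The stated description of $B\cap\cball 0s$ follows, because for $z\in\cball 0s$ every $\rho\in(2s,2r)$ exceeds $2|z|$ and is therefore a good radius.

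For~\ref{i:bs:BF-covering}, to each $z\in B\cap\cball 0s$ I would attach a bad radius $\rho_z\le 2s$, using~\ref{i:bs:good-scale}. Then $\cball z{\rho_z}\subseteq\cball 0{3s}\subseteq\cylinder T0{3s}{3s}$, and I would apply Besicovitch's covering theorem~\cite[2.8.14]{Federer1969} to extract $\besicovitch{\adim}$ disjointed subfamilies. On a ball of the first kind,
\begin{displaymath}
    \|V\|\,\cball z\rho\le\bigl(\gamma^{-1}\|\delta V\|\,\cball z\rho\bigr)^{\vdim/(\vdim-1)} \,.
\end{displaymath}
Summing over a disjointed subfamily, the superadditivity of $t\mapsto t^{\vdim/(\vdim-1)}$ gives a bound by $\gamma^{-\vdim/(\vdim-1)}\|\delta V\|(\cylinder T0{3s}{3s})^{\vdim/(\vdim-1)}$. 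On a ball of the second kind, Cauchy--Schwarz gives $\gamma^2\|V\|\,\cball z\rho\le\int|\project S-\project T|^2\ud V$, and $|\cdot|^2\le 2\vdim\|\cdot\|^2$ converts this to the operator norm. Collecting constants, with $\gamma\le1$, gives $4\vdim\besicovitch{\adim}\gamma^{-2}\Xi(s)$.

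For~\ref{i:bs:g-L2-norm}, I would split $\cball 0s$ into $X$ and its complement. Off~$X$, $|\uD g|\le\Lip g\le Q^{1/2}L$, and $\LM^{\vdim}(\cball 0s\without X)$ is at most $\|V\|$ of the bad set plus the region where the multiplicity is deficient. Both are controlled by~\ref{i:bs:BF-covering} and by the approximation theorem's estimate, using~(o) to keep the relevant set inside $\cylinder T0{3s}{3s}$. On $X$, the area formula and the fact that at good points $\uD g$ is read off from the approximate tangent planes give $|\uD g|^2\lesssim\|\project S-\project T\|^2$ integrated against~$V$. Finally, for~\ref{i:bs:geometric-tilt}: (a) follows from $\|\uD u\|\le\gamma(8\vdim)^{-1/2}$ via~\cite[4.2, 4.3]{KolRMI} and $|\cdot|\le\vdim^{1/2}\sqrt2\,\|\cdot\|$. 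Part~(b) follows from~\ref{i:bs:good-scale} with $2s$ in place of~$s$, together with the triangle inequality
\begin{displaymath}
    |\project S-\project T|\le|\project S-\project{\Tan(\Sigma,w(p(\xi)))}|+\tfrac12\gamma \,.
\end{displaymath}
Part~(c) repeats the covering argument of~\ref{i:bs:BF-covering} with the balls of~(b), using $\cylinder T0ss\subseteq\cball 0{2s}$ and $\cball z{4s}\subseteq\cylinder T0{2^3s}{2^3s}$.
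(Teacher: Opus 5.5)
Your proposal is correct and follows the paper's proof essentially step by step. The bound $\Lip f \le L$ and the estimate for $\LM^{\vdim}(\cball 0s \without X)$ are imported from the Lipschitz approximation theorem; the paper takes $\varepsilon$ from \cite[5.7]{Menne2012a} and uses its conclusions (4) and (7). The remaining items are obtained along the same lines you describe --- ball inclusions plus the density-ratio hypothesis, Besicovitch covering with Cauchy--Schwarz and $|\project S - \project T|^2 \le 2\vdim \|\project S - \project T\|^2$, the split of $\cball 0s$ into $X$ and its complement, and the triangle inequality against $\Tan(\Sigma,w(p(\xi)))$. Your enclosing ball $\cball 0{3\rho/2}$ in place of the paper's $\cball 0{2\rho}$ only improves the constant $4^{\vdim+1}$ to $3^{\vdim+1}$.

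One correction to your informal gloss, harmless because you import the theorem: for $Q \ge 2$ two of your claims are false. It is not true that any two points of $A$ over nearby base points have heights within $L$ times the base distance. Nor does $\spt\|V\| \cap \cball z\rho$ lie in a thin slab about $z+T$. Two multiplicity-one planes through $0$ meeting at a small angle violate both. What the theorem actually provides is $\mathscr{G}(f(x),f(y)) \le L|x-y|$, which involves matching the $Q$ sheets rather than comparing arbitrary pairs of points.
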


\begin{proof}
    To find $\varepsilon$ we employ~\cite[5.7]{Menne2012a}. Clearly,
    by~\cite[5.7(4)]{Menne2012a}, we have $\Lip f \le L$.
    
    \textbf{Proof of~\ref{i:bs:good-scale}.} Assume $z \in \cball{0}{2r}$, $t =
    |z|$, and $\rho \in (2t,2r)$. Then
    \begin{displaymath}
        \cball{0}{\rho/2}
        \subseteq \cball{0}{\rho - t}
        \subseteq \cball{z}{\rho}
        \subseteq \cball{0}{\rho + t}
        \subseteq \cball{0}{2\rho} \,;
    \end{displaymath}
    the second and third inclusions because $|z| = t$, the first and fourth
    because $\rho > 2t$.  The ``in~particular'' clause follows: if $z \in \cball
    0s$ then every $\rho \in (2s,2r)$ lies in $(2|z|,2r)$, so that no such
    $\rho$ can witness membership of~$z$ in~$B$.
    hence,
    \begin{gather}
        \measureball{\| V \|}{\cball{0}{2\rho}}
        \le \tfrac{1+1/2}{1-1/2} 4^{\vdim}
            (1-\tfrac 12) \unitmeasure{\vdim} (\rho/2)^{\vdim} \density^{\vdim}(\|V\|,0)
        \le 4^{\vdim+1} \measureball{\| V \|}{\cball{0}{\rho/2}}
        \le 4^{\vdim+1} \measureball{\| V \|}{\cball{z}{\rho}} \,,
        \\
        \measureball{\| \delta V \|}{\cball{z}{\rho}}
        \le \measureball{\| \delta V \|}{\cball{0}{2\rho}}
        < 4^{-\vdim-1} \gamma \measureball{\| V \|}{\cball{0}{2\rho}}^{1 - 1/\vdim}
        \le \gamma \measureball{\| V \|}{\cball{z}{\rho}}^{1 - 1/\vdim} \,,
        \\
        \begin{aligned}
            \int_{\cball z\rho \times \grass{\adim}{\vdim}} |\project{S} - \project{T}| \ud V(\xi,S) 
            &\le \int_{\cball{0}{2\rho} \times \grass{\adim}{\vdim}} |\project{S} - \project{T}| \ud V(\xi,S)
            \\
            &< 4^{-\vdim-1} \gamma \measureball{\|V\|}{\cball{0}{2\rho}}
            \le \gamma \measureball{\|V\|}{\cball{z}{\rho}} \,.
        \end{aligned}
    \end{gather}

    \textbf{Proof of~\ref{i:bs:BF-covering}.} Let $s \in (0,2r)$ and set $\beta
    = \vdim/(\vdim-1)$. We proceed as in the proof of~\cite[7.7,
    pp.~35,36]{KolasnskiMenne}. Define sets $B_1$ and $B_2$ consisting of those
    $z \in B \cap \cball 0s$ satisfying
    \begin{gather*}
        \measureball{\| \delta V \|} { \cball z{\rho} }
        > \gamma \| V \| ( \cball z{\rho} )^{1/\beta}
        \text{ for some $\rho \in (0,2s)$} \,,
        \\
        \textint{\cball z{\rho} \times  \grass \adim \vdim}{} \| \project{S} - \project{T} \|^2 \ud V(z,S)
        > \tfrac 12 \gamma^2 \vdim^{-1} \measureball{\| V \|}{ \cball z{\rho} }
        \text{ for some $\rho \in (0,2s)$} 
    \end{gather*}
    respectively. To estimate $\| V \| ( B_1 )$ we employ the
    Besicovitch-Federer covering theorem (cf.~\cite[2.8.14]{Federer1969}) which
    provides pairwise disjoint families $F_1, \ldots, F_{\besicovitch{\adim}}$
    of closed balls such that
    \begin{gather*}
        {\textstyle B_1 \subset \bigcup \bigcup \{ F_i \with i = 1, \ldots, \besicovitch \adim \} }
          \subset \cball 0{3s} \,,
        \\
        \| V \| (C) < \gamma^{-\beta} \| \delta V \| ( C )^{\beta}
        \quad \text{for $i \in \{ 1, \ldots, \besicovitch{\adim} \}$ and $C \in F_i$} \,,
    \end{gather*}
    and we obtain
    \begin{multline}
        \| V \| ( B_1 )
        \le \gamma^{-\beta}
        \textsum{i=1}{\besicovitch \adim} \textsum{C \in F_i}{} \| \delta V \| ( C )^\beta 
        \\
         \le \gamma^{-\beta} \textsum{i=1}{ \besicovitch \adim } \big(
        \textsum{C \in F_i}{} \| \delta V \| ( C ) \big)^\beta 
        \le \gamma^{-\beta} \besicovitch{\adim} \measureball{\| \delta V \|}{\cball 0{3s}}^\beta \,.
    \end{multline}
    To estimate $\|V\|(B_2)$ we find another disjointed families $E_1, \ldots,
    E_{\besicovitch \adim}$ of closed balls such that
    \begin{gather*}
        {\textstyle B_2 \subset \bigcup \bigcup \{ E_i \with i = 1, \ldots, \besicovitch \adim \}}
        \subset \oball 0{3s} \,,
        \\
        \| V \| (C) < 2 \gamma^{-2} \vdim
        \textint{C \times \grass \adim \vdim}{} \| \project{S} - \project{T} \|^2 \ud V(z,S)
        \quad \text{for $i \in \{ 1, \ldots, \besicovitch{\adim} \}$ and $C \in E_i$} \,,
    \end{gather*}
    and consequently
    \begin{multline}
        \| V \| ( B_2 )
        \le 2 \gamma^{-2} \vdim \textsum{i = 1}{\besicovitch \adim}
        \textsum{C \in E_i}{} \textint{C \times \grass \adim \vdim}{}
        \| \project{S} - \project{T} \|^2 \ud V(z,S)
        \\
        \le 2 \gamma^{-2} \vdim \besicovitch \adim
        \textint{\cball 0{3s} \times \grass \adim \vdim}{}
        \| \project{S} - \project{T} \|^2 \ud V(z,S) \,.
    \end{multline}
    Since $\vdim \ge 2$ we have $1 < \beta \le 2$, whence $\gamma^{-\beta} \le
    \gamma^{-2}$ as $\gamma \le 1$, and $\gamma^{-\beta} + 2\vdim\gamma^{-2} \le
    4\vdim\gamma^{-2}$; the conclusion follows on
    noting $B \cap \cball 0s = B_1 \cup B_2$ due to~\ref{i:bs:good-scale}, that
    $\cball 0{3s} \subseteq \cylinder T0{3s}{3s}$, the
    H{\"o}lder inequality, and, recalling~\cite[8.9(1)(2)(3)]{Allard1972},
    \begin{equation}
        \label{eq:proj-norms}
        |\project{S} - \project{T}|^2
        = 2 \perpproject{S} \bullet \project{T}
        = 2|\perpproject{S} \circ \project{T}|^2
        \le 2 \vdim \| \perpproject{S} \circ \project{T} \|^2
        = 2 \vdim \| \project{S} - \project{T} \|^2
        \quad \text{for $S,T \in \grass{\adim}{\vdim}$} \,.
    \end{equation}

    \textbf{Proof of~\ref{i:bs:g-L2-norm}.} Employ~\cite[2.5]{Menne2010}
    to find a countable index set~$I$ and for each $i \in I$ an
    $\LM^{\vdim}$~measurable set $A_i \subseteq X$ and a~function $f_i : A_i \to
    \R^{\adim - \vdim}$ satisfying $\Lip f_i \le \Lip f$ and
    \begin{displaymath}
        \HM^0(I \cap \{ i  : f_i(x) = y \}) = \density^0(\| f(x) \|, y)
        \quad \text{for $(x,y) \in X \times \R^{\adim-\vdim}$} \,.
    \end{displaymath}
    Let $s \in (0,2r)$. If $x \in X$, then
    \begin{displaymath}
        \uD g(x) = \tfrac 1Q \textsum{i \in I(x)}{} \ap \uD f_i(x)
        \quad \text{where } I(x) = I \cap \{ i : x \in \dmn \ap \uD f_i\} \,.
    \end{displaymath}
    Employing~\cite[2.8]{Menne2010}, \cite[2.2]{Menne2012a},
    \cite[3.15(7d)]{Menne2010}, and~\cite[3.5(1)(b)]{Allard1972} we conclude
    that for $x \in X$ there holds
    \begin{displaymath}
        \| \uD g(x) \|^2
        \le \| \ap A f(x) \|^2
        \le Q (1 + (\Lip f)^2) \max \bigl\{ \| \project T - \project{\Tan^{\vdim}(\|V\|,\xi)} \|^2 : \xi \in p^{-1}\{x\} \bigr\} \,.
    \end{displaymath}
    Next, proceeding as in the last paragraph of the proof
    of~\cite[7.7]{KolasnskiMenne} --- whose conclusion~(4) is the present
    assertion with $\|V\| \restrict H$ in place of $\|V\|$ --- we get
    \begin{displaymath}
        \norm{\uD g}{2}{X \cap \cball 0s}^2
        \le \textint{A \cap p^{-1}\lIm X \cap \cball 0s \rIm \times \grass \adim \vdim}{} \| \project{S} - \project{T} \|^2 \ud V(z,S) \,.
    \end{displaymath}
    The region of integration lies in $\cylinder T0{3s}{3s}$, so that the right
    hand side is at most~$\Xi(s)$.  Indeed, $V$ is carried by $\spt\|V\| \times
    \grass{\adim}{\vdim}$, and if $z \in \spt\|V\|$ satisfies $|p(z)| \le s$
    then, writing $\sigma = |q(z)| = \dist(z,T)$, hypothesis~\ref{i:la:height}
    gives $\sigma \le \tfrac12 |z| = \tfrac12 (|p(z)|^2 + \sigma^2)^{1/2}$;
    whence, $\sigma^2 \le \tfrac14 (s^2+\sigma^2)$ and $\sigma \le 3^{-1/2}s <
    s$.  Consequently $\spt\|V\| \cap p^{-1}\lIm \cball 0s \rIm \subseteq
    \cylinder T0ss \subseteq \cylinder T0{3s}{3s}$.  We~note that the height
    bound $q\lIm A \cap \spt\|V\| \rIm \subseteq \cball 0{h-\delta r}$
    of~\cite[5.7(2)]{Menne2012a} would not serve here, being of the order
    of~$h$ rather than of~$s$.
    Noting $\Lip g \le \Lip f \le L$ and setting $\Delta =
    4 \vdim \Gamma_{\text{\cite[5.7(7)]{Menne2012a}}}(Q,\vdim) L^2 \besicovitch{\adim}
    \gamma^{-2}$ we obtain, by~\cite[5.7(7)]{Menne2012a}
    and~\ref{i:bs:BF-covering}, the estimate
    \begin{displaymath}
        \norm{\uD g}{2}{\cball 0s \without X}^2 
        \le \Delta  \Xi(s) \,.
    \end{displaymath}
    Combining the last two estimates we reach the conclusion.

    \textbf{Proof of~\ref{i:bs:geometric-tilt}.}
    Item~\ref{i:bs:gt:angle} is obtained by employing~\cite[4.2, 4.3]{KolRMI} to
    see that for $x \in \R^{\vdim}$
    \begin{displaymath}
        \| \project{T} - \project{\Tan(\Sigma, w(p(x)))} \|^2
        = \frac{\|\uD u(x)\|^2}{1 + \|\uD u(x)\|^2}
        \le \frac{\gamma^2}{8\vdim + \gamma^2}
        < \frac{\gamma^2}{8 \vdim} \,;
    \end{displaymath}
    the second assertion then follows from~\eqref{eq:proj-norms}, which gives
    $| \project{T} - \project{\Tan(\Sigma,w(p(x)))} | \le (2\vdim)^{1/2} \|
    \project{T} - \project{\Tan(\Sigma,w(p(x)))} \| < \tfrac 12 \gamma$.
    Now clause~\ref{i:bs:gt:bad-set-local} follows from~\ref{i:bs:gt:angle} and
    from~\ref{i:bs:good-scale}, applied with $2s$ in place of~$s$ --- admissible
    because $2s < r < 2r$ --- by~applying the triangle inequality, since
    \begin{multline}
        \textint{\cball z{\rho} \times \grass{\adim}{\vdim}}{}
        |\project S - \project{\Tan(\Sigma,w(p(\xi)))}|
        \ud V(\xi,S)
        \\
        \ge \textint{\cball z{\rho} \times \grass{\adim}{\vdim}}{}
        |\project S - \project{T}| \ud V(\xi,S)
        - \frac{\gamma}{2} \measureball{\|V\|}{\cball z{\rho}}
    \end{multline}
    whenever $z \in \R^{\adim}$ and $0 < \rho < \infty$.  Finally,
    \ref{i:bs:gt:bad-set-global} is obtained by repeating the covering argument
    of~\ref{i:bs:BF-covering} with $2s$, $\Tan(\Sigma,w(p(\xi)))$, and
    $\tfrac12\gamma$ in place of $s$, $T$, and~$\gamma$: the sets $B_1$
    and~$B_2$ are then formed with $\rho \in (0,4s]$ and $z \in B \cap \cball
    0{2s}$, their coverings lie in $\cball 0{6s} \subseteq \cylinder
    T0{2^3s}{2^3s}$, and $\cylinder T0ss \subseteq \cball 0{2s}$.
    %
\end{proof}

\section{The main theorem}

\begin{lemma}
    \label{lem:Dk-tilt}
    Assume
    \begin{gather}
        Q \in \natp \,,
        \quad
        A \subseteq \R^{\vdim} \text{ is $\LM^{\vdim}$-measurable} \,,
        \quad
        f : A \to \qspace_Q(\R^{\adim-\vdim}) \,,
        \quad
        \Lip f \le \tfrac 12 \,,
        \\
        v : A \to \R^{\adim-\vdim} \,,
        \quad
        \Lip v \le \tfrac 12 \,,
        \quad
        w : A \to \R^{\adim} \,,
        \\
        p \in \orthproj{\adim}{\vdim} \,,
        \quad
        q \in \orthproj{\adim}{\adim-\vdim} \,,
        \quad
        p \circ q^* = 0 \,,
        \\
        w = p^* + q^* \circ v \,,
        \quad
        M = \im w \,,
        \quad
        N = \bigl\{ p^*(x) + q^*(y) : x \in A ,\, y \in \spt f(x) \bigr\} \,,
        \\
        k(x) = (\trans{-v(x)})_{\#}f(x) \quad \text{for $x \in A$}
        \\
        \psi : N \to \R \,,
        \quad
        \psi(z) = (2Q)^{1/2} \| \project{\Tan( M, w \circ p(z) )} - \project{\Tan(N,z)} \|
        \quad \text{for $z \in N$} \,.
    \end{gather}
    Then
    \begin{displaymath}
        \LM^{\vdim}\bigl(
        A \cap \{ x : \| \ap \Aff k(x) \| > s \}
        \without p \lIm N \cap p^{-1} \lIm A \rIm \cap \{ z : \psi(z) > s \} \rIm
        \bigr) = 0
        \quad \text{for $s > 0$} \,.
    \end{displaymath}
\end{lemma}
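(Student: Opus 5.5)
The plan is to reduce the statement to single-valued Lipschitz branches and then to a pointwise linear-algebra comparison between the slope of a graph and the position of its tangent plane. First, as in the proof of~\ref{lem:lip-approx}\ref{i:bs:g-L2-norm}, I would employ~\cite[2.5]{Menne2010} to obtain a countable index set~$I$, $\LM^{\vdim}$~measurable sets $A_i \subseteq A$ and maps $f_i : A_i \to \R^{\adim-\vdim}$ with $\Lip f_i \le \Lip f \le \tfrac12$ and
\begin{displaymath}
    \HM^0(I \cap \{ i : f_i(x) = y \}) = \density^0(\|f(x)\|,y)
    \quad \text{for $(x,y) \in A \times \R^{\adim-\vdim}$} \,.
\end{displaymath}
Then $k$ decomposes with branches $f_i - v|A_i$, since translation by $-v(x)$ acts on each atom separately. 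By Rademacher's theorem (in its approximate form), and by~\cite[2.8]{Menne2010}, for $\LM^{\vdim}$~almost all $x \in A$ the following hold: $x$ is a density point of every $A_i$ containing it, $v$ and each relevant $f_i$ are approximately differentiable at~$x$, and $\ap \Aff k(x)$ is the $\qspace_Q$~valued affine map whose branches are $\ap\uD f_i(x) - \ap\uD v(x)$. In addition, branches with $f_i(x) = f_j(x)$ have equal approximate differentials. Consequently
\begin{displaymath}
    \| \ap \Aff k(x) \| \le Q^{1/2} \max \bigl\{ \| \ap\uD f_i(x) - \ap\uD v(x) \| : i \in I ,\, x \in A_i \bigr\}
\end{displaymath}
for almost all $x \in A$.

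Second, I would identify tangent planes. Fix such a generic~$x$ and a branch~$i$, and set $z = p^*(x) + q^*(f_i(x)) \in N \cap p^{-1}\lIm A \rIm$. I expect that, after discarding a further null set of~$x$,
\begin{displaymath}
    \Tan(N,z) = \im\bigl( p^* + q^* \circ \ap\uD f_i(x) \bigr)
    \quad \text{and} \quad
    \Tan(M,w(x)) = \im\bigl( p^* + q^* \circ \ap\uD v(x) \bigr) \,.
\end{displaymath}
The inclusion ``$\supseteq$'' follows from the density-point property together with~\cite[3.1.21]{Federer1969}. The inclusion ``$\subseteq$'' uses $\Lip f \le \tfrac12$: near~$x$, every point of~$N$ lying over a point $x'$ of~$A$ lies within $o(|x'-x|)$ of the affine $Q$~valued approximation. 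Only the sheet through~$z$ contributes near~$z$, because different atoms of $f(x)$ are separated and coinciding atoms share the same differential.

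Third, a linear-algebra step: for $\sigma, \tau \in \Hom(\R^{\vdim},\R^{\adim-\vdim})$ with $\|\sigma\|, \|\tau\| \le \tfrac12$, the graph planes satisfy
\begin{displaymath}
    \| \sigma - \tau \| \le 2^{1/2} \, \| \project{\im(p^*+q^*\sigma)} - \project{\im(p^*+q^*\tau)} \| \,.
\end{displaymath}
To prove it, take a unit vector $u \in \R^{\vdim}$, compute the distance from $(u,\sigma u)$ to the plane $\im(p^*+q^*\tau)$, and use $|(u,\sigma u)| \le (1+\tfrac14)^{1/2}$; the same circle of ideas as in~\cite[4.2, 4.3]{KolRMI} and~\cite[8.9(3)]{Allard1972} applies. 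Combining the three steps gives $\|\ap\Aff k(x)\| \le \psi(z)$ for some $z \in N \cap p^{-1}\{x\}$. Hence, outside a null set, $\|\ap\Aff k(x)\| > s$ forces $x \in p\lIm N \cap p^{-1}\lIm A \rIm \cap \{\psi > s\}\rIm$, which is the assertion.

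I expect the main obstacle to be the second step, namely the inclusion $\Tan(N,z) \subseteq$ graph plane. Here $N$ is merely a union of graphs over a measurable set, so its tangent cone could a priori be enlarged by nearby sheets or by points over non-density parts of~$A$. I would handle this with the uniform $\qspace_Q$ approximate-differentiability estimate available at almost every point~\cite[2.8]{Menne2010}, combined with $\Lip f \le \tfrac12$ to control points over $A \without A_i$. If the constant $(2Q)^{1/2}$ proves too tight, I would sharpen the norm inequality in the first step, bounding the norm of $\ap\Aff k(x)$ by the largest branch norm times $Q^{1/2}$, rather than change the linear-algebra step.
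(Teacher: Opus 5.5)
Your proposal is correct and follows essentially the paper's route. Both decompose $f$ into Lipschitz branches, bound $\|\ap\Aff k(x)\|$ by $\bigl(\sum_{i \in I(x)} \|\ap\uD (f_i - v)(x)\|^2\bigr)^{1/2} \le Q^{1/2} \max_i \|\ap\uD (f_i - v)(x)\|$, and compare each branch slope difference with the difference of the corresponding graph planes. The paper obtains the factor $\bigl((1+(\Lip f)^2)/(1-(\Lip v)^2)\bigr)^{1/2} \le (5/3)^{1/2}$ where you get $5/4$; both are below $2^{1/2}$. Your second step, which identifies $\Tan(N,z)$ and $\Tan(M,w(x))$ with the graph planes of the approximate differentials at density points, is only implicit in the paper's proof, and your argument via relative differentiability of Lipschitz maps at density points is a sound way to supply it.
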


\begin{proof}
    Employ~\cite[2.3]{Menne2012a} to find a~countable family of Lipschitz
    functions $\{ f_i : i \in I \}$ with the properties described therein. For
    $a \in \R^{\vdim}$ let $I(a) = I \cap \{ i : a \in \dmn \ap \uD f_i \}$ and
    note that $\HM^0(I(a)) = Q$. For $i \in I$ define $k_i = f_i - v$. Let $a
    \in \R^{\vdim}$ be such that $f$ is approximately strongly affinely
    approximable at~$a$, and $v$ is differentiable at~$a$. Observe that
    \begin{displaymath}
        \ap \Aff k(a) u
        = \sum_{i \in I(a)} \boldsymbol{[} k_i(a) + \langle u ,\, \ap \uD k_i(a) \rangle \boldsymbol{]}
        \quad \text{for $u \in \R^{\vdim}$} \,.
    \end{displaymath}
    Moreover, using the definition~\cite[1.1(9)]{Almgren2000e} of $\| \ap \Aff
    k(a) \|$ together with~\cite[8.9(5)]{Allard1972} we~get
    \begin{multline}
        \| \ap \Aff k(a) \|
        \le \ap \limsup_{u \to 0} \frac{\mathscr{G}(\ap \Aff k(a) u, \ap \Aff k(a)0)}{|u|}
        \le \biggl( \sum_{i \in I(a)} \| \ap \uD k_i(a) \|^2 \biggr)^{1/2}
        \\
        \le \biggl( \frac{1 + (\Lip f)^2}{1 - (\Lip v)^2}  \sum_{i \in I(a)} \| \project{ \Tan(N, (p^* + q^* \circ f_i)(a))} - \project{\Tan(M,w(a))} \|^2 \biggr)^{1/2} \,.
    \end{multline}
    Since $\Lip f \le \frac 12$ and $\Lip v \le \frac 12$ we have $\frac{1 +
      (\Lip f)^2}{1 - (\Lip v)^2} \le \frac 53 < 2$ and the conclusion follows
    by recalling that $\HM^0(I(a)) = Q$ and that $f$ is approximately strongly
    affinely approximable at $\LM^{\vdim}$ almost all $a \in \R^{\vdim}$;
    cf.~\cite[2.3]{Menne2012a}.
\end{proof}

\begin{corollary}
    \label{cor:Dg-Dh-tilt}
    Let $g : \R^{\vdim} \to \R^{\adim-\vdim}$ and $h : \R^{\vdim} \to \R$ be
    defined by
    \begin{gather}
        g = \boldsymbol{\eta}_Q \circ f - v 
        \quad \text{and} \quad
        h(x) = \mathscr{G}(f(x), \boldsymbol{[} v(x) \boldsymbol{]})
        \quad
        \text{for $x \in \R^{\vdim}$} \,.
    \end{gather}
    Then
    \begin{displaymath}
        \LM^{\vdim}\bigl(
        A \cap \{ x : \max \{ \| \uD h(x) \| ,\, \| \uD g(x) \| \} > s \}
        \without p \lIm N \cap p^{-1} \lIm A \rIm \cap \{ z : \psi(z) > s \} \rIm
        \bigr) = 0
        \quad \text{for $s > 0$} \,.
    \end{displaymath}
    In~particular, if $V = \var{\vdim}(N)$ and $1 \le r < \infty$, then
    \begin{displaymath}
        \int_{A}\max \{ \| \uD g \| ,\, \| \uD h \| \}^r \ud \LM^{\vdim}
        \le (2Q)^{r/2} \int_{p^{-1} \lIm A \rIm} \| \project{\Tan( M, w \circ p(z) )} - \project{S} \|^r \ud V(z,S) \,.
    \end{displaymath}
\end{corollary}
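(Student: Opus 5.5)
The plan is to reduce the statement to~\ref{lem:Dk-tilt} by bounding, at $\LM^{\vdim}$~almost every $x \in A$, both $\|\uD g(x)\|$ and $\|\uD h(x)\|$ by $\|\ap\Aff k(x)\|$, where $k(x) = (\trans{-v(x)})_{\#}f(x)$.

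\emph{The map $g$.} Since $\boldsymbol{\eta}_Q$ commutes with translations, $g = \boldsymbol{\eta}_Q \circ k$ on~$A$. At a~point $a$ where $f$ is approximately strongly affinely approximable and $v$ is differentiable (almost every point, by~\cite[2.3]{Menne2012a} and Rademacher), the representation of $\ap\Aff k(a)$ through the functions $k_i = f_i - v$, $i \in I(a)$, from the proof of~\ref{lem:Dk-tilt} gives
\begin{displaymath}
    \ap\uD g(a) = \tfrac1Q \textsum{i \in I(a)}{} \ap\uD k_i(a) \,.
\end{displaymath}
By Cauchy--Schwarz and $\HM^0(I(a)) = Q$,
\begin{displaymath}
    \|\ap\uD g(a)\| \le Q^{-1/2} \Bigl( \textsum{i \in I(a)}{} \|\ap\uD k_i(a)\|^2 \Bigr)^{1/2} \,,
\end{displaymath}
and the right-hand side is exactly the intermediate quantity bounded in the chain of~\ref{lem:Dk-tilt}. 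In what follows I read $\uD g$ and $\uD h$ on~$A$ as approximate derivatives, which agree almost everywhere with the classical ones of Lipschitz extensions.

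\emph{The map $h$.} Here $h(x) = \mathscr{G}(k(x), Q\boldsymbol{[}0\boldsymbol{]}) = \bigl(\textsum{i \in I(x)}{} |k_i(x)|^2\bigr)^{1/2}$ near~$a$ in the approximate sense. Differentiating this sum of squares, the triangle inequality for $\mathscr{G}$ gives
\begin{displaymath}
    |h(x) - h(a)| \le \mathscr{G}(k(x),k(a)) \,.
\end{displaymath}
Passing to approximate limits then yields
\begin{displaymath}
    \|\ap\uD h(a)\| \le \ap\limsup_{u \to 0} \mathscr{G}\bigl(\ap\Aff k(a)u, \ap\Aff k(a)0\bigr)/|u| \le \Bigl(\textsum{i}{} \|\ap\uD k_i(a)\|^2\Bigr)^{1/2} \,.
\end{displaymath}
This is the same bound occurring in~\ref{lem:Dk-tilt}.

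\emph{Conclusion and the integral estimate.} Both derivatives are controlled by $\bigl(\textsum{i}{}\|\ap\uD k_i(a)\|^2\bigr)^{1/2}$. That quantity is at most $\bigl(2\textsum{i}{} \|\project{\Tan(N,\cdot)} - \project{\Tan(M,w(a))}\|^2\bigr)^{1/2}$, which in turn is $\le \max_{z} \psi(z)$ over $z \in N \cap p^{-1}\{a\}$. The first assertion follows exactly as in~\ref{lem:Dk-tilt}.

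For the integral, $V = \var{\vdim}(N)$ has $\Tan(N,z)$ as its plane at $\HM^{\vdim}$~almost every~$z$. Writing the sum over the $Q$ sheets and using the area formula with Jacobian $\ge 1$ (since $p \circ(p^*+q^*\circ f_i) = \id{}$) gives
\begin{displaymath}
    \max\{\|\uD g\|,\|\uD h\|\}^r(a) \le \bigl(2\textsum{i}{}\|\cdot\|^2\bigr)^{r/2} \le (2Q)^{r/2} \textsum{i}{} \|\cdot\|^r \,.
\end{displaymath}
Here I use $(\sum_{i=1}^Q t_i^2)^{r/2} \le Q^{r/2} \max t_i^r$ for $r \ge 1$, bounding the max by the sum. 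Integrating over~$A$ converts the sum over sheets into the integral against~$V$ over $p^{-1}\lIm A\rIm$.

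The main obstacle is the bookkeeping for the approximate derivative of~$h$ at points where sheets coincide. There $h$ is Lipschitz but the sum-of-squares formula is not directly differentiable, so I would rely solely on the $\mathscr{G}$-triangle inequality argument rather than differentiating the formula.
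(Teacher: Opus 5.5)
Your pointwise bounds are correct and follow the paper's argument. The paper records $\|\uD g(a)\| \le Q^{-1/2}\|\ap\Aff k(a)\|$ and $\|\uD h(a)\| \le \|\ap\Aff k(a)\|$ and then invokes \ref{lem:Dk-tilt}. You stop instead at the intermediate quantity $\bigl(\sum_{i \in I(a)} \|\ap\uD k_i(a)\|^2\bigr)^{1/2}$ of that lemma's chain. Your bound of this by $\max_z \psi(z)$ over $z \in N \cap p^{-1}\{a\}$ is also correct, so the first assertion follows exactly as you say.

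The integral estimate, however, fails as you set it up. You replace $\max_i t_i^r$ by $\sum_{i \in I(a)} t_i^r$ and integrate sheet by sheet with the area formula. With $F_i = p^* + q^* \circ f_i$ this yields $\sum_i \int_{F_i\lIm A \cap \dmn f_i \rIm} \phi \ud \HM^{\vdim}$. That is an integral against a varifold whose weight on~$N$ has density $\density^0(\|f(p(z))\|,q(z))$. It is not an integral against $\var{\vdim}(N)$, which has multiplicity one. Wherever several sheets coincide on a set of positive $\LM^{\vdim}$~measure, the same points of~$N$ are counted repeatedly. For instance, if $f = Q\boldsymbol{[}u\boldsymbol{]}$ on~$A$ with $Q \ge 2$, all $Q$ indices in $I(a)$ give the same point, and your argument proves the stated bound only up to an extra factor~$Q$.

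There are two ways to repair this.
\begin{enumerate}
\item Keep the maximum. It is attained at one of the points of $N \cap p^{-1}\{a\}$, so it is at most the sum over the \emph{distinct} points of $N \cap p^{-1}\{a\}$. Integrate this over~$A$ by the area (coarea) formula for $p|N$, whose Jacobian is at most~$1$. This gives exactly the right hand side with $V = \var{\vdim}(N)$.
\item Do as the paper does and deduce the integral bound from the first assertion by a layer-cake argument. Since $\Lip p \le 1$, we have $\LM^{\vdim}\bigl(A \cap \{\max\{\|\uD g\|,\|\uD h\|\} > s\}\bigr) \le \HM^{\vdim}\bigl(N \cap p^{-1}\lIm A \rIm \cap \{\psi > s\}\bigr)$. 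Integrating against $r s^{r-1} \ud s$ yields the claim. Measuring the projected set rather than counting sheets handles the multiplicity automatically.
\end{enumerate}
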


\begin{proof}
    Observe that
    \begin{displaymath}
        \Lip \boldsymbol{\eta}_Q \le Q^{-1/2} 
        \quad \text{and} \quad
        g = \boldsymbol{\eta}_Q \circ k \,.
    \end{displaymath}
    Moreover, we readily verify 
    that
    \begin{displaymath}
        \| \uD g(a) \| \le Q^{-1/2} \| \ap \Aff k(a) \| 
        \quad \text{and} \quad
        \| \uD h(a) \| \le \| \ap \Aff k(a) \| \,.
    \end{displaymath}
    The conclusion follows now from~\ref{lem:Dk-tilt} combined
    with~\cite[3.5(1)]{Allard1972} and~\cite[2.10.35]{Federer1969} exactly as in
    the proof of~\cite[7.9]{KolasnskiMenne}.
\end{proof}

\begin{lemma}
    \label{lem:generic-props}
    Suppose
    \begin{gather}
        U \subseteq \R^{\adim} \text{ is open} \,,
        \quad
        V \in \IVar{\vdim}(U) \,,
        \quad
        \delta V = 0 \,,
        \quad
        \varepsilon, \delta \in \R \cap \{ t : 0 < 2t < 1 \} \,,
    \end{gather}
    and let $A$ consist of all $a \in U$ such that
    \begin{gather}
        \label{eq:gp:dens-tan}
        \density^{\vdim}(\|V\|,a) \in \natp \,,
        \quad
        \Tan^{\vdim}(\|V\|,a) \in \grass{\adim}{\vdim} \,,
        \\
        \label{eq:gp:dens-cnt}
        \lim_{s \downarrow 0}
        \frac{\|V\| \bigl( \cball as \cap \{ z : | \density^{\vdim}(\|V\|,z) - \density^{\vdim}(\|V\|,a)| > \frac 12 \}\bigr)}
        {\measureball{\|V\|}{\cball as}} = 0 \,,
        \\
        \label{eq:gp:tan-cnt}
        \text{and} \quad
        \lim_{s \downarrow 0} \bigl( s^{-\vdim} \textint{\cylinder Tasr \times \grass{\adim}{\vdim}}{}
        \| \project{S} - \project{\Tan^{\vdim}(\|V\|,a)} \|^2 \ud V(x,S) \bigr)^{1/2} = 0 \,.
    \end{gather}
    Then
    \begin{displaymath}
        \|V\|(U \without A) = 0
    \end{displaymath}
    and for $a \in A$ there exists $0 < r <\infty$ such that setting $Q =
    \density^{\vdim}(\|V\|,a)$ and $T = \Tan^{\vdim}(\|V\|,a)$ there holds

    \begin{enumerate}
    \item $\cylinder{T}{a}{r}{r} + \oball{0}{2r} \subseteq \oball{a}{4r} \subseteq U$,

    \item $\density^{\vdim}(\|V\|,a) = Q$,
        
    \item $Q \unitmeasure{\vdim} r^{\vdim}
        \le \measureball{\|V\|}{\cylinder{T}{a}{r}{r}}
        \le (Q + 1 - \delta) \unitmeasure{\vdim} r^{\vdim}$,
        
    \item $\|V\|\bigl( \cylinder{T}{a}{r}{(1+\delta)r} \without
        \cylinder{T}{a}{r}{(1-2\delta)r} \bigr) = 0$,
        
    \item
        \label{i:gp:dens-ratio}
        $Q \unitmeasure{\vdim} s^{\vdim} \le \measureball{\|V\|}{\cylinder Tass}
        \le 2Q \unitmeasure{\vdim} (4s)^{\vdim}$ for $0 < s < r$,
        
    \item
        \label{i:gp:density=Q}
        $\|V\|\bigl( \cylinder Tasr \without \{ z : \density^{\vdim}(\|V\|,z) = Q \} \bigr)
        < \varepsilon s^{\vdim}$ for $0 < s < r$,
        
    \item
        \label{i:gp:tilt-uniform}
        $\textint{\oball{z}{2r} \times \grass{\adim}{\vdim}}{}
        | \project{S} - \project{T} | \ud V(x,S) < \varepsilon
        \measureball{\|V\|}{\oball{z}{2r}}$ for $\|V\|$ almost all $z \in
        \cylinder Tarr$,
        
    \item
        \label{i:gp:L2-tilt-small}
        $\bigl(s^{-\vdim} \textint{\cylinder Tasr \times \grass{\adim}{\vdim}}{}
        \| \project{S} - \project{T} \|^2 \ud V(x,S) \bigr)^{1/2} < \varepsilon$
        \ for $0 < s < r$,
        
    \item $\sup \bigl\{ \dist(z,a+T) : z \in \spt \|V\| \cap \cball as \bigr\} <
        \varepsilon s$ \ for $0 < s < r$.
    \end{enumerate}
\end{lemma}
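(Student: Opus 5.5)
The proof has two parts. First we show that the conditions \eqref{eq:gp:dens-tan}--\eqref{eq:gp:tan-cnt} hold at $\|V\|$~almost every point. Then, at a fixed $a \in A$, we choose $r$ small enough that all of (a)--(i) hold simultaneously.

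\emph{Part 1: $\|V\|(U \without A) = 0$.} Since $V$ is integral and $\delta V = 0$, Allard~\cite[3.5]{Allard1972} together with the monotonicity formula gives, at $\|V\|$~almost every~$a$, three facts: $\density^{\vdim}(\|V\|,a) \in \natp$ exists, $\Tan^{\vdim}(\|V\|,a) \in \grass{\adim}{\vdim}$, and $V$ has the unique tangent cone $\density^{\vdim}(\|V\|,a)\,\var{\vdim}(T)$.

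\begin{itemize}
\item \eqref{eq:gp:dens-cnt} is the Lebesgue density theorem~\cite[2.9.11]{Federer1969} applied to the $\|V\|$~measurable function $\density^{\vdim}(\|V\|,\cdot)$.
\item \eqref{eq:gp:tan-cnt} follows from two ingredients. The first is approximate continuity of $z \mapsto \project{\Tan^{\vdim}(\|V\|,z)}$, since $V$ is carried by the graph of this map~\cite[3.5]{Allard1972}. The second is the density bound $\measureball{\|V\|}{\cball{a}{2s}} \le \Gamma s^{\vdim}$ from monotonicity; it turns the averaged $o(1)$ into $o(s^{\vdim})$, reading the domain of integration as $\cylinder Tass$ intersected with the support.
\end{itemize}

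\emph{Part 2: choice of $r$.} Fix $a \in A$ and write $Q$, $T$ as in the statement. Rescaling by $\scale{1/s}\circ\trans{-a}$ sends $V$ to $Q\,\var{\vdim}(T)$ in the varifold topology as $s \downarrow 0$. Using Allard's regularity-type height estimate or the monotonicity/upper semicontinuity argument, the support also converges in the local Hausdorff sense: $\spt\|V\| \cap \cball as \subseteq \{z : \dist(z,a+T) < \varepsilon s\}$ for small~$s$. This is item (i). Item (i) forces the annular region in (d) to carry no mass once $\varepsilon \le \delta$. Then (c) and (e) follow from weak convergence, because $\partial\cylinder Tass$ is $Q\HM^{\vdim}\restrict T$~null. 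The lower bound $Q\unitmeasure{\vdim}s^{\vdim}$ is more delicate, since mere convergence gives only $(Q-\delta)$. To get it, I would use the monotonicity formula: $\measureball{\|V\|}{\cball as} \ge Q\unitmeasure{\vdim}s^{\vdim}$ for stationary $V$, together with $\cball as \subseteq \cylinder Tass$. The upper bound $2Q\unitmeasure{\vdim}(4s)^{\vdim}$ in (e) is generous and follows from convergence. Items (a) and (b) are immediate for $r$ small.

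\emph{Remaining items.} Item (f) follows from \eqref{eq:gp:dens-cnt}, using the comparison of cylinder and ball masses, after a smallness choice. Item (h) is \eqref{eq:gp:tan-cnt}. Item (g) is the main obstacle, because it is a statement about \emph{all} balls centred at $\|V\|$~almost every $z$ near $a$ with the \emph{fixed} radius $2r$. I would try the following route:
\begin{enumerate}
\item For $z \in \cylinder Tarr$, use $\oball z{2r} \subseteq \oball a{3r+\ldots}$, so $\measureball{\|V\|}{\oball z{2r}} \ge cQ r^{\vdim}$ by monotonicity at~$z$. Here $\density^{\vdim}(\|V\|,z) \ge 1$ holds $\|V\|$~almost everywhere, and $\oball z{2r} \subseteq U$.
\item Apply Cauchy--Schwarz to the integral of $|\project S - \project T| \le (2\vdim)^{1/2}\|\project S - \project T\|$ over a cylinder $\cylinder Ta{4r}{4r}$, and use (h) at scale~$4r$.
\end{enumerate}
This gives the bound $\Gamma\,\varepsilon' r^{\vdim} \le \varepsilon\,\measureball{\|V\|}{\oball z{2r}}$ once the $\varepsilon'$ used in (h) is chosen small compared with~$\varepsilon$. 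Accordingly, I would apply the previous steps with a smaller $\varepsilon'$ in place of~$\varepsilon$ throughout and then shrink $r$ accordingly.
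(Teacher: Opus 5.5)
Your architecture is the paper's. The defining conditions of $A$ hold $\|V\|$~almost everywhere by approximate continuity of $\density^{\vdim}(\|V\|,\cdot)$ and of $z\mapsto\project{\Tan^{\vdim}(\|V\|,z)}$, together with the monotonicity bound on $\measureball{\|V\|}{\cball as}$. At $a\in A$ you then use the blow-up to $Q\var{\vdim}(T)$, the monotonicity lower bound for (c) and (e), the height bound for (i) obtained from monotonicity at support points plus weak convergence, and H\"older's inequality for (g) with $\measureball{\|V\|}{\oball z{2r}}\ge\unitmeasure{\vdim}(2r)^{\vdim}$. (Monotonicity at $z$ gives this bound, not $cQr^{\vdim}$, but no factor $Q$ is needed.) Two steps are wrong as written; both are repaired by applying the height bound at the right scales.

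In (d), the condition $\varepsilon\le\delta$ is the wrong one. A point $z\in\spt\|V\|$ of the region has $|z-a|\le(1+(1+\delta)^2)^{1/2}r<2r$ and $\dist(z,a+T)>(1-2\delta)r$. To exclude it you need $\dist(z,a+T)\le\theta|z-a|$ with $2\theta\le1-2\delta$, and at scales up to $2r$, i.e.\ outside the range $s<r$ of (i). For $\delta$ close to $\tfrac12$ the inequality $\varepsilon\le\delta$ gives nothing. Since $\dist(z,a+T)=o(|z-a|)$ on the support, the fix is to choose $r$ so that the height bound holds with $\theta=\tfrac14\min\{\varepsilon,1-2\delta\}$ at all scales up to $4r$. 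This is what the paper does.

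In (f) and (h), the domain is $\cylinder Tasr$, of radius $s$ but height $r$. It lies in no ball of radius $O(s)$, so neither a comparison of cylinder and ball masses nor \eqref{eq:gp:tan-cnt} read on $\cylinder Tass$ yields these items. The missing step is $\spt\|V\|\cap\cylinder Tasr\subseteq\cball a{2s}$. For $z$ in the left-hand side, $|z-a|\le(s^2+r^2)^{1/2}<2r$, so the height bound at scale $|z-a|$ applies and gives $|z-a|^2=|\project T(z-a)|^2+\dist(z,a+T)^2\le s^2+\theta^2|z-a|^2$, whence $|z-a|<2s$. Then (f) follows from \eqref{eq:gp:dens-cnt} on $\cball a{2s}$, integrality of the density, and the upper mass bound. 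Item (h) follows from the $\Lp{2}$ Lebesgue point property on $\cball a{2s}$.
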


\begin{proof}
    Let $C = \bigl\{ (x,\cball xr) : x \in \R^{\adim} ,\, r > 0 \bigr\}$. Then
    $C$ is a $\|V\|$~Vitali relation, cf.~\cite[2.8.16, 2.8.18]{Federer1969}.
    Therefore, $A$ is the set of all $a \in U$ such that~\eqref{eq:gp:dens-tan}
    holds, $\density^{\vdim}(\|V\|,\cdot)$ is $(\|V\|,C)$~approximately
    continuous at~$a$, and $a$ is an~$\Lp{2}(\|V\|)$~Lebesgue point of the map
    $\project{\Tan^{\vdim}(\|V\|,\cdot)}$.

    Since $V \in \IVar{\vdim}(U)$ and $\density^{\vdim}(\|V\|,\cdot)$ is
    a~$\|V\|$~measurable function (actually, it is even upper semicontinuous;
    cf.~\cite[5.1(4), 8.6]{Allard1972}) it follows that it is
    $(\|V\|,C)$~approximately continuous (cf.~\cite[2.9.13]{Federer1969}) so
    indeed $\|V\|(U \without A) = 0$.

    Let $a \in A$, $Q = \density^{\vdim}(\|V\|,a)$, and $T =
    \Tan^{\vdim}(\|V\|,a)$. Since $\density^{\vdim}(\|V\|,z) \in \natp$ for
    $\|V\|$ almost all $z \in U$ (cf.~\cite[3.5(1)(a)(c)]{Allard1972}) we derive
    \begin{displaymath}
        \lim_{s \downarrow 0} \frac{\|V\|(\cball as \without \{ z : \density^{\vdim}(\|V\|,z) = Q\})}{\unitmeasure{\vdim} s^{\vdim}} = 0 \,.
    \end{displaymath}
    Recall the monotonicity formula~\cite[5.1(2)]{Allard1972} (see
    also~\cite[Corollary~4.5]{Menne2016a} for a streamlined proof): the function
    \begin{displaymath}
        t \mapsto \unitmeasure{\vdim}^{-1} t^{-\vdim} \measureball{\|V\|}{\cball at}
    \end{displaymath}
    is nondecreasing on $\{ t : \cball at \subseteq U \}$ with limit
    $\density^{\vdim}(\|V\|,a) = Q$ as $t \downarrow 0$; in particular
    \begin{equation}
        \label{eq:gp:mono}
        \measureball{\|V\|}{\cball at} \ge Q \unitmeasure{\vdim} t^{\vdim}
        \quad \text{whenever $\cball at \subseteq U$} \,,
    \end{equation}
    and, given $0 < \sigma < \infty$, there is $t_0 > 0$ with
    $\measureball{\|V\|}{\cball at} \le (Q+\sigma)\unitmeasure{\vdim}t^{\vdim}$
    for $0 < t \le t_0$. Moreover, $\density^{\vdim}(\|V\|,z) \ge 1$ for $z \in
    \spt\|V\|$ by~\eqref{eq:gp:mono} applied at~$z$, since
    $\density^{\vdim}(\|V\|,z) \ge 1$ for $\|V\|$~almost all~$z$
    and~$\density^{\vdim}(\|V\|,\cdot)$ is upper
    semicontinuous;~cf.~\cite[3.5(1)(a)(c), 5.1(4), 8.6]{Allard1972}. Finally, since
    $\density^{\vdim}(\|V\|,a) = Q$ and $\Tan^{\vdim}(\|V\|,a) = T$, the
    rescaled measures $\|V\|_s$, defined by $\|V\|_s(E) = s^{-\vdim}
    \|V\|(a + \scale s \lIm E \rIm)$, converge weakly to $Q \HM^{\vdim}
    \restrict T$ as $s \downarrow 0$; cf.~\cite[3.2.19]{Federer1969}
    and~\cite[3.5]{Allard1972}.

    We~claim that
    \begin{equation}
        \label{eq:gp:height}
        \lim_{s \downarrow 0} s^{-1} \sup \bigl\{ \dist(z,a+T) : z \in \spt\|V\| \cap \cball as \bigr\} = 0 \,.
    \end{equation}
    Otherwise there are $\vartheta > 0$ and $s_i \downarrow 0$ together with
    $z_i \in \spt\|V\| \cap \cball a{s_i}$ satisfying $\dist(z_i,a+T) \ge
    \vartheta s_i$. Then $\cball{z_i}{\vartheta s_i/2}$ does not meet $a+T$,
    whereas $\measureball{\|V\|}{\cball{z_i}{\vartheta s_i/2}} \ge
    \unitmeasure{\vdim} (\vartheta s_i/2)^{\vdim}$ by~\eqref{eq:gp:mono}
    and~$\density^{\vdim}(\|V\|,z_i) \ge 1$. Passing to a~subsequence so that
    $(z_i-a)/s_i$ converges to some~$\zeta$ with $|\zeta| \le 1$ and
    $\dist(\zeta,T) \ge \vartheta$, the weak convergence $\|V\|_{s_i} \to Q
    \HM^{\vdim} \restrict T$ yields
    \begin{displaymath}
        Q \HM^{\vdim} \bigl( T \cap \cball{\zeta}{\vartheta/2} \bigr)
        \ge \limsup_{i \to \infty} \|V\|_{s_i} \bigl( \cball{(z_i-a)/s_i}{\vartheta s_i / (2s_i)} \bigr)
        \ge \unitmeasure{\vdim}(\vartheta/2)^{\vdim} > 0 \,,
    \end{displaymath}
    contradicting $T \cap \cball{\zeta}{\vartheta/2} = \varnothing$.

    We~now choose~$r$. Using~\eqref{eq:gp:dens-cnt}
    and~\eqref{eq:gp:tan-cnt}, the convergence $\|V\|_s \to Q \HM^{\vdim}
    \restrict T$, \eqref{eq:gp:height}, the monotonicity formula, and the
    equality
    \begin{displaymath}
        \lim_{s \downarrow 0} \frac{\|V\|(\cball as \without \{ z : \density^{\vdim}(\|V\|,z) = Q\})}{\unitmeasure{\vdim} s^{\vdim}} = 0
    \end{displaymath}
    established above, we~select $0 < r < \infty$ so small that $\cball a{4r}
    \subseteq U$ and, for $0 < s \le 4r$,
    \begin{gather}
        \label{eq:gp:choice}
        \measureball{\|V\|}{\cball as} \le (Q + \tfrac 12) \unitmeasure{\vdim} s^{\vdim} \,,
        \quad
        \|V\|\bigl(\cylinder Tass\bigr) \le (Q + 1 - \delta) \unitmeasure{\vdim} s^{\vdim} \,,
        \\
        \sup \bigl\{ \dist(z,a+T) : z \in \spt\|V\| \cap \cball as \bigr\}
        \le \tfrac 14 \min \{ \varepsilon ,\, 1 - 2\delta \} s \,,
        \\
        \|V\|\bigl( \cball a{2s} \without \{ z : \density^{\vdim}(\|V\|,z) = Q\} \bigr) \le \varepsilon s^{\vdim} \,,
        \quad
        \textint{\cball a{4r} \times \grass{\adim}{\vdim}}{} \| \project{S} - \project{T} \|^2 \ud V(x,S) \le \tfrac{\varepsilon^2}{2\vdim} \unitmeasure{\vdim} (2r)^{\vdim} \,,
    \end{gather}
    and so that the left hand side of~\eqref{eq:gp:tan-cnt} is less
    than~$\varepsilon$ for $0 < s < r$.

    We verify the conclusions in turn. Item~(1) holds because every $z \in
    \cylinder Tarr$ satisfies $|z-a| \le \sqrt 2 r$; whence, $\cylinder Tarr +
    \oball 0{2r} \subseteq \oball a{(2+\sqrt 2)r} \subseteq \oball a{4r}
    \subseteq U$; item~(2) is the definition of~$Q$. For items~(3) and~(5)
    we~note $\cball as \subseteq \cylinder Tass$, so
    that~\eqref{eq:gp:mono} gives the lower bounds $Q \unitmeasure{\vdim}
    s^{\vdim} \le \|V\|(\cylinder Tass)$, while the upper bound in~(3) is part
    of~\eqref{eq:gp:choice} and that in~(5) follows from $\cylinder Tass
    \subseteq \cball a{4s}$ together with $\measureball{\|V\|}{\cball a{4s}} \le
    (Q+\tfrac 12)\unitmeasure{\vdim}(4s)^{\vdim} \le 2Q
    \unitmeasure{\vdim}(4s)^{\vdim}$, as $Q \ge 1$.

    Item~(4): if $z \in \spt\|V\| \cap \cylinder Tar{(1+\delta)r}$ then $|z-a|
    \le \sqrt{1 + (1+\delta)^2}\, r \le 2r$, so $|q(z-a)| = \dist(z,a+T) \le
    \tfrac 14 (1-2\delta) \cdot 2r < (1-2\delta) r$ and therefore $z \in
    \cylinder Tar{(1-2\delta)r}$; hence, the set in question misses $\spt\|V\|$
    and is $\|V\|$~null.

    Item~(6): by the third line of~\eqref{eq:gp:choice}, applied with $s$ in
    place of~$s$, every $z \in \spt\|V\| \cap \cylinder Tasr$ satisfies
    $\dist(z,a+T) \le \tfrac 14 \varepsilon \cdot 2s \le s$; whence, $\cylinder
    Tasr \cap \spt\|V\| \subseteq \cball a{2s}$ and the assertion follows from
    the fourth line of~\eqref{eq:gp:choice}.

    Item~(7): let $z \in \cylinder Tarr$ with $\density^{\vdim}(\|V\|,z) \ge 1$;
    since $\oball z{2r} \subseteq \oball a{4r}$ and, by~\eqref{eq:gp:mono},
    $\measureball{\|V\|}{\oball z{2r}} \ge \unitmeasure{\vdim}(2r)^{\vdim}$,
    H{\"o}lder's inequality and~\eqref{eq:proj-norms} give
    \begin{multline*}
        \textint{\oball z{2r} \times \grass{\adim}{\vdim}}{} | \project{S} - \project{T} | \ud V(x,S)
        \le \bigl( 2\vdim \textint{\oball a{4r} \times \grass{\adim}{\vdim}}{} \| \project{S} - \project{T} \|^2 \ud V(x,S) \bigr)^{1/2}
        \measureball{\|V\|}{\oball z{2r}}^{1/2}
        \\
        \le \varepsilon \unitmeasure{\vdim}^{1/2} (2r)^{\vdim/2} \measureball{\|V\|}{\oball z{2r}}^{1/2}
        \le \varepsilon \measureball{\|V\|}{\oball z{2r}} \,,
    \end{multline*}
    which proves~(7), as $\density^{\vdim}(\|V\|,z) \ge 1$ for $\|V\|$~almost
    all~$z$. Item~(8) is the choice of~$r$ made in the last sentence preceding
    the verification, and item~(9) follows from the third line
    of~\eqref{eq:gp:choice}, since $\tfrac 14 \varepsilon < \varepsilon$.
\end{proof}

%

\begin{definition}
    \label{def:height-and-tilt}
    Suppose
    \begin{gather}
        T \in \grass{\adim}{\vdim} \,,
        \quad
        p \in \orthproj{\adim}{\vdim} \,,
        \quad
        q \in \orthproj{\adim}{\adim - \vdim} \,,
        \quad
        \im p^* = T = \ker q \,,
        \\
        A \subseteq \R^{\vdim} \text{ is $\LM^{\vdim}$~measurable}\,,
        \quad
        v : A \to \R^{\adim-\vdim} \,,
        \quad
        w = p^* + q^* \circ v \,,
        \quad
        M = \im w \,,
        \\
        \Lip v < \infty \,,
        \quad
        a \in \R^{\adim} \,,
        \quad
        0 < s < \infty \,,
        \quad
        U \subseteq \R^{\adim} \text{ is open} \,,
        \quad
        V \in \Var{\vdim}(U) \,.
    \end{gather}
    We define
    \begin{gather}
        C(v,p,a,s) = U \cap \cylinder Tass \cap p^{-1}\lIm A \rIm \,,
        \quad
        D(M,T,a,s) = U \cap \cylinder Tass \cap \Unp(M) \,,
        \\
        \phi(v,p,a,s) = \Bigl( s^{-\vdim+2} \textint{C(v,p,a,s)}{} |q(z) - v(p(z))|^2 \ud \|V\|(z) \Bigr)^{1/2} \,,
        \\
        \psi(v,p,a,s) = \Bigl( s^{-\vdim} \textint{C(v,p,a,s) \times \grass{\adim}{\vdim}}{}
        \| \project{\Tan( M, w \circ p(z) )} - \project{S} \|^2 \ud V(z,S) \Bigr)^{1/2} \,,
        \\
        \tau(T,a,s) = \Bigl( s^{-\vdim} \textint{U \cap \cylinder Tass \times \grass{\adim}{\vdim}}{}
        \| \project S - \project T \|^2 \ud V(z,S) \Bigr)^{1/2} \,,
        \\
        \beta(M,T,a,s) = \Bigl( s^{-\vdim-2} \textint{D(M,T,a,s)}{} \dist(z,M)^2 \ud \|V\|(z) \Bigr)^{1/2} \,,
        \\
        \eta(M,T,a,s) = \Bigl( s^{-\vdim} \textint{D(M,T,a,s) \times \grass{\adim}{\vdim}}{}
        \| \project{T} - \project{\Tan(M,\npp{M}(x))} \|^2 \ud V(x,T) \Bigr)^{1/2} \,.
    \end{gather}
    Consistently with the notation for~$\phi$ and~$\psi$, for any $\tilde v : A
    \to \R^{\adim-\vdim}$ with $\Lip \tilde v < \infty$ we also write
    $\beta(\tilde v,p,a,s) = \beta(\im(p^* + q^* \circ \tilde v),T,a,s)$.
\end{definition}

\begin{theorem}
    \label{thm:decay}
    Suppose
    \begin{gather}
        2 \le \vdim < \adim \,,
        \quad
        U \subseteq \R^{\adim} \text{ is open} \,,
        \quad
        V \in \IVar{\vdim}(U) \,,
        \quad
        \delta V = 0 \,,
        \quad
        0 < \kappa < 1 \,.
    \end{gather}
    Then for $\|V\|$ almost all $a \in U$ there exist a number $0 < r \le 1$ and
    a~family $\{ M_s : 0 < s < r \}$ of submanifolds of~$U$ of
    class~$\cnt{\infty}$ such that, recalling~\ref{def:height-and-tilt} and
    setting
    \begin{displaymath}
        \zeta(s) = \sup \bigl\{ \beta(M_{\rho},\Tan^{\vdim}(\|V\|,a),a,\rho) : 0 < \rho \le s \}
        \quad \text{for $0 < s < r$} \,,
    \end{displaymath}
    there holds
    \begin{displaymath}
        \zeta(s) \le \kappa \zeta(2^{20}s)
        \quad \text{whenever $0 < 2^{20}s < r$} \,.
    \end{displaymath}
\end{theorem}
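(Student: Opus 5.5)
We restrict to $a$ in the full-measure set $A$ of~\ref{lem:generic-props}, applied with $\varepsilon,\delta$ chosen small depending on $\kappa$. We then work with the cylinder data $Q$, $T$, $r$ supplied there, together with the Lipschitz approximation of~\ref{lem:lip-approx} over $T$ at each scale. For each $0<s<r$ we define the comparison function $v_s$ by applying~\ref{lem:approx-function} at radius $\Lambda s$, with $\Lambda=2^{19}$ say, to the Lipschitz extension $g$ of $\boldsymbol{\eta}_Q\circ f$. Here $f$ is the $\qspace_Q$~valued approximation on the cylinder $\cylinder Ta{\Lambda s}{\Lambda s}$, and $g$ is replaced by $g-v_{s'}$ when recentring around an already constructed comparison. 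We set $M_s=\graph v_s$. By~\eqref{eq:u-lo-est} and~\eqref{eq:u-hi-est}, $v_s$ is $\cnt\infty$ on $\oball 0{2^{-4}\Lambda s}$. Moreover $\|\uD^2v_s\|\lesssim s^{-1}\tau(T,a,\Lambda s)$, which is small relative to $s^{-1}$ by~\ref{lem:generic-props}\ref{i:gp:L2-tilt-small}. Hence~\ref{cor:reach-of-graph} gives a reach of $M_s$ much larger than $s$ on the relevant region, and the nearest point projection used in $\beta$ is well defined. By~\eqref{eq:LPhiu}, $M_s$ is an honest minimal graph near $a$, so~\ref{cor:ht:tleh}, in the form~\ref{rem:tilt-height}, applies with $\mathbf h=0$.

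The core of the argument is a one-step estimate
\[
\beta(M_{\rho},T,a,\rho)\le \kappa\,\sup\{\beta(M_t,T,a,t):\rho\le t\le 2^{20}\rho\},
\]
which we obtain as follows. First, height controls tilt:~\ref{rem:tilt-height} bounds the tilt excess of $V$ relative to $M_{t}$ on a cylinder of radius $\sim t$ by $t^{-1}$ times the height excess $\beta(M_t,T,a,2t)$. By~\ref{cor:tilt-tilt} and~\ref{lem:heights} we may trade the nearest-point tilt for the vertical tilt $\psi(v_t,p,a,\cdot)$, and $\beta$ for $\phi$, at the cost of terms of order $\Lip\uD v_t\cdot\dist$, which are quadratic and hence small. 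Next, the bad set: by~\ref{lem:lip-approx}\ref{i:bs:geometric-tilt}\ref{i:bs:gt:bad-set-global}, $\|V\|$ of the set where $V$ is not the graph of $f$ is controlled by $\Gamma$ times the squared tilt relative to $M_t$, there being no $\delta V$ term since $V$ is stationary. Together with~\ref{cor:Dg-Dh-tilt}, this bounds $\norm{\uD(\boldsymbol\eta_Q\circ f-v_t)}{2}{}$ and the $\Lp{2}$ norm of $\mathscr G(f,[v_t])$ by the same tilt quantity. Then we compare $v_\rho$ with $v_t$. Since $L_G(v_t)=0$, \ref{lem:W12est} and~\eqref{eq:Du-g} bound $\uD(v_\rho-g)$ by $\uD(g-v_t)$; and~\ref{lem:v-u-le-LFu-LFv}, with $w=v_t$ and $\Theta=1$, bounds $\norm{v_\rho-\tilde g}{1}{}$. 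Here $\tilde g$ is the average of $f$, and the norm $\norm{L_G(\tilde g)}{-1,1}{}$ is estimated from stationarity of $V$: the first variation tested along vertical fields $q^*\theta\circ p$ equals $L_\Phi$ of the graph up to the bad set and cubic tilt terms. These errors are of higher order: they are quadratic in the tilt, hence quadratic in $\beta$ times $s^{\vdim}$, plus the bad set contribution.

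The smallness factor $\kappa$ comes from the change of scale, through the higher order Schauder estimates~\eqref{eq:u-hi-est}. The function $v_t-v_\rho$, both being solutions of nearby elliptic systems with coefficients $\uD^2G$, is compared with its Taylor polynomial of degree $k$. On the ball of radius $\rho=2^{-20}t$ the $\Lp{2}$ deviation of $V$ from $M_\rho$ is bounded by the deviation from $M_t$ minus a Taylor correction. Since $M_\rho$ is the best $L_G$-solution fit at scale $\rho$, it does at least as well as $M_t$ up to the errors above, and the scale-invariant normalisation $s^{-\vdim-2}$ gains a factor $2^{-20}$ times the $\cnt2$ seminorm. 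In a cleaner form, we decompose the error into a linear part, which by interior estimates for the linearised operator $T_{\uD^2G\circ\uD v_t}$ decays like $(\rho/t)^{1}$ after subtracting the affine-plus-solution part, and superlinear parts, which are $o(1)$ times $\zeta$ because the excesses tend to $0$ at $a$. Choosing the constants, namely $\varepsilon$ in~\ref{lem:generic-props} and $\gamma$ in~\ref{lem:lip-approx}, small depending on $\kappa$ then gives the one-step estimate, and taking suprema yields $\zeta(s)\le\kappa\zeta(2^{20}s)$.

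The main obstacle is the estimate of $\norm{L_G(\tilde g)}{-1,1}{}$ and of the bad set with errors that are genuinely \emph{superlinear} in $\beta$ relative to the curved comparison $M_t$ rather than to the plane $T$. Linear errors would only reproduce a fixed H\"older rate. We will attack this first, by writing the first variation in coordinates adapted to $M_t$, that is, by testing with $\theta$ composed with $p$ and expanding $\Phi$ about $\uD v_t$. This uses $L_G(v_t)=0$ and $G=\Phi$ near $0$ to cancel the linear term, and~\ref{lem:lip-approx}\ref{i:bs:gt:bad-set-global} to control the holes by the squared tilt relative to $M_t$.
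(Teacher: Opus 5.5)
\textbf{The gap: where the factor $\kappa$ comes from.} In the statement the ratio of scales $2^{20}$ does not depend on $\kappa$. Any gain you extract from passing from scale $t$ to $\rho = 2^{-20}t$ (your ``linear part decaying like $(\rho/t)^1$'' via Schauder and Taylor) is therefore at best a fixed factor $C\,2^{-20}$ with $C = C(\adim,\vdim)$. Shrinking $\varepsilon$ and $\gamma$ does not reduce it, and a smaller $\gamma$ even enlarges the bad-set constant of \ref{lem:lip-approx}.

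In a Campanato scheme such a gain comes from replacing the approximating solution by its Taylor polynomial, and it is pushed below $\kappa$ by choosing $\rho/t$ depending on $\kappa$. Here the comparison objects $M_t$ are already $L_G$-solutions, so that gain is zero. Renormalising the excess relative to $M_t$ on a smaller cylinder can only cost up to a factor $(t/\rho)^{(\vdim+2)/2}$. So all of the decay must come from the excess relative to the comparison graph being $o(1)$ times the tilt. As you note yourself, a fixed-ratio linear gain would only give a single H\"older rate.

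Your outline does not supply this $o(1)$. You bound the $\Lp{2}$ norm of $\mathscr{G}(f,Q\boldsymbol{[}v_t\boldsymbol{]})$ ``by the same tilt quantity'', and that bound is linear with a fixed constant. The spread of the $Q$ sheets of $f$ about their average is controlled only through the gradient. Combined with \ref{rem:tilt-height}, this returns $\beta(M_\rho,T,a,\rho) \le C\zeta(2^{20}\rho)$ and nothing better.

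\textbf{The missing ingredient.} What is needed is the Sobolev--Poincar\'e inequality with a small exceptional set, \cite[6.3]{Menne2012a}, run with a density-deficit parameter $\lambda$.
\begin{itemize}
\item On $Y = X \cap \{ x : f(x) = Q\boldsymbol{[} g(x) \boldsymbol{]} \}$ the height function $k_s = \mathscr{G}(f,Q\boldsymbol{[} v_s \boldsymbol{]})$ equals $Q^{1/2}|g-v_s|$.
\item At generic points the density is approximately continuous (\ref{lem:generic-props}\ref{i:gp:density=Q}), which gives $\rho^{-\vdim}\LM^{\vdim}(\cball 0\rho \without Y) \le \lambda$ at all small scales.
\item Hence the height is bounded by $\lambda^{1/\vdim}$ times (tilt plus bad set), plus $\lambda^{-1/2} s^{-\vdim-1}\norm{g-v_s}{1}{0,s}$.
\item Only this last $\Lp{1}$ term is fed by your (correct) superlinear estimate through \ref{lem:v-u-le-LFu-LFv} and stationarity.
\end{itemize}
The outcome, see \eqref{eq:phis-interm} and what follows it, is $\phi(s) \le \Delta\bigl(\lambda^{1/\vdim} + \lambda^{-1/2}\xi\bigr)\xi$. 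Here $\xi$ is the tilt relative to the comparison graphs at scales up to $2^{10}s$, and $\Delta$ does not depend on $\kappa$ or $\lambda$. One first fixes $\lambda$ small depending on $\kappa$. Then one takes $\varepsilon$, and hence $r_0$, so small that $\lambda^{-1/2}\xi$ is small. This order of quantifiers (density deficit first, scale second) is what makes $\kappa$ arbitrary at a fixed ratio of scales, and it is absent from your plan.

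\textbf{A secondary point.} The paper uses a single Lipschitz approximation $f$, $g$ at scale $r_0$ for all $s$. All $v_s$ then share the boundary datum $g$, and \eqref{eq:Du-g} compares them directly. Your per-scale approximations with recentred data would need an additional comparison between the different $g$'s.
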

     
\begin{proof}
    Let $A$ be as in~\ref{lem:generic-props}. Fix $a \in A$. Using translations
    we may and shall assume $a = 0$. Set
    \begin{displaymath}
        Q = \density^{\vdim}(\|V\|,a) \,,
        \quad
        T = \Tan^{\vdim}(\|V\|,a) \,,
        \quad
        \delta = 2^{-4} \,.
    \end{displaymath}
    Let $\Delta_0$ be the constant appearing in the Sobolev embedding
    theorem~\cite[7.10]{Gilbarg2001} for functions from the
    space~$\VSobz{1,2}(\R^{\adim} \cap \oball 01,\R^{\adim-\vdim})$.
    Employ~\ref{lem:approx-function}, \ref{lem:lip-approx},
    \ref{lem:L1-T-1-est}, and~\ref{lem:generic-props} to define
    \begin{gather}
        G = G_{\ref{lem:approx-function}}(\adim,\vdim) \,,
        \quad
        \Lambda = \min \bigl\{ 2^{-8}(6 + 2^5\Delta_0)^{-1} ,\, \kappa_{\ref{lem:approx-function}}(\adim,\vdim) ,\, \eta_{0,\ref{lem:schauder-est}}(\adim,\vdim,\tfrac 12,G) ,\, Q^{-1} \bigr\} \,,
        \\
        \gamma = \min\bigl\{ 2^{-8} ,\, 2^{-8} \unitmeasure{\vdim} ,\,
        \varepsilon_{\ref{lem:L1-T-1-est}}(\adim,\vdim) ,\,
        \varepsilon_{\ref{lem:approx-function}}(\adim,\vdim) ,\,
        \varepsilon_{\ref{lem:lip-approx}}(\adim,\vdim,Q,\Lambda,2Q \cdot 4^{\vdim},\delta)
        \bigr\} \,,
        \\
        \Delta_1 = \max \bigl\{ 1 ,\, Q \Gamma_{\text{\cite[5.7(8)]{Menne2012a}}}(\vdim) \bigr\} \,,
        \quad
        \Delta_2 = 2^{14\vdim}Q \Lip \uD^2G \,,
        \\
        \Delta_3 = \max \bigl\{ \Gamma_{\text{\cite[6.3]{Menne2012a}}}(\vdim, 2\vdim/(2+\vdim)) ,\, 2^{5\vdim+2}\bigr\} \,,
        \quad
        \Delta_4 = \Delta_1 \Delta_3 \max\bigl\{ \sqrt{2} (1 + \Lambda) ,\, 4 Q \bigr\} \,,
        \\
        \Delta_5 = 2^{9\vdim} \max\bigl\{ 1 ,\,
        \Gamma_{\text{\cite[5.7(7)]{Menne2012a}}}(Q,\vdim)
        \Gamma_{\ref{lem:lip-approx}\ref{i:bs:geometric-tilt}\ref{i:bs:gt:bad-set-global}}
        (Q,\vdim,\adim,\Lambda,\gamma) \bigr\} \,,
        \\
        \Delta_6 = 2^{7\vdim+5} Q \,,
        \quad
        \Delta_7 = \max \bigl\{ \Delta_4 (\sqrt{\Delta_5}+1) ,\, 2^4
        \Gamma_{\ref{lem:v-u-le-LFu-LFv}}(\adim,\vdim,\Lip \uD^2G) \Delta_4 \Delta_5 (\Delta_6 + \Delta_2 + 1) \bigr\}  \,,
        \\
        \Delta_8 = 2^{\vdim} \bigl( \Gamma_{0,\ref{lem:approx-function}}(\adim,\vdim)
        + \Gamma_{\ref{lem:approx-function}}(\adim,\vdim,2) \bigr)
        \Gamma_{\ref{lem:lip-approx}\ref{i:bs:g-L2-norm}}(Q,\vdim,\adim,\Lambda,\gamma) \,,
        \quad
        \Delta_9 = 4\Gamma_{\text{\cite[5.7(7)]{Menne2012a}}}(Q,\vdim) \vdim \besicovitch{\adim} \,,
        \quad
        \Delta_{10} = \max \bigl\{ (2\vdim/\unitmeasure{\vdim})^{1/2} ,\, 2^{\vdim/2} \bigr\} \,,
        \\
        \lambda = \min \bigl\{ \kappa^{\vdim} ( 2 (10 + 2\adim) \Delta_7 )^{-\vdim} ,\, \tfrac 12 \unitmeasure{\vdim} \bigr\}\,,
        \\
        \varepsilon = \min \bigl\{
        \gamma^{2} \Delta_9^{-1} ,\,
        \Lambda \Delta_8^{-1} ,\,
        \gamma (8\vdim)^{-1/2} \Delta_8^{-1} ,\,
        4^{-\vdim-1} \gamma \Delta_{10}^{-1} ,\,
        2^{-4} \gamma ,\,
        \sqrt{\lambda} \kappa  (2(10 + 2\adim)\Delta_7)^{-1} ( 1 + 2^{2\vdim+1} Q \unitmeasure{\vdim} \Delta_8 )^{-1}
        \bigr\} ,\,
        \\
        r_0 = r_{\ref{lem:generic-props}}(U,V,\varepsilon,\delta,a) \,,
        \quad
        r = \min \bigl\{ 1 ,\, 2^{-20} r_0 \bigr\} 
        \,.
    \end{gather}
    In~particular, $G$ is an integrand which coincides with the nonparametric
    area integrand~\ref{def:area-integrand} on $\cball{0}{\Lambda} \subseteq
    \Hom(\R^{\vdim},\R^{\adim-\vdim})$ and $\| \uD^2 G(\sigma) - \Upsilon \| \le
    \varepsilon_{\ref{lem:approx-function}}(\adim,\vdim)$ for $\sigma \in
    \Hom(\R^{\vdim},\R^{\adim-\vdim})$, where $\Upsilon$ corresponds to the
    Laplace operator as in~\ref{rem:Laplacian}; the passage
    from~\ref{lem:approx-function}, which bounds the quadratic form $\tau
    \mapsto \langle \tau\odot\tau, \uD^2G(\sigma)-\Upsilon\rangle$, to the
    norm $\|\uD^2G(\sigma)-\Upsilon\|$ is legitimate because a~symmetric
    bilinear form with values in~$\R$ attains its norm on the diagonal.  Since
    $\varepsilon_{\ref{lem:approx-function}}(\adim,\vdim) <
    \varepsilon_{\ref{lem:L1-T-1-est}}(\adim,\vdim)$
    by~\ref{lem:approx-function}, the integrand~$G$ is admissible
    in~\ref{lem:v-u-le-LFu-LFv}, whose constant we~abbreviate
    $\Gamma_{\ref{lem:v-u-le-LFu-LFv}}(\adim,\vdim,\Lip \uD^2G)$, the third
    argument being $\Lambda\Theta$ with $\Lambda = \Lip \uD^2G$ and $\Theta =
    1$; since moreover
    $\varepsilon_{\ref{lem:approx-function}}(\adim,\vdim) < 1/2$, we~have
    \begin{displaymath}
        \tfrac 12 |\tau|^2 \le \langle \tau \odot \tau ,\, \uD^2 G(\sigma) \rangle \le 2|\tau|^2
        \quad \text{for $\sigma,\tau \in \Hom(\R^{\vdim},\R^{\adim-\vdim})$} \,,
    \end{displaymath}
    so that $G$ satisfies the ellipticity hypothesis of~\ref{lem:schauder-est}
    with $\lambda = 1/2$; whence, $\eta_{0,\ref{lem:schauder-est}}(\adim,\vdim,\tfrac
    12,G)$ is defined and, $G$ being determined by $\adim$ and~$\vdim$, depends
    only on $\adim$ and~$\vdim$. Choose $p \in
    \orthproj{\adim}{\vdim}$ and $q \in \orthproj{\adim}{\adim - \vdim}$ such
    that $\im p^* = T = \ker q$.

    We~record how the hypotheses of~\ref{lem:lip-approx} numbered (l), (m),
    and~(n) are met, the remaining ones being immediate
    from~\ref{lem:generic-props}.  Hypothesis~(l) holds because $\delta V = 0$
    and $\measureball{\|V\|}{\cball a\rho} \ge Q \unitmeasure{\vdim}
    \rho^{\vdim} > 0$ by the monotonicity formula, which also gives~(n)
    together with the first line of~\eqref{eq:gp:choice} and $Q \ge 1$.  For~(m)
    let $0 < \rho < 4r_0$.  In~case $\rho < r_0$
    then~\ref{lem:generic-props}\ref{i:gp:L2-tilt-small} and $\cball a\rho
    \subseteq \cylinder Ta{\rho}{r_0}$ give $\textint{\cball a\rho \times
      \grass{\adim}{\vdim}}{} \| \project S - \project T \|^2 \ud V <
    \varepsilon^2 \rho^{\vdim}$, whereas in~case $\rho \ge r_0$ the fourth line
    of~\eqref{eq:gp:choice} gives the same with $2^{\vdim}\varepsilon^2
    \unitmeasure{\vdim} \rho^{\vdim} / (2\vdim)$ on the right, since $(2r_0)^{\vdim}
    \le (2\rho)^{\vdim}$; hence, by
    H{\"o}lder's inequality, \eqref{eq:proj-norms}, and $\measureball{\|V\|}{\cball
      a\rho} \ge \unitmeasure{\vdim}\rho^{\vdim}$,
    \begin{displaymath}
        \textint{\cball a\rho \times \grass{\adim}{\vdim}}{} | \project S - \project T | \ud V
        \le \Delta_{10} \varepsilon \measureball{\|V\|}{\cball a\rho}
        \le 4^{-\vdim-1} \gamma \measureball{\|V\|}{\cball a\rho} \,,
    \end{displaymath}
    the two cases contributing the two entries of~$\Delta_{10}$ and the last
    step being the definition of~$\varepsilon$.  Hypothesis~(c)
    of~\ref{lem:lip-approx} prescribes the ambient set, so we~apply that lemma
    to the restriction of~$V$ to $U' = \cylinder Ta{r_0}{r_0} + \oball 0{2r_0}$,
    which by~\ref{lem:generic-props}(1) is contained in $\oball a{4r_0}$; the
    conclusions concern only cylinders about~$a$ of radius less than~$r_0$ and
    are unaffected.  Hypothesis~\ref{lem:lip-approx}\ref{i:la:height} then
    holds: every $z \in \spt\|V\| \cap U'$ satisfies $|z-a| \le 4r_0$, so that
    the third line of~\eqref{eq:gp:choice}, applied with $s = |z-a|$, gives
    $\dist(z,a+T) \le \tfrac 14 \min\{\varepsilon,1-2\delta\}|z-a| \le \tfrac 12
    |z-a|$.

    Now~\ref{lem:generic-props} ensures that all
    the assumptions of~\ref{lem:lip-approx} with
    \begin{align}
        \adim,\, \vdim,\, Q,\, \Lambda,\, 2Q \cdot 4^\vdim,\, &\delta,\, \gamma,\, r_0,\, r_0,\, T,\, p,\, q,\, U,\, V
        \\
        \text{in place of} \quad
        \adim,\, \vdim,\, Q,\, L,\, M,\, &\delta,\, \gamma,\, r,\, h,\, T,\, p,\, q,\, U,\, V
    \end{align}
    are satisfied and we let $B$, $X$, $f$, and~$g$ be defined as
    in~\ref{lem:lip-approx}. In~particular, $f : X \to
    \qspace_{Q}(\R^{\adim-\vdim})$ is characterised by
    \begin{displaymath}
        \density^0(\|f(x)\|,q(z)) = \density^{\vdim}(\|V\|,z)
        \quad \text{whenever $x \in X$, $p(z) = x$, and $z \in \cylinder Ta{r_0}{r_0} \without B$} \,,
    \end{displaymath}
    $\Lip f \le \Lambda$, and $g : \R^{\vdim} \to \R^{\adim-\vdim}$ is the
    Lipschitz extension of~$\boldsymbol{\eta}_Q \circ f$ with $\Lip g \le
    \Lambda Q^{-1/2}$. Employ~\cite[2.3]{Menne2012a} to find a~countable index
    set $I$ and a family of Lipschitz functions $\{ f_i : i \in I \}$ with the
    properties described therein. For $i \in I$ and $x \in \R^{\vdim}$ define
    \begin{equation}
        \label{eq:Ix-Di}
        D_i = \dmn \ap \uD f_i \,,
        \quad
        D = \tbcup \{ D_i : i \in I \} \,,
        \quad \text{and} \quad
        I(x) = I \cap \{ i : x \in D_i \} \,.
    \end{equation}
    Note that
    \begin{equation}
        \label{eq:fi-Di-props}
        \HM^0(I(x)) = Q \quad \text{for $x \in D$} \,,
        \quad
        \LM^{\vdim}(X \without D) = 0 \,,
        \quad
        \Lip f_i \le \Lip f \quad \text{for $i \in I$} \,.
    \end{equation}
    For $0 < s < r_0$ define $v_s \in \espace{\oball 0{2^5s}}{\R^{\adim-\vdim}}$
    to be the map given by~\ref{lem:approx-function}, i.e., such that
    \begin{displaymath}
        L_G(v_s) = 0 
        \quad \text{and} \quad
        v_s - g \in \VSobz{1,2}(\oball 0{2^5s},\R^{\adim - \vdim}) 
    \end{displaymath}
    and set
    \begin{displaymath}
        w_s = p^* + q^* \circ v_s \,,
        \quad
        M_s = w_s\lIm \oball{0}{2s} \rIm  \,.
    \end{displaymath}
    Using~\ref{def:height-and-tilt} for $0 < s < r_0$ and $x \in D \cap \oball
    0{2^5s}$ we define
    \begin{gather}
        \phi(s) = \phi(v_s,p,a,s) \,,
        \quad
        \psi(s) = \psi(v_{s},p,a,\tfrac 12 s) \,,
        \quad
        \tau(s) = \tau(T,a,s) \,,
        \quad
        \beta(s) = \beta(v_s,p,a,s) \,,
        \\
        \quad
        \xi(s) = \sup \bigl\{ \psi(\rho) : 0 < \rho \le s \bigr\} \,,
        \quad \text{and} \quad
        k_s(x) = \mathscr{G}(f(x),Q\boldsymbol{[} v_s(x) \boldsymbol{]}) \,.
    \end{gather}
    Note the ``$\frac 12$'' in the definition of $\psi$. We shall control all
    terms first with $\psi$ and then use~\ref{rem:tilt-height} to pass
    to~$\beta$. However, when using~\ref{rem:tilt-height} we need to enlarge the
    set over which we integrate and, at that point, we will not be able to
    easily change~$M_s$ to~$M_{2s}$; hence, we need to do all the estimates with
    respect to~$M_{2s}$ from the beginning.

    Combining~\ref{lem:approx-function}\eqref{eq:u-lo-est}
    and~\ref{lem:approx-function}\eqref{eq:u-hi-est}, the former on
    $\cball 0{2^{3}s} \supseteq \cball 0{4s}$ and the latter, with $k = 2$, on
    $\cball 0{2s}$, with~\ref{lem:lip-approx}\ref{i:bs:g-L2-norm}, and
    \ref{lem:generic-props}\ref{i:gp:L2-tilt-small}, we see that
    \begin{multline}
        \label{eq:Lip-vs-est}
        \norm{\uD v_s}{\infty}{0,4s} + 2^5s \norm{\uD^2 v_s}{\infty}{0, 2s}
        \le \bigl( \Gamma_{0,\ref{lem:approx-function}}(\adim,\vdim)
        + \Gamma_{\ref{lem:approx-function}}(\adim,\vdim,2) \bigr)
        (2^5s)^{-\vdim/2} \norm{\uD g}{2}{0,2^5s}
        \\
        \le \Delta_8 \tau(2^7s)
        \le \Delta_8 \varepsilon \le \Lambda \le 2^{-8}
        \quad \text{for $0 < s < 2^{-7}r_0$}
        \,.
    \end{multline}
    Moreover, using~\cite[4.2]{KolRMI}, the triangle inequality,
    and~\ref{lem:generic-props}\ref{i:gp:dens-ratio}, we deduce
    \begin{equation}
        \label{eq:psi-small}
        \psi(s)
        \le \tau(s) + 2^{2\vdim+1}Q\unitmeasure{\vdim} \Lip v_s|\oball 0s
        \le \varepsilon (1 + 2^{2\vdim+1}Q\unitmeasure{\vdim} \Delta_8)
        \quad \text{for $0 < s < 2^{-7}r_0$} \,.
    \end{equation}
    From~\ref{lem:generic-props}\ref{i:gp:tilt-uniform} we see that the set
    termed ``$H$'' in~\cite[5.7(8)]{Menne2012a} coincides with $\cylinder
    Ta{r_0}{r_0}$ up~to a~set of $\|V\|$~measure zero. For $0 < s < 2^{-5}r_0$
    and $x \in D \cap \oball{0}{2^5s}$ we have $\Lip v_s|\cball{0}{4s} \le
    2^{-8}$ and $\sup \bigl\{ |y-v_s(x)| : y \in \spt f(x) \bigr\} \le k_s(x)$;
    thus, we may employ~\cite[5.7(8)]{Menne2012a} with~$M_s \cap \cylinder T0ss$
    in place of~$P$ to~get
    \begin{equation}
        \label{eq:hs-fst}
        \phi(s)
        \le \Delta_1 
        \bigl( s^{-(\vdim+2)} \textint{X \cap \cball 0s}{} |k_s|^2 \ud \LM^{\vdim} \bigr)^{1/2}
        + \Delta_1 s^{-(\vdim+2)/2} \LM^{\vdim} ( \cball 0s \without X )^{1/2 + 1/\vdim}
        \,.
    \end{equation}
    Define
    \begin{displaymath}
        Y = X \cap \bigl\{ x : f(x) = Q \boldsymbol{[}
        g(x) \boldsymbol{]} \bigr\}
        \subseteq p \bigl\lIm \cylinder T0{r_0}{r_0} \cap \{ z : \density^{\vdim}(\|V\|,z) = Q \} \bigr\rIm
    \end{displaymath}
    and note that
    \begin{equation}
        \label{eq:notQset-lambda}
        \sup \bigl\{ \rho^{-\vdim} \LM^{\vdim} ( \cball 0\rho \without Y ) : 0 < \rho < r_0 \}
        \le \varepsilon \le \lambda \le \tfrac 12 \unitmeasure{\vdim} 
    \end{equation}
    due to~\ref{lem:generic-props}\ref{i:gp:density=Q} because if $z \in
    \cylinder T0{r_0}{r_0}$, $\density^{\vdim}(\|V\|,z) = Q$, and $p(z) \notin
    Y$, then $p(z) \notin X$ and $p(z)$ lies in the set termed ``$N$''
    in~\cite[5.7]{Menne2012a} which is an~$\LM^{\vdim}$~null set
    by~\cite[5.7(1)]{Menne2012a}. Since $k_s|Y = \sqrt{Q} (g - v_s)|Y$,
    employing the Sobolev-Poincar{\'e} inequality in the
    form~\cite[6.3]{Menne2012a} with $2$, $2\vdim/(2+\vdim)$, $1$ in place of
    $q$, $\zeta$, $\xi$, we get for $0 < s < 2^{-1}r_0$
    \begin{displaymath}
        s^{-(\vdim+2)/2} \norm{k_{2s}}{2}{0,s}
        \le \Delta_3 \lambda^{1/\vdim} s^{-\vdim/2} \norm{\uD k_{2s}}{2}{0,s} 
        + \Delta_3 \lambda^{-1/2} s^{-\vdim-1} \textint{Y \cap \cball 0s}{} | g - v_{2s} | \ud \LM^{\vdim}
    \end{displaymath}
    and
    \begin{multline}
        s^{-(\vdim+2)/2} \norm{v_s - v_{2s}}{2}{0,s}
        \le \Delta_3 \lambda^{1/\vdim} s^{-\vdim/2} \norm{\uD(v_s - v_{2s})}{2}{0,s}
        + \Delta_3 \lambda^{-1/2} s^{-\vdim-1} \norm{v_s - v_{2s}}{1}{0,s} 
        \\
        \le \Delta_3 \lambda^{1/\vdim} s^{-\vdim/2} \bigl( \norm{\uD(v_s - g)}{2}{0,s} + \norm{\uD(v_{2s} - g)}{2}{0,s} \bigr)
        \\
        + \Delta_3 \lambda^{-1/2} s^{-\vdim-1} \bigl( \norm{v_s - g}{1}{0,s} + \norm{v_{2s} - g}{1}{0,s} \bigr) \,;
    \end{multline}
    hence, since 
    \begin{displaymath}
        | k_s(x) - k_{2s}(x) |
        \le \mathscr{G}(Q \boldsymbol{[} v_s(x)\boldsymbol{]}, Q \boldsymbol{[} v_{2s}(x)\boldsymbol{]})
        = Q^{1/2} |v_s(x) - v_{2s}(x)|
        \quad \text{for $x \in D \cap \oball 0s$}  \,,
    \end{displaymath}
    employing~\ref{lem:approx-function}\eqref{eq:Du-g} to replace~$\norm{\uD(v_s
      - g)}{2}{0,s}$ with~$3\norm{\uD(v_{2s} - g)}{2}{0,2^5s}$ we obtain
    \begin{multline}
        \label{eq:ks-poincare}
        s^{-(\vdim+2)/2} \norm{k_s}{2}{0,s}
        \le s^{-(\vdim+2)/2}
        \bigl(
        \norm{k_{2s}}{2}{0,s}
        + \norm{k_s - k_{2s}}{2}{0,s}
        \bigr)
        \\
        \le \Delta_3 \lambda^{1/\vdim} s^{-\vdim/2}
        \bigl(
        \norm{\uD k_{2s}}{2}{0,s}
        + 4 Q^{1/2} \norm{\uD(g - v_{2s})}{2}{0,2^5s}
        \bigr)
        \\
        + Q^{1/2} \Delta_3 \lambda^{-1/2} s^{-\vdim-1}
        \bigl(
        \norm{g-v_s}{1}{0,s}
        + 2 \norm{g-v_{2s}}{1}{0,s}
        \bigr) \,.
    \end{multline}
    Observing that $\Lip k_{2s} \le \Lambda (1 + \sqrt{Q})$ and using $\Lambda \le
    Q^{-1}$ an application of~\ref{cor:Dg-Dh-tilt} yields
    \begin{multline}
        \label{eq:ks-est}
        s^{-\vdim/2} \norm{\uD k_{2s}}{2}{0,s}
        \le \Bigl( s^{-\vdim} \textint{\cball 0s \cap X}{} \| \uD k_{2s} \|^2 \ud \LM^{\vdim} \Bigr)^{1/2}
        +  \Lip k_{2s} s^{-\vdim/2} \LM^{\vdim}(\cball 0s \without X)^{1/2}
        \\
        \le 2Q \psi(2s) + 2 s^{-\vdim/2} \LM^{\vdim}(\cball 0s \without X)^{1/2}
        \quad \text{for $0 < s < 2^{-7}r_0$} \,.
    \end{multline}
    Combining~\eqref{eq:hs-fst} and~\eqref{eq:ks-poincare}
    with~\eqref{eq:ks-est} and noting that $s^{-1} \LM^{\vdim}(\cball 0s
    \without X)^{1/\vdim} \le \lambda^{1/\vdim}$, by~\eqref{eq:notQset-lambda},
    we get
    \begin{multline}
        \label{eq:phis-interm}
        \phi(s)
        \le \Delta_4 \lambda^{1/\vdim} \bigl(
        s^{-\vdim/2} \LM^{\vdim}(\cball 0s \without X)^{1/2}
        + \psi(2s)
        + s^{-\vdim/2} \norm{\uD(g-v_{2s})}{2}{0,2^5s}
        \bigr)
        \\
        + \Delta_4 \lambda^{-1/2} s^{-\vdim-1} \bigl(
         \norm{g - v_s}{1}{0,s}
        + \norm{g - v_{2s}}{1}{0,s}
        \bigr)
        \quad \text{for $0 < s < 2^{-7}r_0$} \,.
    \end{multline}
    Since $g-v_{\rho} \in \VSobz{1,2}(\oball 0{2^5\rho},\R^{\adim - \vdim})$
    whenever $0 < \rho < r_0$ we employ~\ref{lem:v-u-le-LFu-LFv} twice: first
    with $g$, $v_s$, $v_{2^6s}$ and then with $g$, $v_{2s}$, $v_{2^7s}$ in place
    of~$u$, $v$, $w$; here $\Theta = 1$ is admissible.  Indeed the radius~$r$
    of~\ref{lem:v-u-le-LFu-LFv} is the one determined by the boundary
    condition, namely $2^5s$ in the first application and $2^6s$ in the second,
    whereas the comparison functions carry the scales $2^6s$ and $2^7s$; since
    $\Lip \uD v_{\varrho}|\cball 0{2\varrho} \le \Lambda 2^{-5}\varrho^{-1}$
    by~\eqref{eq:Lip-vs-est}, we~get $2^5 s \Lip \uD v_{2^6s} \le \Lambda
    2^{-6} \le 1$ and, likewise, $2^6 s \Lip \uD v_{2^7s} \le \Lambda 2^{-6}
    \le 1$. Next, we apply the triangle inequality twice
    and~\ref{lem:approx-function}\eqref{eq:Du-g} five times to obtain
    \begin{multline}
        \label{eq:L1g-vs-est}
        s^{-\vdim-1} \bigl(
         \norm{g - v_s}{1}{0,s}
        + \norm{g - v_{2s}}{1}{0,s}
        \bigr)
        \le 2^7 \Gamma_{\ref{lem:v-u-le-LFu-LFv}}(\adim,\vdim,\Lip \uD^2G) s^{-\vdim} \norm{L_G(g)}{-1,1}{0,2^6s}
        \\
        + 2^6 \Gamma_{\ref{lem:v-u-le-LFu-LFv}}(\adim,\vdim,\Lip \uD^2G) s^{-\vdim} \Lip \uD^2G \bigl(
        \norm{ \uD(g-v_{2^6s}) }{2}{0,2^5s}^2
        + \norm{ \uD(v_s-v_{2^6s}) }{2}{0,2^5s}^2
        \\
        + \norm{ \uD(g-v_{2^7s}) }{2}{0,2^6s}^2
        + \norm{ \uD(v_{2s}-v_{2^7s}) }{2}{0,2^6s}^2
        \bigr)
        \\
        \le 2^7 \Gamma_{\ref{lem:v-u-le-LFu-LFv}}(\adim,\vdim,\Lip \uD^2G) s^{-\vdim} \bigl(
        \norm{L_G(g)}{-1,1}{0,2^6s}
        + 46 \norm{ \uD(g-v_{2^7s}) }{2}{0,2^6s}^2
        \bigr)
        \quad \text{for $0 < s < 2^{-7}r_0$}
        \,.
    \end{multline}
    Since $\Lip v_{2^7s}|\cball{0}{2^{9}s} \le \Lambda$ by~\eqref{eq:Lip-vs-est}
    we may employ~\ref{cor:Dg-Dh-tilt}. Recalling $0 \in X$, $f(0) = Q
    \boldsymbol{[} 0 \boldsymbol{]}$, and $\Lip f \le \Lambda < 1$, we see also
    that the $Q$-graph of~$f$ over $X \cap \cball 0\rho$ lies in~$\cylinder
    T0\rho\rho$, i.e.,
    \begin{displaymath}
        \bigl\{ p^*(x) + q^*(y) : x \in X \cap \cball 0{\rho} ,\, y \in \spt f(x) \bigr\}
        \subseteq \cylinder T0\rho\rho
        \quad \text{for $0 < \rho < r_0$} \,;
    \end{displaymath}
    hence, for $0 < s < 2^{-7}r_0$
    \begin{multline}
        \label{eq:Dg-vs-squared}
        s^{-\vdim} \norm{ \uD(g-v_{2^7s}) }{2}{0,2^6s}^2
        \\
        \le 2Q s^{-\vdim} \int_{p^{-1}\lIm X \rIm \cap \cylinder T0{2^6s}{2^6s}}
        \| \project{\Tan( M_{2^7s}, w_{2^7s} \circ p(z) )} - \project{S} \|^2 \ud V(z,S)
        \\
        + s^{-\vdim} \int_{\cball 0{2^6s} \without X} \|\uD(g-v_{2^7s})\|^2 \ud \LM^{\vdim} 
        \le \Delta_6 \bigl( \psi(2^7s)^2 + s^{-\vdim} \LM^{\vdim}(\cball 0{2^6s} \without X) \bigr) \,,
    \end{multline}
    where we have used that $\Delta_6 \ge \max \{ 2^{7\vdim+1}Q ,\, \bigl( \Lip
    g + \Lip v_{2^7s}|\cball{0}{2^6s} \bigr)^2 \}$ because $\Lip g \le 1$ and
    $\Lip v_{2^7s}|\cball{0}{2^6s} \le 1$.

    Now, we need to estimate $s^{-\vdim}\norm{L_G(g)}{-1,1}{0,2^6s}$. We~shall
    write ``$\ap \uD$'' for the approximate derivative as
    in~\cite[3.1.2]{Federer1969}. Recalling~\ref{rem:LF-domain-problem}, for
    brevity of the notation, we~define
    \begin{displaymath}
        g_{s} = g|\oball 0{2^6s} 
        \quad \text{and} \quad
        u_{s} = v_{2^{10}s}|\oball{0}{2^{6}s}
        \quad \text{for $0 < s < 2^{-10}r_0$} \,,
    \end{displaymath}
    and we let $f_{i,s} : \oball 0{2^6s} \to \R^{\adim-\vdim}$ to be a~Lipschitz
    extension of~$f_i|\oball 0{2^6s} \cap \dmn f_i$. Recall the definitions of
    ``$L_F$'' and ``$T_A$'' from~\ref{def:ell-system}. For $0 < s < 2^{-10}r_0$
    and $\theta \in \dspace{\oball 0{2^6s}}{\R^{\adim-\vdim}}$ we have
    \begin{displaymath}
        L_G(g_s)\theta = (L_G(g_s) - L_G(u_{s}))\theta
        = - \textint{}{} \bigl\langle \uD \theta(x) , \uD G( \uD g_s(x) ) - \uD G( \uD u_{s}(x) ) \bigr\rangle \ud \LM^{\vdim}(x) \,.
    \end{displaymath}
    Whenever $0 < s < 2^{-10}r_0$, $x \in \oball 0{2^6s}$, $i \in I$, $y \in
    \oball 0{2^6s} \cap D_i$, and $z \in \oball 0{2^6s} \without D_i$ set
    \begin{gather}
        R(x) = \textint 01 \uD^2 G(t \uD g_s(x) + (1-t) \uD u_{s}(x)) \ud \LM^1(t) \,,
        \quad 
        H(x) = \uD^2G( \uD u_{s}(x)) \,,
        \\
        F_i(y) = \textint 01 \uD^2 G(t \ap \uD f_i(y) + (1-t) \uD u_{s}(y)) \ud \LM^1(t) \,,
        \quad
        F_i(z) = 0  \,.
    \end{gather}
    Observe that for $0 < s < 2^{-10}r_0$, $\theta \in \dspace{\oball
      0{2^6s}}{\R^{\adim-\vdim}}$, $x \in \oball 0{2^6s}$, $i \in I$, and $y \in
    \oball 0{2^6s} \cap D_i$ there holds
    \begin{gather}
        \bigl\langle \uD \theta(x) ,\, \uD G( \uD g_s(x) ) - \uD G( \uD u_{s}(x) ) \bigr\rangle
        = \bigl\langle \uD \theta(x) \odot \uD (g_s-u_{s})(x) ,\, R(x) \bigr\rangle 
        \,,
        \\
        \bigl\langle \uD \theta(y) ,\, \uD G( \ap \uD f_i(y) ) - \uD G( \uD u_{s}(y) ) \bigr\rangle
        = \bigl\langle \uD \theta(y) \odot \ap \uD (f_i-u_{s})(y) ,\, F_i(y) \bigr\rangle 
        \,;
    \end{gather}
    hence,
    \begin{multline}
        \label{eq:LGg}
        L_G(g_s)
        = L_G(g_s) - L_G(u_{s})
        = T_R(g_s-u_{s})
        = T_{R-H}(g_s-u_{s}) + T_{H}(g_s-u_{s})
        \\
        = T_{R-H}(g_s-u_{s}) + T_{H;\oball 0{2^6s} \without X}(g_s-u_{s}) + T_{H;X}(g_s-u_{s}) 
    \end{multline}
    and
    \begin{multline}
        \label{eq:THXg-v}
        Q T_{H;X}(g_s-u_{s})
        = \textsum{i \in I}{} T_{H;D_i}(f_{i,s}-u_{s})
        = \textsum{i \in I}{} \bigl( T_{H-F_i,D_i}(f_{i,s}-u_{s}) + T_{F_i;D_i}(f_{i,s}-u_{s}) \bigr)
        \\
        = \textsum{i \in I}{} \bigl( T_{H-F_i,D_i}(f_{i,s}-u_{s}) + L_{G;D_i}(f_{i,s}) - L_{G;D_i}(u_{s}) \bigr)
        \\
        = \textsum{i \in I}{} \bigl( T_{H-F_i,D_i}(f_{i,s}-u_{s}) + L_{\Phi;D_i}(f_{i,s}) \bigr) + Q L_{G;\oball 0{2^6s} \without X}(u_{s}) \,,
    \end{multline}
    where $\Phi$ is the nonparametric area integrand defined
    in~\ref{def:area-integrand} and we have used $L_G(u_{s}) = 0$ and $\Lip f_i
    \le \Lambda$ which implies $\uD G(\ap \uD f_i(x)) = \uD \Phi(\ap \uD
    f_i(x))$ for $i \in I$ and $x \in D_i$. Since
    \begin{multline}
        |(R-H)(x)| = \bigl| \textint 01 \uD^2 G(t \uD g_s(x) + (1-t) \uD u_{s}(x)) - \uD^2G(\uD u_{s}(x)) \ud \LM^1(t) \bigr|
        \\
        \le \tfrac 12 \Lip \uD^2 G | \uD (g_s-u_{s})(x) |
        \quad \text{for $x \in \oball 0{2^6s}$} 
    \end{multline}
    and, in the same way,
    \begin{displaymath}
        |(H-F_i)(x)|
        \le \tfrac 12 \Lip \uD^2 G | \ap \uD (f_{i,s}-u_{s})(x) |
        \quad \text{for $i \in I$ and $x \in D_i \cap \oball 0{2^6s}$} \,,
    \end{displaymath}
    we obtain, setting $l(x) = (\trans{-u_s(x)})_{\#}f(x)$ for $x \in X$ and
    using~\ref{lem:Dk-tilt} and~\ref{cor:Dg-Dh-tilt},
    \begin{gather}
        \label{eq:LR-H-gvs}
        \norm{T_{R-H}(g_s-u_{s})}{-1,1}{0,2^6s}
        \le \tfrac 12 \Lip \uD^2G \norm{\uD(g_s-u_{s})}{2}{0,2^6s}^2
        \le \Delta_2 s^{\vdim} \psi(2^{10}s)^2 \,,
        \\
        \label{eq:LH-Fi-fvs}
        Q^{-1} \textsum{i \in I}{} \norm{T_{H-F_i;D_i}(f_{i,s}-u_{s})}{-1,1}{0,2^6s}
        \le \tfrac 12 \Lip \uD^2G \norm{\ap \Aff l}{2}{0,2^6s}^2
        \le \Delta_2 s^{\vdim} \psi(2^{10}s)^2 \,.
    \end{gather}
    Employing~\cite[5.7(6)(7)]{Menne2012a}
    and~\ref{lem:lip-approx}\ref{i:bs:geometric-tilt}\ref{i:bs:gt:bad-set-global}
    we also get for $0 < 2^{17}s < r_0$
    \begin{equation}
        \label{eq:bad-set-psi}
        \LM^{\vdim}(\oball 0{2^6s} \without X)
        \le \Gamma_{\text{\cite[5.7(7)]{Menne2012a}}}(Q,\vdim) \|V\|(B \cap \cylinder{T}{a}{2^6s}{2^6s})
        \le \Delta_5 s^{\vdim} \psi(2^{10}s)^2 \,,
    \end{equation}
    the region occurring
    in~\ref{lem:lip-approx}\ref{i:bs:geometric-tilt}\ref{i:bs:gt:bad-set-global}
    at the scale $2^6s$ being $\cylinder Ta{2^9s}{2^9s}$, which is the cylinder
    of $\psi(2^{10}s)$;
    where we have used that $\Lip v_{2^{10}s} \le \Delta_8 \varepsilon \le
    \gamma(8\vdim)^{-1/2}$ by~\eqref{eq:Lip-vs-est} and the definition
    of~$\varepsilon$, which is the smallness required
    in~\ref{lem:lip-approx}\ref{i:bs:geometric-tilt}.  Now, we need to take care of the term
    $\textsum{i \in I}{} L_{\Phi;D_i}(f_{i,s})$. Let
    \begin{gather}
        N_i = \bigl\{ p^*(x) + q^*(f_i(x)) : x \in D_i \bigr\} \,,
        \quad
        N = \tbcup \{ N_i : i \in I \} \,,
        \\
        \text{and} \quad
        W = \textsum{i \in I}{} \var{\vdim}(N_i) \in \IVar{\vdim}(\R^{\adim})\,.
    \end{gather}
    Then, directly from the
    definitions~\ref{lem:lip-approx}\ref{i:la:X}\ref{i:la:f} of~$X$ and~$f$,
    \begin{displaymath}
        W = V \restrict ( N \times \grass{\adim}{\vdim} )
        \quad \text{and} \quad
        \| V-W \| \bigl( \cylinder T0{r_0}{r_0} \without B \bigr) = 0 \,.
    \end{displaymath}
    For $0 < s < 2^{-10}r_0$ and $\theta \in
    \dspace{\oball{0}{2^6s}}{\R^{\adim-\vdim}}$ we recall~\cite[2.6]{Menne2012a}
    and $\delta V = 0$ to write
    \begin{multline}
        \textsum{i \in I}{} L_{\Phi;D_i}(f_{i,s})\theta
        = \delta W(q^* \circ \theta \circ p)
        \\
        = \delta V(q^* \circ \theta \circ p) - \delta(V-W)(q^* \circ \theta \circ p)
        = \delta(V-W)(q^* \circ \theta \circ p) 
    \end{multline}
    and conclude using
    again~\ref{lem:lip-approx}\ref{i:bs:geometric-tilt}\ref{i:bs:gt:bad-set-global}
    \begin{equation}
        \label{eq:LPhi-fi}
        \norm{\textsum{i \in I}{} L_{\Phi;D_i}(f_{i,s})}{-1,1}{0,2^6s}
        \le \|V\|(B \cap \cylinder T0{2^6s}{2^6s})
        \le \Delta_5 s^{\vdim} \psi(2^{10}s)^2 \,.
    \end{equation}
    Combining~\eqref{eq:LPhi-fi}, \eqref{eq:bad-set-psi}, \eqref{eq:LH-Fi-fvs},
    \eqref{eq:LR-H-gvs}, \eqref{eq:THXg-v}, \eqref{eq:LGg},
    \eqref{eq:Dg-vs-squared}, \eqref{eq:L1g-vs-est} with~\eqref{eq:phis-interm}
    we obtain
    \begin{equation}
        \phi(s)
        \le \xi(2^{10}s) \Delta_7
        \bigl( \lambda^{1/\vdim} + \lambda^{-1/2} \xi(2^{10}s) \bigr)
        \quad \text{for $0 < 2^{17}s < r_0$}
        \,,
    \end{equation}
    where we have used that $\Delta_5 \ge 1$, $\Lip u_s \le 1$, and $\Lip g \le
    1$. The definitions of $\lambda$ and $\varepsilon$ and the
    estimate~\eqref{eq:psi-small} imply now that
    \begin{equation}
        \label{eq:final-phi}
        \phi(s)
        \le (10 + 2\adim)^{-1} \kappa \xi(2^{10}s)
        \quad \text{for $0 < s < 2^{-17}r_0$}
    \end{equation}

    The next step is to estimate~$\xi(s)$ by~$\phi(2s)$ for $0 < 2s < r_0$.
    To~this end we want employ~\ref{rem:tilt-height} with~$M_s$ in place of~$M$
    but to ensure that $\cylinder T0{2s}{2s}$ is contained in~$\Unp(M)$
    we~possibly need to translate~$M_s$. For $0 < s < r_0$ define $z_s \in
    \R^{\adim-\vdim}$ so that
    \begin{displaymath}
        \beta(v_s + z_s,p,0,s)
        = \inf \bigl\{ \beta(v_s + z,p,0,s) : z \in \R^{\adim-\vdim} \bigr\}
        \le \beta(s) 
    \end{displaymath}
    and note that
    \begin{displaymath}
        \psi(s) = \psi(v_s+z_s,p,0,s/2)
        \,.
    \end{displaymath}
    Employing~\eqref{eq:Lip-vs-est} together
    with~\ref{lem:convex-reach}\ref{i:cr:reach} (applied with $\cball 0{2s}$,
    $v_s$, $2^{-13}s^{-1}$, $\Lambda$ in place of $C$, $v$, $\Delta$, $\gamma$,
    so that $\reach(M_s) \ge 2^{13}s - \Lambda \diam \cball 0{2s} \ge (2^{13}-1)s$
    as $\Lambda \le 2^{-8}$) we~get for $0 < s < 2^{-7}r_0$
    \begin{gather}
        p\lIm M_s \rIm = \oball 0{2s} \,,
        \quad
        \Lip v_s|\cball 0{2s} \le \Lambda \,,
        \\
        \Lip \uD v_s|\cball 0{2s} \le \frac{\Lambda}{2^{5}s} \le 2^{-13} s^{-1} \,,
        \quad
        \reach(M_s) \ge (2^{13} - 1) s \,;
    \end{gather}
    hence,
    \begin{gather}
        M_s \subseteq \cylinder T{w_s(0)}{2s}{2\Lambda s} \,,
        \\
        \trans{q^{*}(z_s)}\lIm M_s \rIm
        \subseteq \cylinder T0{2s}{6\Lambda s} 
        \subseteq \cylinder T0{2s}{2s} 
        \subseteq \Unp(\trans{q^{*}(z_s)}\lIm M_s \rIm) \,.
    \end{gather}
    Recall~\ref{def:height-and-tilt} and define
    \begin{displaymath}
        \eta(s) = \eta(\trans{q^{*}(z_s)}\lIm M_s \rIm,T,0,s/2)
        \quad \text{for $0 < s < 2^{-7}r_0$} \,.
    \end{displaymath}
    Note that $\mathbf{h}(M_s,x) = 0$ for $x \in M_s$
    by~\ref{lem:approx-function} and~\eqref{eq:Lip-vs-est}.
    Since \ref{lem:heights} and~\ref{cor:tilt-tilt} concern an~entire graph
    whereas $\trans{q^{*}(z_s)}\lIm M_s \rIm$ is a~graph over $\oball 0{2s}$
    only, let $\tilde v : \R^{\vdim} \to \R^{\adim-\vdim}$ be a~Lipschitzian
    extension of $(v_s+z_s)|\cball 0{2s}$ with
    \begin{displaymath}
        \Lip \tilde v = \Lip (v_s+z_s)|\cball 0{2s} \le \Lambda \,,
    \end{displaymath}
    furnished by Kirszbraun's theorem~\cite[2.10.43]{Federer1969}, and put
    $\tilde f = p^* + q^* \circ \tilde v$ and $\tilde \Sigma = \im \tilde f$.
    That the extension may be taken with the \emph{same} Lipschitz constant is
    what matters, $\Lambda$ being the~$\gamma$ with which \ref{lem:heights}
    and~\ref{cor:np-in-2cyl} are applied.  Since $\tilde v$ agrees with
    $v_s+z_s$ on $\cball 0{2s}$, it is of class~$\cnt{\infty}$ on $\oball
    0{2s}$, so the local regularity required in~\ref{lem:heights}
    and~\ref{cor:tilt-tilt} is available at every point lying over $\oball
    0{2s}$.  Moreover every point of~$\tilde\Sigma$ that is relevant lies over
    $\cball 0{2s}$.  Indeed, let $a \in \cylinder T0{s/2}{s/2}$.  Since
    $\trans{q^{*}(z_s)}\lIm M_s \rIm \subseteq \cylinder T0{2s}{6\Lambda s}$ and
    $6\Lambda \le 6 \cdot 2^{-8} \le \tfrac 14$, the point $\tilde f(p(a))$
    satisfies $|a - \tilde f(p(a))| = |q(a) - \tilde v(p(a))| \le \tfrac 12 s +
    \tfrac 14 s$; hence $\dist(a,\tilde\Sigma) \le \tfrac 34 s$, and every $z \in
    \tilde\Sigma$ with $|a-z| \le \tfrac 34 s$ satisfies $|p(z)| \le \tfrac 12 s +
    \tfrac 34 s < 2s$.  On the part of~$\tilde\Sigma$ lying over $\oball 0{2s}$
    --- which is $\trans{q^{*}(z_s)}\lIm M_s \rIm$ --- nearest points are unique
    by~\ref{lem:convex-reach}\ref{i:cr:strict}; consequently $a \in
    \Unp(\tilde\Sigma)$, $\npp{\tilde\Sigma}(a) = \npp{\trans{q^{*}(z_s)}\lIm
      M_s \rIm}(a) \in M_s$, and $\dist(a,\tilde\Sigma) =
    \dist(a,\trans{q^{*}(z_s)}\lIm M_s \rIm)$.  This disposes at the same time
    of the distinction between $M_s$ and its closure ---
    \ref{lem:convex-reach}\ref{i:cr:reach} bounding the reach of $w_s\lIm \cball
    0{2s}\rIm$ whereas $M_s = w_s\lIm \oball 0{2s}\rIm$ --- the nearest points
    just located lying over the \emph{open} ball.
    Recalling~\ref{cor:np-in-2cyl} we may now use~\ref{lem:heights},
    \ref{cor:tilt-tilt}, and~\ref{rem:tilt-height} together with the triangle
    inequality to obtain
    \begin{multline}
        \psi(s)
        = \psi(v_s+z_s,p,0,s/2)
        \le \sqrt 2 \eta(s) + \beta(\trans{q^{*}(z_s)}\lIm M_s \rIm,T,0,s/2)
        \\
        \le (\sqrt{2} (4+\adim) + 1) \beta(\trans{q^{*}(z_s)}\lIm M_s \rIm,T,0,s)
        \le (\sqrt{2} (4+\adim) + 1) \zeta(s) 
        \quad \text{for $0 < s < 2^{-7}r_0$} \,.
    \end{multline}
    Consequently~\eqref{eq:final-phi} gives
    \begin{displaymath}
        \beta(s) \le \phi(s)
        \le (10+2\adim)^{-1}\kappa \xi(2^{10}s)
        \le \kappa \zeta(2^{10}s)
        \quad \text{whenever $0 < 2^{17}s < r_0$} \,;
    \end{displaymath}
    hence,
    \begin{displaymath}
        \zeta(s) \le \kappa \zeta(2^{10}s)
        \quad \text{for $0 < s < 2^{-17}r_0$}
        \qedhere
    \end{displaymath}
\end{proof}

\section{Rectifiability of every finite order}
\label{sec:rectifiability}

We combine the decay of~\ref{thm:decay} with the characterisation of higher
order rectifiability of sets due to Santilli~\cite{Santilli2019d}.  For an
$\HM^{\vdim}$~measurable set $A \subseteq \R^{\adim}$, an integer $k \ge 1$, and
$0 \le \alpha \le 1$, we~use without further comment the notion of
\emph{approximate differentiability of order~$(k,\alpha)$ of~$A$ at a point~$a$}
and its density characterisation; see \cite[3.8,~3.14]{Santilli2019d}.  We~shall
verify the following instance of that characterisation:
\begin{miniremark}
    \label{mr:santilli-crit}
    Let $A \subseteq \R^{\adim}$ be $\HM^{\vdim}$~measurable, $a \in A$, $T \in
    \grass{\adim}{\vdim}$ with $\ap\Tan(A,a) = T$, and $p \in
    \orthproj{\adim}{\vdim}$, $q \in \orthproj{\adim}{\adim-\vdim}$ with $\im p^*
    = T = \ker q$, and let $0 < \alpha \le 1$.  Suppose there are a polynomial
    map $P : \R^{\vdim} \to \R^{\adim-\vdim}$ of degree at most~$k$ with $P(0) =
    0$ and $\uD P(0) = 0$ and a number $0 \le \lambda < \infty$ such that, writing
    $N_P = \{ a + p^*(x) + q^*(P(x)) : x \in \R^{\vdim} \}$,
    \begin{equation}
        \label{eq:santilli-density}
        \lim_{s \downarrow 0} s^{-\vdim} \HM^{\vdim}\bigl(
        A \cap \cball as \cap \{ z : \dist(z,N_P) > \lambda s^{k+\alpha} \} \bigr) = 0 \,,
    \end{equation}
    and suppose in addition that
    \begin{equation}
        \label{eq:santilli-lower}
        \text{$A$ is approximately differentiable of order~$(1,0)$ at~$a$
          with $\ap\Tan(A,a) = T$} \,.
    \end{equation}
    Then $A$ is approximately differentiable of order~$(k,\alpha)$ at~$a$.
    Indeed, \eqref{eq:santilli-density} is the second condition
    of~\cite[3.8(2)]{Santilli2019d}, and, $\alpha$ being positive, it implies
    the first: $\lambda s^{k+\alpha} \le \varepsilon s^{k}$ for every
    $\varepsilon > 0$ once $s$ is small enough.  Condition~(1)
    of~\cite[3.8]{Santilli2019d} is a~lower density bound on the cylinders
    about the points of~$T$ which does not involve~$P$ at all, and is therefore
    supplied by~\eqref{eq:santilli-lower}; by~\cite[3.9]{Santilli2019d} the
    latter amounts to the existence of an~approximate tangent plane at~$a$.
\end{miniremark}

\begin{miniremark}[iterated decay]
    \label{mr:iterated-decay}
    Suppose $a$, $r$, $\{M_s\}$, and $\zeta$ are as furnished
    by~\ref{thm:decay}, so that $\zeta$ is nondecreasing and $\zeta(s) \le \kappa
    \zeta(2^{20}s)$ whenever $0 < 2^{20}s < r$.  Setting $\alpha_\kappa =
    \tfrac{1}{20}\log_2(1/\kappa)$, so that $\kappa = 2^{-20\alpha_\kappa}$,
    we~obtain by iteration
    \begin{equation}
        \label{eq:iterated-decay}
        \zeta(s) \le \kappa^{-1} (s/r)^{\alpha_\kappa} \zeta(r)
        \quad \text{for $0 < s < r$} \,;
    \end{equation}
    indeed, given such~$s$, let $\mathrm{N}$ be the greatest integer with
    $2^{20\mathrm{N}}s < r$, so that $\zeta(s) \le \kappa^{\mathrm{N}} \zeta(r)$,
    while $\kappa^{\mathrm{N}} = (2^{20\mathrm{N}})^{-\alpha_\kappa} \le
    \kappa^{-1}(s/r)^{\alpha_\kappa}$ because $2^{20(\mathrm{N}+1)} \ge r/s$.
    In particular \eqref{eq:iterated-decay} gives
    $\beta(M_s,\Tan^{\vdim}(\|V\|,a),a,s) \le \zeta(s) \le \Gamma
    s^{\alpha_\kappa}$ for $0 < s < r$, with $\Gamma = \kappa^{-1}
    r^{-\alpha_\kappa}\zeta(r)$; here $\zeta(r)$ abbreviates $\sup \{ \zeta(\rho) :
    0 < \rho < r \}$, which is finite because $\dist(z,M_\rho) \le
    \sqrt{2}\,\rho$ for $z \in \cylinder T{a}{\rho}{\rho}$ and $\|V\|(\cylinder
    T{a}{\rho}{\rho}) \le 2Q\unitmeasure{\vdim}(4\rho)^{\vdim}$
    by~\ref{lem:generic-props}\ref{i:gp:dens-ratio}.
\end{miniremark}

\begin{lemma}[Cauchy property of the jets of the comparison maps]
    \label{lem:jet-cauchy}
    Let $0 < \kappa < 1$, let $U$, $V$, $a$, $r$, $\{M_s\}$, and~$\zeta$ be as
    furnished by~\ref{thm:decay}, let $v_s$, $g$, $p$, $q$, $T$, $Q$, $r_0$,
    $\Lambda$, $\psi$, $\xi$ be as in the proof of~\ref{thm:decay} with $a = 0$,
    and let $J \in \natp$ satisfy $J \ge 2$.  There exist $0 < r_1 \le 2^{-17}r_0$ and $1
    \le \Gamma < \infty$, both determined by $\adim$, $\vdim$, $J$, $Q$, $G$,
    and~$V$, such that
    \begin{equation}
        \label{eq:jet-cauchy}
        \norm{\uD^{j}(v_s - v_{2s})}{\infty}{0,s/2} \le \Gamma \, s^{1-j} \zeta(2^{10}s)
        \quad \text{whenever $0 \le j \le J$ and $0 < s < r_1$} \,.
    \end{equation}
    If moreover $\alpha_{\kappa} \ge J$, where $\alpha_{\kappa} =
    \tfrac{1}{20}\log_2(1/\kappa)$ as in~\ref{mr:iterated-decay}, then
    \begin{equation}
        \label{eq:jet-bounded}
        \Gamma_{J} := \sup \bigl\{ \norm{\uD^{j}v_s}{\infty}{0,s/2} : 0 < s < r_1 ,\, 0 \le j \le J \bigr\} < \infty \,,
    \end{equation}
    the limits $\uD^{j}P(0) = \lim_{s \downarrow 0} \uD^{j}v_s(0)$ exist for $0
    \le j \le J$, and
    \begin{equation}
        \label{eq:jet-rate}
        | \uD^{j}v_s(0) - \uD^{j}P(0) | \le \Gamma \, s^{1 + \alpha_{\kappa} - j}
        \quad \text{whenever $0 \le j \le J-1$ and $0 < s < r_1$} \,.
    \end{equation}
\end{lemma}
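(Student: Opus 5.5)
The plan is to compare $v_s$ and $v_{2s}$ first in a low norm and then upgrade to all derivatives up to order~$J$ by interior Schauder estimates applied to their difference, since $v_s - v_{2s}$ solves a linear elliptic system. Both $v_s$ and $v_{2s}$ satisfy $L_G(\cdot) = 0$ on $\oball 0{2^5s}$. Hence $h = v_s - v_{2s}$ satisfies $T_A(h) = 0$ with $A = \textint 01 \uD^2G(t\uD v_s + (1-t)\uD v_{2s}) \ud\LM^1 t$, exactly as in Step~1 of the proof of~\ref{lem:v-u-le-LFu-LFv}.

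For the low norm, the key quantity is $s^{-\vdim-1}\norm{v_s - v_{2s}}{1}{0,s}$. This was already bounded in the proof of~\ref{thm:decay}: the combination of \eqref{eq:L1g-vs-est}, \eqref{eq:LGg}--\eqref{eq:LPhi-fi} and \eqref{eq:Dg-vs-squared} controls $s^{-\vdim-1}(\norm{g-v_s}{1}{0,s} + \norm{g-v_{2s}}{1}{0,s})$ by a constant times $\xi(2^{10}s)^2 + \xi(2^{10}s)$. Since $\xi \le 1$ for small $s$ and $\xi(\rho) \le (\sqrt2(4+\adim)+1)\zeta(\rho)$ by the last display of that proof, this gives a bound $\Gamma\zeta(2^{10}s)$. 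I would record the companion $L^2$ gradient bound in the same way: by \eqref{eq:Du-g} and \eqref{eq:Dg-vs-squared}, $s^{-\vdim/2}\norm{\uD(v_s - v_{2s})}{2}{0,s} \le \Gamma\zeta(2^{10}s)$.

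For the upgrade, I would apply the tower~\ref{lem:ho:tower}, at order $J$ and on a ball of radius comparable to $s$, to $f = h$ with $\Omega = 0$. The coefficient $A$ takes values in $W(\tfrac12 - ,2+)$. Its derivatives satisfy $\varrho^i\hnorm{\delta}{\uD^iA}{}{} \le C$ by \eqref{eq:u-hi-est} applied to $v_s$ and $v_{2s}$ together with the bound $\norm{\uD g}{2}{} \lesssim s^{\vdim/2}\varepsilon$, so $\mathcal A$ is bounded independently of $s$. Its Hölder smallness $\varrho^\delta\hoelder{\delta}{A} \le \varepsilon_{\ref{lem:H}}$ follows from \eqref{eq:u-lo-est} with the choice of $\Lambda$. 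The tower then bounds $\sum_p s^{p-1}\norm{\uD^p h}{\infty}{0,s/2}$ by $\hnorm{\delta}{\uD h}{0,\varrho}{\varrho}$. That quantity is in turn controlled, via \eqref{eq:H:L2} of~\ref{lem:H}, by the $L^2$ gradient bound from the previous step. The case $j = 0$ would be handled by an $L^1$-to-$L^\infty$ estimate, combining the mean-value/Morrey argument with the $\uD h$ bound. Taking $r_1 = 2^{-17}r_0$ scaled as needed yields \eqref{eq:jet-cauchy}.

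For the second half, assume $\alpha_\kappa \ge J$. By \eqref{eq:iterated-decay}, $\zeta(2^{10}s) \le \Gamma s^{\alpha_\kappa}$, so $|\uD^j(v_s - v_{2s})(0)| \le \Gamma s^{1+\alpha_\kappa - j}$. For $j \le J-1$ this exponent is $\ge 2 > 0$. Telescoping over the dyadic scales $s, 2^{-1}s, \ldots$ gives a convergent geometric series, which yields both the existence of the limits and \eqref{eq:jet-rate}. For $j = J$ the exponent is $1+\alpha_\kappa - J \ge 1$, still positive, so the same telescoping gives Cauchy convergence. For \eqref{eq:jet-bounded}, I would telescope at points of $\cball 0{s/2}$ rather than only at $0$. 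Since $\cball 0{s/2} \subseteq \cball 0{\rho/2}$ for $\rho \ge s$, summing the differences from $s$ up to a fixed scale near $r_1$ bounds $\norm{\uD^jv_s}{\infty}{0,s/2}$ by $\norm{\uD^j v_{r_1/2}}{} + \Gamma\sum_k (2^ks)^{1+\alpha_\kappa - j}$, which is finite. Non-dyadic $s$ are handled by \eqref{eq:u-hi-est}, which bounds the jets at comparable scales.

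I expect the main obstacle to be verifying the hypotheses of the tower uniformly in $s$ at order $J$. In particular, the bound on $\mathcal A$ needs derivatives of $v_s$ and $v_{2s}$ up to order $J-1$ on a ball about $0$, and \eqref{eq:u-hi-est} provides these only on $\cball 0{2^{-4}\cdot 2^5 s} = \cball 0{2s}$. The radii therefore have to be juggled carefully (working on $\cball 0s$ and shrinking to $s/2$) so that they fit.
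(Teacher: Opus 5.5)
Your proposal is correct and follows the paper's proof essentially step for step. The shared steps are: the averaged coefficient for $v_s - v_{2s}$; the $\Lp{2}$ gradient bound from \eqref{eq:Du-g}, \eqref{eq:Dg-vs-squared} and \eqref{eq:bad-set-psi}, which holds on $\cball{0}{2^5 s}$; the upgrade by \eqref{eq:H:L2} from radius $2s$ to $s$ and then \ref{lem:ho:tower} from $s$ to $s/2$, with the coefficient bounds supplied by \eqref{eq:u-hi-est}; and dyadic telescoping. Because the gradient bound holds on $\cball{0}{2^5 s}$, the radius juggling you flagged as the main obstacle causes no trouble.

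The one real deviation is at $j=0$. There the paper uses the elementary Poincar\'e inequality for $v_s-g,\ v_{2s}-g \in \VSobz{1,2}$, whereas you appeal to \eqref{eq:L1g-vs-est}; that route is valid (the bound is even quadratic in $\xi$) but heavier. Also, existence of $\lim_{s\downarrow 0}$ over all $s$, and not merely along dyadic sequences, needs the same comparison for $v_s, v_t$ with $s \le t \le 2s$. The paper records this only in a clause, and the same argument supplies it.
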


\begin{proof}
    Throughout, $\Gamma_1, \Gamma_2, \ldots$ denote positive finite numbers
    determined by $\adim$, $\vdim$, $J$, $Q$, $G$, and~$V$.  Abbreviate $D_s =
    v_s - v_{2s}$, defined on $\oball 0{2^5s}$, and note $\zeta(\rho) \le
    \Gamma_1 \rho^{\alpha_\kappa}$ for $0 < \rho < r$
    by~\ref{mr:iterated-decay}.

    \emph{Step~1: bounds for $v_s$ and $v_{2s}$ on $\cball 0{2s}$.}
    Applying~\eqref{eq:u-hi-est} to $v_{\rho}$, for $\rho \in \{s,2s\}$, with
    $2^5\rho$ in place of~$r$ and with $J+2$ in place of~$k$ --- so that the
    ball on which it holds is $\cball 0{2^{-4}\cdot 2^5\rho} = \cball
    0{2\rho} \supseteq \cball 0{2s}$, \emph{independently of~$J$} ---
    and estimating
    $(2^5\rho)^{-\vdim/2}\norm{\uD g}{2}{0,2^5\rho} \le \Delta_8 \tau(2^7\rho)
    \le \Lambda$ exactly as in the derivation of~\eqref{eq:Lip-vs-est}, we~obtain
    \begin{equation}
        \label{eq:jc:vbounds}
        \textsum{i=1}{J+2} \bigl( s^{i-1}\norm{\uD^{i}v_{\rho}}{\infty}{0,2s}
        + s^{i-1/2}\hoelder{1/2}{\uD^{i}v_{\rho}|\cball 0{2s}} \bigr) \le \Gamma_1
        \quad \text{for $\rho \in \{s,2s\}$} \,.
    \end{equation}

    \emph{Step~2: $D_s$ solves a linear system.}  Define $B_s : \oball 0{2^5 s}
    \to \bigodot^2\Hom(\R^{\vdim},\R^{\adim-\vdim})$ by
    \begin{displaymath}
        B_s(x) = \textint{0}{1} \uD^2G \bigl( \uD v_{2s}(x) + t \, \uD D_s(x) \bigr) \ud t \,.
    \end{displaymath}
    Since $L_G(v_s) = L_G(v_{2s}) = 0$ and $\uD G(\uD v_s) - \uD G(\uD v_{2s}) =
    B_s \colon \uD D_s$ by the fundamental theorem of calculus, we~get
    \begin{equation}
        \label{eq:jc:linear}
        \textint{}{} \bigl\langle \uD\theta \odot \uD D_s ,\, B_s \bigr\rangle \ud \LM^{\vdim} = 0
        \quad \text{for $\theta \in \dspace{\oball 0{2^5s}}{\R^{\adim-\vdim}}$} \,,
    \end{equation}
    that is, $D_s$, $B_s$, $\Omega = 0$, $h = 0$ satisfy the base
    equation~\eqref{eq:ho:base-pde}.  By~\ref{lem:approx-function}, $\uD^2G$ is
    bounded below by $1-\varepsilon \ge \tfrac 12$ and above by $1 + \varepsilon
    \le 2$ in the sense of quadratic forms; whence, $\ellipticity{B_s(x)} \ge
    \tfrac12$ and $\|B_s(x)\| \le 2$ for $x \in \oball 0{2^5s}$, and $B_s$ takes
    its values in the class $W(\tfrac14,4)$ of~\ref{def:ho:W} --- these being the
    values of $c$ and~$M$ with which $\eta_{0,\ref{lem:schauder-est}}$ is formed
    for $\lambda = \tfrac12$.  Differentiating under the integral sign,
    each $\uD^i B_s$ is a universal polynomial expression in $\uD^{2}G, \ldots,
    \uD^{i+2}G$ evaluated along the segment joining $\uD v_{2s}$ to $\uD v_s$
    and in $\uD^{2}v_{\rho}, \ldots, \uD^{i+1}v_{\rho}$, $\rho \in \{s,2s\}$;
    hence, \eqref{eq:jc:vbounds} yields
    \begin{equation}
        \label{eq:jc:Bbounds}
        \textsum{i=0}{J} \bigl( s^{i} \norm{\uD^{i}B_s}{\infty}{0,2s}
        + s^{i+1/2}\hoelder{1/2}{\uD^{i}B_s|\cball 0{2s}} \bigr) \le \Gamma_2 \,.
    \end{equation}
    Moreover, $B_s$ satisfies the smallness
    hypothesis~\ref{lem:H}\ref{i:H:assume-epsilon} on $\cball 0{2s}$.  Indeed,
    writing $\Lambda_G^{(3)} = \sup\{\|\uD^3G(\sigma)\| : |\sigma| \le 1\}$ as
    in the proof of~\ref{lem:schauder-est}, the integrand defining~$B_s$ is
    evaluated along the segment joining $\uD v_{2s}$ to $\uD v_{s}$, so that
    \begin{displaymath}
        \hoelder{1/2}{B_s|\cball 0{2s}}
        \le \Lambda_G^{(3)} \bigl( \hoelder{1/2}{\uD v_s|\cball 0{2s}}
        + 2\hoelder{1/2}{\uD v_{2s}|\cball 0{2s}} \bigr) \,;
    \end{displaymath}
    on the other hand, for $\rho \in \{s,2s\}$ we~have $2s \le 2\rho$ and
    $2^5\rho \norm{\uD^2v_{\rho}}{\infty}{0,2\rho} \le \Lambda$
    by~\eqref{eq:Lip-vs-est}; whence,
    \begin{displaymath}
        (2s)^{1/2}\hoelder{1/2}{\uD v_{\rho}|\cball 0{2s}}
        \le (2s)^{1/2} (4s)^{1/2} \norm{\uD^2v_{\rho}}{\infty}{0,2s}
        \le 2^{3/2} s \cdot \Lambda 2^{-5}s^{-1} \le 2^{-3}\Lambda \,.
    \end{displaymath}
    Therefore, $(2s)^{1/2}\hoelder{1/2}{B_s|\cball 0{2s}} \le 3 \cdot
    2^{-3}\Lambda_G^{(3)}\Lambda \le \Lambda_G^{(3)}
    \eta_{0,\ref{lem:schauder-est}}(\adim,\vdim,\tfrac12,G) \le
    \varepsilon_{\ref{lem:H}}(\vdim,\adim,\tfrac12,\tfrac14,4)$, by the choice of~$\Lambda$
    in~\ref{thm:decay} and of $\eta_0$ in the proof
    of~\ref{lem:schauder-est}.

    \emph{Step~3: the $\Lp2$ estimate for $\uD D_s$.}  We~claim
    \begin{equation}
        \label{eq:diff-L2}
        s^{-\vdim/2}\norm{\uD D_s}{2}{0,2^5s} \le \Gamma_3 \, \zeta(2^{10}s)
        \quad \text{for $0 < s < r_1$} \,.
    \end{equation}
    Indeed, $v_s$ is the map furnished by~\ref{lem:approx-function} on $\oball
    0{2^5s}$ with datum~$g$, $v_{2s}$ is the map furnished on $\oball 0{2^6s}$
    with the same datum, and $L_G(v_{2^7s}) = 0$ on $\oball 0{2^{12}s}$, which
    contains both balls; hence,~\eqref{eq:Du-g}, applied twice with $v_{2^7s}$ in
    place of~$w$, gives
    \begin{displaymath}
        \norm{\uD(v_s-g)}{2}{0,2^5s} \le 3\norm{\uD(g-v_{2^7s})}{2}{0,2^5s} \,,
        \quad
        \norm{\uD(v_{2s}-g)}{2}{0,2^6s} \le 3\norm{\uD(g-v_{2^7s})}{2}{0,2^6s} \,,
    \end{displaymath}
    so that $\norm{\uD D_s}{2}{0,2^5s} \le 6 \norm{\uD(g-v_{2^7s})}{2}{0,2^6s}$.
    By~\eqref{eq:Dg-vs-squared} and~\eqref{eq:bad-set-psi},
    \begin{displaymath}
        s^{-\vdim}\norm{\uD(g-v_{2^7s})}{2}{0,2^6s}^2
        \le \Delta_6 \bigl( \psi(2^7s)^2 + \Delta_5 \psi(2^{10}s)^2 \bigr)
        \le \Delta_6(1+\Delta_5) \, \xi(2^{10}s)^2 \,,
    \end{displaymath}
    and $\xi(2^{10}s) \le (\sqrt2(4+\adim)+1)\zeta(2^{10}s)$ by the estimate
    for~$\psi$ established at the end of the proof of~\ref{thm:decay}.
    This proves~\eqref{eq:diff-L2}.

    \emph{Step~4: interior estimates.}  Apply the estimate~\eqref{eq:H:L2}
    of~\ref{lem:H} to the system~\eqref{eq:jc:linear}, with $b = 0$, $A = B_s$,
    $g = D_s$, $\Omega = 0$, and with inner and outer radii $s$
    and~$2s$, so that $\tau = s$;
    its hypotheses hold by Step~2.  Together with~\eqref{eq:diff-L2} this gives
    \begin{equation}
        \label{eq:jc:first}
        \norm{\uD D_s}{\infty}{0,s}
        + s^{1/2}\hoelder{1/2}{\uD D_s|\cball 0{s}}
        \le \Gamma_4 \, \zeta(2^{10}s) \,.
    \end{equation}
    Next we~apply~\ref{lem:ho:tower}, at order~$J$ and with $b = 0$, $\varrho =
    s$, $A = B_s$, $\Omega = 0$, $f = D_s$, and $r = s/2$.  Its
    hypotheses hold: the base equation~\eqref{eq:jc:linear} is homogeneous,
    $B_s$ is of class~$\cnt{J-1,1/2}$ on $\cball 0{s}$
    by~\eqref{eq:jc:Bbounds}, and the smallness
    hypothesis~\ref{lem:ho:tower}\ref{i:tower:A} was verified in Step~2.
    Since $\varrho = s$, \eqref{eq:jc:Bbounds} moreover bounds the
    quantity~$\mathcal{A}$ of~\ref{lem:ho:tower}\ref{i:tower:est} by~$\Gamma_2$,
    while $\mathcal{O} = 0$ and, by~\eqref{eq:jc:first},
    $\hnorm{1/2}{\uD D_s}{0,\varrho}{\varrho} \le \Gamma_4 \zeta(2^{10}s)$; as
    $\varrho/(\varrho-r) = 2$, \ref{lem:ho:tower}\ref{i:tower:est} therefore yields
    \begin{equation}
        \label{eq:diff-schauder}
        \textsum{j=1}{J} s^{j-1} \norm{\uD^{j}D_s}{\infty}{0,s/2}
        \le \Gamma_5 \, \zeta(2^{10}s) \,.
    \end{equation}

    \emph{Step~5: the supremum estimate.}  Since $v_s - g \in
    \VSobz{1,2}(\oball 0{2^5s},\R^{\adim-\vdim})$ and $v_{2s} - g \in
    \VSobz{1,2}(\oball 0{2^6s},\R^{\adim-\vdim})$
    by~\eqref{eq:LGu-and-u-g}, the Poincaré inequality $\norm{h}{2}{0,R} \le 2R
    \norm{\uD h}{2}{0,R}$, valid for $h \in \VSobz{1,2}(\oball
    0R,\R^{\adim-\vdim})$ by Fubini's theorem and the Cauchy--Schwarz
    inequality, together with the triangle inequality and~\eqref{eq:diff-L2}
    yields
    \begin{displaymath}
        \norm{D_s}{2}{0,2^5s}
        \le \norm{v_s-g}{2}{0,2^5s} + \norm{v_{2s}-g}{2}{0,2^6s}
        \le 2^{7}s \norm{\uD(g-v_{2^7s})}{2}{0,2^6s}
        \le \Gamma_6 \, s^{1+\vdim/2} \zeta(2^{10}s) \,.
    \end{displaymath}
    Comparing with the mean of $|D_s|^2$ over $\cball 0{s/2}$ we~find $x_1
    \in \cball 0{s/2}$ with $|D_s(x_1)| \le \Gamma_7 s \zeta(2^{10}s)$;
    since $\cball 0{s/2}$ is convex, \eqref{eq:diff-schauder} with $j = 1$
    gives, for $x \in \cball 0{s/2}$,
    \begin{equation}
        \label{eq:diff-sup}
        |D_s(x)| \le |D_s(x_1)| + |x - x_1| \norm{\uD D_s}{\infty}{0,s/2}
        \le \Gamma_8 \, s \, \zeta(2^{10}s) \,.
    \end{equation}
    Together, \eqref{eq:diff-schauder} and~\eqref{eq:diff-sup}
    are~\eqref{eq:jet-cauchy}.

    \emph{Step~6: consequences.}  Assume $\alpha_{\kappa} \ge J$ and let $0 \le
    j \le J$.  By~\eqref{eq:jet-cauchy} and~\ref{mr:iterated-decay},
    \begin{equation}
        \label{eq:jet-dyadic}
        \norm{\uD^{j}(v_s - v_{2s})}{\infty}{0,s/2} \le \Gamma_9 \, s^{1+\alpha_{\kappa}-j}
        \quad \text{for $0 < s < r_1$} \,,
    \end{equation}
    and $1 + \alpha_{\kappa} - j \ge 1 > 0$.  Since $\cball 0{s/2}
    \subseteq \cball 0{2^{m}s/2}$ for $m \ge 0$,
    iterating~\eqref{eq:jet-dyadic} from~$s$ up to the fixed scale $r_1/2$ and
    summing the resulting geometric series --- whose ratio
    $2^{1+\alpha_\kappa-j} > 1$ makes it comparable to its largest term ---
    yields
    \begin{displaymath}
        \norm{\uD^{j}v_s}{\infty}{0,s/2}
        \le \norm{\uD^{j}v_{r_1/2}}{\infty}{0,r_1/4} + \Gamma_9 \textsum{m=0}{\infty} (2^{-m}r_1)^{1+\alpha_\kappa-j}
        \le \Gamma_{10} \,,
    \end{displaymath}
    which is~\eqref{eq:jet-bounded}.  Finally, for $0 \le j \le J-1$ we~have $1
    + \alpha_{\kappa} - j \ge 2 > 0$, so~\eqref{eq:jet-dyadic} applied at the
    scales $2^{-m}s$, $m \ge 0$, shows that $(\uD^{j}v_{2^{-m}s}(0))_{m}$ is a
    Cauchy sequence and that
    \begin{displaymath}
        |\uD^{j}v_s(0) - \uD^{j}P(0)|
        \le \textsum{m=0}{\infty} \Gamma_9 (2^{-m}s)^{1+\alpha_{\kappa}-j}
        \le 2 \Gamma_9 \, s^{1+\alpha_{\kappa}-j} \,,
    \end{displaymath}
    the limit being independent of the dyadic sequence chosen because the same
    estimate applies to any pair of comparable scales.  This
    is~\eqref{eq:jet-rate}.
\end{proof}

\begin{lemma}[decay yields approximate differentiability]
    \label{lem:decay-approxdiff}
    Let $U$, $V$ be as in~\ref{thm:decay}, let $k \in \natp$ and $0 < \alpha <
    1$, and apply~\ref{thm:decay} with $\kappa = 2^{-20(k+2)}$, so that
    $\alpha_\kappa = k+2$.  Let $A = \spt \|V\| \cap \{ z :
    \density^{\vdim}(\|V\|,z) \in \natp \}$.  Then $A$ is approximately
    differentiable of order~$(k,\alpha)$ at $\|V\|$~almost every $a \in A$, with
    $\ap\Tan(A,a) = \Tan^{\vdim}(\|V\|,a)$.
\end{lemma}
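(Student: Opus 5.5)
Fix $a$ in the full-measure set furnished by~\ref{thm:decay} with $\kappa = 2^{-20(k+2)}$. We also require $a\in A$, that $a$ lies in the set of~\ref{lem:generic-props}, and that $a$ is a point of $\HM^{\vdim}$~density one for~$A$ relative to $\|V\|$. Translate so that $a=0$, and put $T = \Tan^{\vdim}(\|V\|,a)$. The plan is to verify the two hypotheses of~\ref{mr:santilli-crit}.

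\emph{Hypothesis \eqref{eq:santilli-lower}.} Since $V$ is integral, $\|V\| = \density^{\vdim}(\|V\|,\cdot)\,\HM^{\vdim}\restrict A$ with density at least~$1$. Together with $\Tan^{\vdim}(\|V\|,a)=T$, this gives $\ap\Tan(A,a)=T$ via~\cite[3.9]{Santilli2019d}. The lower density bound on cylinders follows from~\ref{lem:generic-props}, items~(5) and~(9).

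\emph{Construction of~$P$.} Apply~\ref{lem:jet-cauchy} with $J = k+1$; then $\alpha_\kappa = k+2 \ge J$. We obtain limits $\uD^jP(0)=\lim_{s\downarrow0}\uD^jv_s(0)$ for $0\le j\le k$, and we let $P$ be the Taylor polynomial of degree~$k$ with these coefficients. The conditions $P(0)=0$ and $\uD P(0)=0$ should follow from $0\in\spt\|V\|$, from $T$ being the tangent plane, and from the rate \eqref{eq:jet-rate}: for instance $|v_s(0)|\lesssim s\zeta(s)+s^{1+\alpha_\kappa}$, which tends to zero. If necessary, replace $T$-adapted coordinates by an affine correction; since $\uD P(0)$ must vanish anyway, the tilt $\uD v_s(0)\to 0$ is controlled by \eqref{eq:Lip-vs-est} and $\tau(s)\to0$.

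Next compare $v_s$ with $P$ on $\cball0{s/2}$. By Taylor's formula, $|v_s(x)-P(x)|\le\sum_{j\le k}|\uD^jv_s(0)-\uD^jP(0)|\,|x|^j/j!+\norm{\uD^{k+1}v_s}{\infty}{0,s/2}|x|^{k+1}$. The first sum is at most $\Gamma s^{1+\alpha_\kappa}=\Gamma s^{k+3}$ by \eqref{eq:jet-rate}. The remainder is at most $\Gamma_J s^{k+1}$ by \eqref{eq:jet-bounded}. Hence $\sup_{\cball0{s/2}}|v_s-P|\le\Gamma s^{k+1}$.

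\emph{Density condition \eqref{eq:santilli-density} (main step).} For $z\in\spt\|V\|\cap\cball0{s/4}$ we have $\dist(z,N_P)\le \dist(z,M_s)+\Gamma' s^{k+1}$, since $M_s$ and $N_P$ are graphs that are $\Gamma s^{k+1}$-close over $\cball0{s/2}$ and $z$ lies in the cylinder. This is where $\zeta$ enters. Write $\lambda s^{k+\alpha}$, note that $s^{k+1}\ll s^{k+\alpha}$, and apply Chebyshev with~\ref{mr:iterated-decay}:
\[
\|V\|\bigl(\cball0{s/4}\cap\{\dist(\cdot,M_s)>\tfrac12 s^{k+\alpha}\}\bigr)\le 4s^{-2k-2\alpha}s^{\vdim+2}\beta(M_s,T,0,s)^2\le\Gamma s^{\vdim}s^{2(k+3)+2-2k-2\alpha},
\]
which is $o(s^{\vdim})$. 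Since $\HM^{\vdim}\restrict A\le\|V\|$, condition \eqref{eq:santilli-density} follows with $\lambda=1$ after replacing $s$ by $4s$.

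The main obstacle is a domain subtlety: $\beta$ integrates only over $D=\cylinder Tss\cap\Unp(M_s)$. The proof of~\ref{thm:decay} shows that $\cylinder T0{2s}{2s}\subseteq\Unp$ after translation by $z_s$. I would use the untranslated $\beta(s)\le\phi(s)$, as in the decay proof, or absorb $z_s$ into the Cauchy estimate, since $|z_s|\lesssim s\zeta$. A second point is that the constants are uniform only below $r_1$; this is harmless because the limit is taken as $s\downarrow0$.
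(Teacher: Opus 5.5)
Your proposal is correct and is essentially the paper's proof. It applies \ref{mr:santilli-crit} with $P$ assembled from the jet limits of~\ref{lem:jet-cauchy}, bounds $\sup_{\cball 0{s/2}}|v_s-P|\lesssim s^{k+1}$ via \eqref{eq:jet-rate} and \eqref{eq:jet-bounded}, and concludes by Chebyshev and~\ref{mr:iterated-decay}; your $J=k+1$ suffices, and reading off $\uD P(0)=0$ directly from \eqref{eq:Lip-vs-est} and $\tau\to0$ is simpler than the paper's detour.

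Minor points:
\begin{itemize}
\item Since $\zeta(s)\lesssim s^{k+2}$, the Chebyshev exponent is $6-2\alpha$, not $8-2\alpha$.
\item The claim $|v_s(0)|\lesssim s\zeta(s)$ is not justified as stated, but $v_s(0)\to0$ follows from monotonicity plus Chebyshev, as in the paper.
\item The paper settles the $\Unp(M_s)$ domain issue via $\reach(M_s)\ge(2^{13}-1)s$. Your alternative also works: integrate $|q(z)-v_s(p(z))|^2$ over $\cylinder T0ss$ and use $\phi(s)\le\kappa\zeta(2^{10}s)$ from the proof of~\ref{thm:decay}.
\end{itemize}
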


\begin{proof}
    Fix $a$ in the full-measure set of~\ref{lem:generic-props} for which
    additionally the conclusion of~\ref{thm:decay} holds and
    $\lim_{\rho \downarrow 0} \tau(\Tan^{\vdim}(\|V\|,a),a,\rho) = 0$; the
    latter holds $\|V\|$~almost everywhere because $V$ is
    $(\HM^{\vdim},\vdim)$~rectifiable by~\cite[3.5]{Allard1972}, so that
    $\Tan^{\vdim}(\|V\|,a)$ is an approximate tangent plane in the sense
    of~\cite[11.6]{Simon1983}.  Set $Q = \density^{\vdim}(\|V\|,a)$, $T =
    \Tan^{\vdim}(\|V\|,a)$, let $p,q,v_s,M_s$ be as in the proof
    of~\ref{thm:decay}, and assume $a = 0$ by translation.  Since
    $\density^{\vdim}(\|V\|,\cdot) = Q$ $\HM^{\vdim}$~almost everywhere near~$a$
    by~\ref{lem:generic-props}\ref{i:gp:density=Q}, the measures $\|V\|$ and $Q
    \HM^{\vdim} \restrict A$ agree up to sets of $\HM^{\vdim}$~measure zero on a
    neighbourhood of~$a$; in particular $\ap\Tan(A,a) = T$
    by~\cite[3.2.19]{Federer1969} and~\cite[11.6]{Simon1983}.  Put $J = k+2$ and
    let $r_1$, $\Gamma_{J}$, and $\uD^{j}P(0)$ be as
    in~\ref{lem:jet-cauchy}; below $\Gamma_1,\Gamma_2,\ldots$ denote positive
    finite numbers determined by $\adim$, $\vdim$, $k$, $Q$, $G$, and~$V$.

    \emph{Step~1 (the osculating polynomial).}  Let $P : \R^{\vdim} \to
    \R^{\adim-\vdim}$ be the polynomial of degree at most~$k$ with $\uD^{j}P(0)$
    the limits of~\ref{lem:jet-cauchy}, $0 \le j \le k$.  We~check $P(0) = 0$
    and $\uD P(0) = 0$.  As $0 \in \spt\|V\|$ and $\density^{\vdim}(\|V\|,z) \ge
    1$ for $z \in \spt\|V\|$ --- the density being a~positive integer
    $\|V\|$~almost everywhere by~\cite[3.5(1)(a)(c)]{Allard1972} and upper
    semicontinuous by~\cite[8.6]{Allard1972} --- the monotonicity
    formula~\cite[5.1(2)]{Allard1972} gives $\measureball{\|V\|}{\cball 0\sigma} \ge
    \unitmeasure{\vdim}\sigma^{\vdim}$ for $0 < \sigma < s$; since $\cball 0s
    \subseteq \cylinder T0ss \cap \Unp(M_s)$ --- the latter because
    $\reach(M_s) \ge (2^{13}-1)s$, as established in the proof
    of~\ref{thm:decay}, and since the same applies at every scale,
    \begin{equation}
        \label{eq:height-over-Ms}
        \textint{\cball 0\varsigma}{} \dist(z,M_{\varsigma})^2 \ud \|V\|(z)
        \le \varsigma^{\vdim+2}\beta(M_{\varsigma},T,0,\varsigma)^2
        \le \varsigma^{\vdim+2}\zeta(\varsigma)^2
        \quad \text{for $0 < \varsigma < r_1$} \,;
    \end{equation}
    Chebyshev's inequality applied to~\eqref{eq:height-over-Ms} with $\varsigma
    = s$
    furnishes, for each $0 < t \le s$, a point $z \in \spt\|V\| \cap \cball
    0t$ with $\dist(z,M_s) \le (s/t)^{\vdim/2}
    \unitmeasure{\vdim}^{-1/2} s \zeta(s)$; hence, $\dist(0,M_s) \le t +
    (s/t)^{\vdim/2}\unitmeasure{\vdim}^{-1/2}s\zeta(s)$, and choosing
    $t = s\zeta(s)^{2/(\vdim+2)}$ --- which is at most~$s$ once $s$ is small
    enough that $\zeta(s) \le 1$, and which renders the two summands equal up
    to the factor $\unitmeasure{\vdim}^{-1/2}$ --- gives $\dist(0,M_s) \le \Gamma_1 s
    \zeta(s)^{2/(\vdim+2)}$.  As $\Lip v_s|\cball 0{2s} \le \Lambda$, this
    yields $|v_s(0)| \le \Gamma_2 s \zeta(s)^{2/(\vdim+2)} \to 0$, so $P(0) =
    0$.  For $\uD P(0)$, note that by~\ref{rem:tilt-height} applied with $M_s$
    the quantity $\eta(M_s,T,0,s)$ is at most $\Gamma_3\zeta(2s)$; whence, by the
    triangle inequality and~\ref{lem:generic-props}\ref{i:gp:dens-ratio}
    \begin{displaymath}
        \inf \bigl\{ \| \project{\Tan(M_s,\npp{M_s}(z))} - \project T \| : z \in \spt\|V\| \cap \cylinder T0ss \bigr\}
        \le (Q\unitmeasure{\vdim})^{-1/2} \bigl( \tau(2^7s) + \Gamma_3 \zeta(2s) \bigr) \,,
    \end{displaymath}
    while $\| \project{\Tan(M_s,w_s(x))} - \project{\Tan(M_s,w_s(y))} \| \le
    \sqrt 2 \, 2^{5}s \norm{\uD^2v_s}{\infty}{0,2s} \le \sqrt2 \Delta_8
    \tau(2^7s)$ for $x,y \in \cball 0{2s}$ by~\eqref{eq:Lip-vs-est}.  Since
    $\tau(2^7 s) \to 0$ and $\zeta(2s) \to 0$ as $s \downarrow 0$, it follows
    that $\|\uD v_s(0)\| \to 0$, so $\uD P(0) = 0$.

    \emph{Step~2 (uniform Taylor remainder).}  Put $\sigma = 2s$, so that
    the ball on which~\ref{lem:jet-cauchy} provides its estimates for~$v_\sigma$
    is $\cball 0{\sigma/2} = \cball 0s$, and let $P_\sigma$ be the Taylor
    polynomial of~$v_\sigma$ at~$0$ of degree~$k$.  Since $\alpha_\kappa = k+2 =
    J$, \eqref{eq:jet-bounded} applies and gives
    $\norm{\uD^{k+1}v_\sigma}{\infty}{0,s} \le \Gamma_{J}$ whenever $0 < \sigma <
    r_1$; Taylor's theorem therefore yields $\sup\{|v_\sigma(x) - P_\sigma(x)| :
    |x| \le s\} \le \Gamma_{J} s^{k+1}/(k+1)!$.  Meanwhile~\eqref{eq:jet-rate}
    --- available for $0 \le j \le k \le J-1$ --- gives, as $\sigma = 2s$,
    \begin{displaymath}
        \sup \bigl\{ |P_\sigma(x) - P(x)| : |x| \le s \bigr\}
        \le \textsum{j=0}{k} \frac{s^{j}}{j!} \, \Gamma \, \sigma^{1+\alpha_\kappa-j}
        \le \Gamma_4 \, s^{k+3} \,.
    \end{displaymath}
    Consequently
    \begin{equation}
        \label{eq:vs-minus-P}
        \sup \bigl\{ |v_\sigma(x) - P(x)| : |x| \le s \bigr\} \le \Gamma_5 \, s^{k+1}
        \quad \text{for $0 < \sigma < r_1$} \,.
    \end{equation}

    \emph{Step~3 (Chebyshev).}  Write $N_P = \{ p^*(x) + q^*(P(x)) : x \in
    \R^{\vdim} \}$ and fix $\lambda > 0$; keep $\sigma = 2s$.  Let $z \in
    \spt\|V\| \cap \cball 0s$.  Then $|p(z)| \le s < 2\sigma$, so $p(z)$ belongs
    to the base of the graph $M_\sigma = w_\sigma \lIm \oball 0{2\sigma}\rIm$,
    and the point $p^*(p(z)) + q^*(P(p(z)))$ belongs to~$N_P$; whence,
    $\dist(z,N_P) \le |q(z) - P(p(z))|$.  Writing $(x,v_\sigma(x))$ for a point
    of~$M_\sigma$ nearest to~$z$ and using $\Lip v_\sigma|\cball 0{2\sigma} \le
    \Lambda \le 1$ we~get $|p(z) - x| \le \dist(z,M_\sigma)$ and $|q(z) -
    v_\sigma(x)| \le \dist(z,M_\sigma)$; hence,
    \begin{displaymath}
        |q(z) - v_\sigma(p(z))|
        \le |q(z) - v_\sigma(x)| + \Lambda |x - p(z)|
        \le 2 \dist(z,M_\sigma) \,;
    \end{displaymath}
    combining this with~\eqref{eq:vs-minus-P} yields
    \begin{displaymath}
        \dist(z,N_P) \le 2 \dist(z,M_\sigma) + \Gamma_5 s^{k+1} \,.
    \end{displaymath}
    Since $k + 1 > k + \alpha$, there is $0 < r_2 \le r_1/2$ with $\Gamma_5
    s^{k+1} \le \tfrac 12 \lambda s^{k+\alpha}$ for $0 < s < r_2$; for such~$s$,
    \begin{displaymath}
        \cball 0s \cap \{ z : \dist(z,N_P) > \lambda s^{k+\alpha} \}
        \subseteq \cball 0s \cap \{ z : \dist(z,M_\sigma) > \tfrac 14 \lambda s^{k+\alpha} \} \,.
    \end{displaymath}
    As $s \le \sigma$, \eqref{eq:height-over-Ms} applied with $\varsigma =
    \sigma$ bounds $\textint{\cball 0s}{}\dist(z,M_\sigma)^2 \ud\|V\|$ by
    $\sigma^{\vdim+2}\zeta(\sigma)^2$, so Chebyshev's inequality
    and~\ref{mr:iterated-decay} give
    \begin{multline}
        s^{-\vdim} \|V\| \bigl( \cball 0s \cap \{ z : \dist(z,N_P) > \lambda s^{k+\alpha} \} \bigr)
        \le 16\lambda^{-2} s^{-\vdim-2(k+\alpha)} \sigma^{\vdim+2} \zeta(\sigma)^2
        \\
        \le \Gamma_6 \lambda^{-2} s^{2 - 2(k+\alpha) + 2\alpha_\kappa}
        = \Gamma_6 \lambda^{-2} s^{6-2\alpha}
        \xrightarrow{\ s \downarrow 0\ } 0 \,,
    \end{multline}
    because $\sigma = 2s$, $\alpha_\kappa = k+2$ and $\alpha < 1$.  As $\|V\| = Q \HM^{\vdim}
    \restrict A$ near~$a$, the same limit holds with $\HM^{\vdim} \restrict A$
    in place of~$\|V\|$, which is~\eqref{eq:santilli-density}.  Moreover
    \eqref{eq:santilli-lower} holds: it was established at the beginning of
    this proof that $\ap\Tan(A,a) = T$, the measure $\|V\|$ having
    an~approximate tangent plane at~$a$ by~\cite[3.5]{Allard1972}
    and~\cite[11.6]{Simon1983} and agreeing near~$a$ with $Q
    \HM^{\vdim}\restrict A$, $Q \ge 1$.
    By~\ref{mr:santilli-crit}, $A$ is approximately differentiable of
    order~$(k,\alpha)$ at~$a$.  As $a$ ranged over a set of full $\|V\|$~measure,
    the proof is complete.
\end{proof}

\begin{theorem}[rectifiability of every finite order]
    \label{thm:finite-rectifiability}
    Let $U \subseteq \R^{\adim}$ be open and $V \in \IVar{\vdim}(U)$ satisfy
    $\delta V = 0$.  Then $\spt\|V\|$ is $(\HM^{\vdim},\vdim)$~rectifiable of
    class~$(k,\alpha)$ for every $k \in \natp$ and $0 < \alpha < 1$; in
    particular $\|V\|$ is carried by a countable union of $\vdim$~dimensional
    submanifolds of~$\R^{\adim}$ of class~$\cnt{k}$ for every $k \in \natp$.
\end{theorem}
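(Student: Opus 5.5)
The plan is to deduce the theorem from \ref{lem:decay-approxdiff} and Santilli's characterisation \cite[5.6]{Santilli2019d}. Fix $k \in \natp$ and $0 < \alpha < 1$, and let $A = \spt\|V\| \cap \{ z : \density^{\vdim}(\|V\|,z) \in \natp \}$. First we dispose of the case $\vdim = \adim$ and the case $\vdim = 1$, which \ref{thm:decay} excludes. For $\vdim = \adim$ the set $\spt\|V\|$ is an open subset of~$\R^{\adim}$ up to a null set, so the statement is trivial. For $\vdim = 1$ the support of a stationary integral $1$-varifold is, $\HM^1$~almost everywhere, locally a finite union of segments by Allard--Almgren; alternatively we would check that the proof of \ref{thm:decay} goes through there with the Sobolev exponents adjusted. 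Assume now $2 \le \vdim < \adim$.

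By \ref{lem:decay-approxdiff}, $A$ is approximately differentiable of order~$(k,\alpha)$ at $\|V\|$~almost every point of~$A$. Since $\|V\|$ and $\HM^{\vdim} \restrict A$ are mutually absolutely continuous, by \cite[3.5]{Allard1972} together with $\density^{\vdim}(\|V\|,\cdot) \ge 1$ on $\spt\|V\|$, this holds at $\HM^{\vdim}$~almost every point of~$A$. Moreover $\HM^{\vdim}(\spt\|V\| \without A) = 0$. This uses upper semicontinuity of the density and the lower bound $\density^{\vdim} \ge 1$ on the support, so that $\|V\| \ge \HM^{\vdim} \restrict \spt\|V\|$ by \cite[2.10.19]{Federer1969}, and the density is an integer $\|V\|$~almost everywhere.

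Santilli's theorem \cite[5.6]{Santilli2019d} states that an $\HM^{\vdim}$~measurable set which is approximately differentiable of order~$(k,\alpha)$ at $\HM^{\vdim}$~almost all of its points is $(\HM^{\vdim},\vdim)$~rectifiable of class~$(k,\alpha)$. Applied to~$A$, it gives countably many $\cnt{k,\alpha}$ submanifolds covering $\HM^{\vdim}$~almost all of~$A$, hence of $\spt\|V\|$. Since $\|V\| \le \density \HM^{\vdim}\restrict\spt\|V\|$ with locally bounded density, these submanifolds also carry~$\|V\|$. Finally, $\cnt{k,\alpha}$ submanifolds are $\cnt{k}$, and as $k$ was arbitrary this gives the last clause.

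The main obstacle is the measure-theoretic bookkeeping. We must make sure that the hypotheses of \cite[5.6]{Santilli2019d} concern~$A$ rather than $\spt\|V\|$ and that measurability holds. We must also justify $\HM^{\vdim}(\spt\|V\|\without A)=0$ correctly from the monotonicity formula. The $\vdim = 1$ case needs a separate remark if \ref{thm:decay} is not extended.
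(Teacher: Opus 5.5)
Your proposal is correct and is essentially the paper's own argument. It reduces to $\vdim \ge 2$ (the case $\vdim = 1$ by Allard--Almgren), applies \ref{lem:decay-approxdiff} to $A$, obtains $\HM^{\vdim}(\spt\|V\| \without A) \le \|V\|(U \without A) = 0$ from $\density^{\vdim}(\|V\|,\cdot) \ge 1$ on $\spt\|V\|$ and \cite[2.10.19]{Federer1969}, and concludes with \cite[5.6]{Santilli2019d}; the paper applies the latter on a compact exhaustion of~$U$, using that $\HM^{\vdim} \restrict A$ is locally finite. The only differences are in trivial cases: you explicitly dispose of $\vdim = \adim$, which the paper leaves implicit even though \ref{thm:decay} requires $\vdim < \adim$, whereas you omit $\vdim = 0$ (countable support), which the paper mentions.
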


\begin{proof}
    We may assume $\vdim \ge 2$: if $\vdim = 0$ then $\spt\|V\|$ is countable,
    and if $\vdim = 1$ then, by the structure theorem for stationary
    one-dimensional integral varifolds~\cite{Allard1976}, $\HM^1$~almost all of
    $\spt\|V\|$ consists of relatively open subarcs of straight lines; in either
    case $\spt\|V\|$ is $(\HM^{\vdim},\vdim)$~rectifiable of class~$\cnt{\infty}$;
    hence, of every class~$(k,\alpha)$.

    Now suppose $\vdim \ge 2$, fix $k$ and~$\alpha$, and let $A$ be as
    in~\ref{lem:decay-approxdiff}.  Then $A$ is $\HM^{\vdim}$~measurable with
    $\HM^{\vdim}(A \cap K) < \infty$ for
    every compact $K \subseteq U$ (as $\|V\|$ is a Radon measure and $\|V\| = Q
    \HM^{\vdim}\restrict A$ locally with $Q \ge 1$), and by
    \ref{lem:decay-approxdiff} $A$ is approximately differentiable of
    order~$(k,\alpha)$ with $\dim\ap\Tan(A,a) = \vdim$ at $\HM^{\vdim}$~almost
    every $a \in A$.  Exhausting~$U$ by compact sets and
    applying~\cite[5.6]{Santilli2019d} (equivalently~\cite[1.2]{Santilli2019d})
    on each, we~conclude that $A$ is $(\HM^{\vdim},\vdim)$~rectifiable of
    class~$(k,\alpha)$.  Since $\density^{*\vdim}(\|V\|,z) \ge 1$ for $z \in
    \spt\|V\|$ by~\cite[3.5(1)(a)(c)]{Allard1972}, the monotonicity
    formula~\cite[5.1(2)]{Allard1972} and the upper semicontinuity of the
    density~\cite[8.6]{Allard1972}, the
    comparison theorem~\cite[2.10.19(1)]{Federer1969} gives
    $\HM^{\vdim}(\spt\|V\| \without A) \le \|V\|(U \without A) = 0$; hence,
    $\spt\|V\|$ too is $(\HM^{\vdim},\vdim)$~rectifiable of
    class~$(k,\alpha)$.  Since $\|V\| = Q \HM^{\vdim}\restrict A$ and
    $\|V\|(U \without A) = 0$, the varifold is carried by~$A$; whence, the final
    assertion.
\end{proof}

Finally, we record that the finite order conclusion of~\ref{thm:finite-rectifiability}
already yields rectifiability of class~$\cnt{\infty}$.  The mechanism is
Whitney's $\cnt{\infty}$~extension theorem, and the key point is that the
approximate differentiability of every order holds on a \emph{common}
set of full measure.

\begin{lemma}
    \label{lem:Ck-to-Cinfty}
    Let $f : \R^{\vdim} \to \R^{\adim-\vdim}$ be $\LM^{\vdim}$~measurable and
    let $A \subseteq \R^{\vdim}$ be $\LM^{\vdim}$~measurable and such that $f$
    is approximately differentiable of order~$k$ at every $a \in A$, in the
    sense of~\cite[2.3]{Santilli2019d}, for every $k \in \natp$.  Then there
    exist countably many maps $g_j : \R^{\vdim} \to \R^{\adim-\vdim}$ of
    class~$\cnt{\infty}$ such that
    \begin{displaymath}
        \LM^{\vdim}\bigl( A \without \tbcup \{ \{ x : g_j(x) = f(x) \} : j \in \natp \} \bigr) = 0 \,.
    \end{displaymath}
\end{lemma}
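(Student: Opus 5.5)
We would deduce the lemma from the Whitney $\cnt{\infty}$~extension theorem, applied on compact pieces of~$A$ on which the Taylor expansions of \emph{all} orders hold uniformly.

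First, for $a \in A$ and $k \in \natp$ let $P_{a,k}$ denote the approximate Taylor polynomial of order~$k$ of~$f$ at~$a$ given by~\cite[2.3]{Santilli2019d}. By uniqueness of approximate Taylor polynomials these polynomials are compatible: $P_{a,k}$ is the truncation of $P_{a,k+1}$ at degree~$k$. Hence a single formal jet $(\phi_j(a))_{j \ge 0}$ is defined on~$A$, with $\phi_j(a) = \uD^j P_{a,k}(a)$ for any $k \ge j$, and $\phi_0 = f$ almost everywhere on~$A$.

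Second, for each~$k$ we invoke the classical fact that approximate differentiability of order~$k$ on~$A$ yields a countable cover of~$A$ (up to a null set) by measurable sets on which $f$ agrees with a $\cnt{k}$ function; alternatively, as in Federer~\cite[3.1.15]{Federer1969} or Santilli, we use the quantitative form of this fact. By Egorov and Lusin type arguments we obtain, for every $k$ and every $\epsilon>0$, a compact set $K_{k,\epsilon} \subseteq A$ with $\LM^{\vdim}(A \without K_{k,\epsilon}) < \epsilon 2^{-k}$ on which the jets $\phi_0,\ldots,\phi_k$ are continuous and the Whitney remainder conditions of order~$k$ hold uniformly:
\begin{displaymath}
    \Bigl| \phi_j(y) - \textsum{i=0}{k-j} \tfrac{1}{i!} \langle (y-x)^{\odot i} ,\, \phi_{j+i}(x) \rangle \Bigr| \le \omega_{k}(|y-x|)\,|y-x|^{k-j}
    \quad \text{for $x,y \in K_{k,\epsilon}$, $0 \le j \le k$},
\end{displaymath}
with a modulus $\omega_k(t) \to 0$ as $t \downarrow 0$. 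Setting $K_\epsilon = \tbcap_k K_{k,\epsilon}$ gives $\LM^{\vdim}(A \without K_\epsilon) < \epsilon$, and on the compact set $K_\epsilon$ the jet $(\phi_j)$ is a Whitney jet of class~$\cnt{\infty}$. Whitney's $\cnt{\infty}$ extension theorem then yields $g \in \cnt{\infty}(\R^{\vdim},\R^{\adim-\vdim})$ with $g = \phi_0 = f$ almost everywhere on~$K_\epsilon$. Letting $\epsilon = 1/j$ produces the maps $g_j$, and the exceptional set has measure zero.

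The main obstacle is the second step: passing from the pointwise approximate expansions, which control only density ratios, to uniform Whitney estimates on a large compact set. For this we would first restrict to points of approximate continuity of the jets, with uniform approximation rates obtained via Egorov. We would then use the standard argument that two points $x,y$ in a density set share a common ball where both expansions hold on most of the measure; comparing the two polynomials on that set, and using the equivalence of norms on polynomials of bounded degree restricted to sets of positive density, gives the remainder estimate. Carrying this out for each~$k$ separately and intersecting is harmless because the target measures $\epsilon 2^{-k}$ are summable.
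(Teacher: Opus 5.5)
Your proposal is correct. It shares the paper's architecture: for each $k$ a compact set on which the Whitney conditions of order~$k$ hold, with measure defects $\varepsilon 2^{-k}$, intersected over~$k$ and fed to Whitney's $\cnt{\infty}$ extension theorem. It differs from the paper in how the order-$k$ estimates are produced.

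The paper never proves a Whitney remainder bound by hand. It extends $f$ by zero off~$A$ and applies Isakov's theorem~\cite{Isakov1991} to obtain a single $g_k \in \cnt{k}$ agreeing with that extension on a compact $K_k$ of nearly full measure. It then passes to a compact $B_k$ of density points of $A \cap K_k$, and uses uniqueness of approximate Taylor polynomials to get $\uD^i g_k(b) = \ap\uD^i f(b)$ for $b \in B_k$ and $i \le k$. The order-$k$ Whitney conditions on $\bigcap_k B_k$ are then inherited from the $\cnt{k}$ map~$g_k$ for free.

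\emph{Your direct route} works and is self-contained. It uses Egorov-uniformised approximation rates, which requires measurability of the jets. It then compares $P_x$ and $P_y$ on $\cball y{|x-y|} \subseteq \cball x{2|x-y|}$ via a Remez-type inequality, uniform over subsets of the ball of proportion at least $\tfrac12$. This, however, amounts to reproving the $\cnt{k}$ Lusin property that the paper imports.

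\emph{Your first option}, citing that property, is the paper's route. There you leave one step implicit, and it is precisely the observation the paper spells out. The classical jet of the $\cnt{k}$ function coincides with $(\phi_j)$ only at density points of the coincidence set. So you must shrink to such points before claiming the Whitney conditions for~$(\phi_j)$.

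One small repair is needed. Your requirement of a compact $K_{k,\varepsilon} \subseteq A$ with $\LM^{\vdim}(A \without K_{k,\varepsilon}) < \varepsilon 2^{-k}$ presupposes $\LM^{\vdim}(A) < \infty$. You therefore need the paper's preliminary reduction to $A \subseteq \oball 0l$.
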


\begin{proof}
    Since the assertion for~$A$ follows from the assertion for each of the sets
    $A \cap \oball 0l$, $l \in \natp$, on taking the union of the resulting
    countable families, we~may assume $A \subseteq \oball 0l$ for some $l \in
    \natp$.  It then suffices to produce, for each $\varepsilon > 0$, a compact
    set $C \subseteq A$ with $\LM^{\vdim}(A \without C) < \varepsilon$ and a map
    $g \in \cnt{\infty}(\R^{\vdim},\R^{\adim-\vdim})$ with $g|C = f|C$; taking
    $\varepsilon = 2^{-l'}$ for $l' \in \natp$ and enumerating the resulting maps
    as $\{g_j\}$ then gives the assertion, because $\LM^{\vdim}(A \without
    \tbcup \{ C_{l'} : l' \in \natp \}) = 0$.

    Let $\hat f : \R^{\vdim} \to \R^{\adim-\vdim}$ satisfy $\hat f|A = f|A$ and
    $\hat f(x) = 0$ for $x \in \R^{\vdim} \without A$.  Then $\hat f$ is
    approximately differentiable of order~$k$, for every $k \in \natp$, at
    $\LM^{\vdim}$~almost every point of~$\R^{\vdim}$: at every $a \in A$ of
    density~$1$ of~$A$, because $\hat f$ differs from~$f$ only on a~set of
    density~$0$ there, and at every point of density~$1$ of $\R^{\vdim} \without
    A$, because $\hat f$ vanishes on a~set of density~$1$ there; the remaining
    points form an $\LM^{\vdim}$~null set by the Lebesgue density
    theorem~\cite[2.9.11]{Federer1969}.

    Fix $\varepsilon > 0$.  Let $k \in \natp$.  By Isakov's
    theorem~\cite{Isakov1991}, applied to~$\hat f$, the ball $\oball 0l$, and
    the number $2^{-k-1}\varepsilon$ --- its hypothesis being the approximate
    differentiability of order~$k$ just verified, and its conclusion the
    \emph{$\cnt{k}$~property} of~$\hat f$ --- there are a~compact set $K_k
    \subseteq \oball 0l$ and a~map $g_k \in
    \cnt{k}(\R^{\vdim},\R^{\adim-\vdim})$ with
    \begin{displaymath}
        \LM^{\vdim}(\oball 0l) - \LM^{\vdim}(K_k) < 2^{-k-1}\varepsilon
        \quad \text{and} \quad
        \hat f|K_k = g_k|K_k \,;
    \end{displaymath}
    in particular $\LM^{\vdim}(A \without K_k) < 2^{-k-1}\varepsilon$.  Employing
    \cite[2.9.11]{Federer1969} and the inner regularity
    of~$\LM^{\vdim}$~\cite[2.2.2]{Federer1969}, choose a~compact set $B_k
    \subseteq A \cap K_k$ every point of which is a~point of density~$1$ of $A
    \cap K_k$ and $\LM^{\vdim}(A \without B_k) < 2^{-k}\varepsilon$.  We~assert
    \begin{displaymath}
        \uD^i g_k(b) = \ap\uD^i f(b) \quad \text{for $b \in B_k$ and $i = 0,1,\ldots,k$} \,.
    \end{displaymath}
    Indeed, $f = \hat f = g_k$ on $A \cap K_k$, a~set of density~$1$ at~$b$; so,
    denoting by~$P$ the polynomial of degree at most~$k$ attached to~$f$
    at~$b$ by~\cite[2.3]{Santilli2019d} and by~$Q$ the $k$~jet of~$g_k$ at~$b$,
    both $|f - P|$ and $|g_k - Q| = |f - Q|$ are $o(|\cdot - b|^{k})$ in the
    approximate sense at~$b$; hence, so is $|P-Q|$, and a~nonzero polynomial of
    degree at most~$k$ vanishing at~$b$ to order less than~$k+1$ exceeds
    $\varepsilon' |\cdot - b|^{k}$ on a~set of positive upper density at~$b$ for
    some $\varepsilon' > 0$.  Thus $P = Q$, which is the assertion.

    Put $C = \tbcap \{ B_k : k \in \natp \}$.  Then $C$ is compact, $C \subseteq
    A$, and
    \begin{displaymath}
        \LM^{\vdim}(A \without C) \le \textsum{k=1}{\infty} \LM^{\vdim}(A \without B_k)
        < \textsum{k=1}{\infty} 2^{-k}\varepsilon = \varepsilon \,.
    \end{displaymath}
    For every $k$ the field $b \mapsto (\ap\uD^i f(b))_{i=0}^{k}$ coincides
    on~$C$ with the $\cnt{k}$~jet of~$g_k$; hence, satisfies Whitney's
    compatibility conditions of order~$k$ there; as $k$ is arbitrary, $(\ap\uD^i
    f)_{i \ge 0}$ is a Whitney field of class~$\cnt{\infty}$ on the compact
    set~$C$.  By Whitney's $\cnt{\infty}$~extension
    theorem~\cite[Theorem~I]{Whitney1934} (cf.~\cite[3.1.14]{Federer1969}),
    which covers the infinite order, there is $g \in
    \cnt{\infty}(\R^{\vdim},\R^{\adim-\vdim})$ with $\uD^i g(b) = \ap\uD^i f(b)$
    for $b \in C$ and $i \ge 0$; in particular $g|C = f|C$.
\end{proof}

\begin{lemma}[from sets to functions]
    \label{lem:set-to-function}
    Suppose $S \in \grass{\adim}{\vdim}$, $p \in \orthproj{\adim}{\vdim}$, $q
    \in \orthproj{\adim}{\adim-\vdim}$, $\im p^* = S = \ker q$, $X \subseteq
    \R^{\vdim}$ is $\LM^{\vdim}$~measurable, $f : \R^{\vdim} \to
    \R^{\adim-\vdim}$ is $\LM^{\vdim}$~measurable, $\Lip (f|X) \le L < \infty$,
    \begin{equation}
        \label{eq:stf:graph}
        E = \{ p^*(y) + q^*(f(y)) : y \in X \} \,,
    \end{equation}
    $A \subseteq \R^{\adim}$ is $\HM^{\vdim}$~measurable, $E \subseteq A$, $k
    \in \natp$, $0 < \alpha \le 1$, $x \in X$, and $a = p^*(x) + q^*(f(x))$.
    Assume
    \begin{enumerate}
    \item \label{i:stf:apdiff}
        $A$ is approximately differentiable of order~$(k,\alpha)$ at~$a$ in the
        sense of~\cite[3.8]{Santilli2019d}, and $T = \ap\Tan(A,a)$ satisfies
        $T \in \grass{\adim}{\vdim}$ and $T \cap \ker p = \{0\}$;
    \item \label{i:stf:density}
        $\density^{\vdim}(\LM^{\vdim} \restrict \R^{\vdim} \without X, x) = 0$.
    \end{enumerate}
    Then $f$ is approximately differentiable of order~$k$ at~$x$ in the sense
    of~\cite[2.3]{Santilli2019d}.
\end{lemma}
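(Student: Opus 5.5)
Approximate differentiability of order~$(k,\alpha)$ of~$A$ at~$a$ provides, via Santilli's definition \cite[3.8]{Santilli2019d}, a polynomial graph osculating~$A$ near~$a$ in density. The plan is to transfer that polynomial to a polynomial of~$f$ and then check the remainder condition of \cite[2.3]{Santilli2019d} at~$x$.

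\emph{Step~1: rewrite the osculating object as a graph over~$S$.} Since $T \cap \ker p = \{0\}$, the plane~$T$ is a graph over~$S$. Definition \cite[3.8]{Santilli2019d} gives a polynomial $\Pi : T \to T^{\perp}$ of degree at most~$k$ with $\Pi(0)=0$, $\uD\Pi(0)=0$. Its graph $N = \{a+\xi+\Pi(\xi):\xi\in T\}$ satisfies
\[
s^{-\vdim}\HM^{\vdim}\bigl(A\cap\cball as\cap\{z:\dist(z,N)>\varepsilon s^{k}\}\bigr)\to 0 \quad\text{for every } \varepsilon>0 .
\]
It also satisfies the lower density condition~(1) of \cite[3.8]{Santilli2019d}. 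By the inverse function theorem, applied as in~\ref{rem:reach-of-a-graph}, $N$ is near~$a$ the graph $\{p^*(y)+q^*(\hat P(y))\}$ of a $\cnt\infty$ map $\hat P$ defined near~$x$. Let $P$ be the Taylor polynomial of~$\hat P$ at~$x$ of degree~$k$. Then $|\hat P - P| = O(|y-x|^{k+1})$, so we may replace $N$ by $N_P=\{p^*(y)+q^*(P(y))\}$ at the cost of $O(s^{k+1})$ in the distance.

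\emph{Step~2: from distance to vertical deviation.} For $y\in X$ near~$x$, put $z=p^*(y)+q^*(f(y))\in E\subseteq A$. Then $|z-a|\le(1+L)|y-x|$. If $\dist(z,N_P)\le t$ with $t$ small, then, using $\Lip \hat P$ near~$x$ (bounded, say by~$L'$), one gets $|f(y)-P(y)|\le(1+L')t+O(|y-x|^{k+1})$. This is the same nearest-point argument as in Step~3 of~\ref{lem:decay-approxdiff}. Hence
\[
\{y\in X\cap\cball x\rho:|f(y)-P(y)|>\varepsilon\rho^k\}\subseteq p\bigl\lIm E\cap\cball a{(1+L)\rho}\cap\{z:\dist(z,N_P)>c\varepsilon\rho^k\}\bigr\rIm
\]
for small~$\rho$, with $c$ depending on~$L'$.

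\emph{Step~3: measure comparison.} Since $p$ is $1$~Lipschitz, the $\LM^{\vdim}$~measure of the left side is at most the $\HM^{\vdim}$~measure of the corresponding subset of~$E$. By Step~1 this is $o(\rho^{\vdim})$. Together with hypothesis~\ref{i:stf:density}, which makes $\cball x\rho\without X$ of measure $o(\rho^\vdim)$, this yields
\[
\rho^{-\vdim}\LM^{\vdim}\bigl(\cball x\rho\cap\{y:|f(y)-P(y)|>\varepsilon\rho^k\}\bigr)\to0 .
\]
This density form is the characterisation of approximate differentiability of order~$k$ in \cite[2.3]{Santilli2019d}. If that definition is phrased pointwise as $|f(y)-P(y)|=o(|y-x|^k)$ in approximate sense, a dyadic annulus decomposition converts the ball estimate into it.

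The main obstacle is Step~1. It requires matching the exact form of Santilli's definition, whether stated with balls or cylinders, with the polynomial over~$T$ or over another plane, and ensuring the reparametrisation over~$S$ preserves the order. The tilt of~$T$ relative to~$S$ is handled by the transversality $T\cap\ker p=\{0\}$.
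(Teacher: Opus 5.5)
Your proposal is correct and follows essentially the paper's route. Both arguments rewrite Santilli's osculating graph $N$ as a graph over~$S$ near~$a$ using $T \cap \ker p = \{0\}$, and both bound the vertical deviation by $(1+\Lambda)\dist(z,N)$ through a nearest-point argument. Both then transfer the density estimate to~$\R^{\vdim}$ via the $1$~Lipschitz projection~$p$ together with hypothesis~(b), and finish with a dyadic-annulus argument.

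The only difference is cosmetic. You replace $N$ by the graph of the Taylor polynomial before making the comparison. The paper instead compares with the graph of the smooth map~$h$ and inserts the Taylor remainder $|h-Q| \le \Gamma_1|y-x|^{k+1}$ only in its final step. The paper also splits off the case $\dist(z,N) \ge \rho$ explicitly, which is the localisation you leave implicit.
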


\begin{proof}
    Let $P : T \to T^{\perp}$ be the polynomial map of degree at most~$k$ with
    $P(0) = 0$ and $\uD P(0) = 0$ furnished by~\ref{i:stf:apdiff} through
    \cite[3.8]{Santilli2019d}, and put $N = \{ a + \chi + P(\chi) : \chi \in T
    \}$.  Thus $N$~is a~closed $\vdim$~dimensional submanifold of~$\R^{\adim}$
    of class~$\cnt{\infty}$ with $a \in N$ and $\Tan(N,a) = T$, and the first
    limit in~\cite[3.8(2)]{Santilli2019d} reads
    \begin{equation}
        \label{eq:stf:hyp}
        \lim_{s \downarrow 0} s^{-\vdim} \HM^{\vdim} \bigl( A \cap \cball as
        \cap \{ z : \dist(z,N) > \varepsilon s^{k} \} \bigr) = 0
        \quad \text{for every $\varepsilon > 0$} \,.
    \end{equation}

    \emph{Step 1 ($N$ is a graph over~$S$ near~$a$).}  Since $\Tan(N,a) = T$ and
    $T \cap \ker p = \{0\}$, the map $p|N$ is a~$\cnt{\infty}$ diffeomorphism of
    a~neighbourhood of~$a$ in~$N$ onto a~neighbourhood of~$x$ in~$\R^{\vdim}$;
    hence, there are $0 < \rho < \infty$, $0 \le \Lambda < \infty$, and $h \in
    \cnt{\infty}(\R^{\vdim},\R^{\adim-\vdim})$ such that
    \begin{equation}
        \label{eq:stf:h}
        N \cap \oball a{2\rho} \subseteq \{ p^*(y) + q^*(h(y)) : y \in
        \R^{\vdim} \}
        \quad \text{and} \quad
        \Lip h \le \Lambda \,.
    \end{equation}
    As $a \in N \cap \oball a{2\rho}$ and $a = p^*(x) + q^*(f(x))$, the
    injectivity of $y \mapsto p^*(y) + q^*(h(y))$ yields $h(x) = f(x)$.  Let $Q$
    be the $k$~jet of~$h$ at~$x$, that is $Q(y) = \textsum{i=0}{k} \langle
    (y-x)^i/i! , \uD^i h(x) \rangle$ for $y \in \R^{\vdim}$; then $Q(x) = h(x) =
    f(x)$ and, by Taylor's theorem~\cite[3.1.11]{Federer1969}, there are $0 <
    \rho_1 \le \rho$ and $0 \le \Gamma_1 < \infty$ with
    \begin{equation}
        \label{eq:stf:taylor}
        |h(y) - Q(y)| \le \Gamma_1 |y-x|^{k+1}
        \quad \text{for $y \in \cball x{\rho_1}$} \,.
    \end{equation}

    \emph{Step 2 (the distance to~$N$ controls the vertical deviation).}
    We~assert that
    \begin{equation}
        \label{eq:stf:vertical}
        |f(y) - h(y)| \le (1+\Lambda) \dist \bigl( p^*(y) + q^*(f(y)) , N \bigr)
    \end{equation}
    whenever $y \in X$, $z = p^*(y) + q^*(f(y)) \in \oball a{\rho}$, and
    $\dist(z,N) < \rho$.  In~fact, $N$ being closed, we~may pick $w \in N$ with
    $|z-w| = \dist(z,N)$; then $|w-a| \le |w-z| + |z-a| < 2\rho$, so $w =
    p^*(\chi) + q^*(h(\chi))$ for some $\chi \in \R^{\vdim}$
    by~\eqref{eq:stf:h}, whence, noting $q(z-w) = f(y) - h(\chi)$ and $p(z-w) =
    y - \chi$,
    \begin{equation*}
        |f(y) - h(y)| \le |f(y) - h(\chi)| + |h(\chi) - h(y)|
        \le |q(z-w)| + \Lambda |p(z-w)| \le (1+\Lambda) |z-w| \,.
    \end{equation*}

    \emph{Step 3 (transfer of the density estimate).}  Abbreviate $C =
    (1+L^2)^{1/2}$ and $\Xi = (1+\Lambda) C^{k}$, and note that $y \mapsto
    p^*(y) + q^*(f(y))$ of~\eqref{eq:stf:graph} maps $X$ onto~$E$, is injective, has Lipschitz constant
    at most~$C$, and its inverse $p|E$ has Lipschitz constant at most~$1$;
    consequently, by~\cite[2.10.11]{Federer1969},
    \begin{equation}
        \label{eq:stf:compare}
        \LM^{\vdim}(Y) \le \HM^{\vdim} \bigl( \{ p^*(y) + q^*(f(y)) : y \in Y \}
        \bigr)
        \quad \text{for $\LM^{\vdim}$~measurable $Y \subseteq X$} \,.
    \end{equation}
    Let $\varepsilon > 0$ and put, for $0 < r < \infty$,
    \begin{equation*}
        Y_r = X \cap \cball xr \cap \{ y : |f(y) - h(y)| > \Xi \varepsilon r^{k}
        \} \,.
    \end{equation*}
    Suppose $0 < r < \infty$ satisfies $Cr < \rho$ and $\varepsilon (Cr)^{k} <
    \rho$.  If $y \in Y_r$ and $z = p^*(y) + q^*(f(y))$, then $|z-a| \le C|y-x|
    \le Cr$ and $\dist(z,N) > \varepsilon (Cr)^{k}$; indeed, this is clear if
    $\dist(z,N) \ge \rho$, whereas if $\dist(z,N) < \rho$
    then~\eqref{eq:stf:vertical} applies and gives $(1+\Lambda) \dist(z,N) \ge
    |f(y)-h(y)| > \Xi \varepsilon r^{k} = (1+\Lambda) \varepsilon (Cr)^{k}$.
    Therefore,
    by~\eqref{eq:stf:compare} and~\eqref{eq:stf:hyp},
    \begin{equation}
        \label{eq:stf:Yr}
        \limsup_{r \downarrow 0} r^{-\vdim} \LM^{\vdim}(Y_r)
        \le C^{\vdim} \lim_{s \downarrow 0} s^{-\vdim} \HM^{\vdim} \bigl( A \cap
        \cball as \cap \{ z : \dist(z,N) > \varepsilon s^{k} \} \bigr) = 0 \,.
    \end{equation}

    \emph{Step 4 (conclusion).}  Let $\varepsilon > 0$ and abbreviate $\Xi_1 =
    \Xi + 1$.  If $0 < r \le \rho_1$, $Cr < \rho$, and $\Gamma_1 r \le
    \varepsilon$, then \eqref{eq:stf:taylor} gives $|h(y) - Q(y)| \le
    \varepsilon r^{k}$ for $y \in \cball xr$; hence,
    \begin{equation*}
        \cball xr \cap \{ y : |f(y) - Q(y)| > \Xi_1 \varepsilon r^{k} \}
        \subseteq Y_r \cup ( \cball xr \without X ) \,.
    \end{equation*}
    In~view of~\eqref{eq:stf:Yr} and~\ref{i:stf:density}, and since $\varepsilon
    > 0$ was arbitrary, we~infer
    \begin{equation}
        \label{eq:stf:scale}
        \lim_{r \downarrow 0} r^{-\vdim} \LM^{\vdim} \bigl( \cball xr \cap \{ y
        : |f(y) - Q(y)| > \varepsilon r^{k} \} \bigr) = 0
        \quad \text{for every $\varepsilon > 0$} \,.
    \end{equation}
    Finally, we~pass from~\eqref{eq:stf:scale} to the approximate limit.  Let
    $\varepsilon > 0$, let $\theta(s)$ denote the quantity whose limit is taken
    in~\eqref{eq:stf:scale} with $\varepsilon$ replaced by $2^{-k}\varepsilon$,
    and note that $\theta(s) \le \unitmeasure{\vdim}$ for $0 < s < \infty$ and
    $\lim_{s \downarrow 0} \theta(s) = 0$.  Writing $B = \{ y : |f(y) - Q(y)| >
    \varepsilon |y-x|^{k} \}$ and $s_j = 2^{-j}r$ for $j \in \nat$, we~have
    \begin{equation*}
        B \cap \cball x{s_j} \without \oball x{s_{j+1}}
        \subseteq \cball x{s_j} \cap \{ y : |f(y) - Q(y)| > 2^{-k} \varepsilon
        s_j^{k} \}
    \end{equation*}
    because $|y-x| \ge s_{j+1} = \tfrac 12 s_j$ for such~$y$; consequently,
    \begin{equation*}
        r^{-\vdim} \LM^{\vdim}(B \cap \cball xr)
        \le \textsum{j=0}{\infty} 2^{-j\vdim} \theta(2^{-j}r) \,,
    \end{equation*}
    and the right hand side tends to~$0$ as $r \downarrow 0$ by dominated
    convergence, a~summable majorant being $j \mapsto 2^{-j\vdim}
    \unitmeasure{\vdim}$.  Thus $\density^{\vdim}(\LM^{\vdim} \restrict B, x) =
    0$ for every $\varepsilon > 0$, that is $\ap\lim_{y \to x} |f(y)-Q(y)| \,
    |y-x|^{-k} = 0$.  Together with $Q(x) = f(x)$ and~\ref{i:stf:density}, this
    is precisely the assertion of~\cite[2.3]{Santilli2019d} for the order~$k$
    and the polynomial~$Q$.
\end{proof}

\begin{corollary}
    \label{cor:Cinfty-rectifiability}
    Let $U \subseteq \R^{\adim}$ be open and $V \in \IVar{\vdim}(U)$ satisfy
    $\delta V = 0$.  Then $\spt\|V\|$ is $(\HM^{\vdim},\vdim)$~rectifiable of
    class~$\cnt{\infty}$; that is, $\|V\|$ is carried by a countable union of
    $\vdim$~dimensional submanifolds of~$\R^{\adim}$ of class~$\cnt{\infty}$.
\end{corollary}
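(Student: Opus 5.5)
The argument combines \ref{lem:decay-approxdiff} over all orders on one common set of full measure, and then transfers from the set to graph functions via \ref{lem:set-to-function} and \ref{lem:Ck-to-Cinfty}.

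We may assume $\vdim \ge 2$, the cases $\vdim \in \{0,1\}$ being disposed of as in the proof of \ref{thm:finite-rectifiability}. Let $A = \spt\|V\| \cap \{ z : \density^{\vdim}(\|V\|,z) \in \natp \}$. For each $k \in \natp$ apply \ref{lem:decay-approxdiff} with, say, $\alpha = \tfrac12$. This yields a $\|V\|$~null set $Z_k$ such that $A$ is approximately differentiable of order $(k,\tfrac12)$, with $\ap\Tan(A,a) = \Tan^{\vdim}(\|V\|,a)$, at every $a \in A \without Z_k$. Put $A_\infty = A \without \tbcup_k Z_k$. Then $\|V\|(U \without A_\infty) = 0$, and at every point of $A_\infty$ the set $A$ is approximately differentiable of every finite order.

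Next we reduce to Lipschitz graphs. By \ref{thm:finite-rectifiability}, already with $k = 1$, or simply by rectifiability \cite[3.5]{Allard1972}, $A_\infty$ is covered up to an $\HM^{\vdim}$~null set by countably many sets of the form $E = \{ p^*(y) + q^*(f(y)) : y \in X \}$. Here $S = \im p^*$ ranges over a countable dense family in $\grass{\adim}{\vdim}$, the function $f$ is Lipschitzian on $X$ and extended measurably to $\R^{\vdim}$, and $X$ is $\LM^{\vdim}$~measurable. Refining the cover, we may assume that each piece has $\| \project{\Tan^{\vdim}(\|V\|,a)} - \project S \| < \tfrac12$ at its points $a$, which guarantees $T \cap \ker p = \{0\}$ in \ref{lem:set-to-function}\ref{i:stf:apdiff}. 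Discarding from $X$ its points of non-density, an $\LM^{\vdim}$~null set by \cite[2.9.11]{Federer1969} whose image under the Lipschitz map is $\HM^{\vdim}$~null, gives hypothesis \ref{lem:set-to-function}\ref{i:stf:density}. Let $X'$ be the set of $x \in X$ with $p^*(x) + q^*(f(x)) \in A_\infty$. Then \ref{lem:set-to-function} shows that, for every $k$, $f$ is approximately differentiable of order $k$ at every $x \in X'$.

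For each piece, \ref{lem:Ck-to-Cinfty} applied to $f$ and $X'$ furnishes countably many $g_j \in \cnt{\infty}(\R^{\vdim},\R^{\adim-\vdim})$ which coincide with $f$ on $X'$ up to an $\LM^{\vdim}$~null set. The graphs $\{ p^*(y)+q^*(g_j(y)) \}$ are $\vdim$~dimensional $\cnt{\infty}$ submanifolds. The portion of $E$ over $X'$ that they miss is the Lipschitz image of an $\LM^{\vdim}$~null set, hence $\HM^{\vdim}$~null. Collecting all pieces, $A_\infty$, and therefore $\spt\|V\|$ and $\|V\|$ (by the comparison argument at the end of the proof of \ref{thm:finite-rectifiability}), is carried by countably many $\cnt{\infty}$ submanifolds.

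The main obstacle is the commonality of the full-measure set. This requires checking that \ref{lem:decay-approxdiff} may be invoked for each $k$ separately, each time with its own $\kappa$ and $r$ from \ref{thm:decay}, and that intersecting countably many full-measure sets costs nothing. A second point is that \ref{lem:set-to-function} needs the transversality $T \cap \ker p = \{0\}$ pointwise, which is why the cover must be chosen adapted to approximate tangent planes rather than arbitrary.
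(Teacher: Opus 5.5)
Your proposal is correct and follows the paper's proof essentially step by step. The ingredients are the same: a common $\|V\|$~null exceptional set $\tbcup \{ Z_k : k \in \natp \}$ from \ref{lem:decay-approxdiff}, a countable decomposition of $A$ into graph pieces, \ref{lem:set-to-function} applied at density points of the base sets, and \ref{lem:Ck-to-Cinfty} together with the fact that Lipschitz images of $\LM^{\vdim}$~null sets are $\HM^{\vdim}$~null.

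The only variation is in securing $T \cap \ker p = \{0\}$. You adapt the pieces to $\Tan^{\vdim}(\|V\|,\cdot)$, which tacitly uses that this plane agrees almost everywhere with the tangent plane of the covering $\cnt{1}$ manifold. The paper proves exactly this in its Step~3, by disjointifying the pieces into sets $E_j$ and comparing approximate tangent cones at points where $A \without E_j$ has density zero. Note also that \ref{lem:set-to-function} requires $E \subseteq A$, so you should pass to $X'$ before discarding non-density points, not after.
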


\begin{proof}
    We~may assume $\vdim \ge 2$, the cases $\vdim \le 1$ being elementary (see
    the proof of~\ref{thm:finite-rectifiability}).  Let $A = \spt\|V\| \cap \{ z
    : \density^{\vdim}(\|V\|,z) \in \natp \}$; as recorded in the proof
    of~\ref{thm:finite-rectifiability}, the set $A$ is $\HM^{\vdim}$~measurable,
    $\HM^{\vdim} \restrict A$ is a~Radon measure over~$U$, $\|V\| =
    \density^{\vdim}(\|V\|,\cdot) \, \HM^{\vdim} \restrict A$ with integer
    density, and $\HM^{\vdim}(\spt\|V\| \without A) = \|V\|(U \without A) = 0$.

    \emph{Step 1 (a common exceptional set).}  For $k \in \natp$ let $Z_k$ be
    the $\|V\|$~null set outside of which~\ref{lem:decay-approxdiff}, applied
    with $\kappa = 2^{-20(k+2)}$ and $\alpha = \frac 12$, asserts approximate
    differentiability of~$A$ of order~$(k,\frac 12)$ with $\ap\Tan(A,\cdot) =
    \Tan^{\vdim}(\|V\|,\cdot)$.  Then $Z = \tbcup \{ Z_k : k \in \natp \}$ is
    $\|V\|$~null, and at every $a \in A^{*} = A \without Z$ the set~$A$ is
    approximately differentiable of order~$(k,\frac 12)$ for \emph{every} $k \in
    \natp$ simultaneously, with one and the same $\ap\Tan(A,a) \in
    \grass{\adim}{\vdim}$; the $\kappa$~dependence of the individual sets~$Z_k$
    is harmless because $\natp$ is countable.  Securing this \emph{common}
    exceptional set is the point at which the passage from finite to infinite
    order is decided, and it is what~\ref{lem:Ck-to-Cinfty} requires.

    \emph{Step 2 (a disjoint decomposition into graphs).}
    By~\ref{thm:finite-rectifiability} applied with $k = 1$, there are countably
    many $\vdim$~dimensional submanifolds of~$\R^{\adim}$ of class~$\cnt{1}$
    covering $\HM^{\vdim}$~almost all of~$A$.  Covering each of them by
    countably many relatively open pieces and shrinking these, we~obtain $S_j
    \in \grass{\adim}{\vdim}$, $p_j \in \orthproj{\adim}{\vdim}$, $q_j \in
    \orthproj{\adim}{\adim-\vdim}$ with $\im p_j^{*} = S_j = \ker q_j$, open
    sets $V_j \subseteq \R^{\vdim}$, and maps $g_j : \R^{\vdim} \to
    \R^{\adim-\vdim}$ with $\Lip g_j \le 1$ and $g_j|V_j$ of class~$\cnt{1}$
    --- the local graph maps being extended to~$\R^{\vdim}$ without increase of
    their Lipschitz constants by Kirszbraun's
    theorem~\cite[2.10.43]{Federer1969} --- such that, setting
    \begin{equation*}
        M_j = \{ p_j^{*}(y) + q_j^{*}(g_j(y)) : y \in V_j \} \,,
    \end{equation*}
    the $M_j$ are $\vdim$~dimensional submanifolds of class~$\cnt{1}$ with
    $\HM^{\vdim} ( A \without \tbcup \{ M_j : j \in \natp \} ) = 0$.  Put
    \begin{equation*}
        E_j = A^{*} \cap M_j \without \tbcup \{ M_i : i \in \natp , \, i < j \}
        \,;
    \end{equation*}
    these sets are $\HM^{\vdim}$~measurable, pairwise disjoint, and satisfy
    $\HM^{\vdim} ( A \without \tbcup \{ E_j : j \in \natp \} ) = 0$.

    \emph{Step 3 (the approximate tangent plane is transversal to $\ker p_j$).}
    Fix~$j$.  Since $\HM^{\vdim} \restrict A \without E_j$ is a~Radon measure,
    \cite[2.10.19(4)]{Federer1969} yields $\density^{\vdim}(\HM^{\vdim}
    \restrict A \without E_j , a) = 0$ for $\HM^{\vdim}$~almost all $a \in E_j$;
    fix such an~$a$ and abbreviate $T = \ap\Tan(A,a)$.  By~\cite[3.14, 3.16,
    3.19]{Santilli2019d},
    \begin{equation*}
        T = \Tan^{\vdim}(\HM^{\vdim} \restrict A, a) \in
        \grass{\adim}{\vdim} \,,
    \end{equation*}
    the approximate tangent cone being taken in the sense
    of~\cite[3.2.16]{Federer1969}, which is the upper cone
    $\Tan^{*\vdim}$ of~\cite[3.1]{Santilli2019d} and coincides with the cone
    $\Tan^{\vdim}$ of the latter whenever the two cones there agree, as they do
    here; that is, $v \in T$ if and only if
    $\density^{*\vdim}(\HM^{\vdim} \restrict A \cap E(a,v,\varepsilon), a) > 0$
    for every $\varepsilon > 0$, where $E(a,v,\varepsilon) = \R^{\adim} \cap \{
    z : |r(z-a) - v| < \varepsilon \text{ for some } 0 < r < \infty \}$.  As
    $\HM^{\vdim} \restrict A = \HM^{\vdim} \restrict E_j + \HM^{\vdim}
    \restrict A \without E_j$ and the second summand has vanishing
    $\vdim$~density at~$a$, these upper densities are unchanged when $A$ is
    replaced by~$E_j$; hence, $T = \Tan^{\vdim}(\HM^{\vdim} \restrict E_j, a)$.
    Taking $S = E_j$ in the definition of $\Tan^{\vdim}$
    in~\cite[3.2.16]{Federer1969} we~obtain $T \subseteq \Tan(E_j,a) \subseteq
    \Tan(M_j,a)$, the latter inclusion because $E_j \subseteq M_j$; as $\dim T =
    \vdim = \dim \Tan(M_j,a)$, we~conclude $T = \Tan(M_j,a)$.  Finally
    $\Tan(M_j,a) \cap \ker p_j = \{0\}$, because $\Tan(M_j,a) = \{ p_j^{*}(u) +
    q_j^{*}(\uD g_j(y)(u)) : u \in \R^{\vdim} \}$ with $y = p_j(a)$ and $\ker p_j
    = \im q_j^{*}$.

    \emph{Step 4 (approximate differentiability of the graph maps).}  Fix~$j$
    and put $X_j = \{ y \in V_j : p_j^{*}(y) + q_j^{*}(g_j(y)) \in E_j \}$; this
    set is $\LM^{\vdim}$~measurable, being the preimage of the
    $\HM^{\vdim}$~measurable set~$E_j$ under a~continuous injection whose
    inverse is Lipschitzian, and
    \begin{equation*}
        E_j = \{ p_j^{*}(y) + q_j^{*}(g_j(y)) : y \in X_j \} \subseteq A \,,
        \qquad \Lip (g_j|X_j) \le 1 \,.
    \end{equation*}
    Let $x \in X_j$ be a~point of density~$1$ of~$X_j$ --- by the Lebesgue
    density theorem~\cite[2.9.11]{Federer1969} this excludes an
    $\LM^{\vdim}$~null set only --- such that, moreover, $a = p_j^{*}(x) +
    q_j^{*}(g_j(x))$ belongs to the full measure subset of~$E_j$ singled out in
    Step~3.  Applying~\ref{lem:set-to-function} with $S$, $p$, $q$, $X$, $f$,
    $L$, $E$, $\alpha$ replaced by $S_j$, $p_j$, $q_j$, $X_j$, $g_j$, $1$,
    $E_j$, $\frac 12$, and with $k$ arbitrary --- its
    hypothesis~\ref{i:stf:apdiff} holding by Step~1 and Step~3, and
    its~\ref{i:stf:density} by the choice of~$x$ --- we~infer that $g_j$ is
    approximately differentiable of order~$k$ at~$x$ for every $k \in \natp$.
    Since the exceptional set is $\LM^{\vdim}$~null and independent of~$k$, this
    holds at $\LM^{\vdim}$~almost all $x \in X_j$; we~let $A_j$ be the
    $\LM^{\vdim}$~measurable set of those~$x$, so that $\LM^{\vdim}(X_j \without
    A_j) = 0$.

    \emph{Step 5 (conclusion).}  Fix~$j$ and apply~\ref{lem:Ck-to-Cinfty}
    to~$g_j$ and~$A_j$, obtaining maps $g_{j,i} \in
    \cnt{\infty}(\R^{\vdim},\R^{\adim-\vdim})$, $i \in \natp$, with
    $\LM^{\vdim}(X_j \without \tbcup \{ \{ y : g_{j,i}(y) = g_j(y) \} : i \in
    \natp \}) = 0$.  The sets $N_{j,i} = \{ p_j^{*}(y) + q_j^{*}(g_{j,i}(y)) : y
    \in \R^{\vdim} \}$ are $\vdim$~dimensional submanifolds of~$\R^{\adim}$ of
    class~$\cnt{\infty}$ and, the map $y \mapsto p_j^{*}(y) + q_j^{*}(g_j(y))$
    being Lipschitzian on~$X_j$, \cite[2.10.11]{Federer1969} gives
    $\HM^{\vdim}(E_j \without \tbcup \{ N_{j,i} : i \in \natp \}) = 0$.  Summing
    over~$j$ and recalling Step~2 we~obtain
    \begin{equation*}
        \HM^{\vdim} \bigl( A \without \tbcup \{ N_{j,i} : i,j \in \natp \}
        \bigr) = 0 \,;
    \end{equation*}
    hence, $\HM^{\vdim}(\spt\|V\| \without \tbcup \{ N_{j,i} : i,j \in \natp \})
    = 0$ and $\|V\|(U \without \tbcup \{ N_{j,i} : i,j \in \natp \}) = 0$, which
    is the assertion.
\end{proof}

\subsection*{Tool and computational resource disclosure}
\addcontentsline{toc}{section}{Tool and computational resource disclosure}
\label{sec:AI-tools}

As said in the introduction, most of the manuscript was written by hand by the
author. To finish the work the tools \emph{Claude~AI} and \emph{Claude~Code}
have been employed mostly using the \emph{Opus~5} model. In~particular, the LLM
has been used to fill many gaps, adjust the constants, check consistency, verify
some conjectures, draft the introduction, and proof-read the paper. Claude Code
has generated a lot of \LaTeX{} code basing on the initial draft, the literature
enumerated in the bibliography, and author's guidance. Especially, the second
half of section~\ref{sec:elliptic}, starting from~\ref{rem:ho:plan}, was
written entirely by Claude Code (supported by Federer's book) -- this is rather
standard PDE theory but the author could not find the precise estimates in the
form needed for this proof in the literature. Also
section~\ref{sec:rectifiability} is a~production by Claude Code. However, a~private
communication of~Menne was the basis for generating
section~\ref{sec:rectifiability}. Section~\ref{sec:reach} was initially written
entirely by Menne and then adjusted by the author via the AI tool. All other
parts where written originally by the author and then transformed using Claude.

The author takes full responsibility for the final outcome of these actions
presented here but he must also admit that he has not thoroughly checked all the
content generated by the AI. Nonetheless, convinced that postponing it would not
improve the quality, he decided to publish the paper.

\subsection*{Acknowledgements}
\addcontentsline{toc}{section}{Acknowledgements}
This research was financed by the \href{https://ncn.gov.pl/}{National Science
  Centre Poland} grant number 2022/46/E/ST1/00328.

The author wishes to thank Camillo De Lellis for his kind invitation to
Princeton in October~2025 and for his gentle encouragement to finish this
project.

\bigskip
{\small
\addcontentsline{toc}{section}{\numberline{}References}
\bibliography{all-refs}
\bibliographystyle{halpha}
}

{\small \noindent
  S{\l}awomir Kolasi{\'n}ski
  \\
  Instytut Matematyki,
  Uniwersytet Warszawski
  \\
  ul. Banacha 2, 02-097 Warszawa, Poland
  \\
  \texttt{s.kolasinski@mimuw.edu.pl}
}



\end{document}